\newcommand{\pathtotrunk}{./}
\begin{document}


\title{\hspace{-0.95cm}Knot polynomial identities and quantum group coincidences}

\newcommand{\Primaryclass}[1]{\primaryclass{#1}}
\newcommand{\Secondaryclass}[1]{\secondaryclass{#1}}
\newcommand{\Keywords}[1]{\keywords{#1}}
\newcommand{\Author}[2]{\author{#1}\email{#2}}

\author{Scott~Morrison}\email{scott@tqft.net}
\author{Emily~Peters}\email{eep@euclid.unh.edu}
\author{Noah~Snyder}\email{nsnyder@math.columbia.edu}

\address{%
\rm URLs:\stdspace \tt \url{http://tqft.net/}\ \
\url{http://euclid.unh.edu/~eep}\\\rm and \tt
\url{http://math.columbia.edu/~nsnyder}}

\Primaryclass{
	18D10 
}
\Secondaryclass{
	57M27 
	17B10 
	81R05 
	57R56 
}
\Keywords{
  Planar Algebras, Quantum Groups, Fusion Categories, Knot Theory, Link Invariants
}

\begin{abstract}
We construct link invariants using the $\mathcal{D}_{2n}$ subfactor planar algebras, and use these to prove new identities relating certain specializations of colored Jones polynomials to specializations of other quantum knot polynomials.  These identities can also be explained by coincidences between small modular categories involving the even parts of the $\mathcal{D}_{2n}$ planar algebras. We discuss the origins of these coincidences, explaining the role of $SO$ level-rank duality, Kirby-Melvin symmetry, and properties of small Dynkin diagrams.  One of these coincidences involves $G_2$ and does not appear to be related to level-rank duality.
\end{abstract}

\maketitle

\setcounter{tocdepth}{2}

\section{Introduction and background}

The goal of this paper is to construct knot and link invariants from the $\mathcal{D}_{2n}$ subfactor planar algebras, and to use these invariants to prove new identities between quantum group knot polynomials. These identities relate certain specializations of colored Jones polynomials to specializations of other knot polynomials.  In particular we prove that for any knot $K$ (but
not for a link!),
\begin{align}
\restrict{\J{\SL{2}}{(2)}(K)}{q=\exp(\frac{2\pi i}{12})} & = 2 \J{\mathcal{D}_4}{P}(K)  \notag \\
& = 2, \tag{Theorem \ref{thm:identities-2}}\\
\restrict{\J{\SL{2}}{(4)}(K)}{q=\exp(\frac{2\pi i}{20})} & = 2 \J{\mathcal{D}_6}{P}(K) \notag \\ & = 2 \restrict{\J{\SL{2}}{(1)}(K)}{q=\exp(- \frac{2\pi i}{10})}, \tag{Theorem \ref{thm:identities-3}}\\
\restrict{\J{\SL{2}}{(6)}(K)}{q=\exp(\frac{2\pi i}{28})} & = 2 \J{\mathcal{D}_8}{P}(K) \notag \\
                & = 2 \HOMFLY(K)(\exp(2\pi i\frac{5}{7}), \exp(- \frac{2\pi i}{14}) - \exp(\frac{2\pi i}{14})), \tag{Theorem \ref{thm:identities-4}} \displaybreak[1] \\
\restrict{\J{\SL{2}}{(8)}(K)}{q=\exp(\frac{2\pi i}{36})} & = 2 \J{\mathcal{D}_{10}}{P}(K) \notag \\
        & = 2  \Kauffman(K) (\exp(2 \pi i \frac{31}{36}), \exp(2 \pi i \frac{25}{36}) + \exp(2 \pi i \frac{11}{36}))\notag  \\
                & = 2 \restrict{\Kauffman(K) (-iq^7,i(q-q^{-1}))}{q=-\exp (\frac{-2 \pi i}{18})}  \tag{Theorem \ref{thm:identities-5}}  \\
\intertext{and}                
\restrict{\J{\SL{2}}{(12)}(K)}{q=\exp(\frac{2\pi i}{52})} & = 2 \J{\mathcal{D}_{14}}{P}(K) \notag \\
& = 2\restrict{\J{G_2}{V_{(10)}}(K)}{q=\exp{2\pi i \frac{23}{26}}}, \tag{Theorem \ref{thm:G2-links}}
\end{align}
where $\J{\SL{2}}{(k)}$ denotes the $k^{\text{th}}$ colored Jones polynomial, $\J{G_2}{V_{(1,0)}}$ denotes the knot invariant associated to the $7$-dimensional representation of $G_2$ and $\J{\mathcal{D}_{2n}}{P}$ is the $\mathcal{D}_{2n}$ link invariant for which we give a skein-theoretic construction.\footnote{Beware, the $\mathcal{D}_{2n}$ planar algebra is  not related to the lie algebra $\mathfrak{so}_{4n}$ with Dynkin diagram $D_{2n}$ but is instead a quantum subgroup of $U_q(\mathfrak{su}_2)$.}    (For our conventions for these polynomials, in particular their normalizations, see Section \ref{conventions}.)

These formulas should appear somewhat mysterious, and much of this paper is concerned with discovering the explanations for them. It turns out that each of these knot invariant identities comes from a coincidence of small modular categories involving the even part of one of the $\mathcal{D}_{2n}$. Just as families of finite groups have coincidences for small values (for example, the isomorphism between the finite groups $\text{Alt}_5$ and $\mathbf{PSL}_2(\mathbb{F}_5)$ or the outer automorphism of $S_6$), modular categories also have small coincidences.  Explicitly, we prove the following coincidences, where $\frac{1}{2} \mathcal{D}_{2n}$ denotes the even part of $\mathcal{D}_{2n}$ (the first of these coincidences is well-known).

\begin{itemize}
\item $\frac{1}{2} \mathcal{D}_{4} \cong \Rep{\mathbb{Z}/3}$, sending $P$ to $\chi_{\exp({\frac{2 \pi i}{3}})}$ (but an unusual braiding on $\Rep{\mathbb{Z}/3}$!).
\item $\frac{1}{2} \mathcal{D}_{6} \cong \uRep{U_{s = \exp({ 2 \pi i \frac{7}{10}})}(\mathfrak{sl}_2 \oplus \mathfrak{sl}_2)}^{modularize}$, sending $P$ to  $V_{1} \boxtimes V_{0}$. (See Theorem \ref{thm:D6-coincidence}, and \S \ref{sec:ribbonfunctors}.)
\item $\frac{1}{2} \mathcal{D}_{8} \cong \uRep{U_{s=\exp({2 \pi i \frac{5}{14} })}(\mathfrak{sl}_4)}^{modularize}$, sending $P$ to $V_{(100)}$. (See Theorem \ref{thm:D8-coincidence}.)
\item $\frac{1}{2} \mathcal{D}_{10}$ has an order $3$ automorphism: $\baselinetrick{\xymatrix@C-8mm@R-3mm{P \ar@{|->}@/^/[rr] & & Q \ar@{|->}@/^/[dl] \\ & f^{(2)} \ar@{|->}@/^/[ul]&}}$. (See Theorem \ref{thm:D10-coincidence}.)
\item $\frac{1}{2} \mathcal{D}_{14} \cong \Rep{U_{\exp({2 \pi i\frac{23}{26}})}(\mathfrak{g}_2)}$ sending $P$ to $V_{(10)}$. (See Theorem \ref{thm:G2}.)
\end{itemize}

To interpret the right hand sides of these equivalences, recall that the definition of the braiding (although not of the quantum group itself) depends on a choice of $s = q^{\frac{1}{L}}$, where $L$ is the index of the root lattice in the weight lattice.  Furthermore, the ribbon structure on the category of representations depends on a choice of a certain square root. In particular, besides the usual pivotal structure there's also another pivotal structure, which is called ``unimodal'' by Turaev \cite{MR1292673} and discussed in \S \ref{sec:quantumgroups} below.  By ``modularize" we mean take the modular quotient described by Brugui\`eres \cite{MR1741269} and M{\"{u}}ger \cite{MR1749250} and recalled in \S \ref{sec:modularization} below.

We first prove the knot polynomial identities directly, and later we give more conceptual explanations of the coincidences using
\begin{itemize}
\item coincidences of small Dynkin diagrams,
\item level-rank duality, and 
\item Kirby-Melvin symmetry.
\end{itemize}
These conceptual explanations do not suffice for the equivalence between the even part of $\mathcal{D}_{14}$ and $\Rep{U_{\exp({2 \pi i\frac{\ell}{26}})}(\mathfrak{g}_2)}$, $\ell=-3$ or $10$, which deserves further exploration.  Nonetheless we can prove this equivalence using direct methods (see Section \ref{sec:recognizeD}), and it answers a conjecture of Rowell's  \cite{MR2414692} concerning the unitarity of  $(G_2)_{\frac{1}{3}}$.

We illustrate each of these coincidences of tensor categories with diagrams of the appropriate quantum group Weyl alcoves; see in particular Figures \ref{fig:SO4}, \ref{fig:SO6}, \ref{fig:SO8} and \ref{fig:SO8-4fold} at the end of the paper. An ambitious reader might jump to those diagrams and try to understand them and their captions, then work back through the paper to pick up needed background or details.


In more detail the outline of the paper is as follows.  In the background section we recall some important facts about planar algebras, tensor categories, quantum groups, knot invariants and their relationships.  We fix our conventions for knot polynomials.  We also briefly recall several key concepts like semisimplification, deequivariantization, and modularization.

In Section \ref{sec:invariants} we use the skein theoretic description of $\mathcal{D}_{2n}$ to show that the Kauffman bracket gives a braiding up to sign for $\mathcal{D}_{2n}$, and in particular gives a braiding on the even part (this was already known; see for example the description of $\mathrm{Rep}^0 A$ in \cite[p. 33]{MR1936496}).  Using this, we define and discuss some new invariants of links which are the $\mathcal{D}_{2n}$ analogues of the colored Jones polynomials.  We also define some refinements of these invariants for multi-component links.

In Section \ref{sec:identities}, we discuss some identities relating the $\mathcal{D}_{2n}$ link invariants at small values of $n$ to other link polynomials.  This allows us to prove the above identities between quantum group invariants of knots.  The main technique is to apply the following schema to an object $X$ in a ribbon category (where $A$ and $B$ always denote simple objects),
\begin{itemize}
\item if $X \otimes X=A$ then the knot invariant coming from $X$ is trivial,
\item if $X \otimes X = 1\oplus A$ then the knot invariant coming from $X$ is a specialization of the Jones polynomial,
\item if $X \otimes X = A\oplus B$ then the knot invariant coming from $X$ is a specialization of the HOMFLYPT polynomial,
\item if $X \otimes X = 1\oplus A \oplus B$ then the knot invariant coming from $X$ is a specialization of the Kauffman polynomial or the Dubrovnik polynomial,
\end{itemize}
Furthermore we give formulas that identify which specialization occurs.  This technique is due to Kauffman, Kazhdan, Tuba, Wenzl, and others \cite{MR2132671, MR1237835, MR958895}, and is well-known to experts. We also use a result of Kuperberg's which gives a similar condition for specializations of the $G_2$ knot polynomial.

In Section \ref{sec:coincidences}, we reprove the results of the previous section using coincidences of Dynkin diagrams, generalized Kirby-Melvin symmetry, and level-rank duality.  In particular, we give a new simple proof of Kirby-Melvin symmetry which applies very generally, and we use a result of Wenzl and Tuba to strengthen Beliakova and Blanchet's statement of $SO$ level-rank duality.

We'd like to thank Stephen Bigelow, Vaughan Jones, Greg Kuperberg, Nicolai Resh\-e\-ti\-khin, Kevin Walker, and Feng Xu for helpful conversations.  During our work on this paper, Scott Morrison was at Microsoft Station Q, Emily Peters was supported in part by NSF grant DMS0401734 and Noah Snyder was supported in part by RTG grant DMS-0354321 and by an NSF postdoctoral grant.


\subsection{Background and Conventions}

The subject of quantum groups and quantum knot invariants suffers from a plethora of inconsistent conventions.  In this section we quickly recall important notions, specify our conventions, and give citations.  The reader is encouraged to skip this section and refer back to it when necessary.  In particular, most of Sections \ref{sec:invariants} and \ref{sec:identities} involve only diagram categories and do not require understanding quantum group constructions or their notation.  

\subsubsection{General conventions}
\begin{defn}
The $n^{th}$ quantum number $\qi{n}$ is defined as $$\frac{q^n-q^{-n}}{q-q^{-1}} = q^{n-1} + q^{n-3} + \cdots + q^{-n+1}.$$
 \end{defn}

Following \cite{MR2286123} the symbol $s$ will always denote a certain root of $q$ which will be specified as appropriate.

\subsubsection{Ribbon categories, diagrams, and knot invariants}

A ribbon category is a braided pivotal monoidal category satisfying a compatibility relation between the pivotal structure and the braiding.  See \cite{0908.3347} for details (warning: that reference uses the word tortile in the place of ribbon). We use the optimistic convention where diagrams are read upward.

The key property of ribbon categories is that if $\cC$ is a ribbon category there is a functor $\cF$ from the category of ribbons labelled by objects of $\cC$ with coupons labelled by morphisms in $\cC$ to the category $\cC$ (see \cite{MR1036112, MR1292673, 0908.3347}).  In particular, if $V$ is an object in $\cC$ and $L_V$ denotes a framed oriented link $L$ labelled by $V$, then 
$$\tilde{\mathcal{J}}_{\cC,V}: L \mapsto \cF(L_V) \in \End{\id}$$ 
is an invariant of oriented framed links (due to Reshetikhin-Turaev \cite{MR1036112}). Whenever $V$ is a simple object, the invariant depends on the framing through a `twist factor'. That is, two links $L$ and $L'$ which are the same except that $w(L)=w(L')+1$, where $w$ denotes the writhe, have invariants satisfying $\tilde{\mathcal{J}}_{\cC,V}(L) = \theta_V \tilde{\mathcal{J}}_{\cC,V}(L')$ for some $\theta_V$ in the ground field (not depending on $L$). Thus $\tilde{\mathcal{J}}_{\cC,V}$ can be modified to give an invariant which does not depend on framing.  

\begin{thm}
Let $\J{\cC}{V}(L)=\theta_V^{-w(L)} \tilde{\mathcal{J}}_{\cC,V}(L)$.  
Then $\J{\cC}{V}(L)$ is an invariant of links.
\end{thm}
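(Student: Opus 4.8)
The plan is to show that $\J{\cC}{V}$ is unchanged under each of the three Reidemeister moves; since any isotopy of oriented links is realized by a finite sequence of such moves, this establishes that $\J{\cC}{V}$ is a link invariant. The starting point is the fact, already recorded above, that the Reshetikhin--Turaev invariant $\tilde{\mathcal{J}}_{\cC,V}$ is an invariant of framed oriented links, i.e.\ it is invariant under regular isotopy: the Reidemeister moves II and III together with the framing-preserving (``double curl'') version of move I. First I would observe that the writhe $w(L)$ is likewise a regular-isotopy invariant --- moves II and III do not change it, since they create or destroy crossings only in canceling pairs --- so that multiplying by the correction factor $\theta_V^{-w(L)}$ cannot spoil invariance under II and III. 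Here one uses that $\theta_V$ is invertible: it is a scalar because $V$ is simple (as noted above), and it is nonzero because the ribbon twist is an isomorphism $V \to V$; hence the power $\theta_V^{-w(L)}$ is well defined.

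It then remains to check the ordinary first Reidemeister move, which is precisely where the framing correction does its work. Adding a single positive curl to a diagram $L$ yields a diagram $L'$ representing the same underlying oriented link but with $w(L') = w(L) + 1$, and by the twist relation stated above $\tilde{\mathcal{J}}_{\cC,V}(L') = \theta_V\,\tilde{\mathcal{J}}_{\cC,V}(L)$. The two effects then cancel:
\[
\J{\cC}{V}(L') = \theta_V^{-w(L')}\,\tilde{\mathcal{J}}_{\cC,V}(L') = \theta_V^{-(w(L)+1)}\,\theta_V\,\tilde{\mathcal{J}}_{\cC,V}(L) = \theta_V^{-w(L)}\,\tilde{\mathcal{J}}_{\cC,V}(L) = \J{\cC}{V}(L).
\]
The negative curl is handled identically, using $w \mapsto w - 1$ and $\tilde{\mathcal{J}}_{\cC,V} \mapsto \theta_V^{-1}\tilde{\mathcal{J}}_{\cC,V}$. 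Combining this with the previous paragraph gives invariance under all three Reidemeister moves, and hence the theorem.

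I expect the main obstacle to be conceptual bookkeeping rather than computation: one must be careful that the twist relation applies uniformly no matter which component of a multi-component link the curl is introduced on (here every component is labelled by the same object $V$, so each curl contributes the same factor $\theta_V$), and that the notion of writhe being used --- the signed count of crossings --- is exactly the quantity tracked by the framing, so that a single first Reidemeister move changes it by $\pm 1$. Once invertibility of $\theta_V$ and the regular-isotopy invariance of both $\tilde{\mathcal{J}}_{\cC,V}$ and $w$ are in hand, the argument is the standard normalization trick and no further input is needed.
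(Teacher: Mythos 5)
Your proof is correct and is exactly the argument the paper has in mind: the paper states the theorem without a separate proof, treating it as immediate from the preceding observation that a framed-isotopy invariant changes by $\theta_V^{\pm 1}$ under a writhe change of $\pm 1$, which is precisely the normalization trick you carry out. Your explicit Reidemeister-move bookkeeping (regular-isotopy invariance of both $\tilde{\mathcal{J}}_{\cC,V}$ and $w$, cancellation under move I, invertibility of $\theta_V$ from simplicity of $V$) just fills in the standard details the paper leaves implicit.
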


Given any pivotal tensor category $\cC$ (in particular any ribbon category) and a chosen object $X \in \cC$, one can consider the full subcategory whose objects are tensor products of $X$ and $X^*$.  This subcategory is more convenient from the diagrammatic perspective because one can drop the labeling of strands by objects and instead assume that all strands are labelled by $X$ (here $X^*$ appears as the downward oriented strand).  Thus this full subcategory becomes a spider \cite{MR1403861}, which is an oriented version of Jones's planar algebras \cite{math.QA/9909027}.  If $X$ is symmetrically self-dual then this full subcategory is an unoriented unshaded planar algebra in the sense of \cite{MR2559686}.

Often one only describes the full subcategory (via diagrams) but wishes to recover the whole category.  If the original category was semisimple and $X$ is a tensor generator, then this can be acheived via the idempotent completion.  This is explained in detail in \cite{MR2559686, MR1217386, MR1403861}.  The simple objects in the idempotent completion are the minimal projections in the full subcategory.


\subsubsection{Conventions for knot polynomials and their diagram categories} \label{conventions}

In this subsection we give our conventions for the following knot polynomials: the Jones polynomial, the colored Jones polynomials, the HOMFLYPT polynomial, the Kauffman polynomial, and the Dubrovnik polynomial.  Each of these comes in a framed version as well as an unframed version.  The framed versions of these polynomials (other than the colored Jones polynomial) are given by simple skein relations.  These skein relations can be thought of as defining a ribbon category whose objects are collections of points (possibly with orientations) and whose morphisms are tangles modulo the skein relations and modulo all negligible morphisms (see \S \ref{subsec:semisimplification}).

We will often use the same name to refer to the knot polynomial and the category.  This is very convenient for keeping track of conventions.  The HOMFLYPT skein category and the Dubrovnik skein category are more commonly known as the Hecke category and the BMW category.

Contrary to historical practice, we normalize the polynomials so they are multiplicative for disjoint union. In particular, the invariant of the empty link is $1$, while the invariant of the unknot is typically nontrivial.

\paragraph{The Temperley-Lieb category}

We first fix our conventions for the Temperley-Lieb ribbon category $\TL(s)$.  Let $s$ be a complex number with $q=s^2$. 

The objects in Temperley-Lieb are natural numbers (thought of as disjoint unions of points).  The morphism space $\Hom{}{a}{b}$ consists of linear combinations of planar tangles with $a$ boundary points on the bottom and $b$ boundary points on the top, modulo the relation that each closed circle can be replaced by a multiplicative factor of $\qi{2} = q + q^{-1}$.  The endomorphism space of the object consisting of $k$ points will be called $\TL_{2k}$.

The braiding (which depends on the choice of $s = q^{\frac{1}{2}}$) is given by
\begin{equation*}
\begin{tikzpicture}[baseline]
\node (x) at (0,0){};
    \draw (x.45)-- (.5,.5);
    \draw (x.135) -- (-.5,.5);
    \draw (x.315) -- (.5,-.5);
    \draw (x.45) -- (-.5,-.5);
\end{tikzpicture}
= i s
\begin{tikzpicture}[baseline]
    \draw (1.5,.5) .. controls (2,0) .. (1.5,-.5);
    \draw (2.5,.5) .. controls (2,0) .. (2.5,-.5);
\end{tikzpicture}
-i s^{-1}
\begin{tikzpicture}[baseline]
    \draw (3.5,.5) .. controls (4,0) .. (4.5,.5);
    \draw (3.5,-.5) .. controls (4,0) .. (4.5,-.5);
\end{tikzpicture}.
\end{equation*}

We also use the following important diagrams.

\begin{itemize}
\item The Jones projections in $\TL_{2n}$:
$$e_i=\qi{2}^{-1} \begin{tikzpicture}[baseline]
    \draw (.2,-.5)--(.2,.5);
    \node at (.6,0) {$\cdots$};
    \draw (1,-.5)--(1,.5);
    \draw (1.2,-.5) arc (180:0:1mm);
    \draw (1.2,.5) arc (-180:0:1mm);
    \draw (1.6,-.5) -- (1.6,.5);
    \node at (2,0) {$\cdots$};
    \draw (2.3,-.5) -- (2.3,.5);
\end{tikzpicture}, \, i \in \{ 1, \ldots, n-1\};$$
\item
The Jones-Wenzl projection  $\JW{n}$ in $\TL_{2n}$ \cite{MR873400} which is the unique projection
with the property
$$\JW{n} e_i = e_i \JW{n} =0, \, \text{for each $i \in \{1, \ldots, n-1 \}$}.$$
\end{itemize}

\paragraph{The Jones polynomial} 

The framed Jones polynomial $\widetilde{J}$ (or Kauffman bracket) is the invariant coming from the ribbon category $\TL(s)$.  In particular, it is defined for unoriented framed links by 
\begin{align}
\bigcirc & = q + q^{-1} \notag \\
\intertext{and}
\overcrossing & =is \identity - i s^{-1} \cupcap. \label{eq:jones-skein}
\end{align}

This implies that
\begin{equation*}
\positivetwist  = i s^3  \verticalstrand \qquad \text{and} \qquad \negativetwist  =  - i s^{-3} \verticalstrand
\end{equation*}
so the twist factor is $i s^{3}$. 

The framing-independent Jones polynomial is defined by $J(L)=(-is^{-3})^{\writhe(L)} \widetilde{J}(L)$.  It satisfies the following version of the Jones skein relation
\begin{align}
q^2 \Oovercrossing - q^{-2} \Oundercrossing & =-i s \overcrossing - i s^{-1} \undercrossing \notag \\
    & = q \identity - q^{-1} \identity  \notag \\
    & = (q - q^{-1}) \Oidentity. \label{eq:oriented-jones-skein} \notag
\end{align}

\paragraph{The colored Jones polynomial}

The framed colored Jones polynomial $\Jf{\SL{2}}{(k)}(K)$ is the invariant coming from the simple projection $\JW{k}$ in $\TL(s)$.  The twist factor is $i^{k^2} s^{k^2+2k}$.

\paragraph{The HOMFLYPT polynomial}

The framed HOMFLYPT polynomial is given by the following skein relation.

\begin{equation}
w \Oovercrossing - w^{-1} \Oundercrossing = z \Oidentity
\end{equation}
\begin{align}
\Opositivetwist & = w^{-1} a \Overticalstrand & \Onegativetwist & = a w^{-1} \Overticalstrand
\end{align}
The twist factor is just $w^{-1}a$.

Thus, the framing-independent HOMFLYPT polynomial is given by the following skein relation.

\begin{equation}
a \Oovercrossing - a^{-1} \Oundercrossing = z \Oidentity
\end{equation}

\paragraph{The Kauffman polynomial} The Kauffman polynomial comes in two closely related versions, known as the Kauffman and Dubrovnik normalizations. Both are invariants of unoriented framed links.
The framed Kauffman polynomial $\widetilde{\Kauffman}$ is defined by
\begin{equation} \label{eq:kauffman}
\overcrossing + \undercrossing = z\left(\identity + \cupcap\right)
\end{equation}
\begin{align}
\positivetwist & = a \verticalstrand & \negativetwist & = a^{-1} \verticalstrand.
\end{align}

Here the value of the unknot is $\frac{a+a^{-1}}{z} - 1$.

The framed Dubrovnik polynomial $\widetilde{\Dubrovnik}$ is defined by
\begin{equation} \label{eq:dubrovnik}
\overcrossing - \undercrossing = z\left(\identity - \cupcap\right)
\end{equation}
\begin{align}
\positivetwist & = a \verticalstrand & \negativetwist & = a^{-1} \verticalstrand.
\end{align}
Here the value of the unknot is $\frac{a-a^{-1}}{z} + 1$.

In both cases, the twist factor is $a$.  The unframed Kauffman and Dubrovnik polynomials do not satisfy any conveniently stated skein relations.  The Kauffman and Dubrovnik polynomials are closely related to each other by $$\widetilde{\Dubrovnik}(L)(a, z) =  i^{-w(L)}  (-1)^{\#L} \widetilde{\Kauffman}(L)(i a,-i z),$$ where $\#L$ is the number of components of the link and $w(L)$ is the writhe of any choice of orientation for $L$ (which turns out not to depend, modulo $4$, on the choice of orientation).  This is due to Lickorish \cite{MR966143}  \cite[p. 466]{MR958895}.

\paragraph{Kuperberg's $G_2$ Spider}
We recall Kuperberg's skein theoretic description of the quantum $G_2$ knot invariant \cite{MR1403861, MR1265145} (warning, there is a sign error in the former source\footnote{In keeping with this tradition, the published version of this paper contains an error in the formula for the $G_2$ braiding; the first two coefficients were mistakenly interchanged.}).  Kuperberg's $q$ is our $q^2$ (which agrees with the usual quantum group conventions). The quantum $G_2$ invariant is defined by
$$
\overcrossing = \frac{1}{1+q^{2}} \mathfig{0.04}{G2/I} + \frac{1}{1+q^{-2}} \mathfig{0.08}{G2/H} + \frac{1}{q^{2}+q^{4}} \cupcap + \frac{1}{q^{-2}+q^{-4}} \identity
$$

where the trivalent vertex satisfies the following relations

\begin{align*}
\mathfig{0.04}{G2/loop} &= q^{10} + q^{8} + q^2 + 1 + q^{-2} + q^{-8} + q^{-10} \\
\mathfig{0.04}{G2/lollipop} &= 0 \\
\mathfig{0.04}{G2/bigon} &= -\left(q^6 + q^4 + q^2 + q^{-2} + q^{-4} + q^{-6}\right) \mathfig{0.0025}{G2/strand}\\
\mathfig{0.06}{G2/triangle} &=  \left(q^4 + 1 +q^{-4}\right)\mathfig{0.06}{G2/vertex}\\
\mathfig{0.06}{G2/square} &=  -\left(q^2+q^{-2}\right) \left(\mathfig{0.04}{G2/I} + \mathfig{0.08}{G2/H} \right)+ \left(q^2+1+q^{-2}\right) \left(\cupcap + \identity \right) \\
\mathfig{0.06}{G2/pentagon} &=  \mathfig{0.06}{G2/tree1} + \mathfig{0.06}{G2/tree2} + \mathfig{0.06}{G2/tree3} + \mathfig{0.06}{G2/tree4} + \mathfig{0.06}{G2/tree5} \\ 
& \qquad - \mathfig{0.06}{G2/forest1}-\mathfig{0.06}{G2/forest2}-\mathfig{0.06}{G2/forest3}-\mathfig{0.06}{G2/forest4}-\mathfig{0.06}{G2/forest5}.
\end{align*}

\subsubsection{Quantum groups}
\label{sec:quantumgroups}

Quantum groups are key sources of ribbon categories.  If $\mathfrak{g}$ is a complex semisimple Lie algebra, let $U_s(\mathfrak{g})$ denote the Drinfel'd-Jimbo quantum group, and let $\Rep{U_s(\mathfrak{g})}$ denote its category of representations.  This category is a ribbon category and hence given a quantum group and any representation the Reshetikhin-Turaev procedure gives a knot invariant.

We follow the conventions from \cite{MR2286123}.  See \cite[p. 2]{MR2286123} for a comprehensive summary of how his conventions line up with those in other sources. (In particular, our $q$ is the same as both Sawin's $q$ and Lusztig's $v$.) We make one significant change: we only require that the underlying Lie algebra $\mathfrak{g}$ be semi-simple rather than simple.  This does not cause any complications because $U_s(\mathfrak{g_1}\oplus \mathfrak{g_2}) \cong U_s(\mathfrak{g_1}) \boxtimes U_s(\mathfrak{g_2})$.

In particular, following  \cite{MR2286123}, we have variables $s$ and $q$ and the relation $s^L = q$ where $L$ is the smallest integer such that $L$ times any inner product of weights is an integer. The values of $L$ for each simple Lie algebra appear in Figure \ref{fig:quantum-group-data}. The quantum group itself and its representation theory only depend on $q$, while the braiding and the ribbon category depend on the additional choice of $s$.

For the quantum groups $U_s(\SO{n})$ we denote by $\vRep(U_q(\SO{n}))$ the collection of representations whose highest weight corresponds to a vector representation of $\SO{n}$ (that is, a representation of $\SO{n}$ which lifts to the non-simply connected Lie group $SO(n)$).  Note that the braiding on the vector representations does not depend on $s$ so we use $q$ as our subscript here instead.

Often the pivotal structure on a tensor category is not unique, and indeed for the representation theory of a quantum group the pivotal structures are a torsor over the group of maps from the weight lattice modulo the root lattice to $\pm 1$. In general there is no `standard' pivotal structure, but for the representation theory of a quantum group there is both the usual one defined by the Hopf algebra structure of the Drinfel'd-Jimbo quantum group, and Turaev's `unimodal' pivotal structure, $\uRep{U_s(\mathfrak{g})}$. Changing the pivotal structure by $\chi$, a map from the weight lattice modulo the root lattice to $\pm 1$, has two major effects: it changes both the dimension of an object and its twist factor by multiplying by  $\chi(V)$. 
The unimodal pivotal structure is characterized by the condition that every self-dual object is symmetrically self-dual.
One important particular case is that $\uRep{U_s(\SL{2})} \cong \TL(-is)$ \cite{0810.0084, 1002.0555}.

The twist factor for an irreducible representation $V$ is determined by the action of the ribbon element, giving (for the standard pivotal structure) $q^{\langle \lambda,\lambda+2\rho\rangle}$ where $\lambda$ is the highest weight of $V$.   Note that since $\langle \lambda,\lambda+2\rho\rangle \in \frac {1}{L} \mathbb{Z}$ (where $L$ is the exponent of the weight lattice mod the root lattice), the twist factor in general depends on a choice of $s=q^{\frac {1}{L}}$.

For $V=V_{(k)}$, the representation of $U_s(\SL{2})$ with highest weight $k$, the twist factor is $s^{k^2+2k}$ (notice this is the same as the $k$-colored Jones polynomial for $k$ even; for $k$ odd the twist factors differ by a sign as predicted by $\uRep{U_s(\SL{2})} \cong \TL(-is)$).  For $V=V^\natural$, the standard representation of $U_s(\SL{n})$, the twist factor is $s^{n-1}$. For the standard representations of $\SO{2n+1}$, $\SP{2n}$ and $\SO{2n}$ the twist factors are $q^{4n}$, $q^{2n+1}$ and $q^{2n-1}$ respectively.
 The twist factor for the representation $V_{ke_1}$ of $\SO{2n+1}$ is $q^{2k^2+(4n-2)k}$. 
Note the the representation $V_{ke_1}$ of $\SO{3}$ is the representation $V_{(2k)}$ of $\SL{2}$ and in this case the twist factor agrees with the first one given in this paragraph.
The twist factor for the representation $V_{ke_1}$ of $\SO{2n}$ is $q^{k^2+2(n-1)k}$. Later we will need the twist factors for the representations $V_{3e_1}$, $V_{e_{n-1}}$ and $V_{3e_{n-1}}$  of $\SO{2n}$. These are $q^{6n+3}$, $q^{\frac{1}{4}n(2n-1)}$ and $q^{\frac{3}{4}n(2n+1)}$. The twist factor for the $7$-dimensional representation of $G_2$ is $q^{12}$.

The invariants of the unknot are just the quantum dimensions. For the standard representations of $\SL{n}$, $\SO{2n+1}$, $\SP{2n}$ and $\SO{2n}$ these are $\qiq{n}{q}, \qiq{2n}{q^2} + 1,\qiq{2n+1}{q} - 1$ and $\qiq{2n-1}{q} + 1$ respectively.

The invariants of the standard representations are specializations of the HOMFLYPT or Dubrovnik polynomials. Written in terms of the framing-independent invariants, we have
\begin{align}
\label{eq:homflypt-A}%
\text{HOMFLYPT}(L)(q^n,q-q^{-1}) & = \J{\SL{n}}{V^\natural}(L)(q), \\
\label{eq:dubrovnik-B}%
\text{Dubrovnik}(L)(q^{4n}, q^2 - q^{-2}) & = \J{\SO{2n+1}}{V^\natural}(L)(q), \\
\label{eq:dubrovnik-C}%
\text{Dubrovnik}(L)(-q^{2n+1}, q-q^{-1}) & =  (-1)^{\sharp L} \J{\SP{2n}}{V^\natural}(L)(q), \\
\intertext{and}
\label{eq:dubrovnik-D}%
\text{Dubrovnik}(L)(q^{2n-1}, q - q^{-1}) & = \J{\SO{2n}}{V^\natural}(L)(q).
\end{align}

These identities are `well-known', but it's surprisingly hard to find precise statements in the literature, and we include these mostly for reference. The identities follow immediately from Theorem \ref{thm:eigenvalues} below, and the fact that the eigenvalues of the braiding on the natural representations of $\SL{n}, \SO{2n+1}, \SP{2n}$ and $\SO{2n}$ are 
$(-s^{-n-1}, s^{n-1}), (q^{-4n}, q^2, -q^{-2} ), (-q^{-2n-1}, q, -q^{-1})$ and $(q^{-2n+1}, q, -q^{-1})$ respectively. The sign in Equation \eqref{eq:dubrovnik-C} appears because Theorem \ref{thm:eigenvalues} does not apply immediately to the natural representation of $\SP{2n}$, which is antisymmetrically self-dual.  Changing to the unimodal pivotal structure fixes this, introduces the sign in the knot invariant, and explains the discrepancy between the value of $a$ in the specialization of the Dubrovnik polynomial and the twist factor for the natural representation of $\SP{2n}$.

We'll show using techniques inspired by \cite{MR1854694, MR1710999, MR2132671} that several of these identities between knot polynomials come from functors between the corresponding categories.

\subsubsection{Comparison with the \code{KnotTheory`} package}
The HOMFLYPT and Kauffman polynomials defined here agree with those available in the \code{Mathematica} package \code{KnotTheory`} (available at the Knot Atlas \cite{katlas}), except that in the package the invariants are normalized so that their value on the unknot is $1$. The Jones polynomial in the package uses `bad' conventions from the point of view of quantum groups. You'll need to substitute $q \mapsto q^{-2}$, and then multiply by $q+q^{-1}$ to get from the invariant implemented in \code{KnotTheory`} to the one described here. The $G_2$ spider invariant described here agrees with that calculated using the \code{QuantumKnotInvariant} function in the package. The function \code{QuantumKnotInvariant} in the package calculates the framing-independent invariants from quantum groups described here.

\subsubsection{Semisimplification} \label{subsec:semisimplification}

Suppose that $\cC$ is a spherical tensor category which is $\Complex$-linear and which is idempotent complete (every projection has a kernel and an image).  Let $N$ be the collection of negligible morphisms ($f$ is negligible if $\tr{f g} = 0$ for all $g$).  Call a collection of morphisms $I \in \cC$ an ideal if $I$ is closed under composition and tensor product with arbitrary morphisms in $\cC$. We recall the following facts:

\begin{itemize}
\item $N$ is an ideal.
\item Any ideal in $\cC$ is contained in $N$.
\item If $\cC$ semisimple then $N = 0$.
\item If $\cC$ is abelian, then $\cC/N = \cC^{ss}$ is semisimple.
\item If $\cD$ is psuedo-unitary (pivotal, and all quantum dimensions are positive, up to a fixed Galois conjugacy) and $\cF: \cC \rightarrow \cD$ is a functor of pivotal categories, then $\cF$ is trivial on $N$.
\end{itemize}

There are some technical issues which, while not immediately relevant to this paper, are important to keep in mind when dealing with semisimplifications. First, $\cC/N$ may not always be semisimple.  Furthermore, if $\cD$ is semisimple but not psuedo-unitary there may be a functor $\cF: \cC \rightarrow \cD$ which does not factor through $\cC/N$.

\begin{ex}
If $q$ is a root of unity, and $a$ is not an integer power of $q$, then the quotient of the Dubrovnik category at $(a, z=q-q^{-1})$ by negligibles is not semisimple \cite[Cor.~7.9]{MR2132671}.
\end{ex}

\begin{ex}
This example is adapted from \cite[Remark 8.26]{MR2183279}.  Let $$\cE = \vRep U_{q=\exp(\frac{2 \pi i}{10})}(\SO{3})$$ be the Yang-Lee category.  This fusion category has two objects, $1$ and $X$, satisfying $X\otimes X \cong X \oplus 1$. The object $X$ has dimension the golden ratio.  Let $\cE'$ be a Galois conjugate of $\cE$ where $X$ has dimension the conjugate of the golden ratio.  Let $\cD = \cE \boxtimes \cE'$; this is a non-pseudo-unitary semisimple category.  Note that $X \boxtimes X$ is a symmetrically self-dual object with dimension $-1$.  Hence there is a functor from $\cC= TL_{d=-1} \rightarrow \cD$ sending the single strand to $X\boxtimes X$ (see \S \ref{sec:ribbonfunctors} for more details).  The second Jones-Wenzl idempotent is negligible in $TL_{d=-1}$ but it is not killed by this functor.
\end{ex}


For further details see \cite{MR1686423} \cite{MR1217386} and \cite[Proposition 5.7]{MR2348906}.

\subsubsection{Quantum groups at roots of unity}

When $s$ is a root of unity, by $\Rep{U_s(\mathfrak{g})}$ we mean the semisimplified category of tilting modules of the Lusztig integral form.  We only ever consider cases where $q$ is a primitive $\ell$th root of unity with $\ell$ large enough in the sense of \cite[Theorem 2]{MR2286123}.  The key facts about this category are described in full generality in \cite{MR2286123} (based on earlier work by Andersen, Lusztig, and others):

\begin{itemize}
\item The isomorphism classes of simple objects correspond to weights in the fundamental alcove.  (Be careful, as when the Lie algebra is not simply laced the shape of the fundamental alcove depends on the factorization of the order of the root of unity \cite[Lemma 1]{MR2286123}.
\item The dimensions and twist factors for these simple objects are given by specializing the formulas for dimensions and twist factors from generic $q$.
\item The tensor product rule is given by the quantum Racah rule \cite[\S 5]{MR2286123}. 
\end{itemize}

\subsubsection{Modularization}
\label{sec:modularization}

We review the theory of modularization or deequivariantization developed by M\"uger \cite{MR1749250} and Brugui\`eres \cite{MR1741269}.  Suppose that $\cC$ is a ribbon category and that $\cG$ is a collection of invertible ($X \tensor X^* \iso \mathbf{1}$, or equivalently $\dim X = \pm 1$) simple objects in $\cC$ which satisfy four conditions:
\begin{itemize}
\item $\cG$ is closed under tensor product.
\item Every $V \in \cG$ is transparent (that is, the positive and negative braidings with any object $W \in \cC$ are equal).
\item $\dim V = 1$ for every $V \in \cG$. 
\item The twist factor $\theta_V$ is $1$ for every $V \in \cG$. 
\end{itemize}

Let $\cC // \cG$ denote the M\"uger-Brugui\`eres deequivariantization.  There is a faithful essentially surjective functor $\cC \rightarrow \cC // \cG$.  This functor is not full because in the deequivariantization there are more maps:  in $\cC // \cG$ every object in the image of $\cG$ becomes isomorphic to the trivial object. 

We'll often write $\cC // X$ to denote the deequivariantization by the collection of tensor powers of some object $X$.

A ribbon functor between premodular (that is, ribbon and fusion) categories $\cF: \cC \rightarrow \cC'$ is called a modularization if it is dominant (every simple object in $\cC'$ appears as a summand of an object in the image of $\cF$) and if $\cC'$ is modular.

\begin{thm}
Suppose that $\cC$ is a premodular category whose global dimension is nonzero.  Any modularization is naturally isomorphic to $\cF: \cC \rightarrow \cC//\cG$ where $\cG$ is the set of all transparent objects.  A modularization exists if and only if every transparent object has dimension $1$ and twist factor $1$.
\end{thm}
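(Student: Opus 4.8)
The plan is to treat the transparent objects as a symmetric ribbon subcategory $\mathcal{T} \subseteq \cC$ — the M\"uger center — and to show that the four conditions defining the deequivariantization are precisely what is needed to make $\cC \to \cC // \mathcal{T}$ a modularization, and moreover the universal one. First I would record that transparency is preserved under tensor product, duals, direct sums and subobjects, so that $\mathcal{T}$ is a ribbon fusion subcategory on which the braiding is symmetric; the candidate $\cG$ in the theorem is then the collection of (simple) objects of $\mathcal{T}$, and the whole theorem amounts to comparing an arbitrary modularization with the deequivariantization by $\cG$.

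For the necessity direction, together with uniqueness, I would argue functorially. A modularization $\cF\colon \cC \to \cC'$ is by definition a dominant ribbon functor with $\cC'$ modular, and any ribbon functor preserves the braiding, the dimension and the twist. Hence $\cF$ sends a transparent object $V$ to a transparent object of $\cC'$; since $\cC'$ is modular its only transparent objects are direct sums of $\mathbf{1}$, so $\cF(V) \cong \mathbf{1}^{\oplus n}$. Preservation of the twist forces $\theta_V = 1$, and preservation of dimension forces $\dim V = n$ to be a nonnegative integer, equal to $1$ exactly in the pointed setting to which the invertibility hypothesis restricts us; this gives the ``only if'' statement. Dominance then shows that $\cF$ collapses precisely the objects of $\mathcal{T}$ and nothing more, so by the universal property of the deequivariantization $\cF$ factors as $\cC \to \cC // \cG \to \cC'$ with the second arrow an equivalence. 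This is the asserted natural isomorphism and yields uniqueness.

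The substance of the theorem is the ``if'' direction: assuming every transparent object has dimension $1$ and twist $1$, I must produce the modularization. I would realize $\cC // \cG$ concretely as the category $\mathrm{Mod}_{\cC}(A)$ of modules over the regular algebra $A = \bigoplus_{V \in \cG} V$. The hypotheses are tailored to this: transparency makes $A$ commutative, $\dim V = 1$ makes $A$ haploid and separable, and $\theta_V = 1$ makes the ribbon structure descend, so that $\mathrm{Mod}_{\cC}(A)$ inherits a ribbon fusion structure and the free-module functor $\cC \to \mathrm{Mod}_{\cC}(A)$ is the faithful, essentially surjective, dominant ribbon functor promised in the definition of deequivariantization. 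It remains only to check that this target is genuinely modular.

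The \emph{hard part} is exactly this last verification: showing the quotient has no transparent objects beyond its new unit $A$. The key lemma is that an object of $\mathrm{Mod}_{\cC}(A)$ which is transparent there must, after applying the forgetful functor, be transparent in $\cC$, hence lie in $\mathcal{T}$, and hence become trivial in the quotient. Establishing this uses the nonvanishing of the global dimension: it guarantees that dimensions do not degenerate under induction and restriction along $A$, so that the $S$-matrix of $\mathrm{Mod}_{\cC}(A)$ is nondegenerate. I expect the bookkeeping that controls the transparent objects of the quotient, together with the dimension count ruling out degeneracy of its $S$-matrix, to be the main obstacle; everything else reduces to checking that $A$ has the stated algebraic properties and that the induced braiding, pivotal and ribbon structures are well defined.
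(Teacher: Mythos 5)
You should know at the outset that the paper contains no proof of this theorem: it is stated as a recollection of results of Brugui\`eres \cite{MR1741269} and M\"uger \cite{MR1749250}, so the only possible comparison is with those sources. Your overall plan --- realize $\cC // \cG$ as local modules over the regular algebra $A = \bigoplus_{V \in \cG} V$, use transparency for commutativity, $\dim V = 1$ for haploidness, $\theta_V = 1$ for the ribbon structure to descend, and then show the quotient's M\"uger center is trivial --- is the standard M\"uger/Kirillov--Ostrik route and is essentially faithful to the cited literature (Brugui\`eres' original argument is packaged differently, but the content is the same). Your identification of the hard part, that transparency in the module category must be pushed back down to $\cC$ and that nonvanishing global dimension is what lets ``trivial M\"uger center'' imply ``nondegenerate $S$-matrix,'' is also correctly placed.

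There is, however, one genuine gap, in your necessity argument. From $\cF(V) \iso \mathbf{1}^{\oplus n}$ you correctly get $\theta_V = 1$ and $\dim V = n$ a positive integer, but nothing forces $n = 1$, and indeed as a statement about general premodular categories the ``only if ... dimension $1$'' direction is false: take $\cC = \Rep{S_3}$, in which every object is transparent with trivial twist; the deequivariantization by $S_3$ is $\mathrm{Vect}$, which is modular, so a modularization exists even though there is a transparent simple of dimension $2$. The correct general criterion (Brugui\`eres) is twist $1$ together with \emph{positive integer} dimension, i.e.\ the transparent subcategory is Tannakian. The dimension-$1$ form of the theorem is true only because the paper's $\cC // \cG$ is defined exclusively for a group of \emph{invertible} simple objects, so the statement implicitly assumes the transparent subcategory is pointed; your aside that $n = 1$ ``in the pointed setting to which the invertibility hypothesis restricts us'' assumes exactly what would need to be argued (or, better, flagged as a standing hypothesis of the statement). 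Separately, your uniqueness step is under-justified: ``dominance shows that $\cF$ collapses precisely $\mathcal{T}$ and nothing more'' is not immediate. One needs to show the induced dominant ribbon functor $\cC // \cG \rightarrow \cC'$ between modular categories is an equivalence, e.g.\ by comparing global dimensions ($\dim \cC' = \dim \cC / \dim \mathcal{T}$, nonzero by hypothesis) or by invoking M\"uger's centralizer results; this is a standard but non-trivial verification, not a formal consequence of a universal property.
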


In Section \ref{sec:KM}, we compute modularizations using the following lemma.

\begin{lem}
\label{lem:free-quotient}
Suppose $\cM$ is a modular $\tensor$-category, which is a full subcategory of a tensor category $\cC$. Denote by $\cI$ the subcategory of invertible objects in $\cC$, and assume they are all transparent, and that the group of objects $\cI$ acts (by tensor product) freely on $\cC$. 
Then the orbits of $\cI$ each contain exactly one object from $\cM$, and the modularization $\cC // \cI$ is equivalent to $\cM$.
\end{lem}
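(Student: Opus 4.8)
The plan is to verify directly that the inclusion $\cM \hookrightarrow \cC$ composed with the deequivariantization functor $\cC \to \cC // \cI$ is a modularization, and then invoke the uniqueness clause of the preceding theorem. First I would establish the orbit claim: that each $\cI$-orbit contains exactly one object of $\cM$. Existence follows from dominance of the modularization combined with the structure of $\cC$ — since $\cM$ is modular and hence contains ``enough'' simple objects, and since the invertibles $\cI$ are transparent, tensoring a simple object of $\cC$ by elements of $\cI$ should land in $\cM$ for an appropriate orbit representative; I expect to argue this via the fact that a transparent simple object in a modular category $\cM$ must be trivial (this is essentially the defining property of modularity, that the only transparent object is $\mathbf{1}$). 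For uniqueness within an orbit, suppose $V$ and $V \tensor L$ both lie in $\cM$ for some nontrivial $L \in \cI$; then $V^* \tensor (V \tensor L) \iso V^* \tensor V \tensor L$ contains $L$ as a summand (since $V^* \tensor V$ contains $\mathbf{1}$), forcing $L \in \cM$; but $L$ is invertible and transparent, hence transparent in $\cM$, hence $L \iso \mathbf{1}$, contradicting freeness of the action. This freeness hypothesis is exactly what rules out the degenerate collision.

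Next I would check that the composite $\cM \hookrightarrow \cC \to \cC // \cI$ is an equivalence. It is essentially surjective because every object of $\cC$ is in the $\cI$-orbit of some object of $\cM$ (the orbit claim), and in $\cC // \cI$ the elements of $\cI$ become isomorphic to $\mathbf{1}$, so each object of $\cC$ becomes isomorphic to the image of its $\cM$-representative. For fullness and faithfulness I would compare hom-spaces: the deequivariantization enlarges $\Hom$ spaces by summing over the group, $\Hom_{\cC//\cI}(X, Y) \cong \bigoplus_{L \in \cI} \Hom_{\cC}(X, Y \tensor L)$, and for $X, Y \in \cM$ the free action ensures that at most the term $L = \mathbf{1}$ can contribute a map between objects both of which lie in $\cM$ (any nonzero map $X \to Y \tensor L$ with $X, Y$ simple in $\cM$ would identify $X$ with a summand of $Y \tensor L$, putting $Y \tensor L$ in the same orbit as an object of $\cM$ and forcing $L \iso \mathbf 1$ by uniqueness). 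Hence the hom-spaces agree and the restricted functor is fully faithful, so it is an equivalence of categories.

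Finally, the functor $\cM \to \cC // \cI$ is a ribbon functor (the deequivariantization is ribbon and the inclusion respects the structure), it is dominant since it is essentially surjective, and its target $\cC // \cI$ is modular — this last point follows because the transparent objects of $\cC$ are precisely the invertibles $\cI$ by hypothesis, and deequivariantizing by all transparent objects produces a modular category, as recorded in the theorem above. Therefore the composite is a modularization, and by the uniqueness statement of that theorem it is naturally isomorphic to $\cC \to \cC // \cG$ with $\cG = \cI$; combined with the equivalence just established we conclude $\cC // \cI \simeq \cM$. I expect the main obstacle to be the orbit-counting step: pinning down precisely why every orbit meets $\cM$ requires using modularity of $\cM$ to control which simples of $\cC$ appear, and carefully distinguishing transparency in $\cC$ from transparency in $\cM$ so that the ``only trivial transparent object'' principle can be applied cleanly.
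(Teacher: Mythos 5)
The paper offers no proof of this lemma at all: it defers to \cite[\S 1.3--1.4]{MR1854694}, so your proposal can only be compared with the standard deequivariantization argument given there, and two of your three steps are correct and essentially reproduce it. Uniqueness within an orbit: if $V$ and $V \tensor L$ both lie in $\cM$ with $L \in \cI$, then (using that $\cM$ is a full subcategory closed under duals and summands) $L$ is a summand of $V^* \tensor (V \tensor L) \in \cM$; being transparent in $\cC$ it is transparent in $\cM$, and a transparent simple object of a modular category is trivial, so $L \iso \mathbf{1}$ --- correct. Your hom-space computation is also the right mechanism: $\mathrm{Hom}_{\cC//\cI}(X,Y) \iso \bigoplus_{L \in \cI} \mathrm{Hom}_{\cC}(X, Y \tensor L)$, and for simple $X, Y \in \cM$ a nonzero map $X \to Y \tensor L$ forces $X \iso Y \tensor L$ (note $Y \tensor L$ is simple since $L$ is invertible), whence $X \iso Y$ by orbit-uniqueness and $L \iso \mathbf{1}$ by freeness; freeness also guarantees that images of simples stay simple, so the composite $\cM \hookrightarrow \cC \to \cC//\cI$ is fully faithful, and essentially surjective \emph{given} the orbit-existence claim. (Your closing appeal to uniqueness of modularization is then redundant, and it rests on a misreading: the hypothesis says only that invertibles are transparent, not conversely. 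This is harmless, though, since modularity of $\cC//\cI$ follows from the ribbon equivalence with $\cM$ you have already built.)

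The genuine gap is exactly where you predicted it: the claim that \emph{every} $\cI$-orbit meets $\cM$. You offer only the hope that tensoring by $\cI$ ``should land in $\cM$,'' justified by the triviality of transparent simples in $\cM$ --- but that fact bears on uniqueness, not existence, and in fact no argument can close this gap from the stated hypotheses alone. Counterexample: let $\cE$ be a modular category with no nontrivial invertible objects (the Yang--Lee category from the background section will do), let $\cM$ be any such modular category as well, and set $\cC = \cM \boxtimes \cE$. Then $\cI = \{\mathbf{1}\}$ is transparent and acts freely, so every stated hypothesis holds; yet orbits are singletons, the simples $\mathbf{1} \boxtimes X$ lie in no orbit meeting $\cM$, and $\cC // \cI = \cC \not\simeq \cM$. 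What is missing is an assumption pinning down the size of $\cC$ relative to $\cM$ and $\cI$ --- for instance that the M\"uger centralizer of $\cM$ in $\cC$ is exactly $\cI$, or simply a count of simple objects. The paper handles this in practice: in every application (Lemmas \ref{lem:KM-SL2}, \ref{lem:KM-SO4} and \ref{lem:KM-SO6}) the orbit condition is verified by hand from the Weyl alcove before the lemma is invoked, so the orbit statement functions there as an input rather than a conclusion. Your proof becomes correct if you do the same: treat orbit-existence as a hypothesis to be checked in each application, after which the remainder of your argument goes through.
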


For further detail, see \cite[\S 1.3-1.4]{MR1854694}.  Related notions appear in the physics literature, for example \cite{0808.0627}.

\section{Link invariants from $\mathcal{D}_{2n}$}
\label{sec:invariants}%

\subsection{The $ \mathcal{D}_{2n}$ planar algebras}
\label{sec:d2n}

The $\mathcal{D}_{2n}$ planar algebras were first discovered during the classification of subfactors below index $4$, where there is an ADE classification of the principal graphs. This classification has some pecularities: there are the $A_n$ subfactors, then $D_{2n}$ subfactors (but no $D_\text{odd}$ subfactors), and finally the $E_6$ and $E_8$ subfactors (but no $E_7$ subfactor). This classification is described in  \cite{MR999799, MR996454,
MR1145672, MR1936496}. In the $ADE$ subfactor planar algebras the shading is irrelevant, corresponding to the fact that these subfactors come from underlying tensor categories. These tensor categories have been described directly, via commutative algebra objects in $\Rep{U_{s=\exp({\frac{2\pi i}{16n-8}})}(\SL{2})}$, in \cite{MR1936496}. (Here, the $D_\text{odd}$ and $E_7$ graphs appear as the fusion graphs of module categories for noncommutative algebra objects.) 

The $D_{2n}$ subfactors were first constructed in \cite{MR1308617} using an automorphism of the subfactor $A_{4n-3}$. This construction is essentially equivalent to the deequariantization procedure described above. The $D_{2n}$ tensor categories are the simplest example of deequivariantization. In this paper, we use a skein theoretic description of the $\mathcal{D}_{2n}$ planar algebra, from our paper \cite{MR2559686}. 

We recall the definition from  \cite{MR2559686}.

\begin{defn}\label{def:pa}
Fix $q=\exp(\frac{2 \pi i}{8n-4})$.
Let $\pa$ be the planar algebra generated by a single ``box" $S$ with $4n-4$ strands, modulo the following relations.
\begin{enumerate}

\item\label{delta} A closed circle is equal to $[2]_q = (q+q^{-1}) = 2 \cos(\frac{2 \pi}{8n-4})$ times the empty diagram.

\item\label{rotateS} Rotation relation:
$
\begin{tikzpicture}[baseline]
    \clip (-.6,-1) rectangle (.6,1);

    \draw (-.32,0) -- ++(90:3mm) arc (0:180:1mm) -- ++(-90:3.5mm) arc (-180:0:5.3mm) -- ++(90:15mm);
    \draw (.32,0) -- ++(90:15mm);
    \draw (-.15,0) -- ++(90:15mm);
    \draw (.07,.7) node {...};
    
    \node (S) at (0,0) [circle, draw, fill=white] {$S$};
    \filldraw (S.180) circle (.5mm);
\end{tikzpicture}
= i \cdot
\begin{tikzpicture}[baseline]
    \clip (-.6,-1) rectangle (.6,1);

    \draw(-.32,0) -- ++(90:15mm);
    \draw (.32,0) -- ++(90:15mm);
    \draw (-.07,.7) node {...};
    \draw (.15,0) -- ++(90:15mm);
    
    \node (S) at (0,0) [circle, draw, fill=white] {$S$};
    \filldraw (S.180) circle (.5mm);
\end{tikzpicture}
$

\item\label{capS} Capping relation:
$
\begin{tikzpicture}[baseline]
 \node (S) [circle,draw] {$S$};
 \filldraw (S.157) circle (.5mm);
 \draw (S.135) .. controls +(135:3mm) and +(90:3mm) .. (S.90);
 \draw (S.45) -- +(45:3mm);
 \draw (S.0) -- +(0:3mm);
 \draw (S.-45) node [below] {$\cdot$};
  \draw (S.-90) node [below] {$\cdot$};
 \draw (S.-135) node [below] {$\cdot$};
 \draw (S.180) -- +(180:3mm);
\end{tikzpicture}
= 0
$

\item\label{twoS} Two-$S$ relation:
$
\begin{tikzpicture}[baseline]
 \node at (0,.7) (S) [circle,draw] {$S$};
     \filldraw (S.180) circle (.5mm);

 \node at (0,-.7) (S') [circle,draw] {$S$};
     \filldraw (S'.180) circle (.5mm);

 \draw (S.45) -- +(45:4mm);
 \draw (S.90) node [above] {$\ldots$};
 \draw (S.135) -- +(135:4mm);
 \draw (S'.225) -- +(-135:4mm);
 \draw (S'.270) node [below] {$\ldots$};
 \draw (S'.-45) -- +(-45:4mm);
\end{tikzpicture}
=\qi{2n-1} \cdot
\begin{tikzpicture}[baseline]
\node (0,0) (JW) [rectangle,draw] {$\JW{4n-4}$};
\draw (JW.180);

\draw (JW.35) -- +(90:11mm);
\draw (JW.90) node [above] {$\cdots$};
\draw (JW.145) -- +(90:11mm);

\draw (JW.-35) -- +(-90:11mm);
\draw (JW.-90) node [below] {$\cdots$};
\draw (JW.-145) -- +(-90:11mm);
\end{tikzpicture}
$

\end{enumerate}
\end{defn}

In \cite{MR2559686}, our main theorem included the following
statements:
\begin{thm}
\label{thm:main}%
$\pa$ is the $ \mathcal{D}_{2n}$ subfactor planar algebra:
\begin{enumerate}
\item the space of closed diagrams is
$1$-dimensional (in particular, the relations are consistent),
\item it is spherical, 
\item It is unitary, and hence pseudo-unitary and semisimple.
\item the principal graph of $\pa$ is the Dynkin diagram $D_{2n}$.
\end{enumerate}
\end{thm}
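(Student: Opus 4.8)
The plan is to treat $\pa$ as a skein-theoretic planar algebra and to attack the four claims in the order stated, since each builds on the previous. The heart of the matter is claim (1): that the relations are consistent, i.e.\ that the space of closed diagrams is exactly one-dimensional rather than zero-dimensional. I would first establish the \emph{upper} bound by exhibiting an evaluation algorithm that rewrites any closed diagram, using the four relations, as a scalar multiple of the empty diagram. The invariant to decrease is the number of copies of the generator $S$: given a closed diagram containing at least one $S$, I would argue by an innermost-disc analysis that one can always either expose a cappable pair of strands on some $S$ (so the capping relation removes or simplifies it) or bring two copies of $S$ into the adjacency pattern required by the two-$S$ relation (which trades two $S$'s for diagrams with strictly fewer). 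The rotation relation is used throughout to normalize the cyclic position of each $S$ so that the capping and two-$S$ relations become applicable. Iterating drives the $S$-count to zero, leaving a Temperley--Lieb diagram that the circle relation evaluates to a scalar, giving $\dim \le 1$.

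The genuinely hard step, and the main obstacle, is the matching \emph{lower} bound: that this evaluation is well-defined and nonzero, so that the relations do not secretly force the empty diagram to equal $0$. This is exactly where the parity of the subscript matters --- the analogous odd construction does not yield a consistent planar algebra --- so any successful argument must be sensitive to the even structure. I see two routes. The first is to prove confluence of the reduction algorithm directly, checking that the finitely many local ambiguities (overlapping applications of the capping and two-$S$ relations) resolve to the same answer; this is combinatorially delicate precisely because the coefficients must conspire correctly, and I expect most of the work to live here. The second, cleaner route is to produce an explicit model: define a candidate element $S$ inside Jones's graph planar algebra of the Dynkin diagram $D_{2n}$ and verify the circle, rotation, capping, and two-$S$ relations for it by direct computation with the Perron--Frobenius weights. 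Since the graph planar algebra is automatically a consistent planar algebra, this yields a nonzero planar algebra homomorphism out of $\pa$, forcing $\dim \ge 1$, hence $\dim = 1$. I would lean on the second route, as it simultaneously prepares the ground for claim (4).

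Sphericity (2) and unitarity (3) are then comparatively formal. For sphericity I would check that the defining data respect the spherical structure: the circle value $[2]_q$ is real, and the rotation relation controls the behaviour of $S$ under rotation so that its left and right partial traces agree; sphericity of all of $\pa$ follows from sphericity on the generator together with the evaluation algorithm. For unitarity I would equip $\pa$ with the evident $*$-structure (reflect diagrams and conjugate scalars, so $q \mapsto \bar q = q^{-1}$), check that the four relations are $*$-invariant, and then prove that the trace form $\langle x,y\rangle = \mathrm{tr}(y^* x)$ is positive definite on each box space $\pa_{2k}$. Positivity is where I would again use the model of the previous paragraph (or, equivalently, identify the principal graph first and invoke that $D_{2n}$ carries a positive Perron--Frobenius eigenvector, using $[2]_q = 2\cos(\pi/(4n-2)) > 0$). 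Pseudo-unitarity and semisimplicity are then immediate: a positive definite trace form has no nonzero negligible morphisms.

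Finally, for the principal graph (4) I would analyse the tower of idempotents. The Temperley--Lieb diagrams form a sub-planar-algebra whose principal graph at this circle parameter is $A_{4n-3}$, with minimal idempotents the Jones--Wenzl projections $\JW{k}$. The role of $S$ is to implement the $\mathbb{Z}/2$ folding of $A_{4n-3}$ onto $D_{2n}$: the capping relation forces $S$ into the top corner $\JW{2n-2}\,\pa\,\JW{2n-2}$, and the two-$S$ relation makes that corner two-dimensional, so that the central projection $\JW{2n-2}$ is no longer minimal but splits as a sum of two minimal projections $P$ and $Q$, producing the fork. Concretely I would compute the dimensions of the box spaces $\pa_{2k}$ from the normal-form basis supplied by the evaluation algorithm and check that they match the loop counts on $D_{2n}$ based at the initial vertex; matching these dimensions for all $k$, together with the explicit splitting of the top idempotent, identifies the principal graph as $D_{2n}$.
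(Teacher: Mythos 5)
Your proposal is correct and follows essentially the same route as the authors' actual proof, which is not given in this paper but in the cited source \cite{math/0808.0764}: there, too, consistency is established by an evaluation algorithm (reducing the number of $S$ boxes via the capping and two-$S$ relations) giving $\dim \le 1$, together with an explicit construction of $S$ inside the graph planar algebra of $D_{2n}$, verified against the relations by direct computation with Perron--Frobenius data, giving $\dim \ge 1$. Sphericality, unitarity (hence pseudo-unitarity and semisimplicity) are likewise inherited from the graph planar algebra embedding, and the principal graph is identified exactly as you suggest, via the splitting $f^{(2n-2)} = P + Q$ with $P = \frac{1}{2}(f^{(2n-2)} + S)$ and $Q = \frac{1}{2}(f^{(2n-2)} - S)$.
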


In order to prove this theorem, we made liberal use of the following ``half-braided''
relation:

\begin{thm}\label{thm:passacrossS}
You can isotope a strand above an $S$-box, but isotoping a strand below
an $S$-box introduces a factor of $-1$.
\begin{enumerate}
\item\label{pullover}
$\begin{tikzpicture}[baseline]
    \clip (-1.1,-1.4) rectangle (1.1,1.4);

    \node (S) at (0,0) [circle, draw] {$S$};
    \filldraw (S.180) circle (.5mm);

    \draw[rounded corners=2mm] (-1,2) -- (-1,-1) -- (1,-1) -- (1,2);

    \draw (S.135) -- ++(90:15mm);
    \draw (S.90) node [above] {...};
    \draw (S.45) -- ++(90:15mm);
\end{tikzpicture}
=
\begin{tikzpicture}[baseline]
    \clip (-1.1,-1.4) rectangle (1.1,1.4);

    \node (S) at (0,0) [circle, draw] {$ S$};
    \filldraw (S.180) circle (.5mm);
    \node (x1) at (S.135 |- -1,1){};
    \node (x2) at  (S.45 |- -1,1){} ;

    \draw[rounded corners=2mm] (-1,1.5) --  (-1,1) -- (1,1) -- (1,1.5);

    \draw (S.135) -- (x1.-90);
    \draw (x1.90) -- ++(90:5mm);
    \draw (S.90) node [above] {...};
    \draw (x2.90) -- ++(90:5mm);
    \draw (S.45) -- (x2.-90);
\end{tikzpicture}$

\item\label{pullunder}
$\begin{tikzpicture}[baseline]
    \clip (-1.1,-1.4) rectangle (1.1,1.4);

    \node (S) at (0,0) [circle, draw] {$S$};
     \filldraw (S.180) circle (.5mm);

    \draw[rounded corners=2mm] (-1,2) -- (-1,-1) -- (1,-1) -- (1,2);

    \draw (S.135) -- ++(90:15mm);
    \draw (S.90) node [above] {...};
    \draw (S.45) -- ++(90:15mm);
\end{tikzpicture}
=-
\begin{tikzpicture}[baseline]
    \clip (-1.1,-1.4) rectangle (1.1,1.4);

    \node (S) at (0,0) [circle, draw] {$S$};
    \filldraw (S.180) circle (.5mm);
    \node (x1) at (S.135 |- -1,1){};
    \node (x2) at  (S.45 |- -1,1){} ;

    \draw[rounded corners=2mm](-1,1.5) -- (-1,1) -- (x1.180);
    \draw (x1.0) -- (x2.180);
    \draw[rounded corners=2mm] (x2.0) -- (1,1)--(1,1.5);

    \draw (S.135) -- (x1.90);
    \draw (x1.90) -- ++(90:5mm);
    \draw (S.90) node [above] {...};
    \draw (x2.90) -- ++(90:5mm);
    \draw (S.45) -- (x2.90);
\end{tikzpicture}$

\end{enumerate}
\end{thm}

In \cite{MR2559686} we gave a skein theoretic description of each isomorphism class of simple projections in $ \mathcal{D}_{2n}$.  These are $\JW{i}$ for $0 \leq i \leq 2n-3$, the projection $P = \frac{1}{2}( \JW{2n-2} + S)$, and the projection $Q = \frac{1}{2} (\JW{2n-2} - S)$.  We also gave a complete description of the tensor product rules for these projections (most of which appear in \cite{MR1145672,MR1936496}).

By the {\em even part} of $ \mathcal{D}_{2n}$, which we'll denote $\frac{1}{2}  \mathcal{D}_{2n}$, we mean the full subcategory whose objects consist of collections of an even number of points.  The simple projections in the even part of $ \mathcal{D}_{2n}$ are $\JW{0}, \JW{2}, \ldots, \JW{2n-4}, P, Q$.

\begin{prop}
$\frac{1}{2} \mathcal{D}_{2n} \cong \vRep{U_{q=\exp(\frac{2 \pi i}{8n-4})}(\SO{3})}^{modularize}$.  
\end{prop}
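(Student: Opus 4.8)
The plan is to realize the left-hand side as an explicit modularization of the right-hand side, and then match the two as braided (indeed ribbon) categories. First I would pin down the right-hand side concretely. Since $\uRep{U_s(\SL{2})} \cong \TL(-is)$, the vector (integer-spin) representations of $U_q(\SO{3})$ are exactly the even part of $\TL(s)$ at $q = \exp(\frac{2\pi i}{8n-4})$, i.e. the braided fusion category whose simple objects are the even Jones--Wenzl idempotents $\JW{0}, \JW{2}, \ldots, \JW{4n-4}$. Here $8n-4 = 2(k+2)$ with $k = 4n-4$, so this is the integer-spin part of the modular category $A_{4n-3}$, and it has $2n-1$ simple objects.

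Next I would locate the transparent objects, in order to invoke the modularization theorem of \S\ref{sec:modularization}. The top idempotent $\JW{4n-4}$ is invertible, since $\dim \JW{4n-4} = \qi{4n-3} = 1$, and a direct computation with the twist formula $\theta_{\JW{m}} = s^{m^2+2m}$ (at $s = \exp(\frac{2\pi i}{16n-8})$) gives $\theta_{\JW{4n-4}} = 1$ together with trivial monodromy of $\JW{4n-4}$ against every even object $\JW{2j}$. Thus $\JW{4n-4}$ is a transparent boson generating a copy of $\mathbb{Z}/2$. I would then check that this $\mathbb{Z}/2$ is the entire M\"uger center (the remaining even objects have nontrivial monodromy among themselves, as one reads off the double-braiding data of the ambient modular category), so that $\vRep{U_q(\SO{3})}$ is premodular of nonzero global dimension and its modularization is the deequivariantization by $\langle \JW{4n-4}\rangle$.

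The heart of the argument is to carry out this deequivariantization and recognize $\frac{1}{2}\mathcal{D}_{2n}$. The orbits of $\langle \JW{4n-4}\rangle$ are the $n-1$ free pairs $\{\JW{2j},\JW{4n-4-2j}\}$ for $0 \le j \le n-2$, which descend to the simple objects $\JW{0}, \JW{2}, \ldots, \JW{2n-4}$, together with the single fixed object $\JW{2n-2}$ (fixed because $\JW{4n-4}\otimes\JW{2n-2} \cong \JW{2n-2}$), which in the quotient splits as a direct sum of two simples. I claim these two summands are exactly $P = \frac{1}{2}(\JW{2n-2}+S)$ and $Q = \frac{1}{2}(\JW{2n-2}-S)$: the generator $S$ supplies precisely the extra endomorphism of $\JW{2n-2}$ created by deequivariantization, and the half-braiding relations of Theorem \ref{thm:passacrossS} are what make $A = \JW{0} \oplus \JW{4n-4}$ a commutative (\'etale) algebra whose category of local modules $\mathrm{Rep}^0 A$ is this quotient. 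Note that Lemma \ref{lem:free-quotient} does \emph{not} apply directly, since the $\mathbb{Z}/2$-action is not free (it fixes $\JW{2n-2}$); the splitting of this fixed point into $P$ and $Q$ is exactly the phenomenon that lemma excludes, and it is the crux of the matching. Once this is in hand, the bijection of simples, together with agreement of quantum dimensions, twists, and fusion rules with the even part of $\mathcal{D}_{2n}$, and the fact that the braiding on $\frac{1}{2}\mathcal{D}_{2n}$ is the Kauffman-bracket braiding for the appropriate square root $s = q^{1/2}$, makes the equivalence braided.

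The main obstacle is upgrading this matching of numerical data to a genuine ribbon equivalence --- in particular justifying that $\JW{2n-2}$ splits as $P \oplus Q$ with the correct braiding, rather than merely that \emph{some} two objects appear. I would resolve this by appealing to the module-category construction of \cite{MR1936496}, where $\frac{1}{2}\mathcal{D}_{2n}$ is presented as exactly $\mathrm{Rep}^0 A$ for $A = \JW{0}\oplus\JW{4n-4}$, so that the deequivariantization and the even part of $\mathcal{D}_{2n}$ are literally the same category; the computation above then serves to confirm that $A$ satisfies the transparency, dimension, and twist hypotheses, identifying $\mathrm{Rep}^0 A$ with $\vRep{U_q(\SO{3})}^{modularize}$. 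A more self-contained route, avoiding the citation, would be to verify directly from the planar-algebra relations that $S$ furnishes the algebra/module structure and induces the splitting, but this is the step I expect to be the most delicate.
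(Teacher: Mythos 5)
Your proposal is correct, but it runs the modularization machinery in the opposite direction from the paper. The paper's proof is much shorter: it observes (as you do) that $\vRep{U_{q}(\SO{3})}$ is the even part of Temperley--Lieb at $q=\exp(\frac{2\pi i}{8n-4})$, notes that the tautological planar-algebra map $\TL \to \mathcal{D}_{2n}$ restricts to a ribbon functor $\vRep{U_q(\SO{3})} \to \frac{1}{2}\mathcal{D}_{2n}$, checks this functor is dominant precisely because $P+Q=\JW{2n-2}$, checks by a simple calculation that $\frac{1}{2}\mathcal{D}_{2n}$ has no transparent objects and hence is modular, and then invokes uniqueness of modularization --- a dominant ribbon functor to a modular category \emph{is} a modularization, so it must agree with the canonical one. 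In particular, the paper never computes the M\"uger center of the source, never identifies the orbits of $\langle \JW{4n-4}\rangle$, and never has to argue directly that the fixed point $\JW{2n-2}$ splits as $P \oplus Q$; all of that structure is an output of uniqueness rather than an input. Your route instead computes the center explicitly (your verifications that $\dim \JW{4n-4} = \qi{4n-3} = 1$, $\theta_{\JW{4n-4}} = s^{(4n-4)(4n-2)} = 1$, and the monodromy with each $\JW{2j}$ is trivial are all correct), carries out the deequivariantization by hand, and correctly identifies the one genuinely delicate point --- that Lemma \ref{lem:free-quotient} fails since $\JW{4n-4}\otimes\JW{2n-2}\cong\JW{2n-2}$, so the fixed point splits --- which you close by appealing to the $\mathrm{Rep}^0 A$ description of \cite{MR1936496} for $A = \JW{0}\oplus\JW{4n-4}$ (a citation the paper itself makes, for the braiding on the even part, so this is a legitimate closure of the gap). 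What your approach buys is an explicit dictionary: the pairing $\JW{2j} \leftrightarrow \JW{4n-4-2j}$, the count of $n+1$ simples, and a structural explanation of where $P$ and $Q$ come from ($S$ as the extra endomorphism of $\JW{2n-2}$ created by condensation). What the paper's approach buys is economy: given that the simples of $\mathcal{D}_{2n}$ and the braiding on its even part were already established in \cite{math/0808.0764}, dominance plus modularity plus uniqueness does everything in three lines, with your center computation replaced by the roughly dual (and easier) verification that the \emph{target} is transparent-free.
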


\begin{proof}
To see this we observe that $\vRep{U_{q=\exp(\frac{2 \pi i}{8n-4})}(\SO{3})}$ is the even part of    $\Rep{U_{q=\exp(\frac{2 \pi i}{8n-4})}(\SL{2})}$, and the even parts of  $\Rep{U_{q=\exp(\frac{2 \pi i}{8n-4})}(\SL{2})}$ and $\TL$ are the same at that value of $q$ (the change in pivotal structure does not affect the even part).  Hence there is a functor $$\vRep{U_{q=\exp(\frac{2 \pi i}{8n-4})}(\SO{3})} \rightarrow \frac{1}{2} \mathcal{D}_{2n}$$  The description of simple objects in $ \mathcal{D}_{2n}$ shows that this functor is dominant (as $P+Q=\JW{2n-2}$), and a simple calculation shows that $\frac{1}{2} \mathcal{D}_{2n}$ has no transparent objects and so is modular. Hence, the claim follows by the uniqueness of modularization.
\end{proof}

\subsection{Invariants from $ \mathcal{D}_{2n}$}
Although $ \mathcal{D}_{2n}$ is not a ribbon category, its even part is ribbon.  This is in \cite[p. 33]{MR1936496}; we prefer to give a skein theoretic explanation.  Define the braiding using the Kauffman bracket formula.  This braiding clearly satisfies Reidemeister moves $2$ and $3$, as well as the additional ribbon axiom: all of these equalities of diagrams only involve diagrams in Temperley-Lieb, which is a ribbon category.  The only additional thing to check is naturality, which means that any diagram can pass over or under a crossing without changing.  This follows immediately from the ``half-braiding" relation, because all crossings involve an even number of strands.

Since the even part of $ \mathcal{D}_{2n}$ is a ribbon category, any simple object in it defines a link invariant.  For the simple objects $\JW{2k}$ this invariant is just a colored Jones polynomial.  So we concentrate on invariants involving $P$ and $Q$.  Given an oriented framed link $L$, to get the framed $P$-invariant, we first $2n-2$ cable it and place a $P$ (going in the direction of the orientation) on each component.  See Figure \ref{PInvHopf} for an example.


Then we evaluate this new picture in the $ \mathcal{D}_{2n}$ planar algebra (using the Kauffman resolution of crossings).  

In the usual way, we can make it into an invariant of unframed links, which we will call $\J{ \mathcal{D}_{2n}}{P}(L)$.  Since $P = P \JW{2n-2}$, the twist factor is the same as that for $\JW{2n-2}$, namely $q^{2n(n-1)}$.

\begin{thm}
\label{thm:half}
For a knot $K$ (but not for a link!), $$\J{ \mathcal{D}_{2n}}{P}(K)=\frac{1}{2}\J{\SL{2}}{(2n-2)}(K)=\J{ \mathcal{D}_{2n}}{Q}(K)$$
\end{thm}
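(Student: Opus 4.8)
The plan is to exploit the sign symmetry $S \mapsto -S$ of the planar algebra $\pa$ together with the linear decomposition $\JW{2n-2} = P + Q$. First I would check that the assignment $\phi\colon S \mapsto -S$ extends to an automorphism of $\pa$. This requires verifying that $\phi$ respects the four relations of Definition~\ref{def:pa}: the circle relation~\ref{delta} involves no $S$; the rotation relation~\ref{rotateS} and the capping relation~\ref{capS} are each linear (homogeneous of degree one) in $S$, so replacing $S$ by $-S$ merely changes the sign of both sides and preserves them; and the two-$S$ relation~\ref{twoS} is homogeneous of degree two in $S$, so it acquires a factor $(-1)^2 = 1$ and is preserved. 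Hence $\phi$ is a well-defined automorphism, and since by Theorem~\ref{thm:main} the space of closed diagrams is one-dimensional and $\phi$ fixes the empty diagram, $\phi$ acts trivially on evaluations of closed diagrams.

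Next I would observe that $\phi(P) = Q$ and $\phi(Q) = P$. For any framed link $L$, the closed diagram computing the framed $Q$-invariant is obtained from the one computing the framed $P$-invariant by applying $\phi$ to every inserted box. Since $\phi$ preserves closed-diagram evaluations, the two framed invariants agree; as $P = P\JW{2n-2}$ and $Q = Q\JW{2n-2}$ share the twist factor $q^{2n(n-1)}$ of $\JW{2n-2}$, the framing-independent invariants agree as well, giving $\J{\mathcal{D}_{2n}}{P}(L) = \J{\mathcal{D}_{2n}}{Q}(L)$ for \emph{every} link $L$, in particular for $K$.

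It remains to identify $\J{\mathcal{D}_{2n}}{P}(K)$ with half the colored Jones polynomial, and this is the step that forces $K$ to be a knot. For a single component there is exactly one inserted box, so by linearity of the Reshetikhin--Turaev evaluation in that box and the relation $P + Q = \JW{2n-2}$, I obtain $\J{\mathcal{D}_{2n}}{P}(K) + \J{\mathcal{D}_{2n}}{Q}(K) = \J{\mathcal{D}_{2n}}{\JW{2n-2}}(K)$. The diagram computing the right-hand side contains only Jones--Wenzl projections and Kauffman-resolved crossings, so it lives entirely in the Temperley--Lieb subalgebra of $\pa$, where every closed diagram evaluates exactly as in $\TL(s)$ (the loop value $\qi{2}$ is the same and no $S$-box relation is ever invoked); moreover the twist factor $q^{2n(n-1)}$ matches that of the $(2n-2)$-colored Jones polynomial, which has $i^{k^2}s^{k^2+2k} = q^{2n(n-1)}$ for $k = 2n-2$. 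Therefore $\J{\mathcal{D}_{2n}}{\JW{2n-2}}(K) = \J{\SL{2}}{(2n-2)}(K)$, and combining this with the symmetry of the previous paragraph yields $2\,\J{\mathcal{D}_{2n}}{P}(K) = \J{\SL{2}}{(2n-2)}(K)$, as desired.

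The main obstacle, and the precise reason the statement fails for genuine links, lies in this last step: for a link with $c > 1$ components one inserts $c$ boxes, and expanding $\prod_i P_i$ and $\prod_i Q_i$ produces cross terms in which $S$ appears on some but not all components, so $\prod_i P_i + \prod_i Q_i$ is no longer equal to $\prod_i \JW{2n-2}$. Thus the clean linear identity $\J{\mathcal{D}_{2n}}{P} + \J{\mathcal{D}_{2n}}{Q} = \J{\mathcal{D}_{2n}}{\JW{2n-2}}$ is special to knots, even though the symmetry $\J{\mathcal{D}_{2n}}{P} = \J{\mathcal{D}_{2n}}{Q}$ persists for all links. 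The only genuine verification required is the bookkeeping against Definition~\ref{def:pa} confirming that $\phi$ is an automorphism, which I expect to be routine.
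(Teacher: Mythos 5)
Your proof is correct, but it takes a genuinely different route from the paper's. The paper proves the identity by brute expansion: insert $P = \frac{1}{2}(\JW{2n-2}+S)$ into the cabled knot and observe that the $S$-term vanishes outright, because in every Kauffman resolution the legs of the lone $S$-box must be planarly paired among themselves, forcing some adjacent pair to close up and triggering the capping relation; the $\JW{2n-2}$-term is then the colored Jones polynomial, and the same computation is repeated verbatim for $Q$. You instead factor the argument into two formally independent pieces: the degree-parity automorphism $\phi\colon S \mapsto -S$ (which gives $\J{\mathcal{D}_{2n}}{P}(L) = \J{\mathcal{D}_{2n}}{Q}(L)$ for \emph{all} links, a statement the paper's proof does not directly address) and one-box linearity $P + Q = \JW{2n-2}$ (which is what genuinely requires $K$ to be a knot). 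The two decompositions are equivalent in content --- your symmetry says the difference $\Jf{\mathcal{D}_{2n}}{P} - \Jf{\mathcal{D}_{2n}}{Q}$, i.e.\ the $S$-term, vanishes, which is exactly what the paper checks diagrammatically --- but each buys something: the paper's capping argument localizes \emph{why} the $S$-term dies and extends to show that any term with a single $S$-box vanishes even in a multi-component link (relevant to the refined invariants of their next subsection), while your automorphism is cleaner, avoids any planarity analysis of resolutions, and yields the extra $P$--$Q$ symmetry for links for free. Your bookkeeping on framings is right: $i^{k^2}s^{k^2+2k} = q^{2n(n-1)}$ for $k = 2n-2$, so all three framed invariants carry the same twist correction and the framed identities descend to the unframed ones.

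One small inaccuracy in your closing remark about links (it does not affect the theorem, which asserts nothing for links): the obstruction is not the cross terms with $S$ on ``some but not all'' components. For a $c$-component link,
\begin{equation*}
\bigotimes_{i=1}^{c}(\JW{2n-2}+S) + \bigotimes_{i=1}^{c}(\JW{2n-2}-S) = 2\sum_{\text{even \# of }S\text{'s}}(\cdots),
\end{equation*}
so all terms with an odd number of $S$-boxes --- in particular every single-$S$ cross term --- cancel in the sum (and vanish individually anyway, by the paper's capping argument). What survives and breaks the identity are the terms with an even number, at least two, of $S$-boxes, which need not vanish because two $S$-boxes can pair with each other via the two-$S$ relation; for $c=2$ the surviving culprit is precisely the all-$S$ term $S \otimes S$, which has $S$ on \emph{all} components.
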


\begin{proof}
To compute $\Jf{ \mathcal{D}_{2n}}{P}(K)$ we $(2n-2)$-cable $K$, and insert one $P=\frac{1}{2}(\JW{2n-2}+S)$ somewhere.  When we split this into the sum of two diagrams, the diagram with the $S$ in it is zero, since in every resolution the $S$ connects back up with itself.  Meanwhile, the diagram with the $\JW{2n-2}$ in it is the colored Jones polynomial.  Thus $\Jf{ \mathcal{D}_{2n}}{P}(K)=\frac{1}{2}\Jf{\SL{2}}{(2n-2)}(K)$.
Exactly the same argument holds for $Q$.  Furthermore, the twist factors for $P$, $Q$ and $\JW{2n-2}$ are all equal as computed above.
\end{proof}

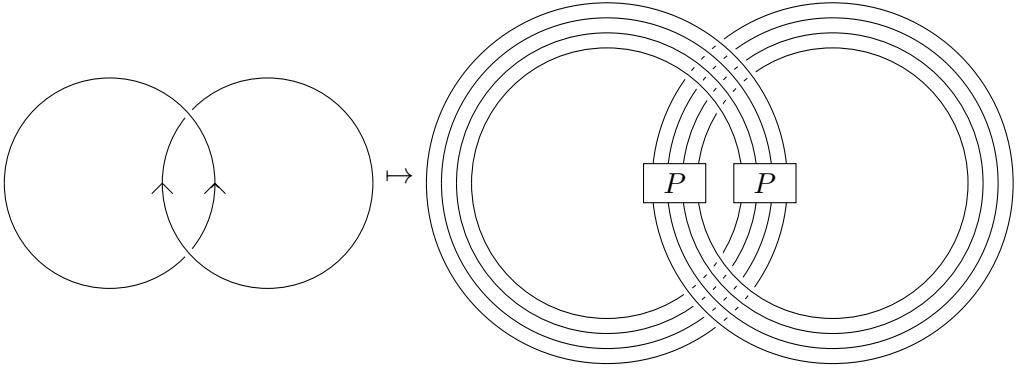
\begin{figure}
\begin{tikzpicture}[scale=.7, baseline=0]
\begin{pgfonlayer}{background}	
	\draw (5,0) arc (0:180:2cm);
	
	\draw[line width=4pt,white,opaque] (2,0) arc (0:90:2cm);
	
	\draw (2,0) arc (0:180:2cm);

	\draw[<-] (2,0) arc (0:-180:2cm);
	
	\draw[line width=4pt,white,opaque] (1,0) arc (-180:-90:2cm);
			
	\draw(5,0) arc (0:-180:2cm);

	\draw (2,0) -- (2.2,-.2);
	\draw (2,0) -- (1.8,-.2);
	\draw (1,0) -- (1.2,-.2);
	\draw (1,0) -- (0.8,-.2);

\end{pgfonlayer}
\end{tikzpicture}
$\mapsto$
\begin{tikzpicture}[scale=1, baseline=0]
	\node at (.9,0) [rectangle,draw, fill=white] {$\; P \;$};
	\node at (2.1,0)  [rectangle,draw, fill=white]  {$\; P \;$};

\begin{pgfonlayer}{background}	
	\draw (5,0) arc (0:180:2cm);
	\draw (4.8,0) arc (0:180:1.8cm);
	\draw (5.2,0) arc (0:180:2.2cm);
	\draw (5.4,0) arc (0:180:2.4cm);	
	
	\draw[line width=4pt,white,opaque] (2,0) arc (0:90:2cm);
	\draw[line width=4pt,white,opaque] (1.8,0) arc (0:90:1.8cm);
	\draw[line width=4pt,white,opaque] (2.2,0) arc (0:90:2.2cm);
	\draw[line width=4pt,white,opaque] (2.4,0) arc (0:90:2.4cm);
	
	\draw (2,0) arc (0:180:2cm);
	\draw (1.8,0) arc (0:180:1.8cm);
	\draw (2.2,0) arc (0:180:2.2cm);
	\draw (2.4,0) arc (0:180:2.4cm);		
	
	\draw (2,0) arc (0:-180:2cm);
	\draw (1.8,0) arc (0:-180:1.8cm);
	\draw (2.2,0) arc (0:-180:2.2cm);
	\draw (2.4,0) arc (0:-180:2.4cm);
	
	\draw[line width=4pt,white,opaque] (1,0) arc (-180:-90:2cm);
	\draw[line width=4pt,white,opaque] (1.2,0) arc (-180:-90:1.8cm);
	\draw[line width=4pt,white,opaque] (0.8,0) arc (-180:-90:2.2cm);
	\draw[line width=4pt,white,opaque] (0.6,0) arc (-180:-90:2.4cm);
			
	\draw (5,0) arc (0:-180:2cm);
	\draw (4.8,0) arc (0:-180:1.8cm);
	\draw (5.2,0) arc (0:-180:2.2cm);	
	\draw (5.4,0) arc (0:-180:2.4cm);	

\end{pgfonlayer}

\end{tikzpicture}
\caption{Computing the framed $ \mathcal{D}_{10}$ invariant of the Hopf link.}\label{PInvHopf}
\end{figure}

\subsection{A refined invariant}

Although this section isn't necessary for the rest of this paper, it may nevertheless be of interest.
We can slightly modify this construction to produce a more refined invariant for links.  Instead of labeling every component with $P$ or every component with $Q$ we could instead label some components with $P$ and others with $Q$.  This would not be an invariant of links, but if you fix which number of links to label with $P$ and sum over all choices of components this is a link invariant.  Notice that since the twist factors for $P$ and $Q$ are the same, this definition makes sense either for framed or unframed versions of the invariant.

\begin{defn}
For $a$ a positive integer, let $\J{ \mathcal{D}_{2n}}{P/Q}^a(L)$ be the sum over all ways of labelling $a$ components of $L$ with $P$ and the remaining components with $Q$.
\end{defn}

Since $P = \frac{1}{2}(\JW{2n-2}+S)$ and $Q=\frac{1}{2}(\JW{2n-2}-S)$, these invariants can be written in terms of simpler-to-compute invariants.

\begin{defn}
Let $\mathcal{J}^k_{ \mathcal{D}_{2n},S/f}(L)$ be $2^{-\ell}$ times the sum of all the ways to put an $S$ on $k$ of the link components and an $\JW{n}$ on the rest of the components.
We call this the {\em $k$-refined $( \mathcal{D}_{2n},P)$-invariant} of an $\ell$-component link $L$.
\end{defn}

This is a refinement of the $( \mathcal{D}_{2n},P)$ link invariant in that
$$\sum_{k=0}^{k=\ell} \mathcal{J}^k_{ \mathcal{D}_{2n},S/f}(L) = \J{ \mathcal{D}_{2n}}{P}$$

More precisely we have the following lemma.

\begin{lem}
\begin{align*}
\J{ \mathcal{D}_{2n}}{P/Q}^a(L) &= \sum_{i=0}^a \sum_{j=0}^{\ell-a} (-1)^{\ell-a-j} \begin{pmatrix}i+j\\ i\end{pmatrix} \begin{pmatrix}\ell-(i+j)\\ a-i\end{pmatrix} \J{ \mathcal{D}_{2n}}{S/f}^{i+j}(L) \\&= \sum_{k=0}^\ell (-1)^{\ell-a-k} \J{ \mathcal{D}_{2n}}{S/f}^{k}(L) \sum_{i=0}^{\text{min}(k,a)} (-1)^i \begin{pmatrix}k \\ i\end{pmatrix} \begin{pmatrix}\ell-k\\a-i\end{pmatrix}.
\end{align*}
\end{lem}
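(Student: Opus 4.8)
The plan is to prove this by directly expanding the definition of $\J{\mathcal{D}_{2n}}{P/Q}^{a}(L)$ using $P = \frac{1}{2}(\JW{2n-2} + S)$ and $Q = \frac{1}{2}(\JW{2n-2} - S)$, and then regrouping the resulting diagrams according to how many components carry an $S$. Write $f = \JW{2n-2}$ and let $\ell$ be the number of components of $L$. For a single labelling in which a chosen set $A$ of $a$ components carries $P$ and the complementary $\ell-a$ components carry $Q$, substituting these two identities and expanding multilinearly over the cabled components writes that one labelled diagram as $2^{-\ell}$ times a signed sum of $2^{\ell}$ diagrams, one for each subset $T$ of components declared to carry $S$ (the rest carrying $f$). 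The essential point is that the term indexed by $T$ carries sign $(-1)^{|T\cap A^{c}|}$, since only the factors $-S$ coming from $Q$-labelled components that are assigned $S$ contribute a minus.

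Next I would interchange the two sums, writing $\J{\mathcal{D}_{2n}}{P/Q}^{a}(L) = 2^{-\ell}\sum_{T} D_{T}\sum_{|A|=a}(-1)^{|T\cap A^{c}|}$, where $D_{T}$ is the diagram with $S$ on the components of $T$ and $f$ elsewhere. The observation that collapses this to the stated formula is that $\J{\mathcal{D}_{2n}}{S/f}^{k}(L) = 2^{-\ell}\sum_{|T|=k}D_{T}$ depends on $T$ only through $k=|T|$. Hence it suffices to compute, for fixed cardinality $k$, the signed count $\sum_{|A|=a}(-1)^{|T\cap A^{c}|}$, which is the same for every $T$ of that size and can therefore be pulled out of $\sum_{|T|=k}D_{T} = 2^{\ell}\,\J{\mathcal{D}_{2n}}{S/f}^{k}(L)$.

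I would carry out the count by stratifying over $i = |T\cap A|$, the number of $S$-carrying components that are also $P$-labelled; then $|T\cap A^{c}| = k-i$, and $j := k-i$ is the number of $S$-carrying $Q$-components. For fixed $i$ the number of admissible labellings is $\binom{k}{i}\binom{\ell-k}{a-i}$ (choose the $i$ inner $P$-labels from $T$ and the remaining $a-i$ from outside), so the signed count is $\sum_{i}(-1)^{k-i}\binom{k}{i}\binom{\ell-k}{a-i}$. Substituting back produces the single-sum expression over $k$; re-indexing by $k=i+j$, so that $\binom{k}{i}=\binom{i+j}{i}$ and $\binom{\ell-k}{a-i}=\binom{\ell-(i+j)}{a-i}$, with $i,j$ now running independently over $0\le i\le a$ and $0\le j\le \ell-a$, recovers the double-sum form. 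Thus the equality of the two stated forms is exactly this reindexing, with $j\leftrightarrow k-i$.

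The main obstacle is the sign bookkeeping. The delicate part is attributing each factor of $-1$ to the correct components (only $Q$-labelled ones assigned $S$) and then fixing the overall power of $(-1)$ after collecting; in particular a direct expansion naturally assigns the sign $(-1)^{j}$, i.e.\ one minus per $Q$-component that carries $S$, so reconciling this with the displayed exponents $(-1)^{\ell-a-j}$ and $(-1)^{\ell-a-k}(-1)^{i}$ requires tracking a global factor $(-1)^{\ell-a}$ through the computation and against the chosen normalizations. Because the two displayed forms agree with each other automatically under the reindexing above, comparing them cannot detect a shared global sign, so I would instead pin the sign down by checking the low-component cases: $\ell=a=1$, where the formula must reduce to $\J{\mathcal{D}_{2n}}{P}(K)=\frac{1}{2}(\J{\mathcal{D}_{2n}}{S/f}^{0}+\J{\mathcal{D}_{2n}}{S/f}^{1})$, and a two-component link. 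Everything else—the multilinear expansion and the Vandermonde-type counting of labellings—is routine once the sign convention is settled.
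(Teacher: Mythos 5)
The paper actually states this lemma without proof --- it appears immediately after the remark that $P = \frac{1}{2}(\JW{2n-2}+S)$ and $Q = \frac{1}{2}(\JW{2n-2}-S)$ let one rewrite the $P/Q$ invariants in terms of the $S/f$ invariants --- so your multilinear expansion is exactly the intended argument, and your combinatorial bookkeeping is correct: the term indexed by an $S$-subset $T$ carries sign $(-1)^{|T\cap A^{c}|}=(-1)^{j}$, the number of $P$-sets $A$ with $|T\cap A|=i$ is $\binom{k}{i}\binom{\ell-k}{a-i}$, and the signed count depends on $T$ only through $k=|T|$, so it pulls out of $\sum_{|T|=k}D_{T}=2^{\ell}\,\J{\mathcal{D}_{2n}}{S/f}^{k}(L)$. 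Your reindexing $j=k-i$ relating the two displayed forms is also right.

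The one place you stopped short is the sign, and here you should trust your own computation: the discrepancy you flagged is not a normalization to be ``tracked through,'' it is a genuine global sign error in the printed statement. Your expansion yields the coefficient $\sum_{i}(-1)^{k-i}\binom{k}{i}\binom{\ell-k}{a-i}$, i.e.\ prefactor $(-1)^{k}$, while the lemma's prefactor $(-1)^{\ell-a-k}$ differs from this by exactly $(-1)^{\ell-a}$, so the displayed formula is correct only when $\ell\equiv a\pmod 2$. Note that your proposed sanity check $\ell=a=1$ is precisely insensitive to this factor (there $\ell-a=0$), which is why it appears consistent; the two-component case you mention settles it. Concretely, for $\ell=2$, $a=1$: expanding gives $\J{\mathcal{D}_{2n}}{P/Q}^{1}(L)=\frac{1}{2}J_{f,f}-\frac{1}{2}J_{S,S}=2\,\J{\mathcal{D}_{2n}}{S/f}^{0}(L)-2\,\J{\mathcal{D}_{2n}}{S/f}^{2}(L)$, whereas the printed formula evaluates to $-2\,\J{\mathcal{D}_{2n}}{S/f}^{0}(L)+2\,\J{\mathcal{D}_{2n}}{S/f}^{2}(L)$. (One can also check that no reinterpretation of the definitions rescues the printed signs: reading $a$ as the number of $Q$-labels would require the prefactor $1$ rather than the $k$-dependent $(-1)^{\ell-a-k}$.) So complete your proof by committing to the sign $(-1)^{j}$ in the double sum --- equivalently $(-1)^{k}(-1)^{i}$ in the single sum --- and note that the lemma as displayed holds only up to the global factor $(-1)^{\ell-a}$. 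With that commitment, your argument is complete and correct.
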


These refined invariants can detect more information than the ordinary invariant.  For example, although we will show in the next section that the $ \mathcal{D}_4$ invariant is trivial, it is not difficult to see using the methods of the next section that its refined invariants detect linking number mod $3$.

\section{Knot polynomial identities}
\label{sec:identities}%

The theorems of this section describe how to identify an invariant coming from an object in a ribbon category as a specialization of the Jones, HOMFLYPT or Kauffman polynomials. These theorems are  well-known to the experts, and versions of them can be found in \cite{MR958895, MR1237835,MR2132671}.  Since we need explicit formulas for which specializations appear we collect the proofs here.

We then identify cases in which these theorems apply, namely $\mathcal{D}_{2n}$ for $n=2,3,4$ and $5$, and explain exactly which specializations occur.

There is a similar procedure, due to Kuperberg, for recognizing knot invariants which are specializations of the $G_2$ knot polynomial.  We apply this technique to $\mathcal{D}_{14}$.  

The identities in this section do not follow from the knot polynomial identities in \cite{MR1134131} \cite[\S 6, Table 2]{MR980759}. (But most of those identities follow from the technique outlined in this section.)

\subsection{Recognizing a specialization of Jones, HOMFLYPT, or Kauffman}
Identifying a knot invariant as a specialization of a classical knot polynomial happens in two steps. Let's say you're looking at the knot invariant coming from an object $V$ in a ribbon category. First, you look at the direct sum decomposition of $V \tensor V$, and hope that you don't see too many summands. Theorem \ref{thm:summands} below describes how to interpret this decomposition, hopefully guaranteeing that the invariant is either trivial, or a specialization of Jones, HOMFLYPT, or Kauffman. If this proves successful, you next look at the eigenvalues of the braiding on the summands of $V \tensor V$. Theorem \ref{thm:eigenvalues} then tells you exactly which specialization you have.

\begin{thm}
\label{thm:summands}
Suppose that $V$ is a simple object in a ribbon category $\cC$ and that if $V$ is self-dual then it is symmetrically self-dual.
\begin{enumerate}
\item If $V \otimes V$ is simple, then $\dim{V}=\pm1$ and the link invariant $\J{\cC}{V} =(\dim V)^\#$ where $\#$ is the number of components of the link.
\item If $V \otimes V = \id \oplus L$ for some simple object $L$, then the link invariant $\J{\cC}{V}$ is a specialization of the Jones polynomial.
\item If $V \otimes V = L \oplus M$ for some simple objects $L$ and $M$, then the link invariant $\J{\cC}{V}$ is a specialization of HOMFLYPT.
\item If $V \otimes V = \id \oplus L \oplus M$ for some simple objects $L$ and $M$, then the link invariant $\J{\cC}{V}$ is a specialization of either the Kauffman polynomial or the Dubrovnik polynomial.
\end{enumerate}
\end{thm}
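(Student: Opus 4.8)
The plan is to reduce everything to the action of the braiding $c_{V,V}$ on $V\otimes V$ and then to recognize the resulting local relation as the defining skein relation of Temperley--Lieb, Hecke, or BMW. Since $\cC$ is semisimple, Schur's lemma shows that $c_{V,V}$ is scalar on each isotypic component of $V\otimes V$; because in all four cases every summand occurs with multiplicity one, $\End{V\otimes V}$ is commutative and $c_{V,V}$ is diagonalizable with exactly as many distinct eigenvalues as there are simple summands. Thus the positive crossing, viewed as $c_{V,V}$, satisfies a minimal polynomial of degree $1$, $2$, or $3$, and this polynomial --- together with the cup--cap $\cupcap$ when $\id$ appears --- is the skein relation I want. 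I will use two standard facts throughout: first, $\id$ is a summand of $V\otimes V$ if and only if $V$ is self-dual, in which case the symmetric self-duality hypothesis makes $\cupcap$ an honest unoriented morphism and the projection onto the trivial summand equals $(\dim V)^{-1}\cupcap$; second, by the ribbon axiom the eigenvalue $\lambda_W$ on a summand $W$ satisfies $\lambda_W^2=\theta_W\theta_V^{-2}$, so on the trivial summand the eigenvalue is $\theta_V^{-1}$ (the sign being fixed by symmetric self-duality).

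For part (1), a single eigenvalue means $c_{V,V}$ is a scalar; a short argument (using that $V\otimes V$ simple forces $\End{V\otimes V}$ to be one-dimensional, hence $V\otimes V^*\cong\id$ and $V$ invertible) gives $\dim V=\pm1$. Every crossing then contributes the same scalar, which the writhe normalization $\J{\cC}{V}=\theta_V^{-w}\tilde{\mathcal{J}}_{\cC,V}$ removes, so the invariant only records the disjoint unknotted components and equals $(\dim V)^{\#}$. For part (2), two summands with $\id$ present give two eigenvalues, so $c_{V,V}$ is a linear combination of $\id$ and $\cupcap$; writing the crossing as $\alpha\,\id+\beta\,\cupcap$ is precisely the Kauffman-bracket relation \eqref{eq:jones-skein}, yielding a ribbon functor $\TL(s')\to\cC$ sending a strand to $V$, so $\J{\cC}{V}$ is the corresponding specialization of the Jones polynomial. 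For part (3), two summands with no trivial summand force $V$ to be non-self-dual (hence oriented strands) and give a quadratic $(c_{V,V}-\lambda_L)(c_{V,V}-\lambda_M)=0$; multiplying by $c_{V,V}^{-1}$ yields $c_{V,V}+\lambda_L\lambda_M\,c_{V,V}^{-1}=(\lambda_L+\lambda_M)\,\id$, which for an appropriate specialization of its parameters is the oriented HOMFLYPT (Hecke) skein relation, and the resulting functor identifies $\J{\cC}{V}$ as a specialization of HOMFLYPT.

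Part (4) is the main obstacle. Here $V$ is symmetrically self-dual with three summands $\id\oplus L\oplus M$, so $c_{V,V}$ satisfies a cubic and $\{\id,\cupcap,c_{V,V}\}$ is a basis of $\End{V\otimes V}$. To land in the BMW world I must show that $c_{V,V}-c_{V,V}^{-1}$ (respectively $c_{V,V}+c_{V,V}^{-1}$) lies in the span of $\id$ and $\cupcap$; acting on $L$ and $M$ this is exactly the requirement $\lambda_L-\lambda_L^{-1}=\lambda_M-\lambda_M^{-1}$ (resp.\ $\lambda_L+\lambda_L^{-1}=\lambda_M+\lambda_M^{-1}$), that is, $\lambda_L\lambda_M=-1$ or $\lambda_L\lambda_M=+1$. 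The crux is therefore the identity $\lambda_L\lambda_M=\pm1$, equivalently $\theta_L\theta_M=\theta_V^4$. I expect to prove this from the ribbon relation $\lambda_W^2=\theta_W\theta_V^{-2}$ together with the rotational symmetry of the crossing under symmetric self-duality, which pairs the two non-trivial eigenvalues as $\lambda_M=\pm\lambda_L^{-1}$; this is the step that genuinely uses the self-duality hypothesis and is the content extracted from \cite{MR958895, MR1237835, MR2132671}.

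Granting this identity, the cubic collapses onto the Dubrovnik relation \eqref{eq:dubrovnik} when $\lambda_L\lambda_M=-1$ and onto the Kauffman relation \eqref{eq:kauffman} when $\lambda_L\lambda_M=+1$, the sign being exactly what selects between the two normalizations. The associated ribbon functor from the BMW/Dubrovnik category then exhibits $\J{\cC}{V}$ as a specialization of the Kauffman or Dubrovnik polynomial, with the precise values of the parameters read off from the eigenvalues in Theorem \ref{thm:eigenvalues}. In each of parts (2)--(4) the final sentence of the argument is the same: the classical polynomial is by definition the evaluation of a link in the universal (framed) skein category, and applying the functor just constructed specializes its formal parameters to the scalars determined by $\dim V$ and the braiding eigenvalues, so $\J{\cC}{V}$ is literally a specialization.
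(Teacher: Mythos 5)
Your parts (1)--(3) are correct and essentially reproduce the paper's own proof: in (1) you use $\End{V \otimes V} \cong \End{V \otimes V^*}$ to force $V \otimes V^* \cong \id$ and $\dim V = \pm 1$, and in (2) and (3) you expand the crossing in the spanning sets $\{\identity, \cupcap\}$ and $\{\overcrossing, \undercrossing, \identity\}$ respectively, exactly as the paper does (your route to the Hecke relation via the minimal polynomial of $c_{V,V}$ is the same relation in different clothes). One small elision in (2): an arbitrary relation $c = A\,\identity + B\,\cupcap$ is not yet the Kauffman bracket; you still need Kauffman's rotate-and-apply-Reidemeister-2 step to pin down $B = A^{-1}$ and $A^2 + A^{-2} = \dim V$, which is what guarantees the loop value is compatible and the functor from Temperley--Lieb exists. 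That is routine, but it should be said.

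Part (4), however, contains a genuine gap, and you have located it yourself: your whole argument rests on the identity $\lambda_L \lambda_M = \pm 1$, and ``I expect to prove this from the ribbon relation together with rotational symmetry'' is not a proof. The ribbon relation $\lambda_W^2 = \theta_W \theta_V^{-2}$ cannot deliver it on its own: it gives only $(\lambda_L\lambda_M)^2 = \theta_L\theta_M\theta_V^{-4}$, and nothing in the ribbon axioms constrains $\theta_L \theta_M$ relative to $\theta_V^4$ a priori --- in the paper this relation appears in Theorem \ref{thm:eigenvalues}(4) as a \emph{consequence} of the already-established skein relation (read off from the characteristic equation $(x-a^{-1})(x^2 - zx \pm 1) = 0$), so feeding it in as an input to derive the skein relation is circular. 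Nor does the $\pi/2$ rotation ``pair the eigenvalues'' in any direct sense: it is not an algebra endomorphism of $\End{V \otimes V}$, so it does not act on the spectrum of the braiding. The correct use of rotation, which is the paper's actual argument, is linear-algebraic and applies to relations rather than eigenvalues: the four elements $\overcrossing, \undercrossing, \identity, \cupcap$ live in the $3$-dimensional space $\End{V \otimes V}$, so the space of linear relations among them is nonzero; the $\pi/2$ rotation swaps the two crossings and swaps $\identity$ with $\cupcap$, preserves this relation space, and squares to the identity on it (this is precisely where symmetric self-duality is used, to make the rotated cup-cap equal to the cup-cap), hence some relation is a $(\pm 1)$-eigenvector:
\begin{equation*}
A\left( \overcrossing \pm \undercrossing\right)  = B \left(\identity \pm \cupcap\right).
\end{equation*}
If $A = 0$ this degenerates to a proportionality between $\identity$ and $\cupcap$, forcing $V \otimes V \cong \id$, a contradiction; dividing by $A$ yields the Dubrovnik or Kauffman relation with $z = B/A$, and your pairing $\lambda_L\lambda_M = \pm 1$ then \emph{follows} from that relation instead of preceding it. Inserting this eigenvector-of-rotation argument in place of your deferred claim repairs part (4) and makes your write-up a correct proof along the paper's lines.
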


\begin{proof}
    \begin{enumerate}
    \item {\bf Trivial case} 
    
    Since the category is spherical and braided, $\End{V \otimes V} \cong \End{V \otimes V^*}$.  Hence if $V \otimes V$ is simple we must have $V \otimes V^* = \id$, so $\dim V = \pm 1$.
    Also by simplicity, $\End{V \otimes V}$ is one dimensional, and
    so, up to constants,  a crossing is equal to the identity map. Suppose now that
    $\begin{tikzpicture}[baseline]
\node (x) at (0,0){};
    \draw[->] (x.45)-- (.5,.5);
    \draw[->] (x.135) -- (-.5,.5);
    \draw (x.315) -- (.5,-.5);
    \draw (x.45) -- (-.5,-.5);
\end{tikzpicture}=\alpha \begin{tikzpicture}[baseline]
    \draw[<-] (1.5,.5) .. controls (2,0) .. (1.5,-.5);
    \draw[<-] (2.5,.5) .. controls (2,0) .. (2.5,-.5);
\end{tikzpicture}$.  Then, Reidemeister two tells us that
    $\begin{tikzpicture}[baseline,xscale=-1]
    \draw[->] (-.5,-.5)-- (.5,.5);
    \draw[->] (-.1,.1) -- (-.5,.5);
    \draw (.1,-.1) -- (.5,-.5);
\end{tikzpicture}
=\alpha^{-1} $.  Capping this off shows that the twist factor is $\alpha \dim V$.  Thus the framing corrected skein relation is
    $$ = \dim V  = .$$
    
    The equality of the two crossings lets us unlink any link, showing that the framing corrected invariant is $(\dim V)^\#$, where $\#$ is the number of components. 

\item {\bf Jones polynomial case}

Since $\End{V\otimes V}$ is $2$-dimensional there must be a linear dependence between the crossing and the two basis diagrams of Temperley-Lieb.  (If these two Temperley-Lieb diagrams were linearly dependent, then $V\otimes V \cong \id$, contradicting the assumption).  Hence we must have a relation of the form
$$\begin{tikzpicture}[baseline]
\node (x) at (0,0){};
    \draw (x.45)-- (.5,.5);
    \draw (x.135) -- (-.5,.5);
    \draw (x.315) -- (.5,-.5);
    \draw (x.45) -- (-.5,-.5);
\end{tikzpicture}=A \begin{tikzpicture}[baseline]
    \draw (1.5,.5) .. controls (2,0) .. (1.5,-.5);
    \draw (2.5,.5) .. controls (2,0) .. (2.5,-.5);
\end{tikzpicture} + B \begin{tikzpicture}[baseline]
    \draw (3.5,.5) .. controls (4,0) .. (4.5,.5);
    \draw (3.5,-.5) .. controls (4,0) .. (4.5,-.5);
\end{tikzpicture}.$$
Following Kauffman, rotate this equation, glue them together and apply Reidemeister $2$ to see that $B=A^{-1}$ and $A^2+A^{-2} = \dim V$.  Hence this invariant is given by the Kauffman bracket.

\item {\bf HOMFLYPT case}

Since $\End{V\otimes V}$ is $2$-dimensional there must be a linear dependence between the two crossings and the identity (we can't use the cup-cap diagram here because $V$ is not self dual).  Hence, we have that
$$\alpha  + \beta  = \gamma $$ for some $\alpha$, $\beta$, and $\gamma$.  If $\alpha$ or $\beta$ were zero, $\End{V\otimes V}$ would be $1$-dimensional, so we must have that $\alpha$ and $\beta$ are nonzero.  Hence we can rescale the relation so that $\alpha = w$, $\beta = -w^{-1}$, and $\gamma = z$.  Since the twist is some multiple of the single strand we can define $a$ such that the twist factor is $w^{-1}a$.  Thus we've recovered the framed HOMFLYPT skein relations.

\item {\bf Kauffman case}

Since $V \tensor V$ has three simple summands, its endomorphism space is $3$ dimensional. Moreover, since one of the summands is the trivial representation, one such endomorphism is the `cup-cap' diagram $\cupcap$. There must be some linear relation of the form
\begin{equation*}
p \overcrossing + q \undercrossing + r \identity + s \cupcap = 0.
\end{equation*}
The space of such relations is invariant under a $\pi/2$ rotation, and fixed under a $\pi$ rotation, so there must be a linear relation which is either a $(+1)$- or $(-1)$-eigenvector of the $\pi/2$ rotation. That is, there must be a relation of the form
\begin{equation*}
A\left( \overcrossing \pm \undercrossing\right)  = B \left(\identity \pm \cupcap\right)
\end{equation*}
If $A$ were zero, this would be a linear relation between $\identity$ and $\cupcap$, which would imply that $V \tensor V \cong \id$. Thus we can divide by $A$, and obtain either the Kauffman polynomial (Equation \eqref{eq:kauffman}) or Dubrovnik polynomial (Equation \eqref{eq:dubrovnik}) skein relation with $z = B/A$.

    \end{enumerate}
\end{proof}

This argument for the Dubrovnik polynomial is similar in spirit to Kauffman's original description in  \cite{MR958895}, and the argument for HOMFLYPT polynomial is similar to \cite[\S 4]{MR0908150}.  Similar results were also obtained in \cite{MR2132671, MR1237835}.


We'll now need some notation for eigenvalues. Suppose $N$ appears once as a summand of $V \tensor V$, and consider the braiding as an endomorphism acting by composition on $\End{V \tensor V}$. Then the idempotent projecting onto $N \subset V \tensor V$ is an eigenvector for the braiding, and we'll write $\Reigenvalue{N}{V}$ for the corresponding eigenvalue.  The following is well-known (for example the HOMFLYPT case is essentially \cite[\S 4]{MR0908150}).

\begin{thm}
\label{thm:eigenvalues}
If one of conditions (2)-(4) of Theorem \ref{thm:summands} holds, then we can find which specialization occurs by computing eigenvalues.
    \begin{enumerate}
   \item[(2)] If $\Reigenvalue{L}{V} = \lambda$, then $\Reigenvalue{\id}{V} = -\lambda^{-3}$ and
        \begin{equation*}
        \J{\cC}{V} = \J{\SL{2}}{(1)}(a)
        \end{equation*}
    with $a=-\lambda^{2}$.
\item[(3)] If $\Reigenvalue{L}{V} = \lambda$, $\Reigenvalue{M}{V} = \mu$, and $\theta$ is the twist factor, then
        \begin{equation*}
        \J{\cC}{V} = \HOMFLY(a, z)
        \end{equation*}
    with $a=\frac{\theta}{\sqrt{-\lambda\mu}}$ and $z=\frac{\lambda+\mu}{\sqrt{-\lambda\mu}}$.
\item[(4)] If $\Reigenvalue{L}{V} = \lambda$ and $\Reigenvalue{M}{V} = \mu$, then $\lambda \mu = \pm 1$.
\begin{enumerate}
  \item
   If $\lambda \mu = -1$ then
        \begin{equation*}
        \J{\cC}{V} = \Dubrovnik(a, z)
        \end{equation*}
    with $a=\Reigenvalue{1}{V}^{-1}$ and $z=  \lambda + \mu$.
\item
 If $\lambda \mu = 1$ then
        \begin{equation*}
        \J{\cC}{V} = \Kauffman(a, z)
        \end{equation*}
    with $a=\Reigenvalue{\id}{V}^{-1}$ and $z=  \lambda + \mu$.
    \end{enumerate}
    \end{enumerate}
\end{thm}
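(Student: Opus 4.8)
The plan is to exploit the fact that in each of cases (2)--(4) the object $V \otimes V$ is multiplicity-free, so that $\End{V \otimes V}$ is a commutative algebra with basis the orthogonal idempotents $p_N$ projecting onto the simple summands $N$. Since the braiding $c = c_{V,V}$ and its inverse are elements of this algebra, they are simultaneously diagonalized by the $p_N$; that is, $c = \sum_N \Reigenvalue{N}{V}\, p_N$ and $c^{-1} = \sum_N \Reigenvalue{N}{V}^{-1}\, p_N$, which is exactly the content of the eigenvalue notation set up before the statement. I would also record the other diagrams appearing in the skein relations in this same basis: the identity is $\identity = \sum_N p_N$, and, whenever $\id$ is a summand, the cup-cap satisfies $\cupcap = (\dim V)\, p_{\id}$ (it is a nonzero multiple of $p_{\id}$ and $\cupcap^2 = (\dim V)\,\cupcap$). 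The whole proof then amounts to re-expanding the skein relation produced for each case in Theorem \ref{thm:summands} in the eigenbasis $\{p_N\}$ and matching coefficients.

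For case (2) I would start from $c = A\,\identity + B\,\cupcap$ with $B = A^{-1}$, substitute $\identity = p_{\id} + p_L$ and $\cupcap = (\dim V)\, p_{\id}$, and read off the coefficient of $p_L$ to get $A = \lambda$ and the coefficient of $p_{\id}$ to get $\Reigenvalue{\id}{V} = A + A^{-1}\dim V$; using the loop value $\dim V = -(A^2 + A^{-2})$ this collapses to $\Reigenvalue{\id}{V} = -\lambda^{-3}$. Comparing $c = \lambda\,\identity + \lambda^{-1}\,\cupcap$ with the framed Jones skein relation \eqref{eq:jones-skein} identifies $is = \lambda$, hence $q = s^2 = -\lambda^2$, which after the framing correction is the claimed $a = -\lambda^2$. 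Case (3) is analogous but uses the two crossings rather than the cup-cap: imposing the framed HOMFLYPT relation $w\,c - w^{-1}c^{-1} = z\,\identity$ and matching the coefficients of $p_L$ and $p_M$ gives $w\lambda - w^{-1}\lambda^{-1} = z = w\mu - w^{-1}\mu^{-1}$; subtracting forces $w^2 = -(\lambda\mu)^{-1}$, whence $w = 1/\sqrt{-\lambda\mu}$ and $z = w(\lambda + \mu) = (\lambda+\mu)/\sqrt{-\lambda\mu}$. The value of $a$ then comes from matching twist factors: the framed HOMFLYPT twist factor is $w^{-1}a$, so setting it equal to $\theta$ gives $a = w\theta = \theta/\sqrt{-\lambda\mu}$.

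For case (4) I would expand $A(c \pm c^{-1}) = B(\identity \pm \cupcap)$ in the three-dimensional basis $\{p_{\id}, p_L, p_M\}$. The coefficients of $p_L$ and $p_M$ each equal $B/A$, forcing $\lambda \pm \lambda^{-1} = \mu \pm \mu^{-1}$; since $\lambda \neq \mu$ this is possible only when $\lambda\mu = \pm 1$, which simultaneously selects the Kauffman ($+$, $\lambda\mu = 1$) versus Dubrovnik ($-$, $\lambda\mu = -1$) normalization and, through $\mu = \pm\lambda^{-1}$, gives $z = B/A = \lambda \pm \lambda^{-1} = \lambda + \mu$. The value of $a$ is then read from the coefficient of $p_{\id}$ together with the twist factor: since the framed Kauffman/Dubrovnik twist factor is $a$, I would identify $a = \Reigenvalue{\id}{V}^{-1}$.

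The step I expect to be the genuine obstacle is the bookkeeping of signs and square roots and its interaction with the framing correction. Matching the \emph{framed} skein relation, which the crossing satisfies on the nose, to the \emph{framing-independent} polynomial requires the twist factor $\theta_V$ in terms of the eigenvalues, and this is precisely where the ambiguous $\sqrt{-\lambda\mu}$ of case (3) and the sign of $\Reigenvalue{\id}{V}$ in cases (2) and (4) must be resolved. The clean way to control these is the balancing axiom $\theta_{V \otimes V} = \theta_V^{\,2}\, c^2$, which on the summand $N$ reads $\Reigenvalue{N}{V}^2 = \theta_N/\theta_V^2$; for $N = \id$ this gives $\Reigenvalue{\id}{V} = \pm\theta_V^{-1}$, and the hypothesis that $V$ is symmetrically self-dual is what forces the $+$ sign, so that $a = \Reigenvalue{\id}{V}^{-1} = \theta_V$ really is the twist factor. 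Keeping the various sign conventions of Section \ref{conventions} consistent throughout is the only delicate part; everything else is linear algebra in the commutative algebra $\End{V \otimes V}$.
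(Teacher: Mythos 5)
Your proposal is correct and is essentially the paper's own proof: expanding the skein relation from Theorem \ref{thm:summands} in the basis of orthogonal idempotents $p_N$ is precisely the paper's computation of the characteristic equation of the crossing operator $X$ read in its eigenbasis, and you arrive at the same equations ($w\lambda - w^{-1}\lambda^{-1} = z = w\mu - w^{-1}\mu^{-1}$, the cubic $(x-a^{-1})(x^2 - zx \pm 1) = 0$, etc.) and the same resolution of $a$ via the twist factor. The only point where you add something is in making explicit, via the balancing axiom and symmetric self-duality, why $\Reigenvalue{\id}{V} = \theta_V^{-1}$, which the paper leaves implicit (``since $a$ is the twist factor it is the inverse of the eigenvalue corresponding to $\id$''); this is a clarification of the same argument rather than a different method.
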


\begin{proof}
These proofs all follow the same outline.  We consider the operator $X$ which acts on tangles with four boundary points by multiplication with a positive crossing.  We find the eigenvalues of $X$ in terms of the parameters ($a$ and/or $z$) and then solve for the parameters in terms of the eigenvalues.

    \begin{enumerate}
\item[(2)]
        The Jones skein relation for unoriented framed links is
       $$\overcrossing=i a^{\frac{1}{2}} \identity - i a^{-\frac{1}{2}} \cupcap$$
        if closed circles count for $[2]_a=(a+a^{-1})$.  
        
        The eigenvectors for $X$, multiplication by the positive crossing, are
        $$\begin{tikzpicture}[baseline]
    \draw(0,0) -- (.5,.5);
    \draw (0,0) -- (-.5,.5);
    \draw (0,0) -- (.5,-.5);
    \draw (0,0) -- (-.5,-.5);
    \node at (0,0) [rectangle,draw, fill=white] {$f^{(2)}$};
\end{tikzpicture}
\text{ \quad and \quad }
\begin{tikzpicture}[baseline]
    \draw (3.5,.5) .. controls (4,0) .. (4.5,.5);
    \draw (3.5,-.5) .. controls (4,0) .. (4.5,-.5);
\end{tikzpicture}
$$
        which have eigenvalues $i a^{\frac{1}{2}}$ and $ - i a^{\frac{-3}{2}}$.

        The cup-cap picture must correspond to the summand $\id$, and so we see that if
        $\Reigenvalue{L}{V}=\lambda$,
        then
        $a=-\lambda^{2} $ and $\Reigenvalue{\id}{V} = -\lambda^{-3}$.

    \item[(3)]
        The HOMFLYPT skein relation is for oriented framed links:
\begin{equation}
w \Oovercrossing - w^{-1} \Oundercrossing = z \Oidentity,
\end{equation}
        and the characteristic equation for the operator $X$ which multiplies by the positive crossing is
        $$w x - w^{-1} x^{-1} = z \quad \Longleftrightarrow \quad  x^2- \frac{z}{w} x - \frac{1}{w^2}=0.$$

        So if $\lambda$ and $\mu$ are the eigenvalues of $X$, we have $\lambda \mu = -w^{-2}$ and
        $\lambda + \mu = \frac z w$, so that
        $$w=\frac{1}{\sqrt{-\lambda \mu}} \text{\quad and \quad} z
        = \frac{\lambda + \mu}{\sqrt{-\lambda \mu}}$$
        
        To recover $a$ we note that the twist factor is $a w^{-1}$, hence $a = w \theta$.

    \item[(4)]
       For the Dubrovnik or Kauffman skein relation we have
       $$ \overcrossing \pm \undercrossing = z\left(\identity\pm \cupcap \right).$$  Multiplying by the crossing we see that, 
       $$\begin{tikzpicture}[baseline]
\node (x) at (0,.5){};
    \draw (x.45)-- (.5,1);
    \draw (x.135) -- (-.5,1);
    \draw (x.315) -- (.5,0);
    \draw (x.45) -- (-.5,0);
\node (y) at (0,-.5){};
    \draw (y.45)-- (.5,0);
    \draw (y.135) -- (-.5,0);
    \draw (y.315) -- (.5,-1);
    \draw (y.45) -- (-.5,-1);
\end{tikzpicture} \pm \identity = z \left(\overcrossing \pm a^{-1} \cupcap\right).$$  Subtracting $a^{-1}$ times the first equation from the second and rearranging slightly we see that the characteristic equation for the crossing operator is $(x-a^{-1})(x^2-z x\pm 1)=0$.  Hence the eigenvalues are $a^{-1}$, $\lambda$, and $\mu$, where $\lambda+\mu = z$ and $\lambda \mu = \pm 1$.  Since $a$ is the twist factor it is the inverse of the eigenvalue corresponding to $\id$ (compare with case (2)).
           
    \end{enumerate}
\end{proof}

\subsection{Knot polynomial identities for $\mathcal{D}_{4}$, $\mathcal{D}_{6}$, $\mathcal{D}_{8}$ and $\mathcal{D}_{10}$}

We state four theorems, give two lemmas, and then give rather pedestrian proofs of the theorems.  Snazzier proofs appear in Section \ref{sec:coincidences}, as special cases of Theorem \ref{thm:identities-geq}.  In each of these theorems, we relate two quantum knot invariants via an intermediate knot invariant coming from $\mathcal{D}_{2n}$.  You can think of these results as purely about quantum knot invariants, although the proofs certainly use $\mathcal{D}_{2n}$.

\begin{thm}[Identities for $n=2$]
\label{thm:identities-2}
\begin{align*}
\restrict{\J{\SL{2}}{(2)}(K)}{q=\exp(\frac{2\pi i}{12})} & = 2 \J{\mathcal{D}_4}{P}(K) \\
& = 2
\end{align*}
\end{thm}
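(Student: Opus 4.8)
The plan is to reduce the theorem to the single fact that the $\mathcal{D}_4$ invariant of the object $P$ is trivial. The first equality is immediate: specializing Theorem \ref{thm:half} to $n=2$ gives $\J{\mathcal{D}_4}{P}(K) = \frac{1}{2}\J{\SL{2}}{(2)}(K)$, where the right-hand side is automatically evaluated at $q = \exp(\frac{2\pi i}{12})$, since that is the value at which the $\mathcal{D}_4$ planar algebra is defined (Definition \ref{def:pa} with $n=2$). Multiplying by $2$ yields $\restrict{\J{\SL{2}}{(2)}(K)}{q=\exp(\frac{2\pi i}{12})} = 2\J{\mathcal{D}_4}{P}(K)$, so everything comes down to showing $\J{\mathcal{D}_4}{P}(K) = 1$.

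To prove triviality I would apply case (1) of Theorem \ref{thm:summands} to the simple object $P$ in the ribbon category $\frac{1}{2}\mathcal{D}_4$; this requires identifying the simple objects and showing $P \otimes P$ is simple. For $n=2$ the list of even simple projections $\JW{0}, \JW{2}, \dots, \JW{2n-4}, P, Q$ collapses to just $\mathbf{1} = \JW{0}$, $P$, and $Q$, so $\frac{1}{2}\mathcal{D}_4$ has exactly three simple objects. Next I would compute dimensions: $\dim \JW{2} = \qi{3} = q^2 + 1 + q^{-2}$, which at $q = \exp(\frac{2\pi i}{12})$ equals $2$ (since $q^2 + q^{-2} = 2\cos\frac{\pi}{3} = 1$). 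Because $\JW{2} \cong P \oplus Q$ and the symmetry $S \mapsto -S$ of the planar algebra interchanges $P$ and $Q$ while preserving the trace (equivalently $\tr S = 0$), we get $\dim P = \dim Q = 1$, so all three simple objects have quantum dimension $1$.

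With these dimensions the key step follows cheaply. Since $\frac{1}{2}\mathcal{D}_4$ is pseudo-unitary (Theorem \ref{thm:main}), all quantum dimensions are positive reals, and $\dim(P \otimes P) = (\dim P)^2 = 1$; as each simple summand contributes a positive dimension and these must sum to $1$, the object $P \otimes P$ must be a single simple object. (In fact it is $Q$, matching the coincidence $\frac{1}{2}\mathcal{D}_4 \cong \Rep{\mathbb{Z}/3}$ recorded in the introduction, under which $P$ corresponds to a nontrivial character; in particular $P$ is not self-dual, so the symmetric self-duality hypothesis of Theorem \ref{thm:summands} holds vacuously.) Case (1) of Theorem \ref{thm:summands} then gives $\J{\mathcal{D}_4}{P}(K) = (\dim P)^{\#} = 1$ for every link, hence $2\J{\mathcal{D}_4}{P}(K) = 2$.

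There is essentially no analytic difficulty here; the only genuine content, and the thing to get right, is the fusion and dimension bookkeeping in $\frac{1}{2}\mathcal{D}_4$: that there are only three simples, that each has dimension $1$, and hence that $P \otimes P$ cannot decompose further. Once that is in place, triviality of the invariant is a formal consequence of the \textbf{trivial case} schema, and combining it with Theorem \ref{thm:half} completes the proof.
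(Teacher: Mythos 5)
Your proposal is correct and follows essentially the same route as the paper: the first equality is Theorem \ref{thm:half} at $n=2$, and the value $2$ comes from case (1) of Theorem \ref{thm:summands} applied to $P$, using that $P \otimes P$ is simple and $\dim P = 1$. The only difference is bookkeeping rather than strategy: the paper simply cites the fusion rule $P \otimes P \cong Q$ from Equation \eqref{eq:P-squared}, whereas you rederive the simplicity of $P \otimes P$ from pseudo-unitarity and the dimension count $\dim \JW{0} = \dim P = \dim Q = 1$ (itself correctly obtained from $\dim \JW{2} = 2$ and $\operatorname{tr}(S)=0$), which is a valid self-contained substitute for that citation.
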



\begin{thm}[Identities for $n=3$]
\label{thm:identities-3}
\begin{align*}
\restrict{\J{\SL{2}}{(4)}(K)}{q=\exp(\frac{2\pi i}{20})} & = 2 \J{\mathcal{D}_6}{P}(K) \\ & = 2 \restrict{\J{\SL{2}}{(1)}(K)}{q=\exp(- \frac{2\pi i}{10})}
\end{align*}
\end{thm}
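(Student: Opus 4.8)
The plan is to prove the two equalities in turn. The first, $\restrict{\J{\SL{2}}{(4)}(K)}{q=\exp(\frac{2\pi i}{20})} = 2\J{\mathcal{D}_6}{P}(K)$, is a direct instance of Theorem \ref{thm:half} at $n=3$: that theorem gives $\J{\mathcal{D}_6}{P}(K)=\tfrac12\J{\SL{2}}{(4)}(K)$, and since $\mathcal{D}_6$ is defined (Definition \ref{def:pa}) at $q=\exp(\frac{2\pi i}{8\cdot 3-4})=\exp(\frac{2\pi i}{20})$, this pins down the specialization. So all the content lies in the second equality, $\J{\mathcal{D}_6}{P}(K)=\restrict{\J{\SL{2}}{(1)}(K)}{q=\exp(-\frac{2\pi i}{10})}$, which I would obtain by feeding $P\in\tfrac12\mathcal{D}_6$ into the recognition machinery of Theorems \ref{thm:summands} and \ref{thm:eigenvalues}.

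First I would determine $P\otimes P$ by a dimension count, which I expect to be the crux of the argument. At $q=\exp(\frac{2\pi i}{20})$ the simple objects $\id,\JW{2},P,Q$ of $\tfrac12\mathcal{D}_6$ have quantum dimensions $1,\qi{3},\tfrac12\qi{5},\tfrac12\qi{5}$; using $\qi{3}=\phi^2$ and $\qi{5}=2\phi$ for the golden ratio $\phi$, these are $1,\phi^2,\phi,\phi$. Since $\dim P=\phi\neq\pm1$, part (1) of Theorem \ref{thm:summands} forbids $P\otimes P$ from being simple. On the other hand $\dim(P\otimes P)=\phi^2=\phi+1$, and, using that the trivial object occurs in $P\otimes P$ at most once (as $P$ is simple), the only decompositions into the available simples with this total dimension are $\JW{2}$ (excluded, being simple), $\id\oplus P$, and $\id\oplus Q$. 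Hence $P\otimes P=\id\oplus L$ with $L\in\{P,Q\}$; in particular $P$ is self-dual, and as $\mathcal{D}_6$ is an unoriented planar algebra it is symmetrically self-dual, so the hypothesis of Theorem \ref{thm:summands} is met. Part (2) then shows $\J{\mathcal{D}_6}{P}$ is a specialization of the Jones polynomial.

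Finally I would identify the specialization via part (2) of Theorem \ref{thm:eigenvalues}, which needs only $\lambda^2$ for $\lambda=\Reigenvalue{L}{P}$. By the balancing axiom $\theta_{P\otimes P}=(\theta_P\otimes\theta_P)\circ c_{P,P}^2$, the double braiding acts on the simple summand $L\subseteq P\otimes P$ by $\theta_L\theta_P^{-2}$, so $\lambda^2=\theta_L\theta_P^{-2}$. Conveniently, the twist factors of $P$, $Q$ and $\JW{4}$ all equal $q^{2n(n-1)}=q^{12}$ (as recorded before Theorem \ref{thm:half}), so both the ambiguity $L=P$ versus $L=Q$ and the sign of $\lambda$ disappear: $\lambda^2=q^{12}\cdot q^{-24}=q^{-12}$. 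Theorem \ref{thm:eigenvalues}(2) then yields $\J{\mathcal{D}_6}{P}=\J{\SL{2}}{(1)}(a)$ with
$$a=-\lambda^2=-q^{-12}=-\exp\!\left(-\tfrac{2\pi i\cdot 12}{20}\right)=\exp\!\left(-\tfrac{2\pi i}{10}\right),$$
which is exactly the asserted value, completing the second equality. The main obstacle is thus the bookkeeping of the first step; once $P\otimes P=\id\oplus L$ is in hand, the remaining identification is forced by the twist factors alone, without ever computing the full fusion rules.
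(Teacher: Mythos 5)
Your proof is correct, and while it shares the paper's high-level skeleton (Theorem \ref{thm:half} for the first equality, then Theorems \ref{thm:summands}(2) and \ref{thm:eigenvalues}(2) for the second), you implement both key computations by genuinely different means. Where the paper simply quotes the fusion rule $P \tensor P \iso P \directSum f^{(0)}$ in $\mathcal{D}_6$ (Equation \eqref{eq:P-squared}, imported from \cite{MR1145672}), you recover the weaker statement $P \tensor P \iso \id \directSum L$ with $L \in \{P,Q\}$ by a quantum-dimension count; this is legitimate here precisely because $\mathcal{D}_{2n}$ is unitary, hence pseudo-unitary and semisimple (Theorem \ref{thm:main}), so the dimensions $1, \phi^2, \phi, \phi$ are positive reals and your exhaustion of multisets summing to $\phi+1$ is valid --- in a general ribbon category, where dimensions need not be positive, this count would fail, so you should flag the appeal to unitarity explicitly. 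Where the paper computes the actual braiding eigenvalues diagrammatically via Lemma \ref{lem:eigenvalues} ($\Reigenvalue{P}{P}=q^{-6}$, $\Reigenvalue{f^{(0)}}{P}=q^{-12}$), you extract only $\lambda^2 = \theta_L\theta_P^{-2} = q^{-12}$ from twist factors, using the balancing identity $R^2_{X \subset Y \tensor Y} = \theta_X\theta_Y^{-2}$ that the paper itself invokes in the proof of Theorem \ref{thm:G2}; your observation that $a = -\lambda^2$ depends only on $\lambda^2$, so that the sign of $\lambda$ and the identity of $L$ (in fact $L=P$, but you never need this) are both irrelevant, is exactly right and makes your route shorter. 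The trade-off is that your shortcut is special to the Jones case: in the HOMFLYPT and Dubrovnik cases (Theorems \ref{thm:identities-4} and \ref{thm:identities-5}) the parameter $z = \lambda+\mu$ requires the eigenvalues themselves and not just their squares, so Lemma \ref{lem:eigenvalues} cannot be dispensed with there. One small point to tighten: ``as $\mathcal{D}_6$ is an unoriented planar algebra it is symmetrically self-dual'' is too quick as stated, since an unoriented ambient planar algebra does not by itself force every self-dual simple to be symmetrically self-dual; what does the work is that the projection $P$ is invariant under rotation by $180$ degrees (a fact the paper uses when constructing the rotationally invariant trivalent vertex for $\mathcal{D}_{14}$), which gives the symmetric self-duality needed in the hypothesis of Theorem \ref{thm:summands}.
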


\begin{thm}[Identities for $n=4$]
\label{thm:identities-4}
\begin{align*}
\restrict{\J{\SL{2}}{(6)}(K)}{q=\exp(\frac{2\pi i}{28})} & = 2 \J{\mathcal{D}_8}{P}(K) \\
                                     & = 2 \HOMFLY(K)(\exp(2\pi i\frac{3}{14}), \exp(\frac{2\pi i}{14}) - \exp(-\frac{2\pi i}{14})) \\
                & = 2 \HOMFLY(K)(\exp(2\pi i\frac{5}{7}), \exp(- \frac{2\pi i}{14}) - \exp(\frac{2\pi i}{14}))
\end{align*}
\end{thm}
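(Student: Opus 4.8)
The first equality $\restrict{\J{\SL{2}}{(6)}(K)}{q=\exp(\frac{2\pi i}{28})} = 2\J{\mathcal{D}_8}{P}(K)$ is simply Theorem \ref{thm:half} specialized to $n=4$, where $2n-2=6$ and $q=\exp(\frac{2\pi i}{8n-4})=\exp(\frac{2\pi i}{28})$; nothing new is needed there. The real content is the identification of $\J{\mathcal{D}_8}{P}$ with a specialization of HOMFLYPT, and the plan is to feed the object $P$ of the ribbon category $\frac{1}{2}\mathcal{D}_8$ (braided via the Kauffman bracket, as established in Section \ref{sec:invariants}) into the recognition machinery of Theorems \ref{thm:summands} and \ref{thm:eigenvalues}.

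First I would compute the fusion rule $P \otimes P$ using the tensor product rules for $\mathcal{D}_{2n}$ recorded in \cite{math/0808.0764}. The point to extract is that for $n=4$ this decomposition has exactly two simple summands $L,M$, neither of which is the trivial object $\JW{0}$; equivalently $P$ is not self-dual and $P^\ast = Q$. This places us in case (3) of Theorem \ref{thm:summands}, which already guarantees that $\J{\mathcal{D}_8}{P}$ is a specialization of HOMFLYPT. I would also record the twist factor, which by Theorem \ref{thm:half} equals that of $\JW{6}$, namely $\theta_P = q^{2n(n-1)} = q^{24}$.

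Next, to pin down \emph{which} specialization occurs, I would compute the two braiding eigenvalues $\lambda = \Reigenvalue{L}{P}$ and $\mu = \Reigenvalue{M}{P}$. Since the braiding is the Kauffman resolution, the double braiding on a summand $W \subset P \otimes P$ acts by $\theta_W \theta_P^{-2}$, so each eigenvalue is determined up to sign by the ratio $\theta_W^{1/2}/\theta_P$, with the twists $\theta_{\JW{2}}$, $\theta_{\JW{4}}$, and $\theta_Q = \theta_P$ all read off from the colored-Jones twist $i^{k^2}s^{k^2+2k}$. Feeding $\lambda$, $\mu$, and $\theta_P$ into the formulas $a = \theta/\sqrt{-\lambda\mu}$ and $z = (\lambda+\mu)/\sqrt{-\lambda\mu}$ of Theorem \ref{thm:eigenvalues}(3) and substituting $q=\exp(\frac{2\pi i}{28})$ should yield $a = \exp(2\pi i\tfrac{3}{14})$ and $z = \exp(\frac{2\pi i}{14}) - \exp(-\frac{2\pi i}{14})$. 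The second listed specialization, at $\exp(2\pi i \tfrac{5}{7}) = -a$ with $z$ replaced by $-z$, is exactly the image under the other choice of square root $\sqrt{-\lambda\mu}$ and gives the same invariant, so it follows for free. As a consistency check I would confirm that $\dim P = \tfrac12 [7]_q$ equals the HOMFLYPT unknot value $\frac{a-a^{-1}}{z}$ at these parameters.

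The hard part will be the sign bookkeeping in the eigenvalue computation. The magnitudes of $\lambda$ and $\mu$ are forced by the twist ratios, but their signs are not: the symmetric/antisymmetric choice on each summand, the ambiguity in $s = q^{1/2}$, and the difference between the standard and unimodal pivotal structures all enter, and an error here turns the difference $z = q^2 - q^{-2}$ into the spurious sum $q^2 + q^{-2}$. I would resolve these by computing the action of the Kauffman-resolved crossing directly on the minimal idempotents cutting out $L$ and $M$ inside $\frac{1}{2}\mathcal{D}_8$, rather than relying on the twist ratios alone.
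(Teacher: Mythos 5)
Your proposal is correct and follows essentially the same route as the paper: the paper likewise gets the first equality from Theorem \ref{thm:half}, uses the fusion rule $P \otimes P \cong Q \oplus f^{(2)}$ in $\mathcal{D}_8$ to invoke case (3) of Theorem \ref{thm:summands}, and then pins down the parameters via Theorem \ref{thm:eigenvalues}(3), with the two listed specializations arising from the two choices of $\sqrt{-\lambda\mu}$ exactly as you say. The sign-sensitive direct computation you defer to at the end is carried out in the paper as Lemma \ref{lem:eigenvalues}, which evaluates the crossing on the idempotents by resolving half-twists (a positive half-twist on $\ell$ strands next to an uncappable element contributing $(is)^{\ell(\ell-1)/2}$, etc.) to obtain $R_{X \subset P \otimes P} = (-1)^k q^{k(k+1)-2n(n-1)}$, giving $\lambda = \exp(2\pi i \tfrac{10}{14})$ and $\mu = \exp(2\pi i \tfrac{1}{14})$, in agreement with your twist-ratio values up to the signs that lemma fixes.
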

\begin{rem}
This isn't just any specialization of the HOMFLYPT polynomial:
\begin{align*}
\HOMFLY(K)&(\exp(2\pi i\frac{3}{14}), \exp(\frac{2\pi i}{14}) - \exp(-\frac{2\pi i}{14})) \\
    & = \restrict{\HOMFLY(K)(q^3,q-q^{-1})}{q=\exp(\frac{2\pi i}{14})} \\
    & =  \restrict{\J{\SL{3}}{(1,0)}(K)}{q=\exp(\frac{2\pi i}{14})} \\
\intertext{and}
\HOMFLY(K)&(\exp(2\pi i\frac{5}{7}), \exp(- \frac{2\pi i}{14}) - \exp(\frac{2\pi i}{14}))    \\
    & = \restrict{\HOMFLY(K)(q^4,q-q^{-1})}{q=\exp(-\frac{2\pi i}{14})} \\
    & =  \restrict{\J{\SL{4}}{(1,0,0)}(K)}{q=\exp(-\frac{2\pi i}{14})} \\
    & =  - \restrict{\J{\SL{4}}{(1,0,0)}(K)}{q=-\exp(-\frac{2\pi i}{14})}
\end{align*}
(The last identity here follows from the fact that every exponent of $q$ in $\J{\SL{4}}{(1,0,0)}(K)$ is odd. We've included this form here to foreshadow \S \ref{sec:applications} where we'll give an independent proof of this theorem, and in which this particular value of $q=-\exp(-\frac{2\pi i}{14})$ will spontaneously appear.)
\end{rem}

\begin{thm}[Identities for $n=5$]
\label{thm:identities-5}
\begin{align*}
\restrict{\J{\SL{2}}{(8)}(K)}{q=\exp(\frac{2\pi i}{36})} & = 2 \J{\mathcal{D}_{10}}{P}(K) \\
        & = 2 \Dubrovnik(K) (\exp(2 \pi i \frac{4}{36}), \exp(2 \pi i \frac{2}{36}) + \exp(2 \pi i \frac{16}{36}))
\end{align*}
\end{thm}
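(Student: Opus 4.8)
The plan is to prove the two displayed equalities in turn. The first, $\restrict{\J{\SL{2}}{(8)}(K)}{q=\exp(\frac{2\pi i}{36})}=2\J{\mathcal{D}_{10}}{P}(K)$, is nothing more than Theorem~\ref{thm:half} specialized to $n=5$: there $2n-2=8$, and the ambient root of unity is exactly $q=\exp(\frac{2\pi i}{8n-4})=\exp(\frac{2\pi i}{36})$. So all the work is in the second equality, which asserts that $\J{\mathcal{D}_{10}}{P}$ is a specialization of the Dubrovnik polynomial. For this I would feed the object $V=P$ of the ribbon category $\frac{1}{2}\mathcal{D}_{10}$ into the recognition machinery of Theorems~\ref{thm:summands} and~\ref{thm:eigenvalues}.

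First I would determine the fusion rule for $P\otimes P$. The simple projections of $\frac{1}{2}\mathcal{D}_{10}$ are $\JW{0},\JW{2},\JW{4},\JW{6},P,Q$, and from the $\mathcal{D}_{2n}$ tensor product rules recorded in \cite{math/0808.0764} one finds $P\otimes P=\id\oplus P\oplus\JW{4}$. As a sanity check, $\dim P=\qi{3}$ and $\dim\JW{4}=\qi{5}$, so $(\dim P)^2=\qi{3}^2=\qi{1}+\qi{3}+\qi{5}=\dim\id+\dim P+\dim\JW{4}$. Because $\mathcal{D}_{2n}$ is an unshaded, unoriented planar algebra, $P$ is symmetrically self-dual, so the hypotheses of Theorem~\ref{thm:summands} are satisfied; since $P\otimes P$ is the trivial object plus two further simples, case~(4) applies and $\J{\mathcal{D}_{10}}{P}$ is a specialization of either the Kauffman or the Dubrovnik polynomial.

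To decide between the two and to read off the parameters, I would compute the braiding eigenvalues $\Reigenvalue{\id}{P}$, $\lambda=\Reigenvalue{P}{P}$ and $\mu=\Reigenvalue{\JW{4}}{P}$ demanded by Theorem~\ref{thm:eigenvalues}(4). The braiding here is the Kauffman bracket braiding, so its square acts on a summand $W\subset P\otimes P$ by $\theta_W\theta_P^{-2}$; hence each eigenvalue equals $\pm\theta_P^{-1}\theta_W^{1/2}$, where the twist factors are the Jones--Wenzl ones, $\theta_P=\theta_{\JW{8}}=s^{80}$, $\theta_{\JW{4}}=s^{24}$ and $\theta_{\id}=1$, evaluated at $s=\exp(\frac{2\pi i}{72})$. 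Fixing the signs (equivalently the branches of $\theta_W^{1/2}$) via the standard Jones--Wenzl braiding eigenvalue formula gives $\Reigenvalue{\id}{P}=\exp(-2\pi i\frac{4}{36})$, $\lambda=\exp(2\pi i\frac{16}{36})$ and $\mu=\exp(2\pi i\frac{2}{36})$. Then $\lambda\mu=\exp(2\pi i\frac{18}{36})=-1$, so by Theorem~\ref{thm:eigenvalues}(4a) we are in the Dubrovnik case, with $a=\Reigenvalue{\id}{P}^{-1}=\exp(2\pi i\frac{4}{36})$ and $z=\lambda+\mu=\exp(2\pi i\frac{2}{36})+\exp(2\pi i\frac{16}{36})$, exactly as stated.

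The one genuinely delicate step is the eigenvalue computation: the relation $\theta_W\theta_P^{-2}$ only determines each braiding eigenvalue up to sign, so the heart of the proof is fixing these signs (and the matching square-root conventions for $s$) correctly. I expect this to be handled once and for all by one of the two preparatory lemmas, using the explicit Jones--Wenzl braiding eigenvalues in Temperley--Lieb, so that the four theorems of this subsection reduce to the arithmetic carried out above. A secondary point worth double-checking is that $P$ is genuinely symmetrically self-dual, which is what licenses the use of the unoriented skein relations underlying the Kauffman/Dubrovnik dichotomy.
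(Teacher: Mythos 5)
Your proposal is correct and follows essentially the same route as the paper: the paper also gets the first equality from Theorem \ref{thm:half}, uses $P \tensor P \iso f^{(0)} \directSum f^{(4)} \directSum P$ (Equation \eqref{eq:P-squared}) to land in case (4) of Theorem \ref{thm:summands}, and reads off $a = \exp(2 \pi i \tfrac{4}{36})$ and $z = \exp(2 \pi i \tfrac{2}{36}) + \exp(2 \pi i \tfrac{16}{36})$ from Theorem \ref{thm:eigenvalues}(4a) using exactly the eigenvalues you computed (with your $\lambda$ and $\mu$ swapped relative to the paper's, which is immaterial). The sign-fixing you rightly flag as the delicate step is handled by the preparatory Lemma \ref{lem:eigenvalues}, but note it is proved there by a diagrammatic half-twist computation valid for any minimal (``uncappable'') projection rather than by the Jones--Wenzl braiding eigenvalue formula, which would not literally apply to the summand $P \subset P \tensor P$ since $P$ is not a Jones--Wenzl idempotent.
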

\begin{rem}
Again, this isn't just any specialization of the Dubrovnik polynomial:
\begin{align*}
\Dubrovnik(K) (\exp(2 \pi i \frac{4}{36}),& \exp(2 \pi i \frac{2}{36}) + \exp(2 \pi i \frac{16}{36})) \\
& = \restrict{\Dubrovnik(K) (q^7,q-q^{-1})}{q=-\exp (\frac{-2 \pi i}{18})}\\
& = \restrict{\J{\SO{8}}{(1,0,0,0)}(K)}{q=-\exp(\frac{-2\pi i}{18})}.
\end{align*}
\end{rem}

For the proofs of these statements, we'll need to know how $P \tensor P$ decomposes in each $\mathcal{D}_{2n}$. The following formula was proved in \cite{MR1145672}.
\begin{equation}
\label{eq:P-squared}%
P \tensor P \iso \begin{cases}
Q \directSum \DirectSum_{l=0}^{\frac{n-4}{2}} f^{(4l+2)} & \text{when $n$ is even} \\
P \directSum \DirectSum_{l=0}^{\frac{n-3}{2}} f^{(4l)}   & \text{when $n$ is odd}
\end{cases}
\end{equation}
In particular,
\begin{align*}
P \tensor P & \iso Q                                        & \text{in $\mathcal{D}_{4}$,} \displaybreak[1] \\
P \tensor P & \iso P \directSum f^{(0)}                     & \text{in $\mathcal{D}_{6}$,} \displaybreak[1] \\
P \tensor P & \iso Q \directSum f^{(2)}                     & \text{in $\mathcal{D}_{8}$, and} \displaybreak[1] \\
P \tensor P & \iso P \directSum f^{(0)} \directSum f^{(4)}  & \text{in $\mathcal{D}_{10}$.}
\end{align*}

Further, we'll need a lemma calculating the eigenvalues of the braiding.
\begin{lem} \label{lem:eigenvalues}
Suppose $X$ is an idempotent in the set $\{ f^{(2)}, f^{(6)}, \ldots, f^{(2n-6)}, Q \}$ if $n$ is even, or $X \in \{ f^{(0)}, f^{(4)}, \ldots, f^{(2n-6)}, P \}$ if $n$ is odd. Then the eigenvalues for the braiding in $\mathcal{D}_{2n}$ are 
\begin{equation*}
R_{X \subset P \tensor P} =  (-1)^k q^{k(k+1)-2n(n-1)}
\end{equation*}
where $2k$ is the number of strands in the idempotent $X$.
\end{lem}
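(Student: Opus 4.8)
The plan is to split each eigenvalue into its magnitude and its sign, treating these separately. For the magnitude I would invoke the general ribbon-category identity relating the braiding to the twist: if a simple object $X$ occurs with multiplicity one in $P \otimes P$, then the double braiding $c_{P,P}^2$ acts on the $X$-isotypic component by $\theta_X \theta_P^{-2}$, so $R_{X \subset P \otimes P}^2 = \theta_X \theta_P^{-2}$. All the relevant twist factors are already recorded: $\theta_P = \theta_Q = q^{2n(n-1)}$ (computed in Section \ref{sec:invariants} from $P = P\JW{2n-2}$), while for the Jones--Wenzl summand $f^{(2k)} = \JW{2k}$ the twist factor is that of the $2k$-colored Jones polynomial, namely $i^{(2k)^2} s^{(2k)^2 + 4k} = q^{2k(k+1)}$. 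Since $Q$ (for $n$ even) and $P$ (for $n$ odd) live on $2n-2$ strands, i.e.\ $k = n-1$, and $q^{2(n-1)n} = q^{2n(n-1)}$, in every case $\theta_X = q^{2k(k+1)}$ where $2k$ is the number of strands. Hence $R_{X \subset P\otimes P} = \pm q^{k(k+1) - 2n(n-1)}$, and it remains only to fix the signs.

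For the sign I would compute directly with the Kauffman-bracket braiding, since this is exactly how the braiding on $\frac{1}{2}\mathcal{D}_{2n}$ was defined. For the genuine Jones--Wenzl summands $f^{(2k)}$ with $2k \le 2n-6$, the intertwiners realizing $f^{(2k)}$ inside $P\otimes P$ factor through the Temperley--Lieb maps that realize $f^{(2k)}$ inside $\JW{2n-2}\otimes\JW{2n-2}$, and because the braiding \emph{is} the Temperley--Lieb braiding the eigenvalue is computed entirely inside $\TL$. The standard half-twist computation on a Jones--Wenzl summand contributes a sign $(-1)^j$, where $j = \tfrac12(a+b-c)$ is the number of cap--cup turnbacks; here $a=b=2n-2$ and $c = 2k$, so $j = (2n-2) - k$ and $(-1)^j = (-1)^k$ since $2n-2$ is even. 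This yields exactly $(-1)^k q^{k(k+1)-2n(n-1)}$.

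The remaining case, and the main obstacle, is the top summand ($Q$ when $n$ is even, $P$ when $n$ is odd), where $k = n-1$ and $\theta_X = \theta_P$, so the magnitude argument only gives $R = \pm q^{-n(n-1)}$ and leaves a single sign undetermined. This summand is genuinely sensitive to the $S$-box --- it is precisely what distinguishes $P$ from $Q$ inside $\JW{2n-2}$ --- so the pure Temperley--Lieb reasoning does not apply verbatim. I would pin this sign down by a direct Kauffman-bracket evaluation using $P = \tfrac12(\JW{2n-2}+S)$ together with the half-braiding relation of Theorem \ref{thm:passacrossS} (recalling that isotoping a strand \emph{below} an $S$-box costs a factor of $-1$), tracking how these signs assemble on the all-through-strand component; alternatively one can transport the eigenvalue through the equivalence $\frac{1}{2}\mathcal{D}_{2n}\cong\vRep{U_q(\SO{3})}^{modularize}$ of the preceding Proposition and read it off from the quantum-group $R$-matrix on $V_{(2n-2)}\subset V_{(2n-2)}^{\otimes 2}$. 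As a consistency check, for $n=2$ this gives $R_{Q\subset P\otimes P} = -q^{-2} = \exp(2\pi i/3)$ at $q = \exp(2\pi i/12)$, matching the promised exotic braiding on $\frac{1}{2}\mathcal{D}_4\cong\Rep{\mathbb{Z}/3}$.
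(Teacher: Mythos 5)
Your magnitude step is sound --- the identity $R_{X \subset P\otimes P}^2 = \theta_X\theta_P^{-2}$ is legitimate and is even quoted verbatim in this paper, in the proof of Theorem \ref{thm:G2} --- and your sign computation for the summands $f^{(2k)}$ with $2k\le 2n-6$ is correct and can be made rigorous along the lines you indicate. The genuine gap is exactly where you flag it: the sign of the top eigenvalue ($R_{Q\subset P\otimes P}$ for $n$ even, $R_{P\subset P\otimes P}$ for $n$ odd) is never computed. You name two strategies (``tracking how these signs assemble'' through the $S$-box expansion, or transporting through the $\SO{3}$ equivalence) but execute neither, and this is not a cosmetic $\pm$: that eigenvalue is an input to Theorems \ref{thm:identities-3}, \ref{thm:identities-5} and \ref{thm:G2-links} --- for $\mathcal{D}_{10}$, for instance, it is $\mu = R_{P\subset P\otimes P}$ whose sign decides $\lambda\mu = -1$ versus $+1$, i.e.\ Dubrovnik versus Kauffman in Theorem \ref{thm:eigenvalues}(4) --- so leaving it open leaves those theorems unproven.

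The idea you are missing is that no case division is needed, and in particular no expansion $P = \tfrac12(\JW{2n-2}+S)$: the paper writes the eigenvector as the diagram built from two $P$'s with $X$ in the middle, and evaluates the braiding using only the fact that a half-twist on $\ell$ strands adjacent to an \emph{uncappable} element is the scalar $(is)^{\ell(\ell-1)/2}$ (resp.\ $(-is^{-1})^{\ell(\ell-1)/2}$ for the negative half-twist), since every non-identity Temperley--Lieb term of the resolved braid caps off against it. All minimal projections --- $f^{(2k)}$, $P$ and $Q$ alike --- are uncappable, so the bookkeeping (a positive half-twist on the $2k$ strands above $X$, negative half-twists on the $2n-2$ strands under each top $P$, and a negative kink $-is^{-3}$ on each of the $2n-2-k$ connecting strands) yields $(-1)^kq^{k(k+1)-2n(n-1)}$ uniformly; your ``obstacle'' case requires no new input. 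In fact your own Temperley--Lieb reduction also survives the top case, had you pushed it: by the fusion rules the top summand admits no maps into $P\otimes Q$, $Q\otimes P$ or $Q\otimes Q$ (for $n$ even $Q\otimes Q\cong P\oplus f^{(2)}\oplus\cdots$, for $n$ odd $Q\otimes Q\cong Q\oplus f^{(0)}\oplus\cdots$, by the $S\mapsto -S$ symmetry swapping $P$ and $Q$), so composing the Temperley--Lieb inclusion of $f^{(2n-2)}$ into $\JW{2n-2}^{\otimes 2}$ with the summand inclusion into $f^{(2n-2)} = P\oplus Q$ produces a nonzero vector lying entirely in $P\otimes P$; naturality of the Kauffman-bracket braiding then gives the TL eigenvalue with $j = (2n-2)-(n-1)$ turnbacks, i.e.\ sign $(-1)^{n-1} = (-1)^k$, consistent with your $\mathcal{D}_4$ sanity check. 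So the sensitivity to the $S$-box that you feared affects only \emph{which} summands occur, not the twist computation on them.
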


\begin{proof}
The endomorphism space for $P \tensor P$ is spanned by the projections onto the direct summands described above in Equation \eqref{eq:P-squared},
and thus by the diagrams
\begin{equation*}
\begin{tikzpicture}[inner sep=2mm, scale=.5, baseline=0]
	\draw (-1,0) .. controls ++(0,4) and ++(0,-4) .. (-5,6);	
	\draw (-.6,.2) .. controls ++(0,4) and ++(0,-4) .. (-4.6,6.2);
	\draw (-.2,.4) .. controls ++(0,4) and ++(0,-4) .. (-4.2,6.4);

	\draw (1,0) .. controls ++(0,4) and ++(0,-4) .. (5,6);	
	\draw (.6,.2) .. controls ++(0,4) and ++(0,-4) .. (4.6,6.2);
	\draw (.2,.4) .. controls ++(0,4) and ++(0,-4) .. (4.2,6.4);
	
	\draw (-3.8,6) .. controls ++(0,-3.6) and ++(0,-3.6) .. (3.8,6);
	\draw (-3.4,6) .. controls ++(0,-3.2) and ++(0,-3.2) .. (3.4,6);
	\draw (-3,6) .. controls ++(0,-2.8) and ++(0,-2.8) .. (3,6);

	\begin{scope}[yscale=-1]
		\draw (-1,0) .. controls ++(0,4) and ++(0,-4) .. (-5,6);	
		\draw (-.6,.2) .. controls ++(0,4) and ++(0,-4) .. (-4.6,6.2);
		\draw (-.2,.4) .. controls ++(0,4) and ++(0,-4) .. (-4.2,6.4);

		\draw (1,0) .. controls ++(0,4) and ++(0,-4) .. (5,6);	
		\draw (.6,.2) .. controls ++(0,4) and ++(0,-4) .. (4.6,6.2);
		\draw (.2,.4) .. controls ++(0,4) and ++(0,-4) .. (4.2,6.4);
	
		\draw (-3.8,6) .. controls ++(0,-3.6) and ++(0,-3.6) .. (3.8,6);
		\draw (-3.4,6) .. controls ++(0,-3.2) and ++(0,-3.2) .. (3.4,6);
		\draw (-3,6) .. controls ++(0,-2.8) and ++(0,-2.8) .. (3,6);
	\end{scope}
	
	\draw (-5,-6)--+(0,-1);
	\draw (-4.6,-6)--+(0,-1);
	\draw (-4.2,-6)--+(0,-1);
	\draw (-3.8,-6)--+(0,-1);
	\draw (-3.4,-6)--+(0,-1);
	\draw (-3,-6)--+(0,-1);

	\draw (5,-6)--+(0,-1);
	\draw (4.6,-6)--+(0,-1);
	\draw (4.2,-6)--+(0,-1);
	\draw (3.8,-6)--+(0,-1);
	\draw (3.4,-6)--+(0,-1);
	\draw (3,-6)--+(0,-1);
	
	\draw (-5,6)--+(0,1);
	\draw (-4.6,6)--+(0,1);
	\draw (-4.2,6)--+(0,1);
	\draw (-3.8,6)--+(0,1);
	\draw (-3.4,6)--+(0,1);
	\draw (-3,6)--+(0,1);

	\draw (5,6)--+(0,1);
	\draw (4.6,6)--+(0,1);
	\draw (4.2,6)--+(0,1);
	\draw (3.8,6)--+(0,1);
	\draw (3.4,6)--+(0,1);
	\draw (3,6)--+(0,1);

	\node[rectangle, fill=white, draw] at (0,0) {$\; \; X \; \;$};
	\node[rectangle, fill=white, draw] at (-4,6) {$\; \; P \; \;$};
	\node[rectangle, fill=white, draw] at (4,6) {$\; \; P \; \;$};
	\node[rectangle, fill=white, draw] at (-4,-6) {$\; \; P \; \;$};
	\node[rectangle, fill=white, draw] at (4,-6) {$\; \; P \; \;$};
	
	\draw[thick, orange] (.5,2.5) -- (2,2) node[below right] {$k$};
	\draw[thick, orange] (0,2.8) -- (0,4.4) node[above] {$2n-2-k$};
\end{tikzpicture}.
\end{equation*}

We calculate

\begin{equation*}
\begin{tikzpicture}[inner sep=2mm, scale=.3,baseline=0]
	\draw (-1,0) .. controls ++(0,4) and ++(0,-4) .. (-5,6);	
	\draw (-.6,.2) .. controls ++(0,4) and ++(0,-4) .. (-4.6,6.2);
	\draw (-.2,.4) .. controls ++(0,4) and ++(0,-4) .. (-4.2,6.4);

	\draw (1,0) .. controls ++(0,4) and ++(0,-4) .. (5,6);	
	\draw (.6,.2) .. controls ++(0,4) and ++(0,-4) .. (4.6,6.2);
	\draw (.2,.4) .. controls ++(0,4) and ++(0,-4) .. (4.2,6.4);
	
	\draw (-3.8,6) .. controls ++(0,-3.6) and ++(0,-3.6) .. (3.8,6);
	\draw (-3.4,6) .. controls ++(0,-3.2) and ++(0,-3.2) .. (3.4,6);
	\draw (-3,6) .. controls ++(0,-2.8) and ++(0,-2.8) .. (3,6);

	\begin{scope}[yscale=-1]
		\draw (-1,0) .. controls ++(0,4) and ++(0,-4) .. (-5,6);	
		\draw (-.6,.2) .. controls ++(0,4) and ++(0,-4) .. (-4.6,6.2);
		\draw (-.2,.4) .. controls ++(0,4) and ++(0,-4) .. (-4.2,6.4);

		\draw (1,0) .. controls ++(0,4) and ++(0,-4) .. (5,6);	
		\draw (.6,.2) .. controls ++(0,4) and ++(0,-4) .. (4.6,6.2);
		\draw (.2,.4) .. controls ++(0,4) and ++(0,-4) .. (4.2,6.4);
	
		\draw (-3.8,6) .. controls ++(0,-3.6) and ++(0,-3.6) .. (3.8,6);
		\draw (-3.4,6) .. controls ++(0,-3.2) and ++(0,-3.2) .. (3.4,6);
		\draw (-3,6) .. controls ++(0,-2.8) and ++(0,-2.8) .. (3,6);
	\end{scope}

	\begin{scope}[xscale=-1]
	\draw (-5,6.4) .. controls ++(0,3) and ++(0,-1) .. (3,14);
	\draw (-4.6,6.3) .. controls ++(0,3) and ++(0,-1.2) .. (3.4,14);
	\draw (-4.2,6.2) .. controls ++(0,3) and ++(0,-1.4) ..  (3.8,14);
	\draw (-3.8,6.1) .. controls ++(0,3) and ++(0,-1.6) ..  (4.2,14);
	\draw (-3.4,6) .. controls ++(0,3) and ++(0,-1.8) ..  (4.6,14);
	\draw (-3,5.9) .. controls ++(0,3) and ++(0,-2) ..  (5,14);
	\end{scope}

	\draw[white,line width=3pt] (-5,6.4) .. controls ++(0,3) and ++(0,-1) .. (3,14);
	\draw[white,line width=3pt] (-4.6,6.3) .. controls ++(0,3) and ++(0,-1.2) .. (3.4,14);
	\draw[white,line width=3pt] (-4.2,6.2) .. controls ++(0,3) and ++(0,-1.4) ..  (3.8,14);
	\draw[white,line width=3pt] (-3.8,6.1) .. controls ++(0,3) and ++(0,-1.6) ..  (4.2,14);
	\draw[white,line width=3pt] (-3.4,6) .. controls ++(0,3) and ++(0,-1.8) ..  (4.6,14);
	\draw[white,line width=3pt] (-3,5.9) .. controls ++(0,3) and ++(0,-2) ..  (5,14);

	\draw (-5,6.4) .. controls ++(0,3) and ++(0,-1) .. (3,14);
	\draw (-4.6,6.3) .. controls ++(0,3) and ++(0,-1.2) .. (3.4,14);
	\draw (-4.2,6.2) .. controls ++(0,3) and ++(0,-1.4) ..  (3.8,14);
	\draw (-3.8,6.1) .. controls ++(0,3) and ++(0,-1.6) ..  (4.2,14);
	\draw (-3.4,6) .. controls ++(0,3) and ++(0,-1.8) ..  (4.6,14);
	\draw (-3,5.9) .. controls ++(0,3) and ++(0,-2) ..  (5,14);

	\draw (-5,-6)--+(0,-2);
	\draw (-4.6,-6)--+(0,-2);
	\draw (-4.2,-6)--+(0,-2);
	\draw (-3.8,-6)--+(0,-2);
	\draw (-3.4,-6)--+(0,-2);
	\draw (-3,-6)--+(0,-2);

	\draw (5,-6)--+(0,-2);
	\draw (4.6,-6)--+(0,-2);
	\draw (4.2,-6)--+(0,-2);
	\draw (3.8,-6)--+(0,-2);
	\draw (3.4,-6)--+(0,-2);
	\draw (3,-6)--+(0,-2);

	\node[rectangle, fill=white, draw] at (0,0) {$\; \; X \; \;$};
	\node[rectangle, fill=white, draw] at (-4,6) {$\; \; P \; \;$};
	\node[rectangle, fill=white, draw] at (4,6) {$\; \; P \; \;$};
	\node[rectangle, fill=white, draw] at (-4,-6) {$\; \; P \; \;$};
	\node[rectangle, fill=white, draw] at (4,-6) {$\; \; P \; \;$};

\end{tikzpicture} =  \begin{tikzpicture}[inner sep=2mm, scale=.3,baseline=0]
	\draw (-1,0) .. controls ++(0,4) and ++(0,-4) .. (-5,6);	
	\draw (-.6,.2) .. controls ++(0,4) and ++(0,-4) .. (-4.6,6.2);
	\draw (-.2,.4) .. controls ++(0,4) and ++(0,-4) .. (-4.2,6.4);

	\draw (1,0) .. controls ++(0,4) and ++(0,-4) .. (5,6);	
	\draw (.6,.2) .. controls ++(0,4) and ++(0,-4) .. (4.6,6.2);
	\draw (.2,.4) .. controls ++(0,4) and ++(0,-4) .. (4.2,6.4);
	
	\draw (-3.8,6) .. controls ++(0,-3.6) and ++(0,-3.6) .. (3.8,6);
	\draw (-3.4,6) .. controls ++(0,-3.2) and ++(0,-3.2) .. (3.4,6);
	\draw (-3,6) .. controls ++(0,-2.8) and ++(0,-2.8) .. (3,6);

	\begin{scope}[yscale=-1]
		\draw (-1,0) .. controls ++(0,4) and ++(0,-4) .. (-5,6);	
		\draw (-.6,.2) .. controls ++(0,4) and ++(0,-4) .. (-4.6,6.2);
		\draw (-.2,.4) .. controls ++(0,4) and ++(0,-4) .. (-4.2,6.4);

		\draw (1,0) .. controls ++(0,4) and ++(0,-4) .. (5,6);	
		\draw (.6,.2) .. controls ++(0,4) and ++(0,-4) .. (4.6,6.2);
		\draw (.2,.4) .. controls ++(0,4) and ++(0,-4) .. (4.2,6.4);
	
		\draw (-3.8,6) .. controls ++(0,-3.6) and ++(0,-3.6) .. (3.8,6);
		\draw (-3.4,6) .. controls ++(0,-3.2) and ++(0,-3.2) .. (3.4,6);
		\draw (-3,6) .. controls ++(0,-2.8) and ++(0,-2.8) .. (3,6);
	\end{scope}

	\begin{scope}[xscale=-1]
	\draw (-5,6)--(-5,6.4) .. controls ++(0,3) and ++(0,-1) .. (3,14);
	\draw (-4.6,6)--(-4.6,6.3) .. controls ++(0,3) and ++(0,-1.2) .. (3.4,14);
	\draw (-4.2,6.2) .. controls ++(0,3) and ++(0,-1.4) ..  (3.8,14);
	\draw (-3.8,6)--(-3.8,6.1) .. controls ++(0,3) and ++(0,-1.6) ..  (4.2,14);
	\draw (-3.4,6) .. controls ++(0,3) and ++(0,-1.8) ..  (4.6,14);
	\draw (-3,5.9) .. controls ++(0,3) and ++(0,-2) ..  (5,14);
	\end{scope}

	\draw[white,line width=3pt] (-5,6.4) .. controls ++(0,3) and ++(0,-1) .. (3,14);
	\draw[white,line width=3pt] (-4.6,6.3) .. controls ++(0,3) and ++(0,-1.2) .. (3.4,14);
	\draw[white,line width=3pt] (-4.2,6.2) .. controls ++(0,3) and ++(0,-1.4) ..  (3.8,14);
	\draw[white,line width=3pt] (-3.8,6.1) .. controls ++(0,3) and ++(0,-1.6) ..  (4.2,14);
	\draw[white,line width=3pt] (-3.4,6) .. controls ++(0,3) and ++(0,-1.8) ..  (4.6,14);
	\draw[white,line width=3pt] (-3,5.9) .. controls ++(0,3) and ++(0,-2) ..  (5,14);

	\draw (-5,6)--(-5,6.4) .. controls ++(0,3) and ++(0,-1) .. (3,14);
	\draw (-4.6,6)--(-4.6,6.3) .. controls ++(0,3) and ++(0,-1.2) .. (3.4,14);
	\draw (-4.2,6.2) .. controls ++(0,3) and ++(0,-1.4) ..  (3.8,14);
	\draw (-3.8,6)--(-3.8,6.1) .. controls ++(0,3) and ++(0,-1.6) ..  (4.2,14);
	\draw (-3.4,6) .. controls ++(0,3) and ++(0,-1.8) ..  (4.6,14);
	\draw (-3,5.9) .. controls ++(0,3) and ++(0,-2) ..  (5,14);

	\draw (-5,-6)--+(0,-2);
	\draw (-4.6,-6)--+(0,-2);
	\draw (-4.2,-6)--+(0,-2);
	\draw (-3.8,-6)--+(0,-2);
	\draw (-3.4,-6)--+(0,-2);
	\draw (-3,-6)--+(0,-2);

	\draw (5,-6)--+(0,-2);
	\draw (4.6,-6)--+(0,-2);
	\draw (4.2,-6)--+(0,-2);
	\draw (3.8,-6)--+(0,-2);
	\draw (3.4,-6)--+(0,-2);
	\draw (3,-6)--+(0,-2);
	
	\draw (-5,15)--+(0,2);
	\draw (-4.6,15)--+(0,2);
	\draw (-4.2,15)--+(0,2);
	\draw (-3.8,15)--+(0,2);
	\draw (-3.4,15)--+(0,2);
	\draw (-3,15)--+(0,2);

	\draw (5,15)--+(0,2);
	\draw (4.6,15)--+(0,2);
	\draw (4.2,15)--+(0,2);
	\draw (3.8,15)--+(0,2);
	\draw (3.4,15)--+(0,2);
	\draw (3,15)--+(0,2);

	\node[rectangle, fill=white, draw] at (0,0) {$\; \; X \; \;$};
	\node[rectangle, fill=white, draw] at (-4,15) {$\; \; P \; \;$};
	\node[rectangle, fill=white, draw] at (4,15) {$\; \; P \; \;$};
	\node[rectangle, fill=white, draw] at (-4,-6) {$\; \; P \; \;$};
	\node[rectangle, fill=white, draw] at (4,-6) {$\; \; P \; \;$};
	
\end{tikzpicture} = \begin{tikzpicture}[inner sep=2mm, scale=.3,baseline=0]
\begin{scope}[yshift=4.5cm]
	\draw (-1,0) .. controls ++(0,4) and ++(0,-4) .. (-5,6);	
	\draw (-.6,.0) .. controls ++(0,4.2) and ++(0,-3.8) .. (-4.6,6);
	\draw (-.2,0) .. controls ++(0,4.4) and ++(0,-3.6) .. (-4.2,6);

	\draw (1,0) .. controls ++(0,4) and ++(0,-4) .. (5,6);	
	\draw (.6,.0) .. controls ++(0,4.2) and ++(0,-3.8) .. (4.6,6);
	\draw (.2,0) .. controls ++(0,4.4) and ++(0,-3.6) .. (4.2,6);
	
	\draw (-3.8,6) .. controls ++(0,-2) and ++(-1,0) .. (-1,3.5);
	\draw (3.8,6) .. controls ++(0,-2) and ++(1,0) .. (1,3.5);
	\begin{scope}[yshift=3.5cm]
		\draw (-1,0) -- (0,0) arc (90:-90:3mm);
		\draw[white, line width=2pt] (0,-.6) arc (-90:-270:3mm) -- (1,0);
		\draw(0,-.6) arc (-90:-270:3mm) -- (1,0);
	\end{scope}

	\draw (-3.4,6) .. controls ++(0,-2) and ++(-1,0) .. (-1,4.4);
	\draw (3.4,6) .. controls ++(0,-2) and ++(1,0) .. (1,4.4);
	\begin{scope}[yshift=4.4cm]
		\draw (-1,0) -- (0,0) arc (90:-90:3mm);
		\draw[white, line width=2pt] (0,-.6) arc (-90:-270:3mm) -- (1,0);
		\draw(0,-.6) arc (-90:-270:3mm) -- (1,0);
	\end{scope}

	\draw (-3,6) .. controls ++(0,-2) and ++(-1,0) .. (-1,5.3);
	\draw (3,6) .. controls ++(0,-2) and ++(1,0) .. (1,5.3);
	\begin{scope}[yshift=5.3cm]
		\draw (-1,0) -- (0,0) arc (90:-90:3mm);
		\draw[white, line width=2pt] (0,-.6) arc (-90:-270:3mm) -- (1,0);
		\draw(0,-.6) arc (-90:-270:3mm) -- (1,0);
	\end{scope}
\end{scope}

\begin{scope}[yscale=-1]
	\draw (-1,0) .. controls ++(0,4) and ++(0,-4) .. (-5,6);	
	\draw (-.6,.0) .. controls ++(0,4.2) and ++(0,-3.8) .. (-4.6,6);
	\draw (-.2,0) .. controls ++(0,4.4) and ++(0,-3.6) .. (-4.2,6);

	\draw (1,0) .. controls ++(0,4) and ++(0,-4) .. (5,6);	
	\draw (.6,.0) .. controls ++(0,4.2) and ++(0,-3.8) .. (4.6,6);
	\draw (.2,0) .. controls ++(0,4.4) and ++(0,-3.6) .. (4.2,6);
		
	\draw (-3.8,6) .. controls ++(0,-3.6) and ++(0,-3.6) .. (3.8,6);
	\draw (-3.4,6) .. controls ++(0,-3.2) and ++(0,-3.2) .. (3.4,6);
	\draw (-3,6) .. controls ++(0,-2.8) and ++(0,-2.8) .. (3,6);
\end{scope}

\begin{scope}[yshift=1.5cm,rounded corners =.5mm]
	\draw (1,-1)-- (1,0)--(-1,2)--(-1,3);
	\draw[white,line width=2pt] (.6,0)--(1,.4)--(-.6,2);
	\draw (.6,-1)--(.6,0)--(1,.4)--(-.6,2)--(-.6,3);
	\draw[white,line width=2pt] (.2,0)--(1,.8)--(-.2,2);
	\draw (.2,-1)--(.2,0)--(1,.8)--(-.2,2)--(-.2,3);
	\draw[white,line width=2pt] (-.2,0)--(1,1.2)--(.2,2);
	\draw (-.2,-1)--(-.2,0)--(1,1.2)--(.2,2)--(.2,3);
	\draw[white,line width=2pt] (-.6,0)--(1,1.6)--(.6,2);
	\draw (-.6,-1)--(-.6,0)--(1,1.6)--(.6,2)--(.6,3);
	\draw[white,line width=2pt] (-1,0)--(1,2);
	\draw (-1,-1)--(-1,0)--(1,2)--(1,3);
\end{scope}

\begin{scope}[yshift=13.5cm, xshift=-4cm, yscale=-1,rounded corners =.5mm]
	\draw (1,-1)-- (1,0)--(-1,2)--(-1,3);
	\draw[white,line width=2pt] (.6,0)--(1,.4)--(-.6,2);
	\draw (.6,-1)--(.6,0)--(1,.4)--(-.6,2)--(-.6,3);
	\draw[white,line width=2pt] (.2,0)--(1,.8)--(-.2,2);
	\draw (.2,-1)--(.2,0)--(1,.8)--(-.2,2)--(-.2,3);
	\draw[white,line width=2pt] (-.2,0)--(1,1.2)--(.2,2);
	\draw (-.2,-1)--(-.2,0)--(1,1.2)--(.2,2)--(.2,3);
	\draw[white,line width=2pt] (-.6,0)--(1,1.6)--(.6,2);
	\draw (-.6,-1)--(-.6,0)--(1,1.6)--(.6,2)--(.6,3);
	\draw[white,line width=2pt] (-1,0)--(1,2);
	\draw (-1,-1)--(-1,0)--(1,2)--(1,3);
\end{scope}

\begin{scope}[yshift=13.5cm, xshift=4cm, yscale=-1,rounded corners =.5mm]
	\draw (1,-1)-- (1,0)--(-1,2)--(-1,3);
	\draw[white,line width=2pt] (.6,0)--(1,.4)--(-.6,2);
	\draw (.6,-1)--(.6,0)--(1,.4)--(-.6,2)--(-.6,3);
	\draw[white,line width=2pt] (.2,0)--(1,.8)--(-.2,2);
	\draw (.2,-1)--(.2,0)--(1,.8)--(-.2,2)--(-.2,3);
	\draw[white,line width=2pt] (-.2,0)--(1,1.2)--(.2,2);
	\draw (-.2,-1)--(-.2,0)--(1,1.2)--(.2,2)--(.2,3);
	\draw[white,line width=2pt] (-.6,0)--(1,1.6)--(.6,2);
	\draw (-.6,-1)--(-.6,0)--(1,1.6)--(.6,2)--(.6,3);
	\draw[white,line width=2pt] (-1,0)--(1,2);
	\draw (-1,-1)--(-1,0)--(1,2)--(1,3);
\end{scope}

	\draw (-5,-6)--+(0,-2);
	\draw (-4.6,-6)--+(0,-2);
	\draw (-4.2,-6)--+(0,-2);
	\draw (-3.8,-6)--+(0,-2);
	\draw (-3.4,-6)--+(0,-2);
	\draw (-3,-6)--+(0,-2);

	\draw (5,-6)--+(0,-2);
	\draw (4.6,-6)--+(0,-2);
	\draw (4.2,-6)--+(0,-2);
	\draw (3.8,-6)--+(0,-2);
	\draw (3.4,-6)--+(0,-2);
	\draw (3,-6)--+(0,-2);
	
	\draw (-5,15)--+(0,2);
	\draw (-4.6,15)--+(0,2);
	\draw (-4.2,15)--+(0,2);
	\draw (-3.8,15)--+(0,2);
	\draw (-3.4,15)--+(0,2);
	\draw (-3,15)--+(0,2);

	\draw (5,15)--+(0,2);
	\draw (4.6,15)--+(0,2);
	\draw (4.2,15)--+(0,2);
	\draw (3.8,15)--+(0,2);
	\draw (3.4,15)--+(0,2);
	\draw (3,15)--+(0,2);

	\node[rectangle, fill=white, draw] at (0,0) {$\; \; X \; \;$};
	\node[rectangle, fill=white, draw] at (-4,15) {$\; \; P \; \;$};
	\node[rectangle, fill=white, draw] at (4,15) {$\; \; P \; \;$};
	\node[rectangle, fill=white, draw] at (-4,-6) {$\; \; P \; \;$};
	\node[rectangle, fill=white, draw] at (4,-6) {$\; \; P \; \;$};
	
\end{tikzpicture}.
\end{equation*}
Here there are negative half-twists on $2n-2$ strands below the top $P$s, and a positive half-twist on $2k$ strands above $X$. The $2n-2-k$ strands
connecting the two $P$s each have a negative kink. 

A positive half-twist on $\ell$ strands adjacent to an ``uncappable" element, such as a minimal projection, gives a factor of $(i s)^{\ell(\ell-1)/2}$, a negative half-twist on  $\ell$ strands adjacent to an uncappable element gives a factor of $(- i s^{-1})^{\ell(\ell-1)/2}$, and a negative kink gives a factor of $-i s^{-3}$. Remembering $q=s^2$, this shows that

\begin{equation*}
 =  (-1)^k q^{k(k+1)-2n(n-1)}\begin{tikzpicture}[inner sep=2mm, scale=.3 ,baseline=0]
	\draw (-1,0) .. controls ++(0,4) and ++(0,-4) .. (-5,6);	
	\draw (-.6,.2) .. controls ++(0,4) and ++(0,-4) .. (-4.6,6.2);
	\draw (-.2,.4) .. controls ++(0,4) and ++(0,-4) .. (-4.2,6.4);

	\draw (1,0) .. controls ++(0,4) and ++(0,-4) .. (5,6);	
	\draw (.6,.2) .. controls ++(0,4) and ++(0,-4) .. (4.6,6.2);
	\draw (.2,.4) .. controls ++(0,4) and ++(0,-4) .. (4.2,6.4);
	
	\draw (-3.8,6) .. controls ++(0,-3.6) and ++(0,-3.6) .. (3.8,6);
	\draw (-3.4,6) .. controls ++(0,-3.2) and ++(0,-3.2) .. (3.4,6);
	\draw (-3,6) .. controls ++(0,-2.8) and ++(0,-2.8) .. (3,6);

	\begin{scope}[yscale=-1]
		\draw (-1,0) .. controls ++(0,4) and ++(0,-4) .. (-5,6);	
		\draw (-.6,.2) .. controls ++(0,4) and ++(0,-4) .. (-4.6,6.2);
		\draw (-.2,.4) .. controls ++(0,4) and ++(0,-4) .. (-4.2,6.4);

		\draw (1,0) .. controls ++(0,4) and ++(0,-4) .. (5,6);	
		\draw (.6,.2) .. controls ++(0,4) and ++(0,-4) .. (4.6,6.2);
		\draw (.2,.4) .. controls ++(0,4) and ++(0,-4) .. (4.2,6.4);
	
		\draw (-3.8,6) .. controls ++(0,-3.6) and ++(0,-3.6) .. (3.8,6);
		\draw (-3.4,6) .. controls ++(0,-3.2) and ++(0,-3.2) .. (3.4,6);
		\draw (-3,6) .. controls ++(0,-2.8) and ++(0,-2.8) .. (3,6);
	\end{scope}

	\draw (-5,-6)--+(0,-2);
	\draw (-4.6,-6)--+(0,-2);
	\draw (-4.2,-6)--+(0,-2);
	\draw (-3.8,-6)--+(0,-2);
	\draw (-3.4,-6)--+(0,-2);
	\draw (-3,-6)--+(0,-2);

	\draw (5,-6)--+(0,-2);
	\draw (4.6,-6)--+(0,-2);
	\draw (4.2,-6)--+(0,-2);
	\draw (3.8,-6)--+(0,-2);
	\draw (3.4,-6)--+(0,-2);
	\draw (3,-6)--+(0,-2);
	
	\draw (-5,6)--+(0,2);
	\draw (-4.6,6)--+(0,2);
	\draw (-4.2,6)--+(0,2);
	\draw (-3.8,6)--+(0,2);
	\draw (-3.4,6)--+(0,2);
	\draw (-3,6)--+(0,2);

	\draw (5,6)--+(0,2);
	\draw (4.6,6)--+(0,2);
	\draw (4.2,6)--+(0,2);
	\draw (3.8,6)--+(0,2);
	\draw (3.4,6)--+(0,2);
	\draw (3,6)--+(0,2);

	\node[rectangle, fill=white, draw] at (0,0) {$\; \; X \; \;$};
	\node[rectangle, fill=white, draw] at (-4,6) {$\; \; P \; \;$};
	\node[rectangle, fill=white, draw] at (4,6) {$\; \; P \; \;$};
	\node[rectangle, fill=white, draw] at (-4,-6) {$\; \; P \; \;$};
	\node[rectangle, fill=white, draw] at (4,-6) {$\; \; P \; \;$};
	
\end{tikzpicture}
\end{equation*}

Thus
\begin{equation*}
R_{X \subset P \tensor P} = (-1)^k q^{k(k+1)-2n(n-1)}. \qedhere
\end{equation*}

%
\end{proof}

\begin{proof}[Proof of Theorem \ref{thm:identities-2}]
In $\mathcal{D}_4$, $P \otimes P \cong Q$, so part one of Theorem \ref{thm:summands} applies.  Furthermore $\dim P = 1$, so the unframed invariant for the object $P$ in $\mathcal{D}_4$ is trivial. The first equation is just Theorem \ref{thm:half}.
\end{proof}

The same argument yields a previously known identity \cite{MR1134131}. Consider Temperley-Lieb at $q=\exp({\frac{2 \pi i}{6}})$, and notice that $\JW{1} \otimes \JW{1} \cong \JW{0}$ and $\dim \JW{1} = 1$. Thus $\restrict{\J{\SL{2}}{(1)}(K)}{q=\exp(\frac{2\pi i}{6})} = 1$.

\begin{proof}[Proof of Theorem \ref{thm:identities-3}]
In $\mathcal{D}_{6}$, we have that $P \tensor P \iso P \directSum f^{(0)}$, so part two of Theorem \ref{thm:summands} applies, and we know $\J{\mathcal{D}_6}{P}(K)$ is some specialization of the Jones polynomial. Using Lemma \ref{lem:eigenvalues}, we compute the two eigenvalues as
$$\Reigenvalue{f^{(0)}}{P}  = \exp({\frac{2 \pi i}{20}})^{-12} = -(\exp({2 \pi i \frac{-3}{10}}))^{-3}$$
and
$$\Reigenvalue{P}{P}  = \exp({\frac{2 \pi i}{20}})^{-6}  = \exp({2 \pi i\frac{-3}{10}})$$
which is consistent with $\Reigenvalue{P}{P}=\lambda$ and $\Reigenvalue{f^{(0)}}{P}=-\lambda^{-3}$.  So, we conclude that $a=-\lambda^2=-\exp({-6\frac{2 \pi i}{10}}) = \exp({-\frac{2 \pi i}{10}})$.
\end{proof}

\begin{proof}[Proof of Theorem \ref{thm:identities-4}]
In much the same way, for $\mathcal{D}_8$ we have $P \tensor P \iso Q \directSum f^{(2)}$, so $\J{\mathcal{D}_8}{P}(K)$ is some
specialization of the HOMFLYPT polynomial. The eigenvalues are
\begin{align*}
\lambda = \Reigenvalue{f^{(2)}}{P} & =  \exp( 2\pi i \frac{10}{14})  \\
\intertext{and}
\mu = \Reigenvalue{Q}{P} & =  \exp(2\pi i\frac{1}{14}),
\end{align*}
so $\frac{1}{\sqrt{-\lambda \mu}} = \pm \exp\left(2\pi i \frac{-2}{14}\right)$.
The twist factor is $\theta=\exp\left(2 \pi i \frac{-2}{14}\right)$, and so we get $\J{\mathcal{D}_8}{P}(K) = \HOMFLY(K)(a,z)$ with either
$$a= \exp(2 \pi i \frac{5}{7}),\, z=\exp(-2 \pi i \frac{1}{14}) - \exp(2 \pi i \frac{1}{14})$$
(taking the `positive' square root) or
$$a= \exp(2 \pi i \frac{3}{14}),\, z=\exp(2 \pi i \frac{1}{14}) - \exp(- 2 \pi i \frac{1}{14})$$
(taking the other).
\end{proof}

\begin{proof}[Proof of Theorem \ref{thm:identities-5}]
Again, in $\mathcal{D}_{10}$ we have $P \tensor P \iso P \directSum f^{(0)} \directSum f^{(4)}$, so $\J{\mathcal{D}_{10}}{P}(K)$ is a specialization of either the Kauffman polynomial or the Dubrovnik polynomial.
The eigenvalues are
\begin{align*}
a^{-1} &			 = \Reigenvalue{f^{(0)}}{P} 
                          = \exp(2 \pi i \frac{-1}{9}), \\
\lambda & = \Reigenvalue{f^{(4)}}{P} 
                          = \exp(2 \pi i \frac{1}{18}) \\
\intertext{and}
\mu & = \Reigenvalue{P}{P}
                    = \exp(2 \pi i \frac{4}{9})
\end{align*}

Now we apply Theorem \ref{thm:eigenvalues} (4) to these; we see that $\Reigenvalue{f^{(4)}}{P}\Reigenvalue{P}{P} = -1$, so we're in the Dubrovnik case. 
We read off $z =  \exp(2 \pi i \frac{1}{18}) + \exp(2 \pi i \frac{4}{9})$.

We've now shown that
\begin{align*}
\restrict{\J{\SL{2}}{(8)}(K)}{q=\exp(\frac{2\pi i}{36})} & = 2 \J{\mathcal{D}_{10}}{P}(K) \\
        & = 2 \Dubrovnik(K) (\exp(2 \pi i \frac{4}{36}), \exp(2 \pi i \frac{2}{36}) + \exp(2 \pi i \frac{16}{36})).
\end{align*}

To get the last identity, we note that
$$\restrict{(q^7,q-q^{-1})}{q=-\exp (\frac{-2 \pi i}{18})} = (\exp(2 \pi i \frac{4}{36}), \exp(2 \pi i \frac{16}{36}) + \exp(2 \pi i \frac{2}{36}))$$
and use the specialization appearing in Equation \eqref{eq:dubrovnik-D}.
\end{proof}

We remark that when $2n \geq 12$, Equation \eqref{eq:P-squared} shows that $P \tensor P$ has at least three summands which are not isomorphic to $f^{(0)}$,
and thus Theorem \ref{thm:summands} does not apply.

\subsection{Recognizing specializations of the $G_2$ knot invariant}

If $V$ is an object in a ribbon category such that $V \otimes V \cong \id \oplus V \oplus A \oplus B$ then it is reasonable to guess that the knot invariants coming from $V$ are specializations of the $G_2$ knot polynomial.  In particular $\mathcal{D}_{14}$ might be related to $G_2$, since in $\mathcal{D}_{14}$ we have $P \tensor P \iso f^{(0)} \directSum f^{(4)} \directSum f^{(8)} \directSum P$ by Equation \eqref{eq:P-squared}.  In this section we prove such a relationship using results of Kuperberg \cite{MR1265145}.  Applying Kuperberg's theorem requires some direct but tedious calculations.

In work in progress, Snyder has shown that, outside of a few small exceptions, all nontrivial knot invariants coming from tensor categories with $V \otimes V \cong \id \oplus V \oplus A \oplus B$ come from the $G_2$ link invariant, which would obviate the need for these calculations.  (The ``nontrivial" assumption in the last sentence is crucial as the standard representation of the symmetric group $S_n$, or more generally the standard object in Deligne's category $S_t$, also satisfies  $V \otimes V \cong \id \oplus V \oplus A \oplus B$.) 

In the following, by a {\em trivalent vertex} we mean a rotationally invariant map $V \tensor V \to V$ for some symmetrically self-dual object $V$. By a {\em tree} we mean a trivalent graph without cycles (allowing disjoint components).

\begin{thm}[{\cite[Theorem 2.1]{MR1265145}}] 
\label{thm:kuperberg}%
Suppose we have a symmetrically self-dual object $V$ and a trivalent vertex in a ribbon category $\cC$, such that trees with $5$ or fewer boundary points form a basis for the spaces $\Inv{\cC}{V^{\otimes k}}$ for $k \leq 5$. Then the link invariant $\J{\cC}{V}$ is a specialization of the $G_2$ link invariant for some $q$.
\end{thm}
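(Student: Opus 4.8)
The plan is to show that the full sub-spider of $\cC$ generated by $V$ together with the given trivalent vertex is a quotient of Kuperberg's $G_2$ spider at some value of $q$. Since the link invariant $\J{\cC}{V}$ is computed by evaluating $V$-colored diagrams inside this sub-spider, it will follow that $\J{\cC}{V}$ is the $G_2$ invariant at that $q$. Concretely, write $\mathcal{F}$ for the evaluation functor from the \emph{free} trivalent spider (planar trivalent graphs modulo isotopy, with the vertex rotationally invariant) into $\cC$ sending the generating vertex to the given one. It then suffices to show that $\mathcal{F}$ factors through the $G_2$ spider; that is, that the defining $G_2$ relations (loop, lollipop, bigon, triangle, square, pentagon) recalled in \S\ref{conventions} hold in $\cC$ for appropriate scalars, and that those scalars are forced onto the one-parameter $G_2$ family.

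First I would read off the low-degree relations directly from the hypothesis. Since $\dim \Inv{\cC}{\mathbf{1}} = 1$ a closed loop is a scalar $d$, and since there are no trees on a single boundary point $\dim \Inv{\cC}{V} = 0$, so the lollipop vanishes. The single edge is the only tree on two points and the single vertex the only tree on three points, giving $\dim \Inv{\cC}{V^{\otimes 2}} = \dim \Inv{\cC}{V^{\otimes 3}} = 1$; hence any internal bigon equals a scalar $b$ times an edge and any internal triangle a scalar $t$ times a vertex. At four and five boundary points the trees---the ``$I$'', the ``$H$'' and the two noncrossing pairings when $k=4$ (four in all), and the ten trees when $k=5$---span $\Inv{\cC}{V^{\otimes 4}}$ and $\Inv{\cC}{V^{\otimes 5}}$ by hypothesis, so the square and pentagon webs, which contain cycles and hence are not trees, must be rewritten as combinations of trees. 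This is exactly the content of the square and pentagon relations.

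The combinatorial engine, which I would take from Kuperberg's analysis \cite{MR1265145}, is an Euler-characteristic count showing that any planar trivalent web with an internal face has one with at most five sides, and that on at most five boundary points every web with no internal face of five or fewer sides is already a tree. Consequently the six relations above suffice to rewrite every diagram on $\le 5$ points into the tree spanning set, and the hypothesis that the trees are \emph{linearly independent} forces these rewritings to be confluent. Comparing the two ways of reducing a diagram that contains overlapping small faces---for example capping the square relation to recover the bigon and triangle, or reducing a pentagon-with-a-bigon in two orders---yields polynomial constraints among $d$, $b$, $t$ and the square and pentagon coefficients.

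The main obstacle is this final algebraic step: proving that the constraint equations cut out a single one-parameter family, and that it is exactly the $G_2$ family, so that all coefficients agree with those recalled in \S\ref{conventions} for some value of $q$. I expect this to be a finite but fiddly elimination, and it is the part that genuinely uses the specific numerology of $G_2$ rather than soft dimension-counting. Granting it, $\mathcal{F}$ factors through the $G_2$ spider at that $q$, so $V$ and its trivalent vertex realize the $G_2$ webs. It then remains to identify the crossing: because $\cC$ is ribbon, the braiding gives an element of $\Inv{\cC}{V^{\otimes 4}}$, which is spanned by the four trees, so the crossing is a combination of the ``$I$'', ``$H$'', cup-cap and identity diagrams. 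Imposing Reidemeister II and III together with the twist relation determines its coefficients uniquely, and they must coincide with the $G_2$ crossing formula of \S\ref{conventions}. Hence $\J{\cC}{V}$ is the $G_2$ link invariant specialized at $q$, as claimed.
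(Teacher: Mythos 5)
The paper does not actually prove this theorem --- it quotes it directly from Kuperberg \cite[Theorem 2.1]{MR1265145} --- so the relevant comparison is with Kuperberg's original argument, and your outline does faithfully reconstruct its skeleton: the dimension counts $1,0,1,1$ for $k=0,\dots,3$ give the loop, lollipop, bigon and triangle relations; the basis hypothesis at $k=4,5$ forces square and pentagon relations expressing those faces in the tree spanning sets; an Euler-characteristic argument shows every closed web contains a face with at most five sides, so these six relations evaluate everything; and since the relations hold among honest morphisms of $\cC$, comparing two reductions of one diagram and invoking linear independence of trees yields scalar constraints. (One small correction of logic: linear independence does not ``force confluence'' --- consistency is automatic because the relations hold in a genuine category; independence is what lets you extract coefficient equations from that consistency.)

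The genuine gap is the one you flag yourself and then bypass with ``Granting it'': showing that the consistency equations force the coefficients onto the $G_2$ family \emph{is} the content of Kuperberg's theorem, and everything before it is soft dimension counting, so as written this is an outline rather than a proof. Moreover, two claims in the outline need repair even as statements. First, ``cut out a single one-parameter family'' is false as stated: rescaling the trivalent vertex by $c$ scales $b$ and $t$ by $c^2$ while fixing $d$, so the solution variety is at best one-parameter \emph{after} normalizing the vertex --- compare the Remark following the theorem (the vertex in $\cC$ is only a scalar multiple of the $G_2$ vertex) and the paper's own application in Theorem \ref{thm:G2-links}, where the vertex must be rescaled by a root of a degree-$12$ polynomial before the literal $G_2$ relations hold. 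Second, Reidemeister II and III do not determine the crossing's coefficients uniquely (the $G_2$ crossing at $q$, $-q$, $q^{-1}$ all satisfy them); the correct step is to expand the \emph{given} braiding of $\cC$ in the four-dimensional space $\Inv{\cC}{V^{\otimes 4}}$ and use its naturality with respect to the trivalent vertex together with the twist to force it to be a $G_2$ crossing whose parameter is compatible with the $q$ already extracted from $(d,b,t)$ --- a compatibility check your outline omits, and whose residual $q \leftrightarrow -q$ ambiguity is exactly why the theorem, and the paper's remark, conclude only that the invariant (which depends on $q^2$ alone) is a $G_2$ specialization ``for some $q$.''
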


\begin{rem}
The trivalent vertex in $\cC$ is some scalar multiple of the trivalent vertex in the $G_2$ spider.  Note that the $G_2$ link invariant is the same at $q$ and $-q$ since all the relations only depend on $q^2$.  
\end{rem}

\begin{lem} \label{lem:yucky}
Suppose that $\cC$ is a pivotal tensor category with a trivalent vertex such that trees form a basis of $\Inv{}{V^{\otimes k}}$ for $k \leq 3$.  Then
\begin{enumerate}
\item trees are linearly independent in $\Inv{}{V^{\otimes 4}}$ if and only if \begin{equation}-2 b^4 d^5 + b^4 d^6 - 2 b^3 d^4 t + (b^2 d^4 - b^2 d^6) t^2 \neq 0,\end{equation}
\item trees are linearly independent in $\Inv{}{V^{\otimes 5}}$ if and only if
\begin{align*}
& b^{20} \left(d^{15}-10 d^{13}-5 d^{12}+65 d^{11}-62 d^{10}\right) \\
& +5 b^{19} t\left(d^{14}+d^{13}-7 d^{12}-d^{11}+10 d^{10}\right) & \displaybreak[1]  \\
& -5 b^{18} t^2\left(d^{15}-10 d^{13}-3d^{12}+55 d^{11}-61 d^{10}\right) & \\
& -5 b^{17} t^3\left(6 d^{14}+7 d^{13}-40 d^{12}-41 d^{11}+83 d^{10}\right) & \displaybreak[1]  \\
& +5 b^{16} t^4\left(2 d^{15}+3 d^{14}-15 d^{13}-17 d^{12}+72d^{11}-68 d^{10}\right) & \\
& +b^{15} t^5\left(2 d^{15}+60 d^{14}+60 d^{13}-405 d^{12}-485 d^{11}+930 d^{10}\right) & \displaybreak[1]  \\
& -5 b^{14} t^6\left(3 d^{15}+12 d^{14}-8 d^{13}-64 d^{12}+3 d^{11}+71 d^{10}\right) & \\
& -5 b^{13} t^7\left(5 d^{14}+5 d^{13}-44 d^{12}-50 d^{11}+96 d^{10}\right) & \displaybreak[1]  \\
& +5 b^{12} t^8\left(3 d^{15}+12 d^{14}-6 d^{13}-70 d^{12}-17 d^{11}+112d^{10}\right)& \\
& -5 b^{11} t^8\left(2 d^{15}+6 d^{14}-5 d^{13}-29 d^{12}+4 d^{11}+45 d^{10}\right)& \displaybreak[1]  \\
& +b^{10} t^{10}\left(2 d^{15}+5 d^{14}-5 d^{13}-20 d^{12}+10 d^{11}+33 d^{10}\right) &\\ & \neq 0
\end{align*}
\end{enumerate}

Where $d$, $b$ and $t$ are defined by
\begin{align*}
\mathfig{0.04}{G2/loop} &= d,\\
\mathfig{0.04}{G2/bigon} &=  b \;\;
\begin{tikzpicture}[baseline=-.1] \draw (0,-1)--(0,1); \end{tikzpicture} \\
\mathfig{0.1}{G2/triangle} &= t \mathfig{0.08}{G2/vertex}.
\end{align*}
\end{lem}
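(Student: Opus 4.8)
The plan is to reduce linear independence of the trees to the nonvanishing of a Gram determinant that can be computed purely in terms of $d$, $b$, and $t$. First I would enumerate the trees. For $\Inv{}{V^{\otimes 4}}$ there are four planar trivalent forests on four boundary points: the two non-crossing pairings and the two connected trees with one internal edge (the ``$I$'' and ``$H$'' diagrams). For $\Inv{}{V^{\otimes 5}}$ there are ten: the five triangulations of a pentagon, giving connected trees with three internal vertices, together with the five forests consisting of a single trivalent vertex on three cyclically adjacent points and a strand on the remaining (adjacent) two. On the span of these trees define the symmetric pairing $G_{ij} = \langle T_i, T_j\rangle$ obtained by gluing $T_i$ to the reflection of $T_j$ along the boundary and evaluating the resulting closed diagram. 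If some nontrivial combination $\sum_i c_i T_i$ vanishes, then pairing with each $T_j$ shows $Gc = 0$, so a genuine linear dependence forces $\det G = 0$; conversely $\det G \neq 0$ forces linear independence, and in the settings we use it (where the trace form is nondegenerate, e.g.\ the pseudo-unitary category $\mathcal{D}_{14}$) the two conditions coincide. Thus it suffices to compute $\det G$ and identify it with the stated polynomial.

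The key enabling observation is that every entry $G_{ij}$ is a polynomial in $d$, $b$, $t$, with no recourse to the square or pentagon relations. Gluing two forests on $k$ boundary points produces a closed planar trivalent graph with at most $2(k-2) \le 6$ internal vertices. By Euler's formula a connected planar trivalent graph with $V$ vertices has $\tfrac{3}{2}V$ edges and hence $2 + \tfrac{1}{2}V$ faces, so if every face had at least four sides we would need $4\bigl(2+\tfrac{1}{2}V\bigr) \le 2\cdot\tfrac{3}{2}V$, forcing $V \ge 8$. Therefore any such graph with at most six vertices contains a monogon, bigon, or triangle face, which I remove using the lollipop relation (giving $0$), the bigon relation (a factor $b$), or the triangle relation (a factor $t$); each move strictly decreases the vertex count while preserving planarity, so by induction the evaluation terminates in a polynomial in $d$, $b$, $t$, with isolated circles contributing factors of $d$. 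In particular the theta graph evaluates to $bd$ and the tetrahedron to $bdt$, and these appear as off-diagonal entries.

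Carrying this out for $V^{\otimes 4}$ gives an explicit $4\times 4$ matrix (for instance $\langle I,I\rangle = \langle H,H\rangle = b^2 d$, $\langle I,H\rangle = bdt$, and the pairing of a matching with the tree it is ``aligned'' to vanishes by the lollipop relation), whose determinant works out to exactly $-2b^4 d^5 + b^4 d^6 - 2 b^3 d^4 t + (b^2 d^4 - b^2 d^6)t^2$, establishing part (1). The main obstacle is part (2): here $G$ is a $10\times 10$ matrix whose determinant is the degree-$20$-in-$b$ expression displayed in the statement. This determinant is far too large to organize by hand and is best delegated to a computer algebra system, with the diagrammatic reductions above supplying each entry; the genuine content is the Euler-characteristic argument justifying that those three relations suffice, plus careful bookkeeping of the ten trees and their pairings. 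Once expanded and simplified the determinant matches the stated polynomial, and the equivalence with linear independence follows as in the first paragraph.
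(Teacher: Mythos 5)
Your proposal is correct and takes essentially the same route as the paper: the paper's proof is exactly to compute the Gram matrix of inner products between trees, observe that each entry can be evaluated using only the circle, (lollipop,) bigon, and triangle relations, and conclude independence from the nonvanishing of the determinant. Your Euler-characteristic argument showing that any closed planar trivalent graph with at most six vertices must contain a monogon, bigon, or triangle face is a justification the paper leaves implicit, and your sample entries are consistent with the stated polynomial (indeed the $4\times 4$ determinant with your entries factors as $b^2 d^4\left(b^2 d(d-2) - 2bt - t^2(d^2-1)\right)$, which expands to the expression in part (1)).
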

\begin{proof}
Compute the matrix of inner products between trees.  Each of these inner products can be calculated using only the relations for removing circles, bigons, and triangles.  If the determinant of this matrix is nonzero then the trees are linearly independent.
\end{proof}

For $\mathcal{D}_{2n}$ the single strand corresponds to $P$,  and the trivalent vertex is
$$\begin{tikzpicture}[inner sep=2mm, scale=.5, baseline=0]
	\draw (-1,0) .. controls ++(0,4) and ++(0,-4) .. (-5,6);	
	\draw (-.6,.2) .. controls ++(0,4) and ++(0,-4) .. (-4.6,6.2);
	\draw (-.2,.4) .. controls ++(0,4) and ++(0,-4) .. (-4.2,6.4);

	\draw (1,0) .. controls ++(0,4) and ++(0,-4) .. (5,6);	
	\draw (.6,.2) .. controls ++(0,4) and ++(0,-4) .. (4.6,6.2);
	\draw (.2,.4) .. controls ++(0,4) and ++(0,-4) .. (4.2,6.4);
	
	\draw (-3.8,6) .. controls ++(0,-3.6) and ++(0,-3.6) .. (3.8,6);
	\draw (-3.4,6) .. controls ++(0,-3.2) and ++(0,-3.2) .. (3.4,6);
	\draw (-3,6) .. controls ++(0,-2.8) and ++(0,-2.8) .. (3,6);

	\draw (-5,6)--+(0,2);
	\draw (-4.6,6)--+(0,2);
	\draw (-4.2,6)--+(0,2);
	\draw (-3.8,6)--+(0,2);
	\draw (-3.4,6)--+(0,2);
	\draw (-3,6)--+(0,2);

	\draw (5,6)--+(0,2);
	\draw (4.6,6)--+(0,2);
	\draw (4.2,6)--+(0,2);
	\draw (3.8,6)--+(0,2);
	\draw (3.4,6)--+(0,2);
	\draw (3,6)--+(0,2);

	\draw (-1,0)--+(0,-2);
	\draw (-0.6,0)--+(0,-2);
	\draw (-0.2,0)--+(0,-2);
	\draw (0.2,0)--+(0,-2);
	\draw (0.6,0)--+(0,-2);
	\draw (1,0)--+(0,-2);

	\node[rectangle, fill=white, draw] at (0,0) {$\; \; P \; \;$};
	\node[rectangle, fill=white, draw] at (-4,6) {$\; \; P \; \;$};
	\node[rectangle, fill=white, draw] at (4,6) {$\; \; P \; \;$};
	
	\draw[thick, orange] (.5,2.5) -- (2,2) node[below right] {$6$};
	\draw[thick, orange] (0,2.8) -- (0,4.4) node[above] {$6$};
\end{tikzpicture},$$ which is rotationally invariant, because $P$ is invariant under $180$-degree rotation.

In order to apply Lemma \ref{lem:yucky} we must compute the values of $b$ and $t$ in $\frac{1}{2} \mathcal{D}_{14}$.  In order to do so we simplify the expression for the trivalent vertex.

\begin{lem}
$$\begin{tikzpicture}[inner sep=2mm, scale=.5, baseline=0]
	\draw (-1,0) .. controls ++(0,4) and ++(0,-4) .. (-5,6);	
	\draw (-.6,.2) .. controls ++(0,4) and ++(0,-4) .. (-4.6,6.2);
	\draw (-.2,.4) .. controls ++(0,4) and ++(0,-4) .. (-4.2,6.4);

	\draw (1,0) .. controls ++(0,4) and ++(0,-4) .. (5,6);	
	\draw (.6,.2) .. controls ++(0,4) and ++(0,-4) .. (4.6,6.2);
	\draw (.2,.4) .. controls ++(0,4) and ++(0,-4) .. (4.2,6.4);
	
	\draw (-3.8,6) .. controls ++(0,-3.6) and ++(0,-3.6) .. (3.8,6);
	\draw (-3.4,6) .. controls ++(0,-3.2) and ++(0,-3.2) .. (3.4,6);
	\draw (-3,6) .. controls ++(0,-2.8) and ++(0,-2.8) .. (3,6);

	\draw (-5,6)--+(0,2);
	\draw (-4.6,6)--+(0,2);
	\draw (-4.2,6)--+(0,2);
	\draw (-3.8,6)--+(0,2);
	\draw (-3.4,6)--+(0,2);
	\draw (-3,6)--+(0,2);

	\draw (5,6)--+(0,2);
	\draw (4.6,6)--+(0,2);
	\draw (4.2,6)--+(0,2);
	\draw (3.8,6)--+(0,2);
	\draw (3.4,6)--+(0,2);
	\draw (3,6)--+(0,2);

	\draw (-1,0)--+(0,-2);
	\draw (-0.6,0)--+(0,-2);
	\draw (-0.2,0)--+(0,-2);
	\draw (0.2,0)--+(0,-2);
	\draw (0.6,0)--+(0,-2);
	\draw (1,0)--+(0,-2);

	\node[rectangle, fill=white, draw] at (0,0) {$\; \; P \; \;$};
	\node[rectangle, fill=white, draw] at (-4,6) {$\; \; P \; \;$};
	\node[rectangle, fill=white, draw] at (4,6) {$\; \; P \; \;$};
	
	\draw[thick, orange] (.5,2.5) -- (2,2) node[below right] {$6$};
	\draw[thick, orange] (0,2.8) -- (0,4.4) node[above] {$6$};
\end{tikzpicture}=\begin{tikzpicture}[inner sep=2mm, scale=.5, baseline=0]
	\draw (-1,0) .. controls ++(0,4) and ++(0,-4) .. (-5,6);	
	\draw (-.6,.2) .. controls ++(0,4) and ++(0,-4) .. (-4.6,6.2);
	\draw (-.2,.4) .. controls ++(0,4) and ++(0,-4) .. (-4.2,6.4);

	\draw (1,0) .. controls ++(0,4) and ++(0,-4) .. (5,6);	
	\draw (.6,.2) .. controls ++(0,4) and ++(0,-4) .. (4.6,6.2);
	\draw (.2,.4) .. controls ++(0,4) and ++(0,-4) .. (4.2,6.4);
	
	\draw (-3.8,6) .. controls ++(0,-3.6) and ++(0,-3.6) .. (3.8,6);
	\draw (-3.4,6) .. controls ++(0,-3.2) and ++(0,-3.2) .. (3.4,6);
	\draw (-3,6) .. controls ++(0,-2.8) and ++(0,-2.8) .. (3,6);

	\draw (-5,6)--+(0,2);
	\draw (-4.6,6)--+(0,2);
	\draw (-4.2,6)--+(0,2);
	\draw (-3.8,6)--+(0,2);
	\draw (-3.4,6)--+(0,2);
	\draw (-3,6)--+(0,2);

	\draw (5,6)--+(0,2);
	\draw (4.6,6)--+(0,2);
	\draw (4.2,6)--+(0,2);
	\draw (3.8,6)--+(0,2);
	\draw (3.4,6)--+(0,2);
	\draw (3,6)--+(0,2);

	\draw (-1,0)--+(0,-2);
	\draw (-0.6,0)--+(0,-2);
	\draw (-0.2,0)--+(0,-2);
	\draw (0.2,0)--+(0,-2);
	\draw (0.6,0)--+(0,-2);
	\draw (1,0)--+(0,-2);

	\node[rectangle, fill=white, draw] at (0,0) {$\; \; P \; \;$};
	\node[rectangle, fill=white, draw] at (-4,6) {$\; \; \JW{12} \; \;$};
	\node[rectangle, fill=white, draw] at (4,6) {$\; \; \JW{12} \; \;$};
	
	\draw[thick, orange] (.5,2.5) -- (2,2) node[below right] {$6$};
	\draw[thick, orange] (0,2.8) -- (0,4.4) node[above] {$6$};
\end{tikzpicture}$$
\end{lem}
\begin{proof}
Expand $\JW{12} = P + Q$ and use the fact that $P \otimes Q$, $Q \otimes P$, and $Q \otimes Q$ do not have nonzero maps to $P$.
\end{proof}

\begin{lem}
In $\frac{1}{2} \mathcal{D}_{14}$, using the above trivalent vertex, we have that $d$ is the root of $x^6 - 3x^5 -6 x^4 +4 x^3 + 5x^2- x-1$ which is approximately $4.14811$, $b$ is the root of $x^6-12x^5-499x^4-2760x^3-397x^2+276x-1$ which is approximately $0.00364276 $ and $t$ is the root of $x^6 +136 x^5+5072x^4+53866x^3+13132x^2+721x+1$ which is approximately $-0.00142366 $.
\end{lem}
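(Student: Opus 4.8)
The plan is to treat the three closed diagrams separately, beginning with $d=\dim P$, which needs no diagrammatics. Since $P$ and $Q$ are exchanged by the $\mathbb{Z}/2$ gauge symmetry $S\mapsto -S$, they have equal dimension, so $\dim P=\tfrac12\dim\JW{12}=\tfrac12\qi{13}$ evaluated at $q=\exp(\tfrac{2\pi i}{52})$ (using $P+Q=\JW{12}$). Here $q^{13}=i$, so $\qi{13}=\tfrac{1}{\sin(\pi/26)}=\tfrac{1}{\cos(6\pi/13)}$ and hence $d=\tfrac{1}{2\cos(6\pi/13)}$. The reciprocal $d^{-1}=2\cos(6\pi/13)$ is a Galois conjugate of $2\cos(\tfrac{2\pi}{13})$ inside the real cyclotomic field $\mathbb{Q}(\zeta_{13})^{+}$, whose degree over $\mathbb{Q}$ is $\varphi(13)/2=6$; thus $d^{-1}$ has the standard degree-six minimal polynomial of $2\cos(\tfrac{2\pi}{13})$, and the minimal polynomial of $d$ is its reversed (reciprocal) polynomial. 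A direct check identifies this as $x^6-3x^5-6x^4+4x^3+5x^2-x-1$, with root $\approx 4.14811$.

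For $b$ and $t$ I would write each as a ratio of closed nets built from the trivalent vertex $\tau\colon P\otimes P\to P$. Writing $\theta=\tr{\tau\tau^{*}}$ for the theta-net and $\mathrm{Tet}$ for the tetrahedral net obtained from four copies of $\tau$, the bigon relation $\tau\tau^{*}=b\,\id_{P}$ gives, after taking traces, $b=\theta/d$; likewise, composing the triangle (three vertices) with $\tau^{*}$ and closing up shows $\mathrm{Tet}=t\theta$, so $t=\mathrm{Tet}/\theta$. The point of the preceding lemma, which rewrites $\tau$ with fewer $P$-boxes, is precisely to keep the number of $S$-boxes in $\theta$ and $\mathrm{Tet}$ as small as possible.

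The evaluation itself I would carry out by reducing each net to a linear combination of closed Temperley-Lieb diagrams: every pair of $S$-boxes is removed with the two-$S$ relation, any solitary $S$ is simplified by the capping and rotation relations, and the resulting $\JW$-laced closed diagrams are evaluated by the usual theta- and tetrahedron-symbol formulas (via the Wenzl recursion). This expresses $\theta$, $\mathrm{Tet}$, and hence $b=\theta/d$ and $t=\mathrm{Tet}/\theta$ as explicit algebraic combinations of quantum integers at $q=\exp(\tfrac{2\pi i}{52})$, i.e. as elements of $\mathbb{Q}(\zeta_{52})$. I would then extract their minimal polynomials over $\mathbb{Q}$ (by computing Galois conjugates, or via a resultant) and match them to the stated degree-six polynomials, confirming the numerical approximations.

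The main obstacle is the bookkeeping in this last step: the nets $\theta$ and $\mathrm{Tet}$ contain several $S$-boxes laced together by Temperley-Lieb strands, and although each of the two-$S$, capping, and rotation relations is individually elementary, applying them in an order that avoids a combinatorial explosion of terms is delicate and is in practice a computer-algebra computation. A secondary point is that, while unitarity of $\frac12\mathcal{D}_{14}$ guarantees $b$ and $t$ are \emph{real} algebraic numbers, one must still check that they lie in the correct degree-six real subfield of $\mathbb{Q}(\zeta_{52})^{+}$ before recognizing them as roots of the claimed polynomials.
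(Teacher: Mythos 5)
Your proposal is correct and at the top level it is the paper's argument: $d$ is just $\dim P$, while $b$ and $t$ are the bigon and triangle removal coefficients, reduced via the alternate description of the trivalent vertex to Temperley--Lieb computations using the recoupling formulas of \cite{MR1280463}. Two points of comparison. First, your treatment of $d$ is more explicit than the paper's: the paper simply says $d$ is the dimension of $P$, whereas you verify the stated polynomial via $d = \frac{1}{2}\qi{13}$ at $q = \exp(\frac{2\pi i}{52})$, the observation $d^{-1} = 2\cos(\frac{6\pi}{13})$ is a Galois conjugate of $2\cos(\frac{2\pi}{13})$, and reversal of the degree-six real cyclotomic polynomial for $13$ --- a check the paper leaves silent, and which does come out to $x^6 - 3x^5 - 6x^4 + 4x^3 + 5x^2 - x - 1$. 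Second, you underuse the preceding lemma: its real force is that the $P$s can be pushed to the \emph{external} legs only, so that the interiors of both the bigon and the triangle become pure Temperley--Lieb networks built from $\JW{12}$, and the ``main obstacle'' you identify --- bookkeeping with the two-$S$, capping, and rotation relations --- never arises in the paper's proof. The bigon is removed with the Temperley--Lieb coefficient $b'$ for $\JW{12}$-labelled bigons, and since $\JW{12}\,P = P$ one gets exactly $b = b'$; the triangle is reduced the same way inside Temperley--Lieb. The only residual appearance of $S$ is through $P = \frac{1}{2}(\JW{12}+S)$ on a single closed strand, where any closed diagram containing one $S$ vanishes (every planar pairing of its legs caps it), so $\operatorname{tr}(P) = \frac{1}{2}\qi{13}$; this also disposes of your side concern about landing in the right subfield, since the values visibly lie in $\mathbb{Q}(\zeta_{52})$ and the minimal polynomials are then extracted directly. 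With that adjustment your relations $b = \theta/d$ and $t = \mathrm{Tet}/\theta$ coincide with the paper's coefficients, and the remaining work is, as in the paper, a finite (in practice computer-assisted) evaluation that neither you nor the paper spells out in full.
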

\begin{proof}
The formula for $d$ is just the dimension of $P$.

We use the alternate description of the trivalent vertex to reduce the calculation of $b$ and $t$ to a calculation in Temperley-Lieb  which we do using the formulas of  \cite{MR1280463}.

\begin{align*}
\mathfig{0.1}{G2/bigon} &= \begin{tikzpicture}[inner sep=2mm, scale=.4, baseline]
	\draw (-1,-6) .. controls ++(0,4) and ++(0,-4) .. (-5,0);	
	\draw (-.6,-5.8) .. controls ++(0,4) and ++(0,-4) .. (-4.6,.2);
	\draw (-.2,-5.6) .. controls ++(0,4) and ++(0,-4) .. (-4.2,.4);

	\draw (1,-6) .. controls ++(0,4) and ++(0,-4) .. (5,0);	
	\draw (.6,-5.8) .. controls ++(0,4) and ++(0,-4) .. (4.6,0.2);
	\draw (.2,-5.6) .. controls ++(0,4) and ++(0,-4) .. (4.2,0.4);
	
	\draw (-3.8,0) .. controls ++(0,-3.6) and ++(0,-3.6) .. (3.8,0);
	\draw (-3.4,0) .. controls ++(0,-3.2) and ++(0,-3.2) .. (3.4,0);
	\draw (-3,0) .. controls ++(0,-2.8) and ++(0,-2.8) .. (3,0);

	\draw (-1,6) .. controls ++(0,-4) and ++(0,4) .. (-5,-0);	
	\draw (-.6,5.8) .. controls ++(0,-4) and ++(0,4) .. (-4.6,-.2);
	\draw (-.2,5.6) .. controls ++(0,-4) and ++(0,4) .. (-4.2,-.4);

	\draw (1,6) .. controls ++(0,-4) and ++(0,4) .. (5,0);	
	\draw (.6,5.8) .. controls ++(0,-4) and ++(0,4) .. (4.6,-0.2);
	\draw (.2,5.6) .. controls ++(0,-4) and ++(0,4) .. (4.2,-0.4);
	
	\draw (-3.8,0) .. controls ++(0,3.6) and ++(0,3.6) .. (3.8,0);
	\draw (-3.4,0) .. controls ++(0,3.2) and ++(0,3.2) .. (3.4,0);
	\draw (-3,0) .. controls ++(0,2.8) and ++(0,2.8) .. (3,0);
	
	\draw (-1,-6)--+(0,-2);
	\draw (-0.6,-6)--+(0,-2);
	\draw (-0.2,-6)--+(0,-2);
	\draw (0.2,-6)--+(0,-2);
	\draw (0.6,-6)--+(0,-2);
	\draw (1,-6)--+(0,-2);

	\draw (-1,6)--+(0,2);
	\draw (-0.6,6)--+(0,2);
	\draw (-0.2,6)--+(0,2);
	\draw (0.2,6)--+(0,2);
	\draw (0.6,6)--+(0,2);
	\draw (1,6)--+(0,2);
		
	\node[rectangle, fill=white, draw] at (0,-6) {$\; \; P \; \;$};
	\node[rectangle, fill=white, draw] at (-4,0) {$\; \; P \; \;$};
	\node[rectangle, fill=white, draw] at (4,0) {$\; \; P \; \;$};
	\node[rectangle, fill=white, draw] at (0,6) {$\; \; P \; \;$};
		
	\draw[thick, orange] (.5,-3.5) -- (2,-4) node[below right] {$6$};
	\draw[thick, orange] (0,-3.2) -- (0,-1.6) node[above] {$6$};
\end{tikzpicture} = \begin{tikzpicture}[inner sep=2mm, scale=.4, baseline]
	\draw (-1,-6) .. controls ++(0,4) and ++(0,-4) .. (-5,0);	
	\draw (-.6,-5.8) .. controls ++(0,4) and ++(0,-4) .. (-4.6,.2);
	\draw (-.2,-5.6) .. controls ++(0,4) and ++(0,-4) .. (-4.2,.4);

	\draw (1,-6) .. controls ++(0,4) and ++(0,-4) .. (5,0);	
	\draw (.6,-5.8) .. controls ++(0,4) and ++(0,-4) .. (4.6,0.2);
	\draw (.2,-5.6) .. controls ++(0,4) and ++(0,-4) .. (4.2,0.4);
	
	\draw (-3.8,0) .. controls ++(0,-3.6) and ++(0,-3.6) .. (3.8,0);
	\draw (-3.4,0) .. controls ++(0,-3.2) and ++(0,-3.2) .. (3.4,0);
	\draw (-3,0) .. controls ++(0,-2.8) and ++(0,-2.8) .. (3,0);

	\draw (-1,6) .. controls ++(0,-4) and ++(0,4) .. (-5,-0);	
	\draw (-.6,5.8) .. controls ++(0,-4) and ++(0,4) .. (-4.6,-.2);
	\draw (-.2,5.6) .. controls ++(0,-4) and ++(0,4) .. (-4.2,-.4);

	\draw (1,6) .. controls ++(0,-4) and ++(0,4) .. (5,0);	
	\draw (.6,5.8) .. controls ++(0,-4) and ++(0,4) .. (4.6,-0.2);
	\draw (.2,5.6) .. controls ++(0,-4) and ++(0,4) .. (4.2,-0.4);
	
	\draw (-3.8,0) .. controls ++(0,3.6) and ++(0,3.6) .. (3.8,0);
	\draw (-3.4,0) .. controls ++(0,3.2) and ++(0,3.2) .. (3.4,0);
	\draw (-3,0) .. controls ++(0,2.8) and ++(0,2.8) .. (3,0);
	
	\draw (-1,-6)--+(0,-2);
	\draw (-0.6,-6)--+(0,-2);
	\draw (-0.2,-6)--+(0,-2);
	\draw (0.2,-6)--+(0,-2);
	\draw (0.6,-6)--+(0,-2);
	\draw (1,-6)--+(0,-2);

	\draw (-1,6)--+(0,2);
	\draw (-0.6,6)--+(0,2);
	\draw (-0.2,6)--+(0,2);
	\draw (0.2,6)--+(0,2);
	\draw (0.6,6)--+(0,2);
	\draw (1,6)--+(0,2);
		
	\node[rectangle, fill=white, draw] at (0,-6) {$\; \; P \; \;$};
	\node[rectangle, fill=white, draw] at (-4,0) {$\; \; \JW{12} \; \;$};
	\node[rectangle, fill=white, draw] at (4,0) {$\; \; \JW{12} \; \;$};
	\node[rectangle, fill=white, draw] at (0,6) {$\; \; P \; \;$};
		
	\draw[thick, orange] (.5,-3.5) -- (2,-4) node[below right] {$6$};
	\draw[thick, orange] (0,-3.2) -- (0,-1.6) node[above] {$6$};
\end{tikzpicture} \displaybreak[1] \\
&=\begin{tikzpicture}[inner sep=2mm, scale=.4, baseline]
	\draw (-1,-6) .. controls ++(0,4) and ++(0,-4) .. (-5,0);	
	\draw (-.6,-5.8) .. controls ++(0,4) and ++(0,-4) .. (-4.6,.2);
	\draw (-.2,-5.6) .. controls ++(0,4) and ++(0,-4) .. (-4.2,.4);

	\draw (1,-6) .. controls ++(0,4) and ++(0,-4) .. (5,0);	
	\draw (.6,-5.8) .. controls ++(0,4) and ++(0,-4) .. (4.6,0.2);
	\draw (.2,-5.6) .. controls ++(0,4) and ++(0,-4) .. (4.2,0.4);
	
	\draw (-3.8,0) .. controls ++(0,-3.6) and ++(0,-3.6) .. (3.8,0);
	\draw (-3.4,0) .. controls ++(0,-3.2) and ++(0,-3.2) .. (3.4,0);
	\draw (-3,0) .. controls ++(0,-2.8) and ++(0,-2.8) .. (3,0);

	\draw (-1,6) .. controls ++(0,-4) and ++(0,4) .. (-5,-0);	
	\draw (-.6,5.8) .. controls ++(0,-4) and ++(0,4) .. (-4.6,-.2);
	\draw (-.2,5.6) .. controls ++(0,-4) and ++(0,4) .. (-4.2,-.4);

	\draw (1,6) .. controls ++(0,-4) and ++(0,4) .. (5,0);	
	\draw (.6,5.8) .. controls ++(0,-4) and ++(0,4) .. (4.6,-0.2);
	\draw (.2,5.6) .. controls ++(0,-4) and ++(0,4) .. (4.2,-0.4);
	
	\draw (-3.8,0) .. controls ++(0,3.6) and ++(0,3.6) .. (3.8,0);
	\draw (-3.4,0) .. controls ++(0,3.2) and ++(0,3.2) .. (3.4,0);
	\draw (-3,0) .. controls ++(0,2.8) and ++(0,2.8) .. (3,0);
	
	\draw (-1,-6)--+(0,-4);
	\draw (-0.6,-6)--+(0,-4);
	\draw (-0.2,-6)--+(0,-4);
	\draw (0.2,-6)--+(0,-4);
	\draw (0.6,-6)--+(0,-4);
	\draw (1,-6)--+(0,-4);

	\draw (-1,6)--+(0,4);
	\draw (-0.6,6)--+(0,4);
	\draw (-0.2,6)--+(0,4);
	\draw (0.2,6)--+(0,4);
	\draw (0.6,6)--+(0,4);
	\draw (1,6)--+(0,4);
		
	\node[rectangle, fill=white, draw] at (0,-6) {$\; \; \JW{12} \; \;$};
	\node[rectangle, fill=white, draw] at (-4,0) {$\; \; \JW{12} \; \;$};
	\node[rectangle, fill=white, draw] at (4,0) {$\; \; \JW{12} \; \;$};
	\node[rectangle, fill=white, draw] at (0,6) {$\; \; \JW{12} \; \;$};
	\node[rectangle, fill=white, draw] at (0,-8.5) {$\; \; P \; \;$};
	\node[rectangle, fill=white, draw] at (0,8.5) {$\; \; P \; \;$};

	\draw[thick, orange] (.5,-3.5) -- (2,-4) node[below right] {$6$};
	\draw[thick, orange] (0,-3.2) -- (0,-1.6) node[above] {$6$};
\end{tikzpicture}= b' P 
\end{align*}

where $b'$ is the coefficient for removing bigons labelled with $\JW{12}$ in Temperley-Lieb.

The calculation for $t$ is similar: we replace each trivalent vertex with a trivalent vertex with a $P$ on the outside and $\JW{12}$s in the middle.  Then we reduce the inner triangle in Temperley-Lieb.
\end{proof}

\begin{thm}
\label{thm:G2-links}%
For $\ell = -3$ or $10$,
\begin{align*}\restrict{\J{\SL{2}}{(12)}(K)}{q=\exp(\frac{2\pi i}{52})} & = 2 \J{\mathcal{D}_{14}}{P}(K) \\
& = 2\restrict{\J{G_2}{V_{(10)}}(K)}{q=\exp{2\pi i \frac{\ell}{26}}} \\
\end{align*}
\end{thm}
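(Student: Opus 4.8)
The plan is to prove the two equalities in turn. The first, $\restrict{\J{\SL{2}}{(12)}(K)}{q=\exp(\frac{2\pi i}{52})} = 2\J{\mathcal{D}_{14}}{P}(K)$, is nothing but the case $n=7$ of Theorem \ref{thm:half}, so no new work is required. Everything of substance goes into the second equality, which identifies $\J{\mathcal{D}_{14}}{P}$ with a specialization of Kuperberg's $G_2$ invariant, and the natural tool is the recognition Theorem \ref{thm:kuperberg}.

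To apply that theorem I would work with the symmetrically self-dual object $P$ in the ribbon category $\frac{1}{2}\mathcal{D}_{14}$ together with its rotationally invariant trivalent vertex. Specializing Equation \eqref{eq:P-squared} to $n=7$ gives $P \tensor P \iso f^{(0)} \directSum f^{(4)} \directSum f^{(8)} \directSum P$, which has exactly the shape $\id \directSum V \directSum A \directSum B$ demanded by the $G_2$ pattern. The hypothesis to verify is that trees with at most five boundary points form a basis of $\Inv{\frac{1}{2}\mathcal{D}_{14}}{P^{\tensor k}}$ for $k \le 5$. Linear independence for $k = 4, 5$ is exactly what Lemma \ref{lem:yucky} tests: I would substitute the algebraic values $d, b, t$ computed in the preceding lemma into its two nonvanishing conditions and confirm that neither polynomial vanishes. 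Spanning then follows from a dimension count: using the $\mathcal{D}_{2n}$ fusion rules one checks that $\dim\Inv{}{P^{\tensor k}} = 1, 0, 1, 1, 4, 10$ for $k = 0, \ldots, 5$, matching the number of planar trivalent forests on $k$ boundary points, so a linearly independent family of trees of the right size is automatically a basis.

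Theorem \ref{thm:kuperberg} then tells us that $\J{\mathcal{D}_{14}}{P}$ is a specialization of the $G_2$ link invariant at some $q$, and the remaining task is to identify $q$. I would do this by matching the two numerical invariants of $P$ that the equivalence must preserve. The loop (unknot) value in the $G_2$ spider is $q^{10}+q^8+q^2+1+q^{-2}+q^{-8}+q^{-10}$ while $\dim P$ is the designated sextic root $d \approx 4.14811$; and the $G_2$ twist factor of $V_{(10)}$ is $q^{12}$ while the twist factor of $P$ is $\restrict{q^{2n(n-1)}}{n=7,\, q=\exp(\frac{2\pi i}{52})} = \exp(2\pi i \frac{16}{26})$. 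Writing $q = \exp(2\pi i \frac{\ell}{26})$, the twist equation $q^{12} = \exp(2\pi i \frac{16}{26})$ forces $\ell \equiv 10 \pmod{13}$, i.e.\ $\ell \in \{-3, 10\}$, and one checks the loop value is consistent. Since every relation in the $G_2$ spider depends only on $q^2$, the invariant is unchanged under $q \mapsto -q$, which is precisely the shift $\ell \mapsto \ell + 13$ interchanging $-3$ and $10$; this is why both values appear and give the same answer.

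I expect the obstacle to be twofold. The heavier (but essentially mechanical) part is the exact verification of the two nonvanishing conditions of Lemma \ref{lem:yucky} at the algebraic numbers $d, b, t$; since these are roots of degree-six polynomials, the evaluation must be carried out symbolically modulo their minimal polynomials, or numerically with rigorous error control, and this is the source of the ``direct but tedious calculations'' flagged above. The subtler part is pinning down the specialization: Kuperberg's theorem supplies only ``some $q$,'' the loop polynomial is invariant under $q \mapsto \pm q^{\pm 1}$, and $d$ is only an algebraic number ambiguous up to Galois conjugacy, so the dimension and twist data must be combined carefully — keeping straight the convention that Kuperberg's $q$ is our $q^2$ — in order to isolate the correct roots of unity $\ell = -3, 10$.
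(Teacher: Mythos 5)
Your overall architecture is exactly the paper's: the first equality is Theorem \ref{thm:half} at $n=7$; the second goes through Kuperberg's recognition theorem (Theorem \ref{thm:kuperberg}), with linear independence of trees in $\Inv{}{P^{\otimes 4}}$ and $\Inv{}{P^{\otimes 5}}$ checked via the nonvanishing conditions of Lemma \ref{lem:yucky} at the algebraic values $d,b,t$, spanning via a dimension count (your dimensions $1,0,1,1,4,10$ do match the tree counts), and the final $q\mapsto -q$ observation explaining why both $\ell=-3$ and $\ell=10$ occur is also the paper's closing remark. Your twist-factor computation $\theta_P = \exp(2\pi i\tfrac{16}{26}) = \exp(2\pi i\tfrac{-10}{26})$ agrees with the paper.

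The genuine gap is in how you pin down $q$. You match only the unknot value and the twist, and then write $q = \exp(2\pi i\tfrac{\ell}{26})$ — but nothing in your argument justifies that the ``some $q$'' produced by Theorem \ref{thm:kuperberg} is a $26$th root of unity. The dimension equation $q^{10}+q^{8}+q^{2}+1+q^{-2}+q^{-8}+q^{-10}=d$ is degree $20$ in $q$ and a priori has solutions off this finite set (indeed off the unit circle), and the twist equation $q^{12}=\exp(2\pi i\tfrac{16}{26})$ by itself has twelve unit-circle solutions, of which only two are $26$th roots of unity; your congruence $\ell\equiv 10 \pmod{13}$ therefore presupposes the form of $q$ rather than deriving it. The paper closes exactly this hole by using all three spider parameters: it first normalizes the $\mathcal{D}_{14}$ trivalent vertex (multiplying by the largest real root of $x^{12}-645x^{10}-10928x^{8}-32454x^{6}-4752x^{4}+2x^{2}+1$, using that $b$ and $t$ are homogeneous of degree $2$ in the vertex — a normalization your identification step omits entirely, since Kuperberg's theorem only matches the vertex up to scalar), and then solves the simultaneous system $d=q^{10}+q^{8}+q^{2}+1+q^{-2}+q^{-8}+q^{-10}$, $b=-(q^{6}+q^{4}+q^{2}+q^{-2}+q^{-4}+q^{-6})$, $t=q^{4}+1+q^{-4}$, which has exactly the four solutions $q=\exp(2\pi i\tfrac{\ell}{26})$, $\ell=\pm 3,\pm 10$; only then does the twist factor cut this down to $\ell=-3,10$. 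Your route could in principle be salvaged without $b$ and $t$ by explicitly verifying that among the twelve solutions of the twist equation only those two satisfy the dimension equation, but that verification is absent, and as written ``one checks the loop value is consistent'' checks consistency only at the two values you have already assumed, rather than excluding the alternatives.
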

\begin{proof}
Since $\dim \Inv{}{P^{\otimes 0}} = \dim \Inv{}{P^{\otimes 2}} = \dim \Inv{}{P^{\otimes 3}} = 1$ and $\dim \Inv{}{P} = 0$, trees form a basis of $\Inv{}{P^{\otimes k}}$ for $k \leq 3$.  By Lemma \ref{lem:yucky} we see that trees are linearly independent in $\Inv{}{P^{\otimes 4}}$ and $\Inv{}{P^{\otimes 5}}$.  A dimension count shows that trees form a basis for these spaces.   Now we apply Theorem \ref{thm:kuperberg} to see that the theorem holds for some $q$. 
We need to normalize the $\mathcal{D}_{14}$ trivalent vertex for $P$ before it satisfies the $G_2$ relations, specifically multiplying it by the largest real root of $x^{12} - 645 x^{10} -10928 x^8 - 32454 x^6 - 4752 x^4 + 2 x^2 +1$. The quantities $b$ and $t$ are both homogeneous of degree $2$ with respect to scaling the trivalent vertex, so they are both multiplied by the square of this quantity. We now solve the equations
\begin{align*}
d & = q^{10} + q^{8} + q^2 + 1 + q^{-2} + q^{-8} + q^{-10}  \\
b & = -\left(q^6 + q^4 + q^2 + q^{-2} + q^{-4} + q^{-6}\right) \\
t & = q^4 + 1 +q^{-4}
\end{align*}  
and find that they have a four solutions, $q= \exp{2\pi i \frac{\ell}{26}}$ with $\ell=\pm 3, \pm 10$. Not all of these give the correct twist factor, however. The twist factor for $P$ is $\exp(2 \pi i \frac{-10}{26})$, while the twist factor for the representation $V_{(10)}$ of $G_2$ is $q^{12}$; these only agree for $\ell = -3$ or $10$. Since the identity holds for some $q$, and the knot invariant only depends on $q^2$, the identity must hold for each of these values.
\end{proof}

\subsection{Ribbon functors} \label{sec:ribbonfunctors}
The proofs of Theorems \ref{thm:summands} , \ref{thm:eigenvalues}, and \ref{thm:kuperberg} actually construct ribbon functors from a certain diagrammatic category to the ribbon category $\cC$.  Combining this functor with the description of quantum group categories by diagrams in \cite{MR1710999, MR1854694} and \cite{MR1182414} one could prove the coincidences described in the introduction (that is, Theorems \ref{thm:D6-coincidence}, \ref{thm:D8-coincidence} and \ref{thm:D10-coincidence}).  To do this we need the following lemma.

\begin{lem}
Suppose that $\cC$ is a ribbon category such that $\cC^{ss}$ is premodular, that $\cD$ is a pseudo-unitary modular category, and that $\cF$ is a dominant ribbon functor $\cC \rightarrow \cD$. Then $\cD \cong \cC^{ss\ modularize}$.
\end{lem}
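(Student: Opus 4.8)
The plan is to show that the given dominant ribbon functor $\cF\colon \cC \to \cD$ factors through the semisimplification and the modularization, and that the resulting functor to $\cD$ is an equivalence. First I would observe that since $\cD$ is pseudo-unitary (hence semisimple with positive quantum dimensions up to Galois conjugacy), the last bullet point of the semisimplification discussion applies: $\cF$ is trivial on the ideal $N$ of negligible morphisms in $\cC$. Therefore $\cF$ descends to a functor $\overline{\cF}\colon \cC^{ss} = \cC/N \to \cD$. This $\overline{\cF}$ is still a dominant ribbon functor, and now its source $\cC^{ss}$ is premodular by hypothesis.

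**Factoring through the modularization.**

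Next I would identify the transparent objects of $\cC^{ss}$ and show they are killed by $\overline{\cF}$. The key point is that a transparent simple object $V \in \cC^{ss}$ must map into $\cD$ in a way compatible with the fact that $\cD$ is modular, hence has no nontrivial transparent objects; dominance of $\overline{\cF}$ forces the image $\overline{\cF}(V)$ to become transparent in $\cD$, so it must be (a sum of copies of) the unit. Because $\cD$ admits a modularization, every transparent object of $\cC^{ss}$ has dimension $1$ and twist factor $1$, which is precisely the condition (from the modularization theorem) guaranteeing that the modularization $\cC^{ss} \to \cC^{ss}//\cG$ exists, where $\cG$ is the set of transparent objects. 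I would then invoke the universal property (or the uniqueness clause) of modularization to factor $\overline{\cF}$ as $\cC^{ss} \to \cC^{ss\,modularize} \xrightarrow{\widehat{\cF}} \cD$, with $\widehat{\cF}$ a ribbon functor between modular categories.

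**Showing the induced functor is an equivalence.**

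Finally I would argue that $\widehat{\cF}\colon \cC^{ss\,modularize} \to \cD$ is an equivalence of modular categories. It is dominant because $\cF$ (and hence $\overline{\cF}$) is dominant and the modularization functor is essentially surjective, so every simple of $\cD$ appears in the image. A dominant ribbon functor between modular categories is automatically an equivalence: fullness and faithfulness follow because a nonzero ribbon functor out of a modular category cannot collapse distinct simple objects (the only ideals are trivial, since $\cC^{ss\,modularize}$ is semisimple with no negligibles), and comparing global dimensions or using the standard fact that such a functor induces an injection on the fusion rings pins down essential surjectivity together with an equality of the number of simple objects. This last step—verifying that dominance plus the ribbon structure upgrades to a genuine equivalence—is the main obstacle, and I expect to lean on the uniqueness of modularization (the theorem quoted just before Lemma \ref{lem:free-quotient}) to conclude that $\widehat{\cF}$ realizes $\cD$ as \emph{the} modularization of $\cC^{ss}$, whence $\cD \cong \cC^{ss\,modularize}$.
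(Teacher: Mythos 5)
Your proposal is correct in substance, but it takes a detour the paper avoids, and in the end the detour is made redundant by your own final step. The paper's entire proof is two moves: pseudo-unitarity of $\cD$ forces $\cF$ to kill the negligible ideal, so it descends to a dominant ribbon functor $\overline{\cF}\colon \cC^{ss} \to \cD$; and since $\cC^{ss}$ is premodular and $\cD$ is modular, $\overline{\cF}$ is \emph{by definition} a modularization (the paper defines a modularization to be exactly a dominant ribbon functor from a premodular category to a modular one), so the Brugui\`eres--M\"uger uniqueness theorem---any modularization is naturally isomorphic to the deequivariantization by the transparent objects---immediately gives $\cD \cong \cC^{ss}//\cG \cong \cC^{ss\ modularize}$. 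You instead first factor $\overline{\cF}$ through $\cC^{ss\ modularize}$ (checking that transparent objects are killed and have dimension $1$ and twist $1$) and then face the task of showing the induced functor $\widehat{\cF}$ is an equivalence, which you rightly identify as the crux; your sketch there (no collapsing of simples, counting via global dimensions) is hand-wavy as stated, but you then appeal to uniqueness of modularization anyway---and that appeal alone, applied directly to $\overline{\cF}$, finishes the proof without any of the intermediate factorization or the equivalence argument. Two small blemishes worth fixing: the sentence ``because $\cD$ admits a modularization, every transparent object of $\cC^{ss}$ has dimension $1$ and twist factor $1$'' has the logic backwards---what you mean is that $\cC^{ss}$ admits a modularization, namely $\overline{\cF}$ itself, after which the existence half of the quoted theorem yields those properties (or the check can be skipped entirely); and strictly speaking the uniqueness theorem requires $\cC^{ss}$ to have nonzero global dimension, a hypothesis both you and the paper leave implicit.
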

\begin{proof}
Since $\cD$ is pseudo-unitary the functor must factor through the semisimplification, and thus  the result follows from the uniqueness of modularization.
\end{proof}

In our cases $\cD_{2n}$ is the target category, and is certainly unitary and modular.  The source category is a category of diagrams (coming from Temperley-Lieb, Kauffman/Dubrovnik, HOMFLYPT, or the $G_2$ spider).   Dominance of the functor is a simple calculation in the fusion ring of $\mathcal{D}_{2n}$.  If $q$ is a large enough root of unity, then the semisimplification of that diagram category has been proven to be pre-modular for each of these cases \cite{MR1854694, MR2132671, MR1710999} except the $G_2$ spider.  Hence the argument of the last subsection does not yet give a proof of the $G_2$ coincidence.  We give a completely different proof in the next subsection.

\subsection{Recognizing $\mathcal{D}_{2n}$ modular categories}\label{sec:recognizeD}

Earlier in this section we found knot polynomial identities and coincidences of modular tensor categories by observing that $P^{\otimes 2}$ broke up in some particular way.  In this section we work in the reverse direction.  The category $\frac{1}{2}\mathcal{D}_{2n}$ has a small object $\JW{2}$ and $\JW{2} \tensor \JW{2} \cong \id \oplus \JW{2} \oplus \JW{4}$.  If we are to have a coincidence of modular tensor categories $\mathcal{D}_{2n} \cong \cC$ then there must be an object in $\cC$ which breaks up the same way.  Using the characterization of the Kauffman and Dubrovnik categories above we can prove that  $\mathcal{D}_{2n} \cong \cC$ by producing this object.  In the following theorem, we use this technique to show $\frac{1}{2} \mathcal{D}_{14} \cong \Rep{U_{\exp({2 \pi i\frac{\ell}{26}})}(\mathfrak{g}_2)}$, for $\ell = -3$ or $10$, sending $P \mapsto V_{(10)}$. It's also possible to prove Theorems \ref{thm:D6-coincidence}, \ref{thm:D8-coincidence} and \ref{thm:D10-coincidence} by this technique, although we don't do this.

\begin{thm}\label{thm:G2}
There is an equivalence of modular tensor categories 
$$\Rep{U_{\exp({2 \pi i\frac{\ell}{26}})}(\mathfrak{g}_2)}\cong \frac{1}{2} \mathcal{D}_{14},$$ 
where $\ell = -3$ or $10$,
sending $\JW{2} \mapsto V_{(02)}$.  Under this equivalence we also have $P \mapsto V_{(10)}$.
\end{thm}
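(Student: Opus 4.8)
The plan is to establish the equivalence by exhibiting inside $\Rep{U_{\exp(2\pi i \ell/26)}(\mathfrak{g}_2)}$ an object that breaks up exactly as $\JW{2}$ does in $\frac{1}{2}\mathcal{D}_{14}$, namely an object $W$ satisfying $W \otimes W \cong \id \oplus W \oplus M$ with the eigenvalues of the braiding matching those forced by Theorem \ref{thm:eigenvalues} in the Dubrovnik case. I would take $W = V_{(02)}$, the $14$-dimensional representation of quantum $G_2$ at the relevant root of unity, and compute its fusion rules using the quantum Racah rule, verifying that $V_{(02)} \otimes V_{(02)} \cong \id \oplus V_{(02)} \oplus V_{(10)}$ (where $V_{(10)}$ is the $7$-dimensional ``standard'' object). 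This is the reverse direction of the earlier subsections: rather than recognizing the $\mathcal{D}_{14}$ invariant as a $G_2$ specialization, I use the diagrammatic characterization of the Kauffman/Dubrovnik category to produce a functor the other way.

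First I would record the relevant data in the fundamental alcove at $q = \exp(2\pi i \ell/26)$: the simple objects, their quantum dimensions, and their twist factors, and confirm there are exactly six simples, matching the six simple objects $\JW{0}, \JW{2}, \ldots, \JW{8}, P, Q$ of $\frac{1}{2}\mathcal{D}_{14}$. Then, from the decomposition $V_{(02)} \otimes V_{(02)} \cong \id \oplus V_{(02)} \oplus V_{(10)}$, Theorem \ref{thm:summands}(4) shows the invariant $\J{G_2}{V_{(02)}}$ is a specialization of Kauffman or Dubrovnik. Computing the braiding eigenvalues via the twist factors (as in Lemma \ref{lem:eigenvalues} and Theorem \ref{thm:eigenvalues}(4)) identifies the parameters $(a,z)$ and, crucially, produces a \emph{dominant ribbon functor} from the Dubrovnik skein category at these parameters into $\Rep{U_{\exp(2\pi i\ell/26)}(\mathfrak{g}_2)}$ sending the strand to $V_{(02)}$. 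Since the same Dubrovnik parameters arise for $\frac{1}{2}\mathcal{D}_{14}$ (by Theorem \ref{thm:identities-5}'s analogue, which is exactly what the eigenvalue computation yields), I obtain dominant ribbon functors from a common diagrammatic source category into both modular categories.

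To conclude, I would invoke the lemma of \S\ref{sec:ribbonfunctors}: both $\frac{1}{2}\mathcal{D}_{14}$ and $\Rep{U_{\exp(2\pi i\ell/26)}(\mathfrak{g}_2)}$ are pseudo-unitary modular categories (the former is unitary by Theorem \ref{thm:main}, the latter pseudo-unitary by Rowell's analysis of $(G_2)_{1/3}$), and each receives a dominant ribbon functor from the semisimplified Dubrovnik skein category at the common parameters. Since that source category's semisimplification is premodular and each target is its modularization, uniqueness of modularization forces $\frac{1}{2}\mathcal{D}_{14} \cong \Rep{U_{\exp(2\pi i\ell/26)}(\mathfrak{g}_2)}$. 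Tracking the strand object through both functors shows $\JW{2} \mapsto V_{(02)}$, and then matching the remaining simples by dimension and twist factor (in particular identifying the two ``exceptional'' simples $P, Q$ with the two remaining $G_2$-simples, one of which is $V_{(10)}$) gives $P \mapsto V_{(10)}$.

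The main obstacle I expect is establishing that the relevant Dubrovnik/BMW skein category at these specific roots of unity has a \emph{premodular} semisimplification, so that the uniqueness-of-modularization argument applies cleanly --- the literature (e.g.\ \cite{MR2132671}) handles BMW semisimplicity, but one must check the parameters $(a,z)$ here fall in the controlled range, and separately verify there are no stray transparent objects obstructing modularity. A secondary subtlety is confirming \emph{pseudo-unitarity} of the $G_2$ category at $\ell = -3, 10$ (as opposed to mere premodularity): the quantum dimensions must all lie in a single Galois orbit of positive reals, which is precisely the content needed to answer Rowell's conjecture and is the reason both values $\ell = -3$ and $\ell = 10$ (Galois conjugate, related by $q \mapsto -q^{-1}$-type symmetry) yield the same category. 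Verifying the fusion rule $V_{(02)} \otimes V_{(02)} \cong \id \oplus V_{(02)} \oplus V_{(10)}$ at the root of unity (rather than generically) via the quantum Racah rule is routine but must be done carefully, since truncation in the small alcove can in principle alter the generic decomposition.
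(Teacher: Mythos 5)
Your overall architecture is the same as the paper's: exhibit an object on the $G_2$ side whose tensor square has three summands, use Theorems \ref{thm:summands} and \ref{thm:eigenvalues} to produce a Dubrovnik functor, factor it through the semisimplification using pseudo-unitarity (Rowell, \cite{MR2414692}), and conclude by dominance and uniqueness of modularization, with $\frac{1}{2}\mathcal{D}_{14}$ appearing as the modularization of $\Rep{U_{q=\exp(2 \pi i \frac{1}{52})}(\SO{3})}$. But your key computational input is wrong. The correct fusion rule, which the paper obtains from the Racah rule, is $V_{(02)}^{\otimes 2} \cong \id \oplus V_{(01)} \oplus V_{(02)}$, with the adjoint object $V_{(01)}$ as the third summand --- not $\id \oplus V_{(02)} \oplus V_{(10)}$. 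Your version is inconsistent with the very statement you are proving: in $\frac{1}{2}\mathcal{D}_{14}$ we have $\JW{2} \otimes \JW{2} \cong \JW{0} \oplus \JW{2} \oplus \JW{4}$, and $P$ is \emph{not} a summand; so if $\JW{2} \mapsto V_{(02)}$ and $P \mapsto V_{(10)}$, then $V_{(10)}$ cannot occur in $V_{(02)}^{\otimes 2}$ --- the third summand must be the image of $\JW{4}$, namely $V_{(01)}$. Quantum dimensions confirm this: $\dim_q V_{(02)} = [3]_{q=\exp(\frac{2\pi i}{52})} \approx 2.94$, so the tensor square has dimension $[3]^2 = 1 + [3] + [5] \approx 8.66$, whereas $1 + [3] + \dim_q V_{(10)} \approx 1 + 2.94 + 4.15 \approx 8.09$ (the paper computes $\dim P \approx 4.148$). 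With the wrong summand, your eigenvalue bookkeeping (the paper uses twist factors $1$, $q^{24}$, $q^{60}$ for $V_{(00)}$, $V_{(01)}$, $V_{(02)}$) yields the wrong $(a,z)$, and your final inductive matching of simples would force $\JW{4} \mapsto V_{(10)}$, contradicting $P \mapsto V_{(10)}$. A smaller slip: $\frac{1}{2}\mathcal{D}_{14}$ has eight simples, $\JW{0}, \JW{2}, \ldots, \JW{10}, P, Q$, not six.

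Two further points. First, twist factors determine braiding eigenvalues only through $R_{X \subset Y \otimes Y}^2 = \theta_X \theta_Y^{-2}$, i.e.\ up to sign; the paper must, and does, fix the signs ($\sigma_1 = -1$, $\sigma_2 = +1$) and rule out the Kauffman alternative by comparing the dimension formula $d = \frac{a-a^{-1}}{z}+1$ with $[3]_{q=\exp(\frac{2\pi i}{52})}$. Your sketch glosses this step, yet it is exactly where Kauffman versus Dubrovnik is decided. Second, your worry about premodularity of the semisimplified skein category is resolved as in the paper: at $a=\exp(2\pi i \frac{1}{13})$, $z = \exp(2\pi i \frac{1}{26}) - \exp(-2\pi i \frac{1}{26})$, the semisimplification is identified with the premodular category $\Rep{U_{q=\exp(2\pi i \frac{1}{52})}(\SO{3})}$ via \cite{MR2132671}, whose modularization is precisely $\frac{1}{2}\mathcal{D}_{14}$. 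Once the fusion rule and the sign analysis are corrected, the remainder of your argument goes through and coincides with the paper's proof.
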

\begin{proof}
Using the Racah rule for tensor products in $\Rep{U_{\exp({2 \pi i\frac{\ell}{26}})}(\mathfrak{g}_2)}$ we see that $V_{(02)}^{\otimes 2} \cong \id \oplus V_{(01)} \oplus V_{(02)} $.  

The eigenvalues for the square of a crossing can be read off from twist factors: $$R_{X \subset Y \tensor Y}^2 = \theta_X \theta_Y^{-2}.$$
The twist factors for the representations $V_{(00)}, V_{(01)}$ and $V_{(02)}$ are $1, q^{24}$ and $q^{60}$ respectively, so the corresponding eigenvalues for the crossing are $q^{-60}, \sigma_1 q^{-48}$ and $\sigma_2 q^{-30}$ for some signs $\sigma_1$ and $\sigma_2$. We thus compute, whether we are in the Kauffman or Dubrovnik settings, that  $a=q^{60}$ and $z=\sigma_1 q^{-48} + \sigma_2 q^{-30}$.

If we are in the Kauffman setting, we must have $\sigma_1 \sigma_2 q^{-78} = 1$, so $\sigma_1 = \sigma_2$. We now see the dimension formula $d=\frac{a+a^{-1}}{z} - 1$ can not be equal to 
$\dim(\JW{2})=[3]_{q=\exp(\frac{2 \pi i}{52})}$ for any choice of $\sigma_1, \sigma_2$.

Hence we must be in the Dubrovnik setting where we have $\sigma_1 = -\sigma_2$
and $d=\frac{a-a^{-1}}{z} + 1$.
Now the dimensions match up exactly when $\sigma_1 =-1$ and $\sigma_2 = 1$.

By \S \ref{sec:ribbonfunctors} and Theorems \ref{thm:summands} and \ref{thm:eigenvalues} we have a functor from the Dubrovnik category with $a= \exp(2 \pi i \frac{1}{13})$ and $z=\exp(2 \pi i \frac{1}{26}) - \exp(2 \pi i \frac{-1}{26})$ to $\Rep{U_{\exp({2 \pi i\frac{\ell}{26}})}(\mathfrak{g}_2)}$.   Since the target category is pseudo-unitary \cite{MR2414692}, this functor factors through the semi\-simpli\-fi\-cat\-ion of the diagram category, which is the premodular category $\Rep{U_{q=\exp(2 \pi i \frac{1}{52})}(\SO{3})}$.  Since the target is modular \cite{MR2286123} and the functor is dominant (a straightforward calculation via the Racah rule in the Grothendieck group of $\Rep{U_{\exp({2 \pi i\frac{\ell}{26}})}(\mathfrak{g}_2)}$) this functor induces an equivalence between the modularization of $\Rep{U_{q=\exp(2 \pi i \frac{1}{52})}(\SO{3})}$, which is nothing but $\frac{1}{2} \mathcal{D}_{14}$, and $\Rep{U_{\exp({2 \pi i\frac{\ell}{26}})}(\mathfrak{g}_2)}$.

The correspondence between simples shown in Figure \ref{fig:g2-weyl-chamber}, can be computed inductively.  Begin with the observation that $\JW{2}$ is sent to $V_{(02)}$ by construction; after that, everything else is determined by working out the tensor product rules in both categories.
\end{proof}

\begin{figure}[!ht]
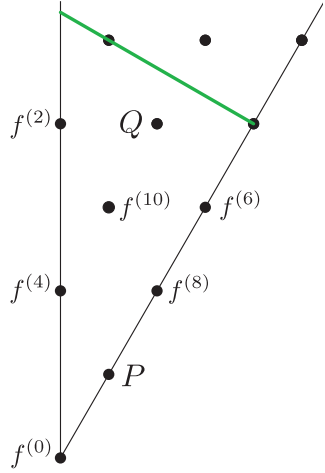

\begin{equation*}
\mathfig{0.3}{coincidences/g2-weyl-chamber}
\end{equation*}
\caption{The positive Weyl chamber for $G_2$, showing the surviving irreducible representations in the semisimple quotient at $q=\pm \exp({2\pi i\frac{-3}{26}})$, and the
correspondence with the even vertices of $\mathcal{D}_{14}$.}
\label{fig:g2-weyl-chamber}
\end{figure}

Note that $q=\pm \exp({2\pi i\frac{-3}{26}})$ corresponds to the fractional level $\frac{1}{3}$ of $G_2$ (see \cite{MR2286123}), which has previously been conjectured to be unitary \cite{MR2414692}. This theorem proves that conjecture.

Finally, we note that the same method gives an equivalence between 
$\Rep{U_{\pm \exp({2 \pi i\frac{-3}{28}})}(\mathfrak{g}_2)}$
and the subcategory of $\Rep{U_{\exp({2 \pi i\frac{1}{28}})}(\SP{6})}$ generated by the representation $V_{(012)}$, sending the  representation $V_{(12)}$ of $\mathfrak{g}_2$ to $V_{(012)}$. On the $\mathfrak{g}_2$ side, we have $V_{(12)}^{\tensor 2} \iso V_{(00)} \directSum V_{(01)} \directSum V_{(02)}$ with corresponding eigenvalues $1, \exp(2\pi i \frac{3}{14})$ and $-\exp(2\pi i \frac{4}{14})$. On the $\SP{6}$ side we have $V_{(012)}^{\tensor 2} \iso V_{(000)} \directSum V_{(010)} \directSum V_{(200)}$ with corresponding eigenvalues $1, \exp(2\pi i \frac{3}{14})$ and $-\exp(2\pi i \frac{4}{14})$. Thus both categories, which are each modular, are the modularization of the semisimplification of the Kauffman category at $a=1, z=\exp(2\pi i \frac{3}{14}) - \exp(2 \pi i \frac{4}{14})$. This proves the conjecture of \cite{MR2414692} that $G_2$ at level $\frac{2}{3}$ is also unitary.

\section{Coincidences of tensor categories}
\label{sec:coincidences}%

In the previous section we found identities between knot polynomials coming from (a priori) different ribbon categories.  In Section \ref{sec:ribbonfunctors} we showed that these identities must come from unexpected functors between these ribbon categories.  In this section we explain how these coincidences of tensor categories follow from general theory.  One should think of the results of this section as quantum analogs of small coincidences in group theory, such as $\operatorname{Alt}_5 \cong \mathbf{PSL}_2(\mathbb{F}_5)$.

There are three important sources of unexpected equivalences (or autoequivalences) between ribbon categories coming from quantum groups: coincidences of small Dynkin diagrams, (deequivariantization related to) generalized Kirby-Melvin symmetry, and level-rank duality.   

There are sometimes coincidences between Dynkin diagrams in different families. For instance, the Dynkin diagrams $A_3$ and $D_3$ are equal, from which it follows that $\SL{4} \iso \SO{6}$ and the associated categories of representations of quantum groups are equivalent too.

Kirby-Melvin symmetry relates link invariants coming from different objects in the same category, when that category has an invertible object.  
Under certain auspicious conditions, one can go further and deequivariantize by the invertible object.

Level-rank duality is a collection of equivalences relating $SU(n)_k$ with $SU(k)_n$, and relating $SO(n)_k$ with $SO(k)_n$,
where $SU(n)_k$ or $SO(n)_k$ refers to the semisimplified representation category of the rank-$n$ quantum group, at a carefully chosen root of unity which depends on the `level' $k$. In some sense, level-rank duality is more natural in the context of $U(n)$ and $O(n)$, and new difficulties arise formulating level-rank duality for the quantum groups $SU(n)$ and $SO(n)$. We give, in Theorem \ref{thm:SO3-duality}, a precise statement for $SO$ level-rank duality with $n=3$ and $k$ even.

We will discuss each of these three sources of unexpected equivalences in the following sections, and then use them to prove the following results:

\begin{thm}
\label{thm:D6-coincidence}
There is an equivalence of modular tensor categories $$\frac{1}{2} \mathcal{D}_{6} \cong \uRep{U_{s = \exp\left(\frac{7}{10} 2 \pi i\right)}(\SL{2} \oplus \SL{2}})^{modularize},$$ sending $P \mapsto V_{(1)} \boxtimes V_{(0)}$.
\end{thm}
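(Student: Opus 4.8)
The plan is to realize both sides as modularizations of closely related $SO$-type premodular categories and to match them using $SO$ level-rank duality, rather than by recognizing a single knot invariant. First I would invoke the proposition identifying $\frac{1}{2}\mathcal{D}_{6}$ with $\vRep{U_{q=\exp(\frac{2\pi i}{20})}(\SO{3})}^{modularize}$, so that the left-hand side becomes an honest $SO(3)$ modular category (at the level dual to $SO(4)_3$, namely $k=4$). On the right-hand side I would use the exceptional coincidence of Dynkin diagrams $D_2 = A_1 \sqcup A_1$, i.e. $\mathfrak{so}_4 \cong \mathfrak{sl}_2 \oplus \mathfrak{sl}_2$, to read $\uRep{U_s(\SL{2}\oplus\SL{2})}$ as the unimodal representation category of $U_s(\SO{4})$, with $V_{(1)}\boxtimes V_{(0)}$ playing the role of a spinor representation. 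The target equivalence should then be an instance of $SO(3)$--$SO(4)$ level-rank duality.

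Before applying duality I would check that the modularization on the right actually makes sense and produces a category with the same number of simples as $\frac{1}{2}\mathcal{D}_{6}$. Here the unimodal pivotal structure is essential: with $s=\exp(\frac{7}{10}2\pi i)$ we have $q=s^2=\exp(2\pi i\frac{2}{5})$, so each factor $\uRep{U_s(\SL{2})}$ is $SU(2)_3$ (four simples $V_{(0)},\dots,V_{(3)}$) in which $V_{(3)}$ is transparent. Its standard twist $s^{15}=-1$ and standard dimension $[4]_q=-1$ would make it a fermion, but passing to Turaev's unimodal structure multiplies both by $\chi(V_{(3)})=-1$, turning $V_{(3)}$ into a transparent boson (dimension $1$, twist $1$). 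Thus the full transparent subcategory of the Deligne product is $\mathbb{Z}/2\times\mathbb{Z}/2$ with all twists $1$, the modularization exists, and it has $16/4 = 4$ simple objects, matching $\JW{0},\JW{2},P,Q$.

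To produce the equivalence itself I would feed level-rank duality (Theorem \ref{thm:SO3-duality}, the $n=3$, $k$ even case developed in this section) a dominant ribbon functor between the two premodular categories at these matched roots of unity, and then conclude via the ribbon functor lemma of \S\ref{sec:ribbonfunctors} together with the uniqueness of modularization that the two modularizations are equivalent as modular tensor categories. Finally I would pin down the assignment $P\mapsto V_{(1)}\boxtimes V_{(0)}$ by matching fusion and twist data: in $\frac{1}{2}\mathcal{D}_{6}$ we have $P\otimes P\cong f^{(0)}\oplus P$, while $(V_{(1)}\boxtimes V_{(0)})^{\otimes 2}\cong (V_{(0)}\oplus V_{(2)})\boxtimes V_{(0)}$, which becomes $\id\oplus(V_{(1)}\boxtimes V_{(0)})$ in the modularization (since $V_{(3)}\boxtimes V_{(0)}$ sends $V_{(2)}\boxtimes V_{(0)}$ to $V_{(1)}\boxtimes V_{(0)}$); moreover the twist $\theta_P=q^{12}=\exp(2\pi i\frac{3}{5})$ agrees with the unimodal twist $-s^{3}=\exp(2\pi i\frac{3}{5})$ of $V_{(1)}\boxtimes V_{(0)}$. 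These two facts single out the correspondence uniquely.

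The main obstacle is that a single object cannot drive this proof. Since $P\otimes P\cong f^{(0)}\oplus P$, the object $P$ is only a Fibonacci object and does \emph{not} tensor-generate $\frac{1}{2}\mathcal{D}_{6}$; the recognition machinery of Theorems \ref{thm:summands} and \ref{thm:eigenvalues} applied to $P$ (or to $V_{(1)}\boxtimes V_{(0)}$) therefore only reconstructs the two-object subcategory it generates, which by Theorem \ref{thm:identities-3} carries the Jones polynomial at $q=\exp(-\frac{2\pi i}{10})$, and this is genuinely insufficient to recover the full four-object category. Supplying honest categorical input---level-rank duality, which furnishes a dominant functor onto the whole modularization---is exactly what bridges this gap, and getting that duality to respect the unimodal ribbon structure and the chosen square root $s$ is the delicate part.
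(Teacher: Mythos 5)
Your proposal is correct and takes essentially the same route as the paper's proof: the $k=4$ case of $SO(3)$--$SO(4)$ level-rank duality (Theorem \ref{thm:SO3-duality}), the Dynkin coincidence $D_2 = A_1 \times A_1$, and then the passage from the $2$-fold quotient of vector representations to the $4$-fold quotient of $\uRep{U_{s=\exp(2\pi i \frac{7}{10})}(\SL{2}\oplus\SL{2})}$, where your inline transparency/dimension/twist verification for $V_{(3)}$ simply re-derives the content of Lemma \ref{lem:KM-SO4} and the paper's table rather than citing them. Your fusion-rule and twist computation pinning down $P \mapsto V_{(1)}\boxtimes V_{(0)}$ plays the role of the paper's Figure \ref{fig:SO4} (note only that the matching is unique up to the automorphism swapping the two $\SL{2}$ factors, since $V_{(1)}\boxtimes V_{(0)}$ and $V_{(0)}\boxtimes V_{(1)}$ have identical fusion and twist data, just as $P$ and $Q$ do).
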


\begin{thm}
\label{thm:D8-coincidence}
There is an equivalence of modular tensor categories $$\frac{1}{2} \mathcal{D}_{8} \cong \uRep{U_{s=\exp \left(\frac{5}{14} 2 \pi i\right)}(\SL{4})}^{modularize},$$ sending $P \mapsto V_{(100)}$.
\end{thm}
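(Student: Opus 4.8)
The plan is to prove Theorem~\ref{thm:D8-coincidence} by exhibiting a dominant ribbon functor from a diagram category into $\frac{1}{2}\mathcal{D}_8$ and then invoking the uniqueness of modularization, exactly as in the proof of Theorem~\ref{thm:G2}. First I would recall from Theorem~\ref{thm:identities-4} that in $\mathcal{D}_8$ we have $P \tensor P \iso Q \directSum f^{(2)}$, so that by Theorem~\ref{thm:summands}~(3) the invariant $\J{\mathcal{D}_8}{P}$ is a specialization of HOMFLYPT. This means there is a ribbon functor from the HOMFLYPT (Hecke) skein category into $\frac{1}{2}\mathcal{D}_8$ sending the single strand to $P$; the relevant parameters $a$ and $z$ were already computed in the proof of Theorem~\ref{thm:identities-4} from the braiding eigenvalues $\Reigenvalue{f^{(2)}}{P}$ and $\Reigenvalue{Q}{P}$ via Theorem~\ref{thm:eigenvalues}. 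The target $\frac{1}{2}\mathcal{D}_8$ is unitary and modular, so by the Lemma in \S\ref{sec:ribbonfunctors} the functor factors through the semisimplification of the Hecke category, and I would identify that semisimplification with the relevant quantum $\SL{4}$ category at the prescribed root of unity.

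The key computational steps are then: (i) match the HOMFLYPT specialization parameters $(a,z)$ arising from $P$ against the parameters $(q^n, q-q^{-1})$ that, by Equation~\eqref{eq:homflypt-A}, correspond to the standard representation $V^\natural$ of $U_s(\SL{n})$, and solve for $n=4$ together with the correct value $s = \exp(2\pi i \frac{5}{14})$; (ii) verify that the standard representation $V_{(100)}$ of $U_s(\SL{4})$ has $V_{(100)} \tensor V_{(100)} \iso V_{(010)} \directSum V_{(200)}$ (two summands, consistent with the HOMFLYPT case and with $P\tensor P$), using the Racah tensor product rule from \cite{MR2286123}; and (iii) check that the twist factors agree, using that the twist factor for $V^\natural$ of $U_s(\SL{n})$ is $s^{n-1}$, so here $s^3$, and comparing with the $P$-twist factor $q^{2n(n-1)}=q^{24}$ for $n=4$ computed in \S\ref{sec:invariants}. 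Since changing to the unimodal pivotal structure $\uRep{}$ affects dimensions and twist factors by $\chi$-signs, I would track those signs carefully, as the statement specifies $\uRep{}$ rather than $\Rep{}$.

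Having established a dominant ribbon functor into the modular category $\frac{1}{2}\mathcal{D}_8$, the final step is to invoke the uniqueness of modularization (the Theorem of \S\ref{sec:modularization}): since the source Hecke category at this root of unity semisimplifies to a premodular $\SL{4}$ category, and the functor is dominant onto a modular target, the induced functor realizes $\frac{1}{2}\mathcal{D}_8$ as the modularization $\uRep{U_{s}(\SL{4})}^{modularize}$. Dominance is a finite check in the fusion ring of $\frac{1}{2}\mathcal{D}_8$, whose simple objects $\JW{0},\JW{2},\JW{4},P,Q$ are known; one verifies that $P$ tensor-generates. The correspondence $P \mapsto V_{(100)}$ then follows, and the remaining object identifications can be read off inductively from the tensor product rules on both sides, just as in Figure~\ref{fig:g2-weyl-chamber}.

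The main obstacle I anticipate is not the functor's existence but the bookkeeping around the unimodal pivotal structure and the sign ambiguities in the braiding eigenvalues (the $\pm$ square-root choices appearing in Theorem~\ref{thm:eigenvalues}~(3) and in the proof of Theorem~\ref{thm:identities-4}). Pinning down $s = \exp(2\pi i \frac{5}{14})$ exactly, rather than merely up to these ambiguities, requires reconciling the quantum $\SL{4}$ conventions of \cite{MR2286123} with the Kauffman-bracket braiding on $\mathcal{D}_{2n}$; in particular one must confirm that the modularization genuinely collapses the expected invertible objects of the $\SL{4}$ category and that the twist factors match after the unimodal correction. This is the step most likely to demand care, though it is routine in the sense that all the needed formulas (dimensions, twist factors, the Racah rule, and level-rank/Kirby-Melvin data developed in Section~\ref{sec:coincidences}) are available.
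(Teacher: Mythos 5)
Your proposal is correct in outline, but it is not the paper's argument --- it is the alternative route that the paper sketches in \S\ref{sec:ribbonfunctors} and explicitly declines to execute in \S\ref{sec:recognizeD} (``It's also possible to prove Theorems \ref{thm:D6-coincidence}, \ref{thm:D8-coincidence} and \ref{thm:D10-coincidence} by this technique, although we don't do this''). The paper's actual proof never touches the Hecke category: it chains together the $k=6$ case of $SO(3)$--$SO(k)$ level-rank duality (Theorem \ref{thm:SO3-duality}), giving $\frac{1}{2}\mathcal{D}_8 \iso \vRep U_{q=-\exp\left(-\frac{2\pi i}{14}\right)}(\SO{6})//V_{3e_1}$ with $P$ landing on a spinor representation; the Dynkin coincidence $D_3=A_3$, converting this to $\vRep U_{q=-\exp\left(-\frac{2\pi i}{14}\right)}(\SL{4})//V_{(030)}$; and Lemma \ref{lem:KM-SO6}, whose transparency bookkeeping for the invertible objects $V_{(300)}, V_{(030)}, V_{(003)}$ is exactly what pins down $s=\exp\left(2\pi i\tfrac{5}{14}\right)$ and the unimodal pivotal structure. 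Your route instead parallels the paper's proof of Theorem \ref{thm:G2}: a dominant ribbon functor from the HOMFLYPT skein category at the $(a,z)$ of Theorem \ref{thm:identities-4} into the unitary modular target $\frac{1}{2}\mathcal{D}_8$, plus uniqueness of modularization. This is viable precisely because, as the paper notes in \S\ref{sec:ribbonfunctors}, the semisimplified Hecke category at such a root of unity is premodular and identified with the quantum $\SL{4}$ category by \cite{MR1710999} --- the external input your argument leans on, and the ingredient that is missing in the $G_2$ case. What each buys: yours is shorter and uniform with the $G_2$ proof; the paper's supplies the conceptual explanation (level-rank duality, Dynkin coincidences, Kirby--Melvin symmetry) that is the stated point of Section \ref{sec:coincidences}, and it handles the choice of $s$ and pivotal structure structurally rather than by chasing square-root signs.

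One concrete warning about your step (iii): the statement in \S\ref{sec:quantumgroups} that the twist factor of $V^\natural$ for $U_s(\SL{n})$ is $s^{n-1}$ cannot be taken at face value (compare the $\SL{2}$ formula $s^{k^2+2k}$ in the same paragraph, which gives $s^3$, not $s$, for the standard representation). The generic formula $\theta_V = q^{\langle\lambda,\lambda+2\rho\rangle}$ gives $\theta_{V_{(100)}} = q^{15/4} = s^{15}$ for $\SL{4}$, i.e.\ $s^{n^2-1}$. Had you used $s^3$ at $s=\exp\left(2\pi i\tfrac{5}{14}\right)$ you would get $\exp\left(2\pi i\tfrac{1}{14}\right)$, which does not match $\theta_P = q^{24} = \exp\left(2\pi i\tfrac{12}{14}\right)$ (with $q=\exp\left(\tfrac{2\pi i}{28}\right)$) even after the unimodal sign, and your twist check would spuriously fail. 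With the correct exponent, $s^{15}=s$ since $s^{14}=1$ here, and the unimodal correction $\chi(V_{(100)})=-1$ gives exactly $-s = \exp\left(2\pi i\tfrac{12}{14}\right) = \theta_P$, as required. Relatedly, note that the two $(a,z)$ solutions in Theorem \ref{thm:identities-4} are the $\SL{3}$ (at $q=\exp\left(\tfrac{2\pi i}{14}\right)$) and $\SL{4}$ (at $q=\exp\left(-\tfrac{2\pi i}{14}\right)$) readings of the same Hecke quotient (level-rank dual: $SU(3)_4$ versus $SU(4)_3$); the unframed knot polynomial alone cannot select between them, so it really is the twist and transparency comparison --- the role Lemma \ref{lem:KM-SO6} plays in the paper's proof --- that forces the $\SL{4}$ statement with $s=\exp\left(2\pi i\tfrac{5}{14}\right)$.
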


\begin{thm}
\label{thm:D10-coincidence}
The modular tensor category $\frac{1}{2} \mathcal{D}_{10}$ has an order $3$ automorphism, fixing $\JW{0}, \JW{4}$ and $\JW{6}$, and permuting $$\baselinetrick{\xymatrix@C-8mm@R-3mm{P \ar@{|->}@/^/[rr] & & Q \ar@{|->}@/^/[dl] \\ & f^{(2)} \ar@{|->}@/^/[ul]&}}.$$
\end{thm}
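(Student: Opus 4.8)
The plan is to show that $P$, $Q$, and $f^{(2)}$ are interchangeable tensor generators of $\frac{1}{2}\mathcal{D}_{10}$, each producing the very same specialization of the Dubrovnik polynomial, and then to manufacture the automorphism by composing the resulting recognition equivalences. First I would record the fusion data. By Equation \eqref{eq:P-squared} with $n=5$ we have $P \tensor P \cong \mathbf{1} \oplus P \oplus f^{(4)}$, and the analogous decompositions $Q \tensor Q \cong \mathbf{1} \oplus Q \oplus f^{(4)}$ and $f^{(2)} \tensor f^{(2)} \cong \mathbf{1} \oplus f^{(2)} \oplus f^{(4)}$ hold as well. Thus each of $P$, $Q$, $f^{(2)}$ is symmetrically self-dual with a square of the shape $\mathbf{1} \oplus X \oplus f^{(4)}$, so part (4) of Theorem \ref{thm:summands} applies to each: the associated invariant is a specialization of the Kauffman or Dubrovnik polynomial. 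A short computation of quantum dimensions at $q = \exp(\frac{2\pi i}{36})$ shows $\dim P = \dim Q = \dim f^{(2)} = [3]_q$, whereas $f^{(4)}$ and $f^{(6)}$ have distinct dimensions $[5]_q$ and $[7]_q$; one checks moreover that $\{P,Q,f^{(2)}\}$ are exactly the non-unit simples $X$ with $X\tensor X \cong \mathbf1\oplus X\oplus f^{(4)}$. Since any tensor autoequivalence preserves fusion rules and dimensions, it must fix each of $f^{(0)}, f^{(4)}, f^{(6)}$ and permute the set $\{P,Q,f^{(2)}\}$.

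Next I would compute the braiding eigenvalues for each object and check that they determine the identical specialization. For $P$ this was already done in the proof of Theorem \ref{thm:identities-5}: the Dubrovnik polynomial with $a = q^4$ and $z = q^2 - q^{-2}$. For $Q$ it follows immediately from the planar-algebra automorphism $S \mapsto -S$ of $\mathcal{D}_{2n}$, which is a ribbon autoequivalence $\tau$ fixing every $f^{(2k)}$ and exchanging $P$ and $Q$; hence $Q$ gives the same specialization as $P$. For $f^{(2)}$ I would use the twist-ratio relation $\Reigenvalue{Y}{X}^2 = \theta_Y \theta_X^{-2}$ together with $\theta_{f^{(2)}} = q^4$, $\theta_{f^{(4)}} = q^{12}$, fix the signs exactly as in Lemma \ref{lem:eigenvalues}, and cross-check against the unknot value $\frac{a-a^{-1}}{z}+1 = q^2 + 1 + q^{-2} = [3]_q = \dim f^{(2)}$, which both selects the Dubrovnik over the Kauffman case and confirms $a = q^4$, $z = q^2 - q^{-2}$, matching $P$ exactly. \emph{This step---pinning down the signs of the $f^{(2)}$ eigenvalues and verifying that the three specializations literally coincide---is the main obstacle;} the dimension cross-check is what keeps it from becoming a delicate diagrammatic computation.

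Finally I would assemble the automorphism using \S\ref{sec:ribbonfunctors}. Each of $P$, $Q$, $f^{(2)}$ is a tensor generator of $\frac{1}{2}\mathcal{D}_{10}$ (a quick walk through the fusion ring, using $f^{(8)} \cong P \oplus Q$, shows this), so each determines a dominant ribbon functor from the Dubrovnik category at $a=q^4$, $z=q^2-q^{-2}$ into $\frac{1}{2}\mathcal{D}_{10}$. Since $\frac{1}{2}\mathcal{D}_{10}$ is pseudo-unitary and modular, the lemma of \S\ref{sec:ribbonfunctors} shows each such functor exhibits $\frac{1}{2}\mathcal{D}_{10}$ as the modularization of the common semisimplified Dubrovnik category; equivalently it provides an equivalence $E_X$ from that fixed category onto $\frac{1}{2}\mathcal{D}_{10}$ sending the generating strand to $X$. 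Composing, $\Phi := E_{f^{(2)}} \circ E_P^{-1}$ is an autoequivalence of $\frac{1}{2}\mathcal{D}_{10}$ with $\Phi(P) = f^{(2)}$ that fixes $f^{(0)}, f^{(4)}, f^{(6)}$ and permutes $\{P,Q,f^{(2)}\}$. Viewed in the symmetric group on $\{P,Q,f^{(2)}\}$, $\Phi$ sends $P\mapsto f^{(2)}$, so it is either the desired $3$-cycle or a transposition; in the latter case $\tau\Phi$ is a $3$-cycle (two distinct transpositions generate the whole group). Either way we obtain an order $3$ autoequivalence cyclically permuting $P \mapsto Q \mapsto f^{(2)} \mapsto P$ (replacing it by its inverse if needed to match the stated orientation) and fixing $f^{(0)}, f^{(4)}, f^{(6)}$, as claimed.
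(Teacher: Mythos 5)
Your proposal is correct, but it takes a genuinely different route from the paper's published proof. The paper proves Theorem \ref{thm:D10-coincidence} via level-rank duality plus triality: the $k=8$ case of Theorem \ref{thm:SO3-duality} identifies $\frac{1}{2}\mathcal{D}_{10}$ with $\vRep U_{q=-\exp(-2\pi i/18)}(\SO{8})//V_{(3000)}$, a deequivariantization argument parallel to Lemmas \ref{lem:KM-SO4} and \ref{lem:KM-SO6} upgrades this to a four-fold quotient of the full Weyl alcove, and the triality automorphism of the Dynkin diagram $D_4$ is then transported back, with a direct fusion computation (e.g.\ $\JW{2}\mapsto V_{(1000)}\mapsto V_{(0001)}\cong V_{(0002)}$ after tensoring by $V_{(0003)}$, which is $P$) showing it cyclically permutes $P$, $Q$, $f^{(2)}$. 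You instead run the recognition machinery of Theorems \ref{thm:summands} and \ref{thm:eigenvalues} together with \S\ref{sec:ribbonfunctors} three times and compose the resulting equivalences; this is exactly the alternative the authors flag in \S\ref{sec:recognizeD} (``It's also possible to prove Theorems \ref{thm:D6-coincidence}, \ref{thm:D8-coincidence} and \ref{thm:D10-coincidence} by this technique, although we don't do this''), and it is how they actually prove the $G_2$ coincidence, Theorem \ref{thm:G2}. Your supporting computations check out: $\frac{1}{2}\qi{9}=\qi{3}$ at $q=\exp(2\pi i/36)$, so all three objects have dimension $\qi{3}$ and any autoequivalence fixes $f^{(0)},f^{(4)},f^{(6)}$ and permutes $\{P,Q,f^{(2)}\}$; all three give the Dubrovnik specialization $a=q^4$, $z=q^2-q^{-2}$ (with matching twist factors, $q^{2n(n-1)}=q^{40}=q^4$ for $P$); the premodularity needed for the lemma of \S\ref{sec:ribbonfunctors} holds because the semisimplified Dubrovnik category at these parameters is $\Rep{U_{q=\exp(2\pi i /36)}(\SO{3})}$, whose modularization is $\frac{1}{2}\mathcal{D}_{10}$ by the proposition in \S\ref{sec:d2n}; and your use of the $S\mapsto -S$ automorphism both to get the $Q$ fusion rule and to repair a possible transposition is sound. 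The trade-off: the paper's route explains the symmetry conceptually (it literally is $D_4$ triality) and yields an automorphism that is honestly of order $3$ as a functor, whereas your route is more self-contained and diagrammatic but, strictly speaking, produces an autoequivalence inducing an order-$3$ permutation of the simple objects — you never show $\Phi^3\cong\mathrm{id}$ as a monoidal functor, though the permutation statement is all the theorem is used for.
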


Finally we note that there are other coincidences of small tensor categories that do not follow from these general techniques.  In particular it would be very interesting to better explain the coincidences involving $G_2$.

\subsection{Dynkin diagram coincidences and quantum groups}

The definition of the quantum group and its ribbon category of representations depend only on the Dynkin diagram itself.  For the quantum group and its tensor category this is obvious from the presentation by generators and relations.  For the braiding and the ribbon structure this follows from the independence of choice of decomposition of the longest word in the Weyl group in the multiplicative formula for the R-matrix.

In particular, every coincidence between Dynkin diagrams lifts to a statement about the quantum groups.  We will use that $D_2 = A_1 \times A_1$, that $D_3 = A_3$, and that $D_4$ has triality symmetry. 

The reason these coincidences are useful is that they give two different diagrammatic presentations of the same ribbon category.  For example, the fact that $B_1 = A_1$ tells you that the even part of Temperley-Lieb can be described using the Dubrovnik category, which we used implicity in Section \ref{sec:recognizeD}. The only coincidence we don't use is $B_2 = C_2$. Since $B_2$ is the Dynkin diagram for $\SO{5}$, there is no relationship via level-rank duality with the $\mathcal{D}_{2n}$ planar algebras.

\subsection{Kirby-Melvin symmetry}\label{sec:KM}
Kirby-Melvin symmetry relates link invariants from one
representation of a quantum group to link invariants coming from another
representation which is symmetric to it under a
symmetry of the affine Weyl chamber.  This symmetry principle was proved
in type $A_1$ by Kirby and Melvin \cite{MR1117149}, in type $A_n$ by Kohno and 
Takata \cite{MR1227008}, and for a general quantum group by Le \cite{MR1749439}.  There is another proof in the type $A$ case, using conformal inclusions, due to Xu \cite{MR2124554}.
We give a diagrammatic  proof which generalizes this result to tensor categories which might not come from quantum groups.

Suppose that $\cC$ is a semi-simple ribbon category and that $X$ is an object which is invertible in the sense that $X \otimes X^* \cong \id$.  Kirby-Melvin symmetry relates link invariants coming from a simple object $A$ to invariants coming from the (automatically simple) object $A\otimes X$.  

The key observation is that, for any simple $A$, the objects $A
\otimes X$ and $X \otimes A$ are simple (since $\Hom{}{A \otimes X}{ A \otimes X}
= \Hom{}{A \otimes X \otimes X^*}{ A}$), so the Hom space between them is one dimensional.  Thus the over-crossing and under-crossing must be scalar multiples.  Define $c_A$ by the following formula,
$$\begin{tikzpicture}[baseline]
\node (x) at (0,0){};
    \draw[->] (x.45)-- (.5,.5);
    \draw[->] (x.135) -- (-.5,.5);
    \draw (x.225) -- (-.5,-.5);
    \draw (x.135) -- (.5,-.5);

\node at (-.75,-.75){$A$};
\node at (.75,-.75){$X$};
\end{tikzpicture} = c_A \begin{tikzpicture}[baseline]
\node (x) at (0,0){};
    \draw[->] (x.45)-- (.5,.5);
    \draw[->] (x.135) -- (-.5,.5);
    \draw (x.315) -- (.5,-.5);
    \draw (x.45) -- (-.5,-.5);

\node at (-.75,-.75){$A$};
\node at (.75,-.75){$X$};
\end{tikzpicture}.$$

Note that $c_A^{-1} \dim A \dim X = S_{XA}$ where $S$ is the $S$-matrix.  Using the formula for the square of the crossing in terms of the ribbon element, we see that $c_A = \frac{\theta_A \theta_X}{\theta_{A\otimes X}}$.

\begin{thm}
\label{thm:ourKM}
Let $\cC$ be a semi-simple ribbon category, $A$ be a simple object in $\cC$, $X$ be a simple invertible object, and $L$ a link with $\# L$ components.  Then, $$\J{\cC}{A \otimes X}(L) = \J{\cC}{A}(L) \J{\cC}{X}(L) = (\dim X)^{\# L} \J{\cC}{A}(L).$$
\end{thm}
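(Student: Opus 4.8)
The second equality is the easy one: since $X$ is invertible, $X \otimes X$ is again invertible and hence simple, so part (1) of Theorem~\ref{thm:summands} applies verbatim and gives $\J{\cC}{X}(L) = (\dim X)^{\# L}$ (the self-duality hypothesis there is either vacuous or harmless). The content of the theorem is therefore the first equality, and the plan is to establish it at the level of the \emph{framed} invariants and only correct for framing at the very end.

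First I would pass to the framed invariant $\tilde{\mathcal{J}}_{\cC, A\otimes X}(L)$ and cable: color each component of $L$ by $A \otimes X$, which diagrammatically means running an $A$-labelled strand parallel to an $X$-labelled strand along each component. The $A$-strands assemble into a framed $A$-colored copy of $L$ and the $X$-strands into a framed $X$-colored copy, but the two copies are linked with one another. The goal of the main step is to pull these two copies apart, producing the product $\tilde{\mathcal{J}}_{\cC,A}(L)\,\tilde{\mathcal{J}}_{\cC,X}(L)$ up to an explicit scalar.

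The separation uses invertibility in exactly the form already isolated before the theorem: because $\Hom{}{A \otimes X}{X \otimes A}$ is one-dimensional, any crossing of an $X$-strand past an $A$-strand equals a scalar multiple of the reversed crossing, the scalar being $c_A = \theta_A \theta_X / \theta_{A\otimes X}$. Sliding the entire $X$-copy free of the $A$-copy thus multiplies the framed invariant by a power of $c_A$, and the key claim is that this power is exactly $-w(L)$; that is,
\begin{equation*}
\tilde{\mathcal{J}}_{\cC, A\otimes X}(L) = c_A^{-w(L)}\,\tilde{\mathcal{J}}_{\cC,A}(L)\,\tilde{\mathcal{J}}_{\cC,X}(L).
\end{equation*}
I expect this exponent count to be the main obstacle. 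The cleanest way to pin it down is to note that unlinking the two copies removes one factor of the double braiding of $X$ with $A$ per unit of linking, that this double braiding acts on the simple object $A \otimes X$ as $\theta_{A\otimes X}/(\theta_A\theta_X) = c_A^{-1}$, and that the total linking number of the $X$-pushoff with the $A$-copy in the blackboard framing is $\sum_i w_i + 2\sum_{i<j} l_{ij} = w(L)$, the sum of the self-writhes and twice the pairwise linking numbers. Keeping careful track of crossing signs while flipping the mixed $A$--$X$ crossings is the only genuinely delicate part.

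Finally I would reinstate the framing. Using $\J{\cC}{V}(L) = \theta_V^{-w(L)}\tilde{\mathcal{J}}_{\cC,V}(L)$ together with the displayed identity and the relation $c_A^{-1} = \theta_{A\otimes X}/(\theta_A\theta_X)$, the framing factors collapse:
\begin{equation*}
\J{\cC}{A\otimes X}(L) = \theta_{A\otimes X}^{-w(L)} c_A^{-w(L)}\,\tilde{\mathcal{J}}_{\cC,A}(L)\,\tilde{\mathcal{J}}_{\cC,X}(L) = (\theta_A\theta_X)^{-w(L)}\,\tilde{\mathcal{J}}_{\cC,A}(L)\,\tilde{\mathcal{J}}_{\cC,X}(L),
\end{equation*}
which is precisely $\J{\cC}{A}(L)\,\J{\cC}{X}(L)$. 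Combining this with the first paragraph yields the full chain of equalities asserted in the theorem.
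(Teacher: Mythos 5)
Your proposal is correct and takes essentially the same route as the paper's proof: cable each component of $L$ into a parallel $A$-strand and $X$-strand, separate the two copies at the cost of a factor $c_A^{-1} = \theta_{A\otimes X}/(\theta_A\theta_X)$ per unit of writhe so that $\tilde{\mathcal{J}}_{\cC,A\otimes X}(L) = c_A^{-w(L)}\tilde{\mathcal{J}}_{\cC,A}(L)\tilde{\mathcal{J}}_{\cC,X}(L)$, let the framing corrections collapse, and obtain the final equality from part (1) of Theorem \ref{thm:summands}. The only difference is bookkeeping: the paper counts one switched mixed crossing (of the same sign as the original) per crossing of $L$, while you count full double braidings against the linking number $\mathrm{lk}(L',L)=\sum_i w_i + 2\sum_{i<j} l_{ij}=w(L)$ of the blackboard pushoff --- an equivalent computation of the same exponent.
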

\begin{proof}
First look at the framed version of the knot invariants.  The framed $A \otimes X$ invariant comes from cabling $L$ and labeling one of the two cables $A$ and the other one $X$.  We unlink the link labeled $A$ from the link labeled $X$ by successively changing crossings where $X$ goes under $A$ to crossings where $X$ goes over $A$.  Each crossing in the original link gives rise to two crossings between the $X$-labelled link and the $A$-labelled link, and exactly one of these crossings needs to be switched.  Furthermore, the sign of the crossing that needs to be switched is the same as the sign of the original crossing.  See the following diagram for what happens at each positive crossing.

$$\begin{tikzpicture}[baseline]
\node (x) at (0,0){};
    \draw[->] (x.45)-- (.5,.5);
    \draw[->] (x.135) -- (-.5,.5);
    \draw (x.315) -- (.5,-.5);
    \draw (x.45) -- (-.5,-.5);

\node at (-.75,-.75){$A\otimes X$};
\node at (.75,-.75){$A \otimes X$};
\end{tikzpicture} = \begin{tikzpicture}[baseline]
\node (lower) at (0,-1){};

\node (upper) at (0,1){};
    \draw[->] (upper.45)-- +(.5,.5);
    \draw[->] (upper.135) -- +(-.5,.5);

\node (left) at (-1,0){};
    \draw[->] (left.135) -- +(-.5,.5);

\node (right) at (1,0){};
    \draw[->] (right.45)-- +(.5,.5);

\node (A1) at (-1.75,-.75){$A$};
\node (X1) at (-.75,-1.75){$X$};
\node (A2) at (1.75,-.75){$X$};
\node (X2) at (.75,-1.75){$A$};

\draw (left.45) -- (A1.45);
\draw (lower.45) -- (X1.45);
\draw (lower.315)--(X2.135);
\draw (right.315)--(A2.135);

\draw (left.45)--(upper.45);
\draw (left.315)--(lower.135);
\draw (right.135)--(upper.315);
\draw (right.45)--(lower.45);

\end{tikzpicture} = c_A^{-1}\begin{tikzpicture}[baseline]
\node (lower) at (0,-1){};

\node (upper) at (0,1){};
    \draw[->] (upper.45)-- +(.5,.5);
    \draw[->] (upper.135) -- +(-.5,.5);

\node (left) at (-1,0){};
    \draw[->] (left.135) -- +(-.5,.5);

\node (right) at (1,0){};
    \draw[->] (right.45)-- +(.5,.5);

\node (A1) at (-1.75,-.75){$A$};
\node (X1) at (-.75,-1.75){$X$};
\node (A2) at (1.75,-.75){$X$};
\node (X2) at (.75,-1.75){$A$};

\draw (left.45) -- (A1.45);
\draw (lower.45) -- (X1.45);
\draw (lower.315)--(X2.135);
\draw (right.315)--(A2.135);

\draw (left.45)--(upper.225);
\draw (left.315)--(lower.135);
\draw (right.135)--(upper.135);
\draw (right.45)--(lower.45);

\end{tikzpicture}$$

Hence, unlinking the $X$-labelled link from the $A$-labelled link picks up a factor of $c_A^{-writhe}$.  At this point, the link labelled by $A$ lies completely behind the link labelled by $X$, and we can compute their invariants separately. Thus,
$$\theta_{A \otimes X}^{writhe} \J{\cC}{A \otimes X}(L) = c_A^{-writhe} \theta_A^{writhe} \J{\cC}{A}(L) \theta_X^{writhe} \J{\cC}{X}(L).$$
Rearranging terms and writing $c_A$ in terms of twist factors, we see that $\J{\cC}{A \otimes X}(L) = \J{\cC}{A}(L) \J{\cC}{X}(L)$.  The final equation follows from Theorem \ref{thm:summands}.
\end{proof}

Note that $\dim{X}$ above has to be $1$ or $-1$, since $\dim{X}=\dim{X^*}$ and 
$X \otimes X^*=1$.

Suppose that you have a finite ribbon category whose fusion graph is symmetric.  Take $X$ to be any projection which is symmetric in the fusion graph with $\id$.  Then it is easy to see that its Frobenius-Perron dimension $\dim_{FP}(X) = 1$, and thus that $X$ is invertible.  Hence, any time the fusion graph has a symmetry so do the knot invariants.

If $X$ gives a Kirby-Melvin symmetry, then if you're lucky you can set $X \cong \id$ using the deequivariantization procedure.  Furthermore, even if you can't deequivariantize immediately (for example, if $\dim X \neq 1$) you might still be able to modify the category $\cC$ is some mild way (changing the braiding or changing the pivotal structure, neither of which changes the link invariant significantly) and then be able to deequivariantize.  We give three examples of this:

Consider $\Rep{U_{q=-\exp \left(-2 \pi i \frac{1}{10} \right)}(\SL{2})}$.  The representation $V_3$ is invertible and thus gives a Kirby-Melvin symmetry.  We can make this monoidal category into a ribbon category in many ways: first we can choose $s = q^{\frac{1}{2}}$ in two different ways; second we can choose either the usual pivotal structure or the unimodal one.  For each of these four choices we check each of the conditions needed to define the deequivariantization $\cC//V_3$ (transparency, dimension $1$, and twist factor $1$).

\begin{figure}[!ht]
\begin{center}
\begin{tabular}{c|c|c|c}
&$V_3$ transparent & $\dim V_3$ & $\theta_{V_3}$\\
\hline
$\Rep{U_{s=\exp\left(2 \pi i \frac{1}{5} \right)}(\SL{2})}$&Yes&-1&1\\
\hline
$\Rep{U_{s=\exp\left(2 \pi i \frac{7}{10} \right)}(\SL{2})}$&No&-1&-1\\
\hline
$\uRep{U_{s=\exp\left(2 \pi i \frac{1}{5} \right)}(\SL{2})}$&No&1&-1\\
\hline
$\uRep{U_{s=\exp\left(2 \pi i \frac{7}{10} \right)}(\SL{2})}$& Yes&1&1
\end{tabular}
\end{center}
\end{figure}

Let $\rRep U_q(\mathfrak{g})$ denote the full subcategory of representations whose highest weights are in the root lattice.  (Notice that this ribbon category only depends on $q$, not on a choice of $s=q^{\frac {1}{L}}$.  Furthermore, it does not depend on the choice of ribbon element.)

\begin{lem} \label{lem:KM-SL2}
$\rRep U_{q=-\exp \left(-2 \pi i \frac{1}{10} \right)}(\SL{2}) \cong \uRep{U_{s=\exp\left(2 \pi i \frac{7}{10} \right)}(\SL{2})^{modularize}}$.
\end{lem}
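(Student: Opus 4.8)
The plan is to realize the right-hand side as a deequivariantization and match it with the left-hand side via Lemma \ref{lem:free-quotient}. First I would pin down the common quantum parameter: since $-\exp(-2\pi i\frac{1}{10}) = \exp(2\pi i\frac{2}{5})$ and $\left(\exp(2\pi i\frac{7}{10})\right)^2 = \exp(2\pi i\frac{2}{5})$, both sides use $q = \exp(2\pi i\frac{2}{5})$, a primitive fifth root of unity. At this $q$ one has $[5]_q = 0$, so the semisimplified category $\Rep{U_q(\SL{2})}$ has exactly the four simple objects $V_0, V_1, V_2, V_3$, i.e. it is the level-$3$ theory. Writing $\cC = \uRep{U_{s=\exp(2\pi i\frac{7}{10})}(\SL{2})}$, the quantum dimensions show that the only invertible objects are $V_0$ and $V_3$ (the dimensions of $V_1, V_2$ are $\pm[2]_q, \pm[3]_q \neq \pm 1$), so the subcategory $\cI$ of invertibles is $\{V_0, V_3\}$, with $V_3 \otimes V_3 \cong V_0$.

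Next I would verify the hypotheses of Lemma \ref{lem:free-quotient}. Let $\cM$ be the even part of $\cC$, the full ribbon subcategory on $\{V_0, V_2\}$. Because even objects see only integer powers of $q$, the braiding on $\cM$ is independent of the choice of $s$, and the unimodal and standard pivotal structures agree on $\cM$ (as noted for $\vRep{U_q(\SO{3})}$); hence $\cM$ is exactly the canonical ribbon category $\rRep U_q(\SL{2})$, the Fibonacci (Yang--Lee) category, which is modular. The table above records precisely the three facts needed for $\cI$: in $\cC$ the object $V_3$ is transparent, $\dim V_3 = 1$, and $\theta_{V_3} = 1$; together with $V_3 \otimes V_3 \cong V_0$ these are the four conditions of \S\ref{sec:modularization}, so $\cI$ consists of transparent invertibles closed under tensor product. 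Finally, the truncated fusion rules give $V_3 \otimes V_1 \cong V_2$ and $V_3 \otimes V_2 \cong V_1$, so tensoring by $\cI$ has orbits $\{V_0, V_3\}$ and $\{V_1, V_2\}$; the action is free, and each orbit meets $\cM = \{V_0, V_2\}$ in exactly one object.

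With these checks in hand, Lemma \ref{lem:free-quotient} yields $\cC // \cI \cong \cM = \rRep U_q(\SL{2})$. It remains to identify $\cC // \cI$ with the modularization. The quotient functor $\cC \rightarrow \cC // \cI$ is dominant and its target is modular (being $\cM$), so it is a modularization of the premodular category $\cC$; by the uniqueness of modularization (\S\ref{sec:modularization}) it is naturally isomorphic to $\cC \rightarrow \cC // \cG$, where $\cG$ is the full transparent subcategory. Hence $\uRep{U_{s=\exp(2\pi i\frac{7}{10})}(\SL{2})}^{modularize} = \cC // \cG \cong \rRep U_{q=-\exp(-2\pi i\frac{1}{10})}(\SL{2})$, as claimed. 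The only genuinely delicate point is this appeal to uniqueness: it is what guarantees that $\{V_0, V_3\}$ exhausts the transparent objects, so that deequivariantizing by the invertibles really computes the full modularization rather than a proper quotient; everything else is the fusion and eigenvalue bookkeeping already packaged in the table.
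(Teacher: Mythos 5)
Your proposal is correct and takes essentially the same approach as the paper: the paper likewise relies on the table preceding the lemma to verify that $V_3$ is transparent with $\dim V_3 = 1$ and $\theta_{V_3}=1$ in $\uRep{U_{s=\exp\left(2 \pi i \frac{7}{10}\right)}(\SL{2})}$, and then applies Lemma \ref{lem:free-quotient} to the free action of $\otimes V_3$ on simple objects, each orbit of which contains exactly one object of $\rRep$. Your additional bookkeeping (the explicit truncated fusion rules, the identification of $\rRep$ with the modular Fibonacci category, and the appeal to uniqueness of modularization to see that $\{V_0,V_3\}$ exhausts the transparent objects) simply makes explicit the hypotheses the paper's terser proof leaves implicit.
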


\begin{proof}
We restrict the deequivariantization $$\cF: \uRep{U_{s=\exp\left(2 \pi i \frac{7}{10} \right)}(\SL{2})} \rightarrow \uRep{U_{s=\exp\left(2 \pi i \frac{7}{10} \right)}(\SL{2})} // V_3$$ to $\rRep$.  Since $\otimes V_3$ acts freely on the isomorphism classes of simple objects and since every orbit contains exactly one object in $\rRep$ the restriction of this functor is an equivalence by Lemma \ref{lem:free-quotient}.
\end{proof}

We will need two similar results, for $\Rep U_{q=-\exp \left(-2 \pi i \frac{1}{10}\right)}(\SL{2} \oplus \SL{2})$ and for $\Rep{U_{q=-\exp \left(-\frac{2\pi i}{14}\right)}(\SL{4})}$.

In $\Rep U_{q=-\exp \left(-2 \pi i \frac{1}{10}\right)}(\SL{2} \oplus \SL{2})$ we can consider the root representations, those of the form $V_a \boxtimes V_b$ with both $a$ and $b$ even, as well as the vector representations, those $V_a \boxtimes V_b$ with $a+b$ even. We call these the vector representations because they become the vector representations under the identification $\SL{2} \directSum \SL{2} \iso \SO{4}$.

\begin{lem} \label{lem:KM-SO4}
\begin{align*}
& \rRep U_{q=-\exp \left(-2 \pi i \frac{1}{10}\right)}(\SL{2} \directSum \SL{2}) \\
& \qquad \cong \vRep U_{q=-\exp \left(-2 \pi i \frac{1}{10} \right)}(\SL{2} \directSum \SL{2}) // V_3 \boxtimes V_3  \\
& \qquad \cong \uRep{U_{s=\exp \left(2 \pi i \frac{7}{10} \right)}(\SL{2} \directSum \SL{2})}^{modularize}.
\end{align*}
\end{lem}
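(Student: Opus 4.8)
The plan is to prove both claimed equivalences exactly as we proved Lemma~\ref{lem:KM-SL2}: exhibit $\rRep U_{q}(\SL{2}\directSum\SL{2})$ as a modular full subcategory of a larger ribbon category on which a group of transparent invertibles acts freely, and then invoke Lemma~\ref{lem:free-quotient}. The organizing observation is the factorization $U_s(\SL{2}\directSum\SL{2})\cong U_s(\SL{2})\boxtimes U_s(\SL{2})$, under which $\rRep U_q(\SL{2}\directSum\SL{2})\cong \rRep U_q(\SL{2})\boxtimes\rRep U_q(\SL{2})$. At $q=-\exp(-2\pi i\frac{1}{10})$, a primitive fifth root of unity, $\rRep U_q(\SL{2})$ is the rank-two (Fibonacci) modular category with simple objects $V_0,V_2$, so its Deligne square $\rRep U_q(\SL{2}\directSum\SL{2})$ is modular and can serve as the category $\cM$ in Lemma~\ref{lem:free-quotient}. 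Throughout I use that $V_3$ is the invertible simple object of $U_q(\SL{2})$ at this root of unity, with $V_3\otimes V_a\cong V_{3-a}$.

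For the first equivalence I would take $\cC=\vRep U_q(\SL{2}\directSum\SL{2})$, $\cM=\rRep U_q(\SL{2}\directSum\SL{2})$, and $\cI=\langle V_3\boxtimes V_3\rangle\cong\mathbb{Z}/2$. First I check that $V_3\boxtimes V_3$ is transparent with dimension $1$ and twist factor $1$ in $\vRep$: since $V_3$ is invertible, its dimension and twist are $\pm1$ and get squared, while its monodromy with any vector representation $V_a\boxtimes V_b$ is $q^{-5a}q^{-5b}=1$ because $q^5=1$. Note that on $\vRep$ the standard and unimodal pivotal structures agree, since every vector representation $V_a\boxtimes V_b$ has $a+b$ even and so lies in the trivial character coset. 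It then remains to observe that $\otimes(V_3\boxtimes V_3)$ sends $V_a\boxtimes V_b\mapsto V_{3-a}\boxtimes V_{3-b}$, that this acts freely on the eight simple vector representations, and that each of the four two-element orbits contains exactly one root representation (the one with both labels even). Lemma~\ref{lem:free-quotient} then gives $\vRep// (V_3\boxtimes V_3)\cong\rRep$.

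For the second equivalence it suffices to show $\uRep{U_{s=\exp(2\pi i\frac{7}{10})}(\SL{2}\directSum\SL{2})}^{modularize}\cong\rRep U_q(\SL{2}\directSum\SL{2})$, which together with the first equivalence closes the chain. Writing $\cC$ for this unimodal category and using $\cC\cong\uRep{U_{s=\exp(2\pi i\frac{7}{10})}(\SL{2})}^{\boxtimes 2}$, I would identify its transparent objects (the M\"uger center) as the group $G\cong(\mathbb{Z}/2)^2$ generated by $V_3\boxtimes V_0$ and $V_0\boxtimes V_3$; here the single-$\SL{2}$ data recorded in the table before Lemma~\ref{lem:KM-SL2} shows that in $\uRep{U_{s=\exp(2\pi i\frac{7}{10})}(\SL{2})}$ the object $V_3$ is transparent with dimension $1$ and twist $1$, while $V_1,V_2$ are not transparent. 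Since every element of $G$ then has dimension $1$ and twist $1$, the modularization is exactly $\cC// G$, and applying Lemma~\ref{lem:free-quotient} with $\cI=G$ and $\cM=\rRep$ — the action of $G$ on the sixteen simples is free, with each size-four orbit meeting $\rRep$ in a single (even-even) object — yields $\uRep^{modularize}\cong\rRep$. Equivalently, one may take the Deligne product of two copies of Lemma~\ref{lem:KM-SL2}, using that modularization commutes with $\boxtimes$.

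The main obstacle is the careful bookkeeping of transparency, quantum dimension, and twist across the different ribbon structures — the $s$-independent $\vRep$ on one side and $\uRep$ at $s=\exp(2\pi i\frac{7}{10})$ on the other — since all three quantities depend on both the pivotal structure and the choice of square root $s$. This is precisely why the unimodal structure and the value $s=\exp(2\pi i\frac{7}{10})$ (rather than the standard pivotal structure or $s=\exp(2\pi i\frac{1}{5})$) are forced: only there do the transparent invertibles acquire dimension and twist $1$, so that the modularization is defined. Once this is pinned down, as in the single-$\SL{2}$ case, everything else reduces to the finite orbit counts above.
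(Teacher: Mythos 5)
Your proposal is correct and takes essentially the same route as the paper's proof: both realize $\rRep$ as a modular full subcategory sitting inside $\vRep \subset \uRep{U_{s=\exp\left(2 \pi i \frac{7}{10}\right)}(\SL{2} \directSum \SL{2})}$, check transparency, dimension and twist of the invertible objects $V_3 \boxtimes V_3$, $V_3 \boxtimes V_0$, $V_0 \boxtimes V_3$ across the choices of $s$ and pivotal structure, verify the free action and the orbit counts ($\Integer/2\Integer$ on the eight vector representations, $(\Integer/2\Integer)^2$ on all sixteen simples, each orbit meeting $\rRep$ once), and conclude via Lemma~\ref{lem:free-quotient} together with uniqueness of modularization. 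Your explicit monodromy computation $q^{-5a}q^{-5b}=1$ and your verification that $\rRep$ is modular (as the Deligne square of a Fibonacci category) spell out hypotheses the paper leaves implicit, but the argument is the same.
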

\begin{proof}
We make the abbreviations
\begin{align*}
\cR & = \rRep U_{q=-\exp \left(-2 \pi i \frac{1}{10}\right)}(\SL{2} \directSum \SL{2}) \\
\cV & = \vRep U_{q=-\exp \left(-2 \pi i \frac{1}{10} \right)}(\SL{2} \directSum \SL{2}) \\
\cU & = \uRep{U_{s=\exp \left(2 \pi i \frac{7}{10} \right)}(\SL{2} \directSum \SL{2})}.
\end{align*}
It is easy to check that $\cR$ and $\cV$ are not affected by either the choice of $s$ (recall in this situation $s$ is a square root of $q$, required for the definition of the braiding), or changing between the usual and the unimodal pivotal structures.
Thus we have inclusions $$\cR \subset \cV \subset \cU.$$
The invertible objects in $\cU$ are the representations  $V_0 \boxtimes V_0, V_0 \boxtimes V_3, V_3 \boxtimes V_0$ and $V_3 \boxtimes V_3$. For any choice of $s$ and either pivotal structure, $V_3 \boxtimes V_3$ is transparent. The representations $V_0 \boxtimes V_3$ and $V_3 \boxtimes V_0$ are transparent only with $s=\exp \left(2 \pi i \frac{7}{10} \right)$ and the unimodal pivotal structure. Under tensor product, the invertible objects form the group $\Integer/2 \Integer \times \Integer/2 \Integer$.
The invertible objects in $\cV$ are $V_0 \boxtimes V_0$ and $V_3 \boxtimes V_3$, forming the group $\Integer/2 \Integer$.

We have
\begin{align*}
(V_a \boxtimes V_b) \tensor (V_0 \boxtimes V_3) & \iso V_a \boxtimes V_{3-b} \\
(V_a \boxtimes V_b) \tensor (V_3 \boxtimes V_0) & \iso V_{3-a} \boxtimes V_b \\
(V_a \boxtimes V_b) \tensor (V_3 \boxtimes V_3) & \iso V_{3-a} \boxtimes V_{3-b},
\end{align*}
and so see that the action of the group of invertible objects is free.
Each $\Integer/2 \Integer \times \Integer/2 \Integer$ orbit on $\cU$ contains exactly one object from $\cR$, and each $\Integer/2 \Integer$ orbit on $\cV$ contains exactly one object from $\cR$. See Figure \ref{fig:SO4-quotients}.
\begin{figure}[!ht]
\begin{center}
\begin{tabular}{cc}
$\mathfig{0.35}{coincidences/so4-4fold-quotient}$ &
$\mathfig{0.35}{coincidences/so4-vector-quotient}$ \\
(a) & (b)
\end{tabular}
\end{center}
\caption{(a) The $4$-fold quotient of the Weyl alcove and (b) the $2$-fold quotient of the Weyl alcove, with vector representations marked. Lemma \ref{lem:KM-SO4} identifies the two resulting $4$-object categories.}
\label{fig:SO4-quotients}
\end{figure}

Thus both equivalences in this lemma are deequivariantizations, by applying Lemma \ref{lem:free-quotient} to the inclusions $\cR \subset \cU$ and $\cV \subset \cU$.
\end{proof}

In $\Rep{U_{q=-\exp \left(-\frac{2\pi i}{14}\right)}(\SL{4})}$ we can again consider two subcategories, the root representations and the vector representations. The root representations of $\SL{4}$ are those whose highest weight is an $\Natural$-linear combination of $(2,-1,0), (-1,2,-1), (0,-1,2)$ in $\Natural^3$. They form an index $4$ sublattice of the weight lattice. The Weyl alcove for $\SL{4}$ at a $14$-th root of unity consists of those weights $(a,b,c) \in \Natural^3$ with $a+b+c \leq 3$, and so the relevant root representations are $V_{(000)}, V_{(101)}, V_{(210)}, V_{(012)}$ and $V_{(020)}$. The vector representations $\vRep U_{q=-\exp \left(-\frac{2\pi i}{14}\right)}(\SL{4})$ are those that become vector representations under the identification $\SL{4} \iso \SO{6}$ (this is $A_3 = D_3$), namely those $V_{(abc)}$ with $a+c$ even. These form an index $2$ sublattice of the weight lattice, containing the root lattice. Both sublattices are illustrated in Figure \ref{fig:SL4-lattices}; hopefully having these diagrams in mind will ease later arguments.

\begin{figure}[!ht]
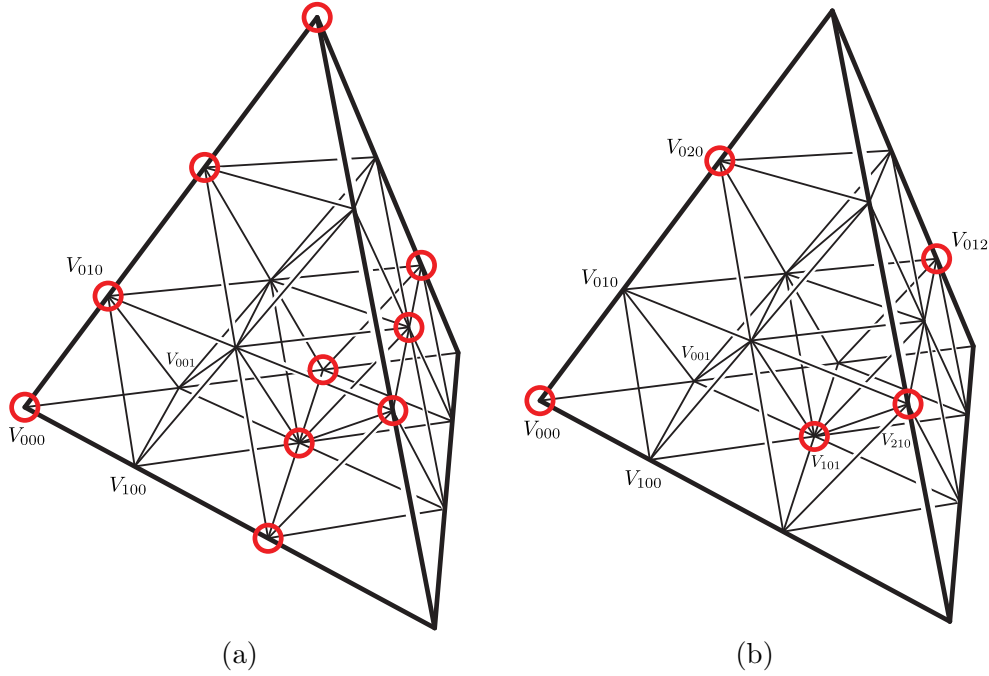

\begin{center}
\begin{tabular}{cc}
$\mathfig{0.45}{coincidences/sl4-vector-labels}$ &
$\mathfig{0.45}{coincidences/sl4-root-labels}$ \\
(a) & (b)
\end{tabular}
\end{center}
\caption{The $\SL{4}$ Weyl alcove at a $14$-th root of unity, showing (a) the vector representation sublattice and (b) the root representation sublattice.}
\label{fig:SL4-lattices}
\end{figure}

\begin{lem} \label{lem:KM-SO6}
\begin{align*}
\rRep U_{q=-\exp \left(-\frac{2 \pi i}{14} \right)}(\SL{4}) & \cong \vRep U_{q=-\exp \left(-\frac{2 \pi i}{14} \right)}(\SL{4}) // V_{(030)}  \\
	& \cong \uRep{U_{s=\exp \left(2 \pi i \frac{5}{14}\right)}(\SL{4})}^{modularize}.
\end{align*}
\end{lem}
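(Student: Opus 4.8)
The plan is to mirror the proof of Lemma~\ref{lem:KM-SO4}, replacing $\SL{2}\directSum\SL{2}$ by $\SL{4}$ and the Klein four-group of simple currents by the cyclic group $\Integer/4\Integer$. Write
\[
\cR = \rRep U_{q}(\SL{4}),\qquad \cV = \vRep U_{q}(\SL{4}),\qquad \cU = \uRep{U_{s}(\SL{4})},
\]
with $q=-\exp\left(-\frac{2\pi i}{14}\right)$ and $s=\exp\left(2\pi i\frac{5}{14}\right)$, so that $s^{4}=q$. As before, $\cR$ and $\cV$ are independent of the choice of square root $s$ and of the passage between the usual and the unimodal pivotal structures: the braiding on vector representations does not depend on $s$ (this is where $A_3=D_3$ enters), and the character $\chi$ cutting out the unimodal structure, which sends the generator of the center $\Integer/4\Integer$ to $-1$, restricts trivially to the index-two vector sublattice. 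Thus there are inclusions $\cR\subset\cV\subset\cU$. The invertible objects of $\cU$ are the four corner weights $V_{(000)},V_{(300)},V_{(030)},V_{(003)}$ of the alcove $\{a+b+c\le 3\}$, forming $\Integer/4\Integer$ generated by $V_{(300)}$; of these only $V_{(000)}$ and $V_{(030)}$ are vector representations, so the invertibles of $\cV$ form the subgroup $\langle V_{(030)}\rangle\cong\Integer/2\Integer$.

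The key step, and the one I expect to be the main obstacle, is to verify that with this $s$ and the unimodal structure every invertible object of $\cU$ is transparent with quantum dimension $1$ and twist factor $1$; this is exactly the role played by the transparency table in Lemma~\ref{lem:KM-SO4}, and it is what forces the particular value of $s$. Using $\theta_{V_\lambda}=q^{\langle\lambda,\lambda+2\rho\rangle}$ one finds, in the standard structure, $\theta_{V_{(030)}}=q^{21}=1$ and $\theta_{V_{(300)}}=s^{63}=-1$, the latter corrected to $+1$ by $\chi(V_{(300)})=-1$ while $\theta_{V_{(030)}}$ is unchanged; the Weyl dimension formula gives $\dim V_{(300)}=-1$ in the standard structure, again corrected to $+1$ by $\chi$. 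Transparency of the generator $V_{(300)}$ is the assertion that $c_A=\theta_A\theta_{V_{(300)}}/\theta_{A\otimes V_{(300)}}=1$ for every simple $A$; since $c_A$ is insensitive to the pivotal structure and transparency is closed under tensor product, it suffices to check the three fundamental representations, and a short computation (e.g.\ $\theta_{V_{(100)}}=\theta_{V_{(210)}}$ in the unimodal structure, since $V_{(100)}\otimes V_{(300)}\cong V_{(210)}$) confirms $c_A=1$ in each case. Hence all of $\Integer/4\Integer$ is transparent, and in particular so is $V_{(030)}$ inside $\cV$.

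The remaining ingredient is a purely combinatorial orbit count. Tensoring by the generator is the alcove automorphism $\sigma(a,b,c)=(3-a-b-c,\,a,\,b)$, with $\sigma^{2}(a,b,c)=(c,\,3-a-b-c,\,a)$ equal to $\otimes V_{(030)}$. Neither $\sigma$ nor $\sigma^{2}$ fixes an alcove weight (the fixed-point conditions read $4a=3$ and $2(a+b)=3$ respectively), so $\Integer/4\Integer$ acts freely on the $20$ alcove weights of $\cU$ and $\langle V_{(030)}\rangle$ acts freely on the $10$ vector weights of $\cV$, giving $5$ orbits in each case. A direct check shows that each orbit contains exactly one of the five root representations $V_{(000)},V_{(101)},V_{(210)},V_{(012)},V_{(020)}$, so each orbit meets $\cR$ in a single object.

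Finally I would assemble the equivalences. Since the transparent objects of $\cU$ are precisely the simple currents $\Integer/4\Integer$, the modularization theorem shows $\cU//\Integer/4\Integer=\cU^{modularize}$ is modular; as its simple objects are the free orbits and each orbit contains a unique root representation, the composite $\cR\hookrightarrow\cU\to\cU^{modularize}$ is dominant and fully faithful, hence an equivalence $\cR\cong\uRep{U_{s}(\SL{4})}^{modularize}$, which in particular proves $\cR$ modular. With $\cR$ now known to be modular, Lemma~\ref{lem:free-quotient} applied to the inclusion $\cR\subset\cV$, whose invertibles are $\langle V_{(030)}\rangle$ acting freely with one root representation per orbit, yields $\cV//V_{(030)}\cong\cR$. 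Combining the two gives the asserted chain of equivalences.
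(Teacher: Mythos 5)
Your proposal is correct and follows essentially the same route as the paper's proof: the inclusions $\cR \subset \cV \subset \cU$, identification of the invertible objects as the $\Integer/4\Integer$ of alcove corners (with $V_{(300)}$, $V_{(003)}$ transparent only for $s=\exp\left(2\pi i \frac{5}{14}\right)$ with the unimodal structure), freeness of the action with one root representation per orbit, and Lemma \ref{lem:free-quotient} to realize both equivalences as deequivariantizations. The only differences are cosmetic improvements: you make explicit the twist, dimension, and fixed-point computations that the paper asserts without detail, and you supply the modularity of $\cR$ (needed as a hypothesis of Lemma \ref{lem:free-quotient}) by first establishing $\cR \cong \cU^{modularize}$ directly.
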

\begin{proof}
We make the abbreviations
\begin{align*}
\cR & = \rRep U_{q=-\exp \left(-\frac{2 \pi i}{14} \right)}(\SL{4}) \\
\cV & = \vRep U_{q=-\exp \left(-\frac{2 \pi i}{14} \right)}(\SL{4}) \\
\cU & = \uRep{U_{s=\exp \left(2 \pi i \frac{5}{14}\right)}(\SL{4})}.
\end{align*}
It is easy to check that $\cR$ and $\cV$ are not affected by either the choice of $s$ (recall in this situation $s$ is a $4$-th root of $q$, required for the definition of the braiding), or any variation of pivotal structure. Thus we have inclusions $$\cR \subset \cV \subset \cU.$$
The invertible objects in $\cU$ are the representations  $V_{(000)}, V_{(300)}, V_{(030)}$ and $V_{(003)}$. For any choice of $s$ and pivotal structure, $V_{(030)}$ is transparent. The representations $V_{(300)}$ and $V_{(003)}$ are transparent only with $s=\exp \left(2 \pi i \frac{5}{14}\right)$ and the unimodal pivotal structure. Under tensor product, the invertible objects form the group $\Integer/4 \Integer$.
The invertible objects in $\cV$ are $V_{000}$ and $V_{030}$, forming the group $\Integer/2 \Integer$.

The action of the group of invertible objects is free, and shown in Figure \ref{fig:SL4-action}. Each $\Integer/4 \Integer$ orbit on $\cU$ contains exactly one object from $\cR$, and each $\Integer/2 \Integer$ orbit on $\cV$ contains exactly one object from $\cR$. See Figure \ref{fig:SL4-quotients}.

\begin{figure}[!ht]
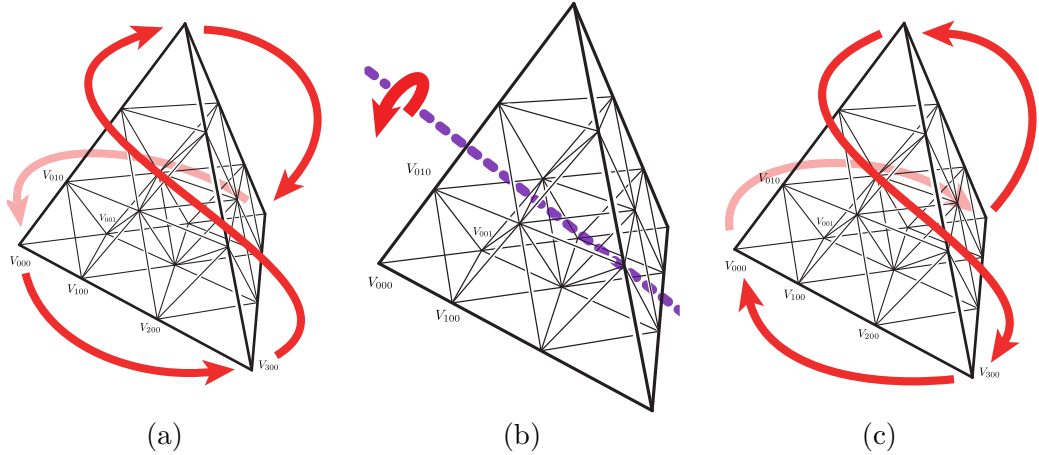

\begin{tabular}{ccc}
$\mathfig{0.3}{coincidences/sl4-300}$ &
$\mathfig{0.3}{coincidences/sl4-030}$ &
$\mathfig{0.3}{coincidences/sl4-003}$
\\
(a) & (b) & (c)
\end{tabular}
\caption{The action of tensor product with an invertible object. (a) $- \tensor V_{(300)}$ and (c) $- \tensor V_{(003)}$ act by orientation reversing isometries, while (b) $- \tensor V_{(030)}$ acts by a $\pi$ rotation.}
\label{fig:SL4-action}
\end{figure}

\begin{figure}[!ht]
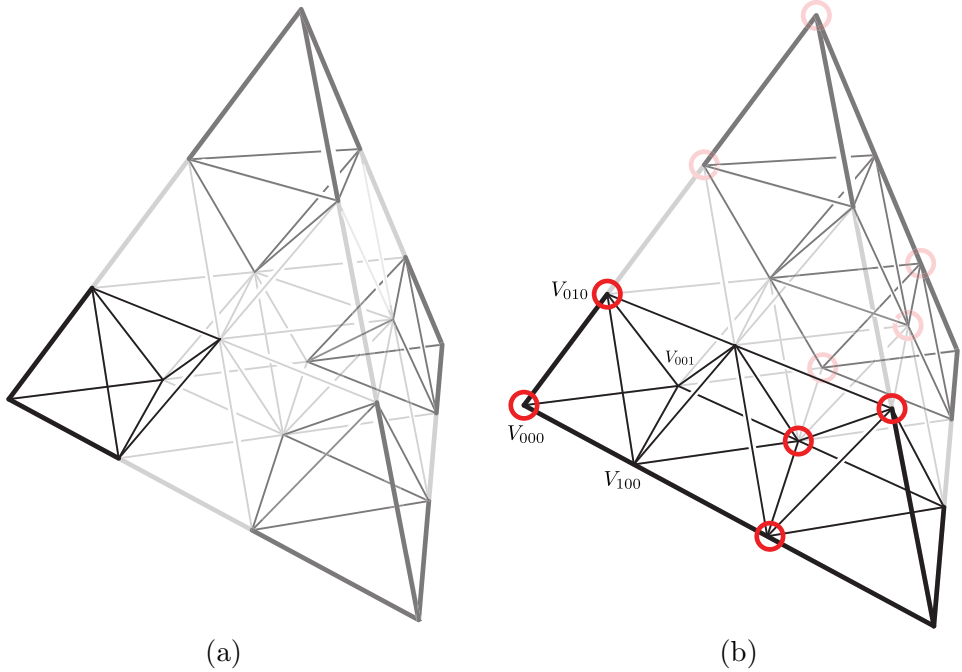

\begin{center}
\begin{tabular}{cc}
$\mathfig{0.45}{coincidences/sl4-4fold-quotient}$ &
$\mathfig{0.45}{coincidences/sl4-vector-quotient}$ \\
(a) & (b)
\end{tabular}
\end{center}
\caption{(a) The $4$-fold quotient of the Weyl alcove and (b) the $2$-fold quotient of the Weyl alcove, with vector representations marked.  Lemma \ref{lem:KM-SO6} identifies the two resulting $5$-object categories.}
\label{fig:SL4-quotients}
\end{figure}

Thus both equivalences in the Lemma are de-equivariantizations, by applying Lemma \ref{lem:free-quotient} to the inclusions $\cR \subset \cU$ and $\cV \subset \cU$.
\end{proof}

Finally, the usual statement in the literature of generalized Kirby-Melvin symmetry involves changing the label of only one component on the link.  This can be proved in a completely analogous way to the result above.  We recall the statement here.

\begin{thm}
\label{thm:KM}
Let $J_{A_1, \ldots, A_k}(L)$ be the value of a framed link $L$ (with components $L_1, \ldots, L_k$), 
labeled by simple objects $A_1, \ldots, A_k$.  Suppose now that $A_1$ is replaced by 
$A_1 \otimes X$ (with $X$ invertible).  Then 
$$J_{A_1 \otimes X, A_2, \ldots, A_k}(L) = 
	\dim{X}  \cdot c_X^{\operatorname{writhe}(L_1)} \cdot \prod_{i=1,\ldots,k} c_{A_i}^{\operatorname{linking}(L_1', L_i)} \cdot J_{A_1, \ldots, A_k}(L)
	$$
where $L_1'$ is a copy of $L_1$ running parallel to $L_1$ in the 
blackboard framing.  
\end{thm}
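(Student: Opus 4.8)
The plan is to imitate the proof of Theorem~\ref{thm:ourKM} almost verbatim, the only difference being that now a single component is recabled rather than all of them. Throughout I work with the framed invariant (the quantity attached to a framed labelled link before framing correction), since the stated identity is manifestly framing-dependent through the factors $c_X^{\operatorname{writhe}(L_1)}$ and, in the $i=1$ term of the product, $c_{A_1}^{\operatorname{linking}(L_1',L_1)}$. First I would realize the replacement of $A_1$ by $A_1 \otimes X$ diagrammatically: take the blackboard-framed diagram of $L$, and replace the component $L_1$ by two parallel strands, one labelled $A_1$ and a pushoff $L_1'$ labelled $X$, leaving $L_2,\dots,L_k$ untouched with their labels $A_2,\dots,A_k$. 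By functoriality of the Reshetikhin--Turaev construction this parallel cable computes exactly $J_{A_1\otimes X,A_2,\dots,A_k}(L)$.

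Next I would disentangle the $X$-labelled pushoff $L_1'$ from the rest of the diagram by crossing changes, exactly as in Theorem~\ref{thm:ourKM}, applying the scalar relation $\inputtikz{labelledundercrossing} = c_A \inputtikz{labelledovercrossing}$ each time $L_1'$ crosses a strand labelled by a simple object $A$. Two kinds of crossings occur. Where $L_1'$ meets $L_i$ with $i\neq 1$, the crossings are in bijection with the $L_1$--$L_i$ crossings, and the signed count of those that must be switched to pull $L_1'$ free is precisely $\operatorname{linking}(L_1',L_i)$; this contributes $c_{A_i}^{\operatorname{linking}(L_1',L_i)}$. At a self-crossing of $L_1$ the cable produces a $2\times 2$ block splitting into one $A_1$--$A_1$ crossing (which survives, so that the undisturbed $A$-labelled strands reconstitute the original $L$ and contribute $J_{A_1,\dots,A_k}(L)$), a pair of $A_1$--$X$ crossings, and one $X$--$X$ crossing. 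Summed over the self-crossings of $L_1$, the $A_1$--$X$ crossings contribute the $i=1$ term $c_{A_1}^{\operatorname{linking}(L_1',L_1)}=c_{A_1}^{\operatorname{writhe}(L_1)}$, while the surviving $X$--$X$ self-crossings of $L_1'$, together with the framing normalization of the separated $X$-component, contribute $c_X^{\operatorname{writhe}(L_1)}$. Once $L_1'$ has been separated off it is a knot labelled by the invertible object $X$, so part~(1) of Theorem~\ref{thm:summands} identifies its remaining contribution as $\dim X$.

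Assembling these factors yields $J_{A_1\otimes X,A_2,\dots,A_k}(L) = \dim X\cdot c_X^{\operatorname{writhe}(L_1)}\cdot\prod_{i=1}^k c_{A_i}^{\operatorname{linking}(L_1',L_i)}\cdot J_{A_1,\dots,A_k}(L)$, where the last simplification uses $c_A=\theta_A\theta_X/\theta_{A\otimes X}$ to rewrite the twist contributions in this clean form. The main obstacle is precisely this final bookkeeping: keeping the signs of the crossing changes consistent and, above all, correctly apportioning the contribution of each self-crossing of $L_1$ between the $c_{A_1}$ factor coming from the $A_1$--$X$ crossings and the $c_X$ factor coming from the $X$--$X$ crossing and the framing of the split-off $X$-component. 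This is exactly the place where Theorem~\ref{thm:ourKM} conceals all of its subtlety, and is the only step where the argument is more than a transcription of that proof.
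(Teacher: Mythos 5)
Your outline is exactly the argument the paper intends: the paper never writes out a proof of Theorem \ref{thm:KM} at all, saying only that it ``can be proved in a completely analogous way'' to Theorem \ref{thm:ourKM}, and your single-component cabling --- replacing $L_1$ by an $A_1$-strand plus an $X$-labelled pushoff $L_1'$, then switching crossings to pull $L_1'$ free --- is that analogue. The linking-number bookkeeping is also sound: each original $L_1$--$L_i$ crossing contributes exactly one mixed $X$--$A_i$ crossing, the signed count of the switches is $\operatorname{linking}(L_1',L_i)$ (and equals $\operatorname{writhe}(L_1)$ for $i=1$), and once $L_1'$ is separated, part (1) of Theorem \ref{thm:summands} supplies a factor of $\dim X$.

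However, the step you explicitly defer --- ``correctly apportioning'' the self-crossing contribution between $c_{A_1}$ and $c_X$ --- is a genuine gap rather than routine bookkeeping, and as written your accounting does not arrive at the stated formula. Both sides of the identity are invariants of the \emph{framed} link, so there is no ``framing normalization of the separated $X$-component'' available to invoke: after the crossing changes, $L_1'$ is simply a framed knot of writhe $\operatorname{writhe}(L_1)$ labelled by $X$, and its framed invariant is $\theta_X^{\operatorname{writhe}(L_1)}\dim X$, since its framing-independent invariant is $\dim X$ by Theorem \ref{thm:summands}(1). Your argument therefore produces the factor $\theta_X^{\operatorname{writhe}(L_1)}$, and identifying this with the stated $c_X^{\operatorname{writhe}(L_1)}$ requires $\theta_X = c_X$, i.e.\ $\theta_{X\otimes X}=\theta_X$ given the paper's definition $c_A=\theta_A\theta_X/\theta_{A\otimes X}$; this fails in general. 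For instance, for $\SL{2}$ at level $k$ with $X=V_{(k)}$ one computes $\theta_X=i^k$ --- matching the factor $i^{(r-2)\phi_{11}}$, $r = k+2$, in Kirby and Melvin's original symmetry principle --- whereas $c_X=(-1)^k$. Relatedly, the signs of the exponents on the $c_{A_i}$ depend on the over/under conventions in the definition of $c_A$ and on whether you pull $L_1'$ over or under the rest of the diagram; note that the corresponding factor in the proof of Theorem \ref{thm:ourKM} appears as $c_A^{-\operatorname{writhe}}$, not $c_A^{+\operatorname{writhe}}$. Until you pin down these conventions and derive the writhe factor explicitly (or identify a convention under which the statement's $c_X$ means the crossing eigenvalue on $X \otimes X$ rather than the inverse double braiding), the proposal asserts precisely the part of the theorem that actually needs proof.
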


\subsection{Level-rank duality}
Level-rank duality is a collection of ideas saying that the semisimplified representation theory of a quantum group at a certain root of unity is related to that of a different quantum group, at a (potentially) different root of unity. The rank of a quantum group in this setting is dimension of its natural representation (i.e. the $n$ in $\SO{n}$ or $\SL{n}$).  The level describes the root of unity.  The name ``level" comes from the connection between quantum groups at roots of unity and projective representations of loop groups at a fixed level.  Here the relationship between the root of unity and the level is given  by the formula
$$k  = \frac{l}{2 D} - \check{h}$$
where $l$ is the order of the root of unity, $D$ is the lacing number of the quantum group, and $\check{h}$ is the dual Coxeter number. See Figure \ref{fig:quantum-group-data} for the values for each simple Lie algebra. Notice that not all roots of unity come from loop groups under this correspondence. 

\begin{figure}[!ht]
\begin{center}
\begin{tabular}{cccccc}
type & Lie group & rank  & lacing number $D$ & dual Coxeter number $\check{h}$ & L \\
\hline
$A_n$ & $\SL{n+1}$ & $n$ & $1$ & $n+1$ & $n+1$ \\
$B_{\text{$n$ even}}$ & $\SO{2n+1}$ & $n$ & $2$ & $2n-1$ & 1\\
$B_{\text{$n$ odd}}$ & $\SO{2n+1}$ & $n$ & $2$ & $2n-1$ & 2\\
$C_n$ & $\SP{2n}$ & $n$ & $2$ & $n+1$ & $1$ \\
$D_{\text{$n$ even}}$ & $\SO{2n}$ & $n$ & $1$ & $2n-2$ & 2 \\
$D_{\text{$n$ odd}}$ & $\SO{2n}$ & $n$ & $1$ & $2n-2$ & 4 \\
$E_n$ & $E_{6|7|8}$ & $6, 7, 8$ & $1$ & $12,18,30$ & $3,2,1$ \\
$F_4$ & $F_4$ & $4$ & $2$ & $9$ & $1$ \\
$G_2$ & $G_2$ & $2$ & $3$ & $4$ & $1$
\end{tabular}
\end{center}
\caption{Combinatorial data for the simple Lie algebras.}
\label{fig:quantum-group-data}
\end{figure}

Nonetheless there are versions of level-rank duality for quantum groups at roots of unity not corresponding to loop groups.  In this context what the ``level" measures is which quantum symmetrizers vanish, while the rank measures which quantum antisymmetrizers vanish.  At the level of combinatorics, the rank gives the bound on the number of rows in Young diagrams, while the level gives a bound on the number of columns, and duality is realized by reflecting Young diagrams thus interchanging the roles of rank and level.

We want statements of level-rank duality that give equivalences of braided tensor categories.  In order to get such precise statements several technicalities appear.  First, level-rank duality concerns $SO$, not $Spin$, so we only look at the vector representations.  Second, there is a subtle relationship between the roots of unity you need to pick on each side of the equivalence.  In particular, if the root of unity on the left side is of the form $\exp(\frac{2 \pi i}{m})$ then the root of unity on the right side typically will not be of that form.  Finally, level-rank duality is most natural as a statement about $U$ and $O$, not about $SU$ and $SO$.  Getting statements about $SU$ and $SO$ requires considering modularizations.  (It may seem surprising that this is even possible, since we know that $\Rep(U_{\zeta(\ell)}(\SO{n}))$ is already a modular tensor category \cite[Theorem 6]{MR2286123}. When we restrict to the subcategory of vector representations, however, we lose modularity.)

We found the papers  \cite{MR1710999} (on the $SU$ case) 
and \cite{MR1854694} (on the $SO$ and $Sp$ cases) 
to be exceedingly useful, and we'll give statements and proofs that closely follow their methods. 
Level-rank duality for $SO(3)-SO(4)$ appears in the paper \cite{MR2469528}, where it is used to prove Tutte's golden identity for the chromatic polynomial.  For our particular case of level-rank duality involving $\mathfrak{so}_3$ and the $\mathcal{D}_{2m}$ subfactor planar algebra, see the more physically minded \cite{MR1279547}.
For some more background on level-rank duality, see \cite{MR1103065, MR1063959, MR1710999, MR2124554} for the $SU$ cases, \cite{MR1414470} for level-rank duality at the level of $3$-manifold invariants and \cite{MR1601870} for loop groups.

As explained by Beliakova and Blanchet, level-rank duality is easiest to understand in a diagrammatic setting, where it says that $U(n)_k \iso U(k)_n$ and $O(n)_k \iso O(k)_n$, with $U$ and $O$ being interpreted as categories of tangles modulo either the HOMFLYPT or Dubrovnik relations. The equivalences come from almost trivial symmetries of the relations. 
The reason this modularization is necessary is that to recover $SO$ from $O$, we need to quotient out by the determinant representation. Thus, to translate an equivalence $O(n)_k \iso O(k)_n$ into something like $SO(n)_k \iso SO(k)_n$, we find that in each category there is the `shadow' of the determinant representation in the other category, which we still need to quotient out. See Figure \ref{fig:LR-schematic} for a schematic diagram illustrating this.

\begin{figure}[!ht]
$$\xymatrix{
&O(n)_k=O(k)_n \ar[ddl]_{\mathrm{det}_n \cong \id} \ar[ddr]^{\mathrm{det}_k \cong \id} & \\
&& \\
SO(n)_k \ar[ddr]_{\mathrm{det}_k \cong \id} && SO(k)_n \ar[ddl]^{\mathrm{det}_n \cong \id}\\
&&\\
&SO(n)_k^{\operatorname{mod}} \cong SO(k)_n^{\operatorname{mod}}&
}$$
\caption{A schematic description of $SO$ level-rank duality, suppressing the details of the actual roots of unity appearing.}
\label{fig:LR-schematic}
\end{figure}
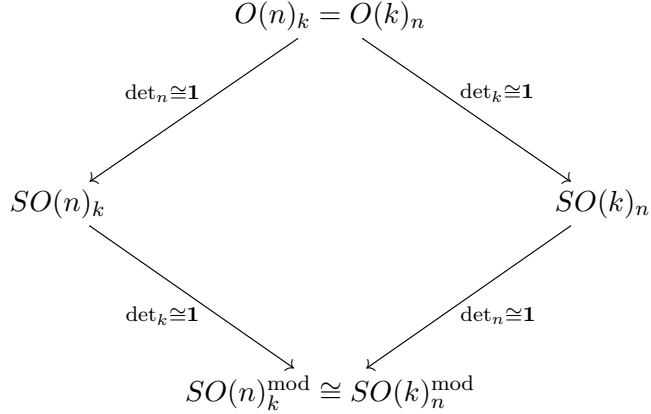

Here is the precise statement of level-rank duality which we will be using.
Define \begin{equation*}
\ell_{n,k} = \begin{cases} 2 (n+k-2) & \text{if $n$ and $k$ are even} \\ 4(n+k-2) & \text{if $n$ is odd and $k$ is even}
\\ n+k-2 & \text{if $n$ is even and $k$ is odd}
\end{cases}
\end{equation*}

\begin{thm}[SO level-rank duality]
\label{thm:DLR}
Suppose $n, k \geq 3$ are not both odd. Suppose $q_1$ is a primitive root of unity with order $\ell_{n,k}$. Choose $q_2$ so that
\begin{equation*}
-1 = \begin{cases}
q_1 q_2 & \text{if $n$ and $k$ are both even} \\
q_1^2 q_2 & \text{if $n$ is odd and $k$ is even} \\
q_1 q_2^2 & \text{if $n$ is even and $k$ is odd} \\
\end{cases}
\end{equation*}
As ribbon categories, there is an equivalence
\begin{equation*}
\vRep(U_{q=q_1}(\SO{n}))//V_{k e_1} \iso \vRep(U_{q=q_2}(\SO{k}))//V_{n e_1}.
\end{equation*}
\end{thm}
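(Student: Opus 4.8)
The plan is to realize both sides diagrammatically, via the Dubrovnik (BMW) skein category, and to exhibit the duality as an ``almost trivial'' symmetry of the Dubrovnik relations in the manner of Beliakova--Blanchet \cite{MR1854694}, upgrading their statement to a genuine equivalence of ribbon categories by means of the recognition theorem of Wenzl and Tuba \cite{MR2132671}. First I would identify each $\vRep(U_{q}(\SO{N}))$ with the semisimplification of a Dubrovnik category at explicit parameters. The natural representation $V^\natural$ is symmetrically self-dual with $V^\natural \tensor V^\natural \iso \id \directSum A \directSum B$ for simple $A, B$, so Theorem \ref{thm:summands}(4) tells us the associated invariant is a Kauffman or Dubrovnik specialization, and feeding the braiding eigenvalues on $V^\natural$ (recorded in \S\ref{sec:quantumgroups}) into Theorem \ref{thm:eigenvalues} pins down $(a,z)$: for $N$ even one gets $a = q^{N-1}$, $z = q - q^{-1}$ as in Equation \eqref{eq:dubrovnik-D}, and for $N$ odd one gets $a = q^{2(N-1)}$, $z = q^2 - q^{-2}$ as in Equation \eqref{eq:dubrovnik-B}. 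Wenzl--Tuba rigidity then shows $\vRep(U_{q}(\SO{N}))$ is equivalent, as a ribbon category, to the semisimplification of the Dubrovnik category at these parameters, not merely Grothendieck-equivalent.

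Next I would match the parameters across the duality. The content of level-rank symmetry is that the Dubrovnik specialization attached to $(\SO{n}, q_1)$ and the one attached to $(\SO{k}, q_2)$ coincide once $q_1$ has order $\ell_{n,k}$ and $q_2$ is chosen by the prescribed relation; the three cases in the definitions of $\ell_{n,k}$ and of the $q_1$--$q_2$ relation are forced precisely by the parity-dependent $(a,z)$ formulas above (the odd-dimensional side effectively working with $q^2$, which is why $q_1$ or $q_2$ is squared) together with the Kauffman-versus-Dubrovnik dictionary $\widetilde{\Dubrovnik}(a,z) \leftrightarrow \widetilde{\Kauffman}(ia,-iz)$ used to compare with the Kauffman-normalized category $O(n)_k$. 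I would verify each case by substituting the two parametrizations and checking that the loop value $\frac{a-a^{-1}}{z}+1$ and the braiding eigenvalues agree up to the allowed signs, so that the two diagrammatic categories are literally the common category $O(n)_k = O(k)_n$.

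Finally I would descend from this common $O$-level category to the two modularizations, following the schematic of Figure \ref{fig:LR-schematic}. In $\vRep(U_{q=q_1}(\SO{n}))$ the object $V_{k e_1}$ is the ``shadow'' of the determinant representation of the dual $\SO{k}$; using the twist-factor formulas of \S\ref{sec:quantumgroups} (for instance $\theta_{V_{ke_1}} = q^{k^2 + 2(n-1)k}$ when $n$ is even) one checks at the specific root of unity $q_1$ that $V_{k e_1}$ is invertible, transparent, and has $\dim = \theta = 1$, so it satisfies the hypotheses of \S\ref{sec:modularization} and $//V_{k e_1}$ is a legitimate deequivariantization; symmetrically for $V_{n e_1}$ on the $\SO{k}$ side. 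Since deequivariantizations by commuting groups of transparent invertibles may be performed in either order, quotienting $O(n)_k$ by both determinant objects produces one category reachable along either branch, which is simultaneously $\vRep(U_{q=q_1}(\SO{n}))//V_{k e_1}$ and $\vRep(U_{q=q_2}(\SO{k}))//V_{n e_1}$; Lemma \ref{lem:free-quotient}, or uniqueness of modularization, then yields the stated equivalence.

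The main obstacle I expect is the bookkeeping of roots of unity and signs: extracting the exact order $\ell_{n,k}$ and the precise $q_1$--$q_2$ relation requires tracking the parity of $n$ and $k$ through both the even/odd Dubrovnik parametrizations and the $i$-twists relating the Kauffman and Dubrovnik normalizations, and it is exactly here that the Wenzl--Tuba recognition result must be invoked to guarantee that matching parameters gives equivalent rather than merely Grothendieck-equivalent ribbon categories, which is what strengthens the Beliakova--Blanchet statement. A secondary but genuine subtlety is confirming that the transparent objects of $\vRep(U_{q=q_1}(\SO{n}))$ are precisely the tensor powers of $V_{k e_1}$, so that the quotient really is the full modularization and the bottom row of Figure \ref{fig:LR-schematic} is modular.
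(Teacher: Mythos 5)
Your overall skeleton --- realize both sides inside a common Dubrovnik/BMW skein category, match the parameters by an almost-trivial symmetry of the relations, and descend by deequivariantizing along the two branches of Figure \ref{fig:LR-schematic} --- is exactly the paper's proof. But your first paragraph contains a genuine error that makes the writeup internally inconsistent. Tuba--Wenzl does \emph{not} show that $\vRep(U_q(\SO{N}))$ is equivalent to the semisimplification of the Dubrovnik category at the stated $(a,z)$: that semisimplification is the $O$-level category, not the $SO$-level one. Concretely, at the relevant roots of unity the semisimplified category $\cO(n,w_{n,k})$ contains a transparent determinant object $\det_n \not\iso \id$ (indeed a full group $\Integer/2\Integer \times \Integer/2\Integer = \{\id, \det_n, \det_k, \det_n \tensor \det_k\}$ of transparent invertibles, per Beliakova--Blanchet), and its simple objects are roughly twice as numerous as those of $\vRep(U_{q_1}(\SO{n}))$, since $\tensor\det_n$ acts freely. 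What the Tuba--Wenzl eigenvalue analysis actually produces is a dominant functor $\widetilde{\cO}(n,w_{n,k}) \to \vRep(U_{q_1}(\SO{n}))$ that kills negligibles and sends $\det_n \mapsto \id$; it therefore factors through the deequivariantization, and the correct identification is $\cO(n,w_{n,k})//\det_n \iso \vRep(U_{q_1}(\SO{n}))$, which is the paper's key lemma. Your third paragraph tacitly relies on this corrected statement --- you quotient the common $O$-level category by $\det_n$ \emph{before} reaching $\vRep(U_{q_1}(\SO{n}))$ --- so it contradicts your first paragraph: if the first were right, quotienting by $\det_n$ would change the category you had just claimed to identify, and the two sides of the duality would have mismatched groups of transparent objects.

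With that repair the remainder of your outline is the paper's argument: your parameter matching is Lemma \ref{lem:Oiso}, though note the paper never passes through the Kauffman normalization --- the signs in the $q_1$--$q_2$ relation arise entirely within the Dubrovnik normalization from $a = (-w_{n,k}^{-1})^{k-1} = w_{n,k}^{n-1}$, using the order of $w_{n,k}$, so your $\widetilde{\Dubrovnik}(a,z) \leftrightarrow \widetilde{\Kauffman}(ia,-iz)$ dictionary is an unnecessary detour and one more place for signs to go astray. Also, the identification of the second transparent object with $V_{ke_1}$ in the quotient (your ``shadow'' of the dual determinant) is imported in the paper from Beliakova--Blanchet rather than verified by twist computations on the quantum group side, though your proposed check of $\theta_{V_{ke_1}}=1$ via the formulas of \S \ref{sec:quantumgroups} is the essential condition the paper also flags. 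Finally, your ``secondary subtlety'' is not needed for this theorem: the statement requires only that both sides equal the single double quotient $\cO(n,w_{n,k})//\{\det_n,\det_k\}$, performed in the two orders, not that this quotient be the full modularization.
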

\begin{rem}
When both $n$ and $k$ are odd, there is some form of level-rank duality in terms of the Dubrovnik skein relation,  pursued in  \cite{MR1854694} where it is called the $B^{n,-k}$ case.  However it does not seem possible to express this case purely in terms of quantum groups.  
\end{rem}

\begin{rem}
Notice that the order of $q_2$ is always $\ell_{k,n}$. When $n$ and $k$ are both even then the roots of unity on both sides come from loop groups.  However, when $n$ or $k$ is odd the roots of unity are not the ones coming from loop groups.
\end{rem}

\begin{proof}
We begin by defining a diagrammatic category $\cO(t,w)$, and then seeing that a certain $\Integer/2\Integer \times \Integer/2\Integer$ quotient can be realised via two steps of deequivariantization in two different ways. In the first way, after the initial deequivariantization we obtain a category equivalent to $\vRep(U_{q_1}(\SO{n}))$, while in the second way we obtain a category equivalent to $\vRep(U_{q_2}(\SO{k}))$ instead. The second steps of deequivariantizations give the categories in the statement above; since both are the modular quotient of $\cO(t, w)$ for a certain $t$ and $w$, they are equivalent.

\begin{defn}
The category $\widetilde{\cO}(t,w)$ is the idempotent completion of the BMW category (the quotient of the tangle category by the Dubrovnik skein relations) with
\begin{align*}
a & = w^{t-1} \\
z & = w-w^{-1}.
\end{align*}
The category $\cO(t,w)$ is the quotient of $\widetilde{\cO}(t,w)$ by all negligible morphisms.
\end{defn}

Now define $w_{n,k}$ by
\begin{equation*}
w_{n,k}  = \begin{cases} q_1 & \text{if $n$ is even} \\
q_1^2 & \text {if $n$ is odd.}
\end{cases} 
\end{equation*}
Note the $w_{n,k}$ is a root of unity of order $2(n+k-2)$ when $k$ is even and of order $n+k-2$ with $k$ is odd. The hypotheses of the theorem then ensure that
\begin{equation*}
-w_{n,k}^{-1}  = \begin{cases}q_2 & \text{if $k$ is even} \\
q_2^2 & \text{if $k$ is odd.}\end{cases}
\end{equation*}

\begin{lem}
\label{lem:Oiso}
For $n,k \in \Natural$ and not both odd, the categories $\cO(n,w_{n,k})$ and $\cO(k,-w_{n,k}^{-1})$ are equivalent.
\end{lem}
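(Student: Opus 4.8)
The plan is to observe that each of the two categories in the statement is, by definition, the semisimplification of a Dubrovnik (BMW) category determined entirely by its two parameters $a$ and $z$, so that it suffices to check that these parameters agree. Concretely, $\widetilde{\cO}(t,w)$ is the idempotent completion of the tangle category modulo the Dubrovnik relations with $a=w^{t-1}$ and $z=w-w^{-1}$, and $\cO(t,w)$ is its quotient by negligibles. Since the negligible ideal is intrinsic to a pivotal category, an equality $\widetilde{\cO}(n,w_{n,k})=\widetilde{\cO}(k,-w_{n,k}^{-1})$ of categories (witnessed by the identity functor on diagrams) immediately descends to an equality $\cO(n,w_{n,k})=\cO(k,-w_{n,k}^{-1})$. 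Thus the whole lemma reduces to a parameter computation; this is exactly the ``almost trivial symmetry of the relations'' referred to above.

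Write $w=w_{n,k}$. First I would compare the $z$ parameters. The category $\cO(k,-w^{-1})$ uses $z'=(-w^{-1})-(-w^{-1})^{-1}$, and since $(-w^{-1})^{-1}=-w$ this is $z'=-w^{-1}+w=w-w^{-1}$, which is exactly the $z$ parameter of $\cO(n,w)$. So the main skein relations of the two categories coincide with no further work.

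It remains to compare the $a$ parameters, namely to show $w^{n-1}=(-w^{-1})^{k-1}$. Expanding the right-hand side as $(-1)^{k-1}w^{-(k-1)}$, this is equivalent to the single identity $w^{n+k-2}=(-1)^{k-1}$. The hard part — really the only place the hypotheses enter — is verifying this from the order of $w$: when $k$ is even one has $w=q_1$ or $w=q_1^2$ with $q_1$ primitive of order $2(n+k-2)$ or $4(n+k-2)$, so in either case $w$ has order $2(n+k-2)$ and $w^{n+k-2}=-1=(-1)^{k-1}$; when $k$ is odd (so $n$ is even) one has $w=q_1$ primitive of order $n+k-2$, so $w^{n+k-2}=1=(-1)^{k-1}$. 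I would be careful here that $a$ lands on $w^{n-1}$ itself rather than on the conjugate root $-w^{-(n-1)}$: for a fixed $z$ and a fixed unknot value $d=\frac{a-a^{-1}}{z}+1$ the relation $a^2-z(d-1)a-1=0$ has two roots whose product is $-1$, namely $a$ and $-a^{-1}$, so matching the unknot dimension alone would not suffice. The exponent computation above pins us to the correct root. With both parameters matched, the identity functor on Dubrovnik diagrams is the desired equivalence, and passing to the negligible quotients completes the proof.
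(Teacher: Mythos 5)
Your proposal is correct and follows essentially the same route as the paper: both verify that $z$ is unchanged under $w \mapsto -w^{-1}$ and then show $(-w_{n,k}^{-1})^{k-1} = w_{n,k}^{n-1}$ using the fact that $w_{n,k}$ has order $2(n+k-2)$ when $k$ is even and $n+k-2$ when $k$ is odd, concluding that the two categories have identical presentations. Your added caution about distinguishing $a = w^{n-1}$ from the conjugate root $-w^{-(n-1)}$ is a reasonable sanity check but does not change the argument, since like the paper you pin down $a$ by direct exponent computation rather than by matching the unknot value.
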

\begin{proof}
In $\cO(k, -w_{n,k}^{-1})$, $z = -w_{n,k}^{-1}+w_{n,k}$, which is the same value of $z$ as appears in $\cO(n,w_{n,k})$. Similarly, in $\cO(k, -w_{n,k}^{-1})$, we have 
\begin{align*}
a & = (-w_{n,k}^{-1})^{k-1} \\
   & = \begin{cases}
   	 	-w_{n,k}^{1-k} = w_{n,k}^{n+k-2+1-k} & \text{if $k$ is even} \\
		w_{n,k}^{1-k} = w_{n,k}^{n+k-2+1-k} & \text{if $k$ is odd}
	 \end{cases} \\
   & = w_{n,k}^{n-1}
\end{align*}
and so the same values of $a$ appear in both categories; thus they actually have exactly the same definition.
\end{proof}

\begin{lem}
When $n \in \Natural$, the category $\cO(n,w)$ has a transparent object with quantum dimension $1$, which we'll call $\det{}_{n}$. Further, if $w=w_{n,k}$, there is another such object $\det{}_{k}$ coming from $\cO(k,-w^{-1})$ via the equivalence of the previous lemma. These transparent objects form the group $\Integer/2\Integer \times \Integer/2\Integer = \{\id, \det{}_{n}, \det{}_{k}, \det{}_{n} \tensor \det{}_{k}\}$ under tensor product.
\end{lem}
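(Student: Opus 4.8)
The plan is to realise $\det{}_{n}$ as a concrete idempotent in the Birman--Murakami--Wenzl category underlying $\cO(n,w)$, to check its three properties (invertibility, quantum dimension $1$, transparency) directly, and then to obtain $\det{}_{k}$ by reinterpreting the \emph{same} category in its dual guise via Lemma \ref{lem:Oiso}. First I would take $\det{}_{n}$ to be the image in $\cO(n,w)$ of the fully antisymmetric minimal idempotent on $n$ strands, i.e. the single-column projection of length $n$; this is the diagrammatic avatar of the top exterior power $\Lambda^n V^\natural$, which is the determinant representation of the orthogonal group $O(n)$. The key structural point is that $\cO(n,w)$ is a quantisation of the representation category of $O(n)$, not of $SO(n)$, so unlike the quantum group $U_q(\mathfrak{so}_n)$ it genuinely contains this determinant. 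Using the structure theory of the semisimplified BMW category \cite{MR1854694, MR2132671}, the fusion rule $\det{}_{n} \tensor \det{}_{n} \cong \id$ (the quantum shadow of $\det \tensor \det \cong \id$ in $O(n)$) and the value $\dim \det{}_{n} = 1$ follow; the dimension can also be read off as the appropriate specialisation of the BMW unknot value $\frac{a-a^{-1}}{z}+1$ at $a = w^{n-1}$, $z = w - w^{-1}$.

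The heart of the matter is transparency. For an invertible object $X$ in a ribbon category the double braiding $c_{Y,X}c_{X,Y}$ acts on the simple object $Y \tensor X$ by the scalar $\theta_{Y \tensor X}\theta_Y^{-1}\theta_X^{-1}$, so $X$ is transparent exactly when $\theta_{Y \tensor X} = \theta_Y\,\theta_X$ for every simple $Y$. I would establish this for $X = \det{}_{n}$ by proving $\theta_{\det{}_{n}} = 1$ and $\theta_{Y \tensor \det{}_{n}} = \theta_Y$ for all $Y$. Conceptually both equalities hold because tensoring by $\det{}_{n}$ changes only the $O(n)$-versus-$SO(n)$ sign and leaves untouched the $\mathfrak{so}_n$-data that controls the ribbon twist; diagrammatically this is the assertion that a single standard strand slides freely across the length-$n$ column idempotent, which I would verify from the Dubrovnik skein relation together with the defining (uncapping) property of the antisymmetriser at the special value $a = w^{n-1}$. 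The relevant braiding eigenvalue is extracted from twist factors exactly as in the proof of Theorem \ref{thm:eigenvalues}.

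For the second transparent object, recall from Lemma \ref{lem:Oiso} that when $w = w_{n,k}$ the two categories $\cO(n,w_{n,k})$ and $\cO(k,-w_{n,k}^{-1})$ have identical defining parameters $(a,z)$, and hence are literally the same category. Running the construction of the previous two paragraphs in the second guise produces a transparent, dimension-$1$, order-$2$ object, which is $\det{}_{k}$, now viewed inside $\cO(n,w_{n,k})$. Since each of $\det{}_{n}, \det{}_{k}$ is invertible of order $2$, the subgroup of the Grothendieck ring they generate is a vector space over $\mathbb{F}_2$; thus it is $\Integer/2\Integer \times \Integer/2\Integer$ as soon as $\det{}_{n} \not\cong \det{}_{k}$, whence automatically $\det{}_{n} \tensor \det{}_{k} \not\cong \id$ and all four objects are distinct. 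To see $\det{}_{n} \not\cong \det{}_{k}$ I would compare their minimal lengths, the least number of strands on which a simple object is supported: $\det{}_{n}$ is the top antisymmetric summand of $V^{\tensor n}$ and does not appear in $V^{\tensor m}$ for $m<n$, so its minimal length is $n$, and symmetrically the minimal length of $\det{}_{k}$ is $k$. For $n \neq k$ these differ, so the objects cannot be isomorphic (the equal-rank case requires a small separate check).

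I expect the transparency statement to be the main obstacle, since it, together with the close cousin of distinguishing $\det{}_{n}$ from $\det{}_{k}$, requires genuinely controlling the braiding with $\det{}_{n}$ at the special root of unity rather than merely its dimension or fusion rules. The cleanest honest route is the direct diagrammatic sliding computation at $a = w^{n-1}$, precisely because at this stage of the argument we do not yet have the identification of $\cO(n,w_{n,k}) // \det{}_{n}$ with $\vRep(U_{q_1}(\SO{n}))$ available and so cannot invoke it without circularity; once transparency is in hand, the group structure and the remaining bookkeeping are routine.
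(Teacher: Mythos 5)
The paper gives no argument of its own here: its entire proof is the citation ``See \cite[Lemmas 4.1.ii and 4.3]{MR1854694}.'' What you have written is essentially a reconstruction of those Beliakova--Blanchet lemmas, and your route matches theirs: take $\det{}_n$ to be the $n$-strand antisymmetrizer (column idempotent), prove transparency by sliding a strand across it at the special value $a = w^{n-1}$, and obtain $\det{}_k$ by rereading the same category in its $\cO(k,-w^{-1})$ guise. Your reformulation of transparency via twist factors --- for invertible $X$ and simple $Y$ the double braiding on $Y \tensor X$ is the scalar $\theta_{Y\tensor X}\theta_Y^{-1}\theta_X^{-1}$ --- is correct, and you rightly identify the sliding computation as the crux; that computation is precisely the content of the cited Lemma 4.3, so the plan is sound where it matters.

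Two concrete points need repair. First, your side remark on the dimension is wrong: $\frac{a-a^{-1}}{z}+1$ is the value of the \emph{unknot}, i.e.\ the dimension of the standard object, and at $a=w^{n-1}$, $z=w-w^{-1}$ it equals $\frac{w^{n-1}-w^{1-n}}{w-w^{-1}}+1 = [n-1]_w+1$, not $1$. To get $\dim \det{}_n = 1$ you need the quantum dimension of the length-$n$ column idempotent itself, e.g.\ via the inductive decomposition of $(\text{column of length } m)\tensor V$ or the BMW dimension formulas --- which is again \cite{MR1854694}. Second, since $\cO(n,w_{n,k})$ and $\cO(k,-w_{n,k}^{-1})$ are \emph{literally the same} category with the same generating strand, the antisymmetrizer ``in the second guise'' projects onto the braiding eigenvalue $-(-w^{-1})^{-1}=w$, so inside $\cO(n,w_{n,k})$ the object $\det{}_k$ is the $k$-strand \emph{symmetrizer} (one-row diagram), not a column. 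You should say this explicitly, because your minimal-length argument then concerns the row diagram $(k)$, and at a root of unity the claims that this simple does not occur in $V^{\tensor m}$ for $m<k$, and that the row $(n)$ and column $1^n$ remain non-isomorphic when $n=k$, are not automatic: they require the classification of simples in the semisimplified quotient by surviving, pairwise non-isomorphic diagrams --- the same structure theory you invoke elsewhere. (You also need $\det{}_n,\det{}_k \not\iso \id$ for your $\mathbb{F}_2$-vector-space argument; minimal length $\geq 3$ handles that.) With those two fixes your proposal is a faithful expansion of the proof the paper outsources, leaning on \cite{MR1854694} at exactly the points the paper does.
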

\begin{proof}
See \cite[Lemmas 4.1.ii and 4.3]{MR1854694}.
\end{proof}


Write $\ell(q)$ for the order of a root of unity $q$, and define 
\begin{equation*}
\ell'(q) = \begin{cases} \ell(q) & \text{if $2 \nmid \ell(q)$} \\ \ell(q)/2 & \text{if $2 \mid \ell(q)$.}\end{cases}
\end{equation*}

\begin{lem}
We can identify the deequivariantizations as
\begin{align*}
\vRep(U_{q}(\SO{n})) & \iso
	\begin{cases}
		\cO(n,q)//\det{}_{n} & \text{if $n$ is even} \\
		\cO(n,q^2)//\det{}_{n} & \text{if $n$ is odd}
	\end{cases}
\end{align*}
for any $q$, as long as if $q$ is a root of unity, when $n$ is even, $\ell'(q) \geq n-2$, and when $n$ is odd, $\ell'(q) \geq 2(n-2)$ when $2 \mid \ell'(q)$ and $\ell'(q) > n-1$ when $2 \nmid \ell'(q)$.

In particular when $q=q_1$ we obtain
\begin{align*}
\cO(n,w_{n,k})//\det{}_{n} & \iso \vRep(U_{q_1}(\SO{n})) \displaybreak[1] \\
\intertext{and further,}
\cO(n,w_{n,k})//\det{}_{k} & \iso	\vRep(U_{q_2}(\SO{k})) \end{align*}
Moreover, in $\cO(n,w_{n,k})//\det{}_{n}$, we have $\det{}_{k} \iso V_{k e_1}$ and in $\cO(n,w_{n,k})//\det{}_{k}$, we have $\det{}_{n} \iso V_{n e_1}$.
\end{lem}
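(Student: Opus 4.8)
The plan is to realize both sides as a semisimplification of a common diagrammatic category and to absorb the passage from $O$ to $SO$ into a single deequivariantization. First I would produce the comparison functor. By Equation~\eqref{eq:dubrovnik-D} (for $n$ even) and Equation~\eqref{eq:dubrovnik-B} (for $n$ odd), together with Theorems~\ref{thm:summands} and~\ref{thm:eigenvalues}, the natural representation $V^\natural$ of $U_q(\SO{n})$ is symmetrically self-dual with $V^\natural \tensor V^\natural \iso \id \directSum L \directSum M$, and its braiding satisfies the Dubrovnik skein relation with parameters $a = w^{n-1}$, $z = w - w^{-1}$, where $w = q$ when $n$ is even and $w = q^2$ when $n$ is odd. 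As explained in \S\ref{sec:ribbonfunctors}, this is exactly the data of a ribbon functor $\cF$ from the BMW tangle category $\widetilde{\cO}(n,w)$ into $\Rep(U_q(\SO{n}))$ sending the strand to $V^\natural$. Since the target quantum group category is semisimple and pseudo-unitary, $\cF$ kills negligibles (by \S\ref{subsec:semisimplification}), so it descends to a functor $\cO(n,w) \to \Rep(U_q(\SO{n}))$ whose image lies in, and is dominant onto, the subcategory $\vRep(U_q(\SO{n}))$ generated by $V^\natural$.

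Next I would descend through the deequivariantization. The determinant representation is trivial on $SO(n)$, so $\cF(\det{}_{n}) \iso \id$; since $\det{}_{n}$ is transparent of quantum dimension and twist factor $1$, the universal property of the M\"uger--Brugui\`eres deequivariantization yields an induced ribbon functor $\bar{\cF}\colon \cO(n,w)//\det{}_{n} \to \vRep(U_q(\SO{n}))$. This functor is dominant because every vector representation appears in some tensor power of $V^\natural$. To conclude it is an equivalence of semisimple categories I would exhibit a bijection of simple objects with matching morphism spaces: the simples of $\cO(n,w)$ are the nonnegligible BMW idempotents, and deequivariantizing by the order-two group $\{\id, \det{}_{n}\}$ collapses each $\det{}_{n}$-orbit to a single object (splitting any $\det{}_{n}$-fixed simple into two), which I would match against the vector representations in the $SO(n)$ Weyl alcove by comparing quantum dimensions and twist factors.

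The main obstacle is exactly this last bookkeeping, and it is where the hypotheses on $\ell'(q)$ enter. One must know, for the stated range of roots of unity, precisely which BMW idempotents survive and that they biject (after the $\det{}_{n}$-quotient, including the correct treatment of the $\det{}_{n}$-fixed simples that split) with the alcove of vector representations, with no extra or missing simple objects. This is the content of the Wenzl and Beliakova--Blanchet analysis of the BMW algebras at roots of unity, sharpened by the Wenzl--Tuba result, and the bounds $\ell'(q) \geq n-2$ (respectively the finer conditions for $n$ odd) are exactly what guarantee the match; checking that $\det{}_{n}$ remains transparent of dimension and twist $1$ in the semisimple quotient, rather than degenerating, is a delicate special case of the same analysis.

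For the ``in particular'' statements, set $w = w_{n,k}$, which equals $q_1$ or $q_1^2$ according to the parity of $n$, so the first identification reads $\cO(n,w_{n,k})//\det{}_{n} \iso \vRep(U_{q_1}(\SO{n}))$. For the second, apply Lemma~\ref{lem:Oiso} to rewrite $\cO(n,w_{n,k}) \iso \cO(k,-w_{n,k}^{-1})$; since $-w_{n,k}^{-1}$ equals $q_2$ or $q_2^2$ according to the parity of $k$, and $\det{}_{k}$ becomes the determinant object of the $\SO{k}$ picture under this relabelling, the first identification applied to $\cO(k,-w_{n,k}^{-1})$ gives $\cO(n,w_{n,k})//\det{}_{k} \iso \vRep(U_{q_2}(\SO{k}))$. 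Finally, to see $\det{}_{k} \iso V_{k e_1}$ inside $\cO(n,w_{n,k})//\det{}_{n} \iso \vRep(U_{q_1}(\SO{n}))$, I would observe that $\det{}_{k}$ is a transparent invertible object there and identify it with the unique such object of the alcove by matching its twist factor against the formula for $\theta_{V_{k e_1}}$ recorded in \S\ref{sec:quantumgroups}; the symmetric statement for $\det{}_{n}$ follows by exchanging the roles of $n$ and $k$.
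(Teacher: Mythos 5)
Your overall route is the same as the paper's: construct the ribbon functor out of the BMW category from the eigenvalue data on the standard representation via Theorems \ref{thm:summands} and \ref{thm:eigenvalues} and \S\ref{sec:ribbonfunctors}, descend through the deequivariantization by $\det{}_{n}$, defer the root-of-unity bookkeeping of surviving idempotents to the Tuba--Wenzl/Beliakova--Blanchet analysis, and obtain the second identification by rewriting $\cO(n,w_{n,k}) \iso \cO(k,-w_{n,k}^{-1})$ using Lemma \ref{lem:Oiso}. However, there is one step where your argument as written would fail. You dispose of the negligible morphisms by asserting that the target $\Rep(U_q(\SO{n}))$ is pseudo-unitary, but this is precisely the danger the paper flags in \S\ref{subsec:semisimplification}, where an explicit counterexample shows that a functor into a semisimple but non-pseudo-unitary category need not factor through the quotient by negligibles. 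The roots of unity relevant to this lemma (for instance $q_2$ determined by $-1 = q_1 q_2^2$) are exactly the ones the paper emphasizes are \emph{not} the loop-group roots, so pseudo-unitarity at these values is not available off the shelf and you cannot simply cite it. The paper's proof avoids this entirely: following \cite{MR2132671}, it concludes that the functor kills all negligibles \emph{and} is surjective by comparing dimensions of Hom-spaces between the BMW quotient and the quantum group category. You need either that dimension count or a genuine proof of pseudo-unitarity at these particular $q$ to close the step; your later remark about "matching morphism spaces" gestures at the former but is doing double duty, since without it your induced functor on $\cO(n,w)//\det{}_{n}$ is not even known to exist.

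A second, smaller omission: in the "in particular" statements you specialize the general identification at $w_{n,k}$ and at $-w_{n,k}^{-1}$ without verifying that the hypotheses on $\ell'$ are met. The paper checks explicitly that $\ell'(q_1) = n+k-2$ when $n$ is even and $2(n+k-2)$ when $n$ is odd, and that $\ell'(q_2) = n+k-2$ when $k$ is even and $2(n+k-2)$ when $k$ is odd, so that the required inequalities hold for $\SO{n}$ and $\SO{k}$ respectively; without this check the specialization is not justified. Your identification of $\det{}_{k}$ with $V_{ke_1}$ by transparency and invertibility is reasonable, but note that the twist factor alone cannot distinguish $\det{}_{k}$ from $\id$ (both transparent deequivariantizable objects have twist $1$); what pins it down is that $\det{}_{k}$ remains nontrivial in $\cO(n,w_{n,k})//\det{}_{n}$ and that $V_{ke_1}$ is the unique nontrivial transparent invertible object there, which is again part of the Beliakova--Blanchet analysis the paper cites.
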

\begin{proof}
The first equivalence follows from the main results of \cite{MR2132671}. We give a quick sketch of their argument.  The fact that the eigenvalues of the $R$-matrix acting on the standard representation of $\SO{n}$ are $q^{-2n+2}, -q^{-2}$ and $q^2$ when $n$ is odd, or $q^{-n+1}, -q^{-1}$ and $q$ when $n$ is even ensures that this is a functor from $\widetilde{\cO}(n,q^2)$ or $\widetilde{\cO}(n,q)$, by Theorems \ref{thm:summands}, \ref{thm:eigenvalues} and \S \ref{sec:ribbonfunctors}.  One then checks that the functor factors through the deequivariantization.  Finally, by computing dimensions of Hom-spaces one concludes that the functor must kill all negligibles and must be surjective.

One can check that $\ell'(q_1) = n+k-2$ when $n$ is even or $2(n+k-2)$ when $n$ is odd, and so the required inequalities always hold for $\SO{n}$. 

The last equivalence follows from the first and Lemma \ref{lem:Oiso},:
\begin{align*}
\cO(n,w_{n,k})//\det{}_{k} & \iso \cO(k,-w_{n,k}^{-1})//\det{}_{k} \displaybreak[1] \\
					& \iso \vRep(U_{q_2}(\SO{k})) \displaybreak[1]
\end{align*}
Here we check that
$\ell'(q_2)= n+k-2$ when $k$ is even or $2(n+k-2)$ when $k$ is odd, satisfying the inequalities for $\SO{k}$.
\end{proof}

The proof of the theorem is now immediate; we write $\cO(n,w_{n,k})//\{\det{}_n,\det{}_k\}$ in two different ways, obtaining
\begin{align*}
\cO(n,w_{n,k})//\{\det{}_n,\det{}_k\} & = \cO(n,w_{n,k})// \det{}_n //\det{}_k \\
						  & \iso 	\vRep(U_{q_1}(\SO{n})) // V_{k e_1} \displaybreak[1] \\
\intertext{and}
\cO(n,w_{n,k})//\{\det{}_n,\det{}_k\} & = \cO(n,w_{n,k})// \det{}_k //\det{}_n \displaybreak[1] \\
						  & \iso \vRep(U_{q_2}(\SO{k})) // V_{n e_1}. \qedhere
\end{align*}
\end{proof}
\begin{rem}
One can easily verify an essential condition, that the twist factor for $V_{ke_1}$ inside $\vRep(U_{\zeta(\ell_{n,k})}(\SO{n}))$ is $+1$, from the formulas for the twist factor given in \S \ref{sec:quantumgroups}.
\end{rem}

Finally, we specialize to the case $n=3$, where the $\mathcal{D}_{2m}$ planar algebras appear. 

\begin{thm}[$SO(3)$-$SO(k)$ level-rank duality]
\label{thm:SO3-duality}
Suppose $k \geq 4$ is even. There is an equivalence of ribbon categories
\begin{equation*}
\frac{1}{2} \mathcal{D}_{k+2} \iso \vRep(U_{q=-\exp\left(-\frac{2 \pi i}{2k+2}\right)}(\SO{k}))//V_{3 e_1}
\end{equation*}
sending the tensor generator $W_2$ of $\frac{1}{2} \mathcal{D}_{k+2}$ to $V_{2 e_1}$ and $P$ to $V_{2e_{\frac{k}{2}-1}}$.
\end{thm}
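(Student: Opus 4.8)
The plan is to deduce this as the $n=3$ case of the general $SO$ level-rank duality of Theorem \ref{thm:DLR}, glued to the earlier identification of $\frac{1}{2}\mathcal{D}_{2m}$ with a modularized category of $\SO{3}$ vector representations. Almost all of the genuinely new content will be bookkeeping: matching the roots of unity on the two sides, and checking that the relevant modularization is exactly a deequivariantization by a single invertible object, so that the level-rank equivalence can be spliced directly onto the planar-algebra side.

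First I would specialize Theorem \ref{thm:DLR} to $n=3$. Since $n=3$ is odd and $k\geq 4$ is even, the hypotheses hold and we are in the ``$n$ odd, $k$ even'' case, with $\ell_{3,k}=4(k+1)$. Taking $q_1=\exp\!\left(\frac{2\pi i}{4k+4}\right)$, a primitive root of unity of order $\ell_{3,k}$, the prescribed relation $-1=q_1^2 q_2$ forces
\[
q_2=-q_1^{-2}=-\exp\!\left(-\frac{2\pi i}{2k+2}\right),
\]
which is precisely the root of unity appearing in the statement. Theorem \ref{thm:DLR} then supplies a ribbon equivalence
\[
\vRep(U_{q_1}(\SO{3}))//V_{k e_1}\iso \vRep(U_{q_2}(\SO{k}))//V_{3 e_1}.
\]

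Next I would identify the left-hand side with $\frac{1}{2}\mathcal{D}_{k+2}$. Putting $2m=k+2$ in the proposition $\frac{1}{2}\mathcal{D}_{2m}\cong \vRep(U_{q=\exp(2\pi i/(8m-4))}(\SO{3}))^{modularize}$ and using $8m-4=4k+4$ gives $\frac{1}{2}\mathcal{D}_{k+2}\cong \vRep(U_{q_1}(\SO{3}))^{modularize}$ for the same $q_1$. The crucial point is that this modularization coincides with $//V_{ke_1}$: the object $V_{ke_1}=\JW{2k}$ is transparent in the even subcategory, has quantum dimension $1$ and twist factor $+1$ (immediate from the dimension and twist formulas of \S\ref{sec:quantumgroups}, as recorded in the remark following Theorem \ref{thm:DLR}), and comparing simple-object counts shows $\{\id, V_{ke_1}\}$ is the entire transparent subcategory. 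Since the modularization quotients by all transparent objects and $\frac{1}{2}\mathcal{D}_{k+2}$ is already known to be modular, the uniqueness of modularization identifies $\vRep(U_{q_1}(\SO{3}))^{modularize}$ with $\vRep(U_{q_1}(\SO{3}))//V_{ke_1}$. Concatenating the three equivalences produces the asserted $\frac{1}{2}\mathcal{D}_{k+2}\iso \vRep(U_{q_2}(\SO{k}))//V_{3 e_1}$.

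Finally I would track the objects. At the diagrammatic level the equivalence of Theorem \ref{thm:DLR} comes from the literal coincidence of Dubrovnik categories in Lemma \ref{lem:Oiso}, which carries the single strand of $\cO(3,w_{3,k})$ to the single strand of $\cO(k,-w_{3,k}^{-1})$; tracing the tensor generator $W_2$ of $\frac{1}{2}\mathcal{D}_{k+2}$ through the two deequivariantizations then identifies its image with the vector-type object the statement records as $V_{2 e_1}$. The projections $P$ and $Q$ are the two simples into which the $\otimes V_{ke_1}$-fixed object $\JW{k}=V_{(k/2)e_1}$ of $\SO{3}$ splits under modularization, and on the orthogonal side these match the pair of half-spin-derived objects, giving $P\mapsto V_{2 e_{k/2-1}}$ (and $Q\mapsto V_{2 e_{k/2}}$). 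As in the proof of Theorem \ref{thm:G2}, once the image of the generator is fixed the entire correspondence of simples is forced by matching the Racah fusion rules on the $\SO{k}$ side against the known fusion graph of $\mathcal{D}_{k+2}$, and the resulting dictionary is exactly what the Weyl-alcove figures display. I expect the main obstacle to be not any single deep step but the accounting: correctly pinning down $q_2$, verifying that the all-transparent-objects modularization really is a deequivariantization by the one object $V_{ke_1}$, and keeping the three labelings of the simples (planar-algebra projections, $\SO{3}$ weights, and $\SO{k}$ weights) consistent across the deequivariantizations — for which I would lean heavily on the alcove diagrams to stay honest.
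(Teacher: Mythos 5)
Your proposal is correct and takes essentially the same route as the paper, which proves Theorem \ref{thm:SO3-duality} by exactly this combination: the identification $\frac{1}{2}\mathcal{D}_{2n} \cong \vRep(U_{q=\exp(\frac{2\pi i}{8n-4})}(\SO{3}))^{modularize}$ from \S\ref{sec:d2n} spliced onto the $n=3$, even-$k$ case of the general level-rank duality Theorem \ref{thm:DLR}. Your bookkeeping --- computing $q_2 = -q_1^{-2} = -\exp\left(-\frac{2\pi i}{2k+2}\right)$, identifying the modularization with the deequivariantization $//V_{ke_1}$ via the transparent subcategory $\{\id, V_{ke_1}\}$, and tracing $W_2$, $P$, $Q$ through the two deequivariantizations --- correctly fills in details the paper compresses into ``this follows immediately.''
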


This follows immediately, from the description in \S \ref{sec:d2n} of the even part of $\mathcal{D}_{2n}$ as $\frac{1}{2}\mathcal{D}_{2n} \cong \vRep(U_{q=\exp(\frac{2\pi i}{8n-4})}(\SO{3})^{modularize}$, and the general case of level-rank duality.

\subsection{Applications}
\label{sec:applications}%

\subsubsection{Knot invariants}
Combining $SO(3)$-$SO(k)$ level-rank duality for even $k \geq 8$ with Kirby-Melvin symmetry, we obtain the following knot polynomial identities:
\begin{thm}[Identities for $n \geq 3$]
\label{thm:identities-geq}
For all knots $K$,
\begin{align}
\restrict{\J{\SL{2}}{(2n-2)}(K)}{q=\exp(\frac{2\pi i}{8n-4})} & = 2 \J{\mathcal{D}_{2n}}{P}(K) \notag \displaybreak[1]  \\
                                                   & = 2 \restrict{\J{\SO{2n-2}}{2e_{n-2}}(K)}{q=-\exp(-\frac{2\pi i}{4n-2})} \notag \displaybreak[1] \\
                                                   & = (-1)^{1+\ceil{\frac{n}{2}}} 2 \restrict{\J{\SO{2n-2}}{e_{n-2}}(K)}{q=-\exp(-\frac{2\pi i}{4n-2})}
                                                   \label{eq:identities-geq-5} \displaybreak[1] \\
\intertext{and for all links $L$,}                                               
\restrict{\J{\SL{2}}{(2)}(L)}{q=\exp(\frac{2\pi i}{8n-4})}    & = \J{\mathcal{D}_{2n}}{W_2}(L) \notag \displaybreak[1] \\
                                                   & = \restrict{\J{\SO{2n-2}}{2e_1}(L)}{q=-\exp(-\frac{2\pi i}{4n-2})} \notag \displaybreak[1] \\
                                                   & =  \restrict{\J{\SO{2n-2}}{e_1}(L)}{q=-\exp(-\frac{2\pi i}{4n-2})}. 
\end{align}
(The representation of $\SO{2n-2}$ with highest weight $e_{n-1}$ is one of the spinor representations.)
\end{thm}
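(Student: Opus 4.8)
The plan is to read each of the two displayed chains as a composite of three ingredients that have already been assembled: Theorem~\ref{thm:half} (with its trivial analogue for $\JW{2}$), the $SO(3)$--$SO(k)$ level-rank duality of Theorem~\ref{thm:SO3-duality} specialized to $k=2n-2$, and the Kirby-Melvin symmetry of Theorems~\ref{thm:ourKM} and~\ref{thm:KM}. The top chain is restricted to knots because it is anchored by Theorem~\ref{thm:half}, whose relation to the colored Jones polynomial holds only for knots; the bottom chain survives for all links because its generator $W_2=\JW{2}$ is already a simple object of $\frac{1}{2}\mathcal{D}_{2n}$ and does not split as $P\directSum Q$.

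First I would treat the top chain one equality at a time. The equality $\restrict{\J{\SL{2}}{(2n-2)}(K)}{q=\exp(\frac{2\pi i}{8n-4})}=2\J{\mathcal{D}_{2n}}{P}(K)$ is exactly Theorem~\ref{thm:half}. For the middle equality I would invoke Theorem~\ref{thm:SO3-duality} with $k=2n-2$, giving a ribbon equivalence $\frac{1}{2}\mathcal{D}_{2n}\iso\vRep(U_{q=-\exp(-\frac{2\pi i}{4n-2})}(\SO{2n-2}))//V_{3e_1}$ that sends $P\mapsto V_{2e_{n-2}}$. Since a ribbon equivalence preserves link invariants, and since the modularization functor $\vRep(\SO{2n-2})\to\vRep(\SO{2n-2})//V_{3e_1}$ is a unital $\Complex$-linear ribbon functor, hence the identity on the scalar invariant living in $\End{\id}=\Complex$, the $P$-invariant of $\frac{1}{2}\mathcal{D}_{2n}$ equals the invariant of $V_{2e_{n-2}}$ computed in $\vRep(\SO{2n-2})$; this is the second line. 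The last equality is Kirby-Melvin symmetry: the vector-sector object $V_{2e_{n-2}}$ and the spinor $V_{e_{n-2}}$ differ by tensoring with a spinor simple current $X$, an invertible object implementing the affine-Weyl symmetry carrying $e_{n-2}$ to $2e_{n-2}$, so Theorem~\ref{thm:ourKM} gives $\J{\SO{2n-2}}{2e_{n-2}}(K)=(\dim X)^{\#K}\J{\SO{2n-2}}{e_{n-2}}(K)$, which for a knot is $(\dim X)\,\J{\SO{2n-2}}{e_{n-2}}(K)$.

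The bottom chain runs in parallel. The framed colored Jones $\J{\SL{2}}{(2)}$ is the $\JW{2}$-invariant of $\TL$ at $q=\exp(\frac{2\pi i}{8n-4})$; since $\JW{2}$ lies in the even part $\vRep(U_q(\SO{3}))$ and the modularization functor of the Proposition of \S\ref{sec:d2n} sends it to the simple object $W_2=\JW{2}$ of $\frac{1}{2}\mathcal{D}_{2n}$ without splitting, the invariant is preserved, there is no factor of $2$, and the identity is valid for all links. The middle equality is again Theorem~\ref{thm:SO3-duality}, now via $W_2\mapsto V_{2e_1}$. For the last equality I would apply Kirby-Melvin symmetry with the vector simple current $Y$, which stays inside the vector sector and carries $e_1$ to $2e_1$; since $\dim Y=1$ one obtains $\J{\SO{2n-2}}{2e_1}(L)=\J{\SO{2n-2}}{e_1}(L)$ with no sign and for all links.

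The hard part is the Kirby-Melvin bookkeeping on the $\SO{2n-2}$ side. I must (i) exhibit the two simple currents $X$ and $Y$ explicitly as corners of the Weyl alcove at $q=-\exp(-\frac{2\pi i}{4n-2})$ and verify that tensoring realises $e_{n-2}\mapsto 2e_{n-2}$ and $e_1\mapsto 2e_1$ respectively, and (ii) compute the quantum dimension $\dim X$ and show it equals $(-1)^{1+\ceil{\frac{n}{2}}}$, using the dimension and twist-factor formulas of \S\ref{sec:quantumgroups}. A subsidiary point to handle with care is that the spinor $V_{e_{n-2}}$ is not a vector representation, so this step takes place in the full category $\Rep{\SO{2n-2}}$ with a chosen $s=q^{\frac{1}{L}}$; I would check that $V_{2e_{n-2}}$, being a vector representation, has the same invariant whether computed in the subcategory $\vRep(\SO{2n-2})$ or in $\Rep{\SO{2n-2}}$, and that the factor $(\dim X)^{\#L}$ is precisely what forces the first chain to be stated only for knots, since for a general link it would depend on the number of components, whereas the dimension-$1$ current $Y$ leaves the second chain valid for all links.
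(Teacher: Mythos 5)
Your proposal is correct and follows essentially the same route as the paper: Theorem~\ref{thm:half} for the first equality, level-rank duality (Theorem~\ref{thm:SO3-duality}) for the second, and Kirby--Melvin symmetry (Theorem~\ref{thm:ourKM}) for the third, where your ``simple currents'' are exactly the paper's corner objects $X = V_{3e_{n-2}}$ and $Y = V_{3e_1}$, with the Racah rule giving $V_{2e_{n-2}} \tensor X \iso V_{e_{n-2}}$ and $\dim V_{3e_1} = 1$. The only cosmetic difference is the sign computation: where you would evaluate $\dim X$ directly from the formulas of \S\ref{sec:quantumgroups}, the paper uses a shortcut --- $\dim X = +1$ at the unitary root $q=\exp(\frac{2\pi i}{4n-2})$, invariance of quantum dimensions under $q \mapsto q^{-1}$, and the parity of the exponents in the Weyl dimension formula to fix the sign at $q=-\exp(-\frac{2\pi i}{4n-2})$ --- both yielding $(-1)^{1+\ceil{\frac{n}{2}}}$.
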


\begin{proof}
The first two identities are immediate applications of Theorems \ref{thm:half} and \ref{thm:SO3-duality}. For the next identity, we use the statement of Kirby-Melvin symmetry in Theorem \ref{thm:ourKM}, with $A = V_{2e_{n-2}}$ and $X = V_{3e_{n-2}}$. 
We calculate that $\dim{X}=(-1)^{1+\ceil{\frac{n}{2}}}$ by the following trick.  At $q=\exp(\frac{2\pi i}{4n-2})$, this dimension must be $+1$, since it is the dimension of an invertible object in a unitary tensor category.  At $q=\exp(-\frac{2\pi i}{4n-2})$ it is the same, since quantum dimensions are invariant under $q \mapsto q^{-1}$, and finally we can calculate the sign at $q=-\exp(-\frac{2\pi i}{4n-2})$ by checking the parity of the exponents in the Weyl dimension formula. This implies that $X \tensor X^* \iso V_0$.  Using the Racah rule, we find $A \tensor X =  V_{e_{n-2}}$. 

Now we do the same computation again with $A = V_{2e_1}$ and $X=V_{3e_1}$.  This case is simpler since $\dim V_{3e_1} = 1$.

\end{proof}

\begin{rem}
We found keeping all the details of this theorem straight very difficult, and we'd encourage you to wonder if we eventually got it right. We had some help, however, in the form of computer computations. You too can readily check the details of this theorem on small knots and links, assuming you have access to \MMA. Download and install the \package{KnotTheory} package from \url{http://katlas.org}. This includes with it the \package{QuantumGroups} package written by Morrison, which, although rather poorly documented, provides the function \code{QuantumKnotInvariant}. This function can in principle compute any knot invariant coming from an irreducible  representation of a quantum group, but in practice runs into time and memory constraints quickly.  The explicit commands for checking small cases of the above theorem are included as a \MMA \ notebook \code{aux/check.nb} with the \code{arXiv} source of this paper.

\noop{
\begin{mma}
\begin{inm}<<KnotTheory`\end{inm}
\begin{printm}
Loading KnotTheory` version of January 20, 2009. \\
Read more at \url{http://katlas.org/wiki/KnotTheory}.
\end{printm}
\begin{inm}
\\
CheckIdentity1[$n\_$][$K\_$] := \\
\quad Chop[N[ \\
\quad\quad    QuantumKnotInvariant[$A_1$, Irrep[$A_1$][$\{2n-2\}$]] \\
\quad\quad\quad [$K$][Exp[($2\pi I$)/($8n-4$)]] \\
\quad\quad    - 2 $(-1)^{1+\operatorname{Ceiling}[n/2]}$ QuantumKnotInvariant[$D_{n-1}$, Irrep[$D_{n-1}$][UnitVector[$n-1$,$n-1$]]] \\
\quad\quad\quad [$K$][-Exp[-(($2\pi I$)/($4n-2$))]] \\
\quad ]]
\end{inm}
\begin{inm}
CheckIdentity1[$5$] /@ {Knot[$0$,$1$], Knot[$3$,$1$], Knot[$5$,$1$]}
\end{inm}
\begin{outm}
$\{0,0,0\}$
\end{outm}
\begin{inm}
Table[CheckIdentity1[$n$][Knot[$5$,$2$]], $\{n, 5, 6\}$]
\end{inm}
\begin{outm}
$\{0,0\}$
\end{outm}
\begin{inm}
CheckIdentity1[$7$][Knot[$3$,$1$]]
\end{inm}
\begin{outm}
$0$
\end{outm}
\begin{inm}
\\
CheckIdentity2[$n\_$][$K\_$] := \\
\quad Chop[N[ \\
\quad\quad    QuantumKnotInvariant[$A_1$, Irrep[$A_1$][$\{2\}$]] \\
\quad\quad\quad [$K$][Exp[($2\pi I$)/($8n-4$)]] \\
\quad\quad    - QuantumKnotInvariant[$D_{n-1}$, Irrep[$D_{n-1}$][UnitVector[$n-1$,$1$]]] \\
\quad\quad\quad [$K$][-Exp[-(($2\pi I$)/($4n-2$))]] \\
\quad ]]
\end{inm}
\begin{inm}
CheckIdentity2[$5$] /@ {Knot[$0$,$1$], Knot[$3$,$1$], Knot[$5$,$1$]}
\end{inm}
\begin{outm}
$\{0,0,0\}$
\end{outm}
\begin{inm}
Table[CheckIdentity1[$n$][Knot[$5$,$2$]], $\{n, 5, 6\}$]
\end{inm}
\begin{outm}
$\{0,0\}$
\end{outm}
\end{mma}
}
\end{rem}

Note that the $n=5$ case of Equation \eqref{eq:identities-geq-5} in Theorem \ref{thm:identities-geq} reproduces the statement of Theorem \ref{thm:identities-5}.

The $n=3$ and $n=4$ cases of Theorem \ref{thm:identities-geq} also reproduce previous results. 
The Lie algebra $\SO{2n-2}$ has Dynkin diagram $D_{n-1}$, with the spinor representations
corresponding to the two extreme vertices. At $n=4$, $D_{n-1}$ becomes
the Dynkin diagram $A_3$, and the spinor representations become the
standard and dual representations; this explains Theorem
\ref{thm:identities-4}. See Theorem \ref{thm:D8-coincidence} and Figure \ref{fig:SO6} for a full explanation. 

At $n=3$, $D_{n-1}$ becomes $A_1 \times A_1$, and
the spinor representations become $(\text{standard}) \boxtimes (\text{trivial})$
and $(\text{trivial}) \boxtimes (\text{standard})$, giving the case
described in Theorem \ref{thm:identities-3}. See Theorem \ref{thm:D6-coincidence} and Figure \ref{fig:SO4} for a full explanation.

\subsubsection{Coincidences}

\begin{proof}[Proof of Theorem \ref{thm:D6-coincidence}]
We want to construct an equivalence $$\frac{1}{2} \mathcal{D}_{6} \cong \uRep{U_{s = \exp\left(\frac{7}{10} 2 \pi i\right)}(\SL{2} \oplus \SL{2})}^{modularize},$$ sending $P \mapsto V_{(1)} \boxtimes V_{(0)}$.

First, we recall that the $k=4$ case of $SO(3)$ level-rank duality (Theorem \ref{thm:SO3-duality}) gave us the equivalence  
\begin{align*}
\frac{1}{2} \mathcal{D}_{6} & \iso \vRep U_{q=\exp( \frac{2 \pi i}{20})} (\SO{3})^{modularize} \\
& \iso \vRep U_{q=-\exp \left( -\frac{2 \pi i}{10}\right)}(\SO{4}) // V_{3e_1} 
\end{align*}
Since the Dynkin diagrams $D_2$  and $A_1 \times A_1$ 
coincide we can replace $\SO{4}$ by $\SL{2} \oplus \SL{2}$. The representation $V_{3e_1}$ of $\SO{4}$ is sent to $V_3 \boxtimes V_3$, so we have 
\begin{align*}
\frac{1}{2} \mathcal{D}_{6} & \iso \vRep U_{q=-\exp \left( -\frac{2\pi i}{10} \right)}(\SL{2} \oplus \SL{2}) // V_3 \boxtimes V_3
\end{align*}
Next, by Lemma \ref{lem:KM-SO4} we can replace this $2$-fold quotient of the vector representations with a $4$-fold quotient of the entire representations category of $\SL{2} \oplus \SL{2}$, as long as we carefully choose $s$ and the unimodal pivotal structure. Figure \ref{fig:SO4} shows the identification between the objects of $\frac{1}{2} \mathcal{D}_6$ and the corresponding objects in the $4$-fold quotient.
\end{proof}
\begin{figure}[!ht]
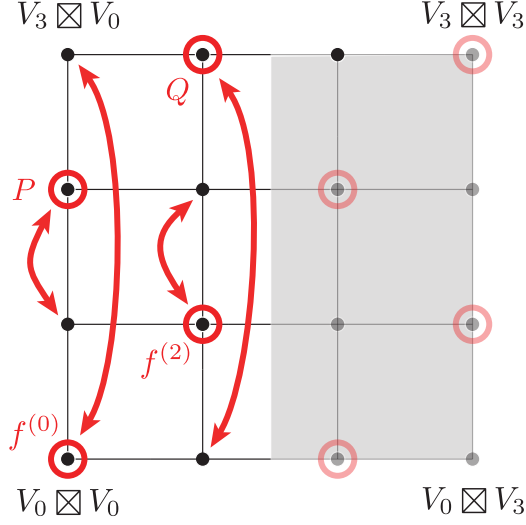

$$
\mathfig{0.5}{coincidences/SO4-chamber}
$$
\caption{
The simple objects of $\frac{1}{2} \mathcal{D}_6$ may be identified with the representatives of the vector representations in the quotient $\Rep U_{q=-\exp \left( -\frac{2 \pi i}{10}\right)}(\SO{4}) // V_{3e_1}$, via level-rank duality. We can replace $\SO{4}$ here with $\SL{2} \directSum \SL{2}$. The object $V_{3e_1}$ becomes $V_3 \boxtimes V_3$. The circles above indicate the vector representations, labelled by the corresponding objects of $\frac{1}{2}\mathcal{D}_6$. Next we can apply Lemma \ref{lem:KM-SO4} to realize $\frac{1}{2} \mathcal{D}_6$ as the modularization of $\uRep{U_{s = \exp\left(\frac{7}{10} 2 \pi i\right)}(\SL{2} \oplus \SL{2})}$. In this modularization, we quotient out the four corner vertices. Note that $P$ is sent to $V_1 \boxtimes V_0$, and in particular the knot invariant coming from $P$ recovers a specialization of the Jones polynomial.
}
\label{fig:SO4}
\end{figure}
\begin{rem}
Theorem \ref{thm:identities-3} is now an immediate corollary.
\end{rem}

\begin{rem}
The coincidence of Dynkin diagrams $D_2 = A_1 \times A_1$ also implies that the $D_2$ specialization of the Dubrovnik polynomial is equal to the square of the Jones polynomial:
$$\Dubrovnik(K)(q^3,q-q^{-1}) = J(K)(q)^2.$$
This was proved by Lickorish \cite[Theorem 3]{MR966143}, without using quantum groups.
\end{rem}

\begin{cor}
Looking at the object $\JW{2} \in \frac{1}{2} \mathcal{D}_6$, we have
\begin{equation*}
\restrict{\J{\SL{2}}{(2)}(K)}{q=\exp(\frac{2\pi i}{20})}  = \restrict{\J{\SL{2}}{(1)}(K)^2}{q=\exp(-\frac{2\pi i}{10})}.
\end{equation*}
This identity is closely related to Tutte's golden identity, c.f. \cite{MR2469528}.
\end{cor}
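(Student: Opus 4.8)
The plan is to rerun the coincidence of Theorem~\ref{thm:D6-coincidence} while tracking the object $\JW{2}$ in place of $P$, reading the two sides of the claimed identity off the two descriptions of $\frac{1}{2} \mathcal{D}_6$. For the left-hand side, I would note that $\JW{2}$ is a simple object of the even part and that $\mathcal{D}_6$ is defined at $q=\exp(\frac{2\pi i}{20})$, so the remark in Section~\ref{sec:invariants} that the invariant attached to $\JW{2k}$ is a colored Jones polynomial gives $\J{\frac{1}{2} \mathcal{D}_6}{\JW{2}}(K) = \restrict{\J{\SL{2}}{(2)}(K)}{q=\exp(\frac{2\pi i}{20})}$, exactly the left side of the corollary.

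Next I would locate the image of $\JW{2}$ under the equivalence of Theorem~\ref{thm:D6-coincidence}. The four simple objects $\JW{0}, \JW{2}, P, Q$ correspond to the four $\Integer/2\Integer \times \Integer/2\Integer$-orbit representatives $V_0 \boxtimes V_0$, $V_1 \boxtimes V_1$, $V_1 \boxtimes V_0$, $V_0 \boxtimes V_1$ surviving in the modularization (see Figure~\ref{fig:SO4}). As $P \mapsto V_1 \boxtimes V_0$ and, by the symmetry exchanging the two $\SL{2}$ factors, $Q \mapsto V_0 \boxtimes V_1$, the remaining nontrivial object must map as $\JW{2} \mapsto V_1 \boxtimes V_1$; equivalently $\JW{2} \cong P \tensor Q$ maps to $(V_1 \boxtimes V_0) \tensor (V_0 \boxtimes V_1) = V_1 \boxtimes V_1$.

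Then, since a ribbon equivalence preserves Reshetikhin--Turaev invariants and the modularization functor is itself ribbon, I would conclude that $\J{\frac{1}{2} \mathcal{D}_6}{\JW{2}}(K) = \J{\uRep{U_{s}(\SL{2} \oplus \SL{2})}}{V_1 \boxtimes V_1}(K)$ for $s = \exp(\frac{7}{10} 2\pi i)$. Because $\uRep{U_s(\SL{2} \oplus \SL{2})} \cong \uRep{U_s(\SL{2})} \boxtimes \uRep{U_s(\SL{2})}$ and the braiding, twist, and duality data of a Deligne product are the factorwise data, the invariant of the product object $V_1 \boxtimes V_1$ equals $\left(\J{\uRep{U_s(\SL{2})}}{V_1}(K)\right)^2$. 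Finally, using $\uRep{U_s(\SL{2})} \cong \TL(-is)$, this $V_1$-invariant is the ordinary Jones polynomial computed in Temperley--Lieb with loop parameter $-is$, hence at $\hat{q} = (-is)^2 = -s^2 = \exp(-\frac{2\pi i}{10})$, so it equals $\restrict{\J{\SL{2}}{(1)}(K)}{q=\exp(-\frac{2\pi i}{10})}$; combining these displays gives the corollary.

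The entire conceptual content sits inside Theorem~\ref{thm:D6-coincidence}, so I do not expect a substantive obstacle. The only genuine care required is the root-of-unity bookkeeping in the last step --- in particular the factor of $-i$ coming from the unimodal-to-Temperley--Lieb identification $\uRep{U_s(\SL{2})} \cong \TL(-is)$, which is precisely what converts $q=\exp(\frac{2\pi i}{20})$ on the left into $q=\exp(-\frac{2\pi i}{10})$ on the right --- together with confirming the placement $\JW{2} \mapsto V_1 \boxtimes V_1$. Both are routine once the equivalence is in hand.
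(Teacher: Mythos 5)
Your proposal is correct and is essentially the paper's own (largely implicit) argument: the corollary is read off from Theorem \ref{thm:D6-coincidence} by tracking $\JW{2} \mapsto V_1 \boxtimes V_1$ through the equivalence (this is exactly the labelling recorded in Figure \ref{fig:SO4}), using that ribbon functors preserve the framing-corrected invariant, factoring the invariant of $V_1 \boxtimes V_1$ over the Deligne product, and converting $\uRep{U_{s=\exp(2\pi i \frac{7}{10})}(\SL{2})}$ to $\TL(-is)$ to land at $q = \exp(-\frac{2\pi i}{10})$. Your placement $\JW{2} \cong P \tensor Q \mapsto V_1 \boxtimes V_1$ and the bookkeeping $(-is)^2 = -s^2 = \exp(-\frac{2\pi i}{10})$ are exactly right, so there is nothing to correct.
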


\begin{proof}[Proof of Theorem \ref{thm:D8-coincidence}]
We want to construct an equivalence
\begin{align*}
\frac{1}{2} \mathcal{D}_{8} & \cong \vRep U_{q=-\exp \left(-\frac{2 \pi i}{14} \right)}(\SO{6}) // (V_{3e_1}) \\
			   & \cong \uRep{U_{s=\exp \left(\frac{5}{14} 2 \pi i\right)}(\SL{4})}^{modularize},
\end{align*}
sending $P$ to a spinor representation of $\SO{6}$ and to $V_{(100)}$, the standard representation of $\SL{4}$.

The first step is the $k=6$ special case of Theorem \ref{thm:SO3-duality} on level-rank duality. The second step uses the coincidence of Dynkin diagrams $D_3 = A_3$ to obtain
$$\vRep U_{q=-\exp \left(-\frac{2 \pi i}{14} \right)}(\SO{6} // (V_{3e_1}) \cong
	\vRep U_{q=-\exp \left(-\frac{2 \pi i}{14} \right)}(\SL{4}) // V_{(030)}$$
after which Lemma \ref{lem:KM-SO6} gives the desired result.
For more details see Figure \ref{fig:SO6}.
\end{proof}
\begin{rem}
Theorem \ref{thm:identities-4} is now an immediate corollary.
\end{rem}

\begin{figure}[!pht]
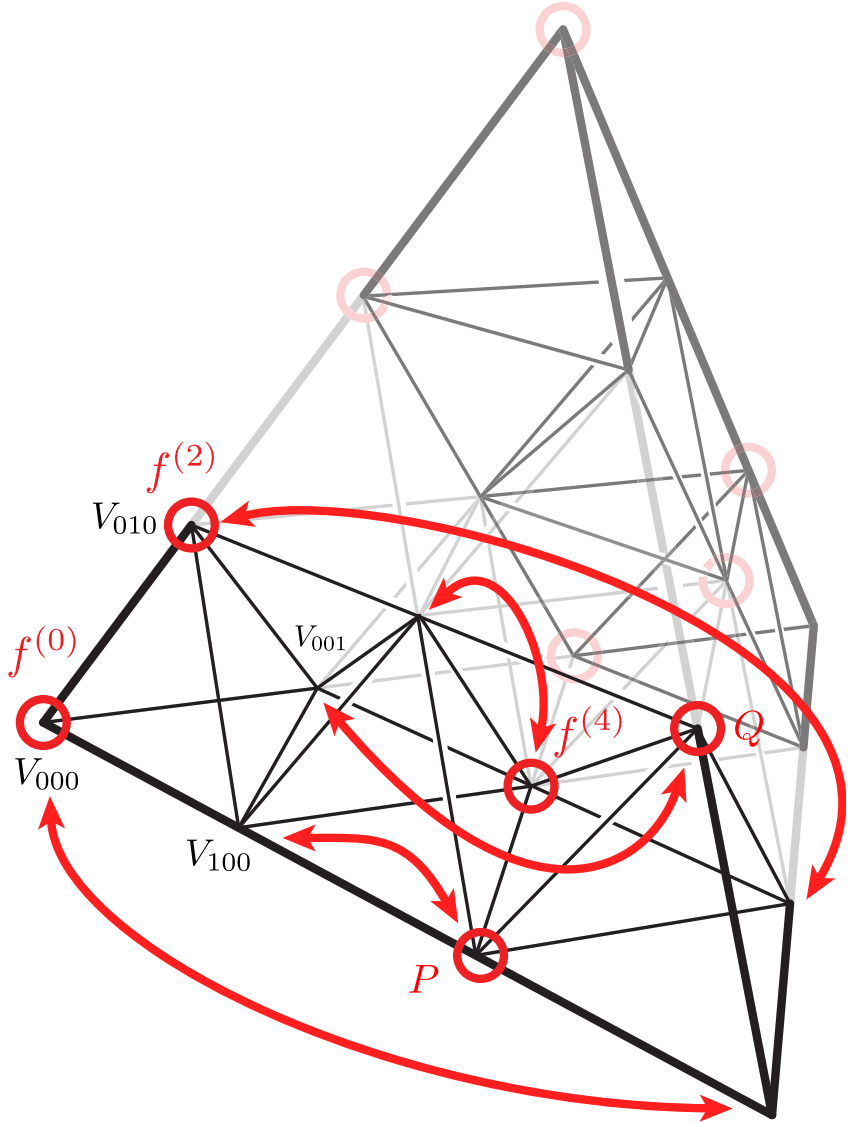
 
\begin{equation*}
\mathfig{0.8}{coincidences/sl4-modularisation-domain}
\end{equation*}

\caption{
We can realise $\frac{1}{2} \mathcal{D}_{8}$ as the vector representations in the $2$-fold quotient $\Rep U_{q=-\exp \left(-\frac{2 \pi i}{14} \right)}(\SL{4}) // (V_{(030)})$, via level-rank duality and the $A_3 = D_3$ coincidence of Dynkin diagrams. The figure shows a fundamental domain for the $2$-fold quotient. The objects of $\frac{1}{2} \mathcal{D}_{8}$ are shown circled (with fainter circles in the other domain showing their other representatives). Now we can apply Lemma \ref{lem:KM-SO6}, and instead identify these vector representations with representations in the $4$-fold quotient $\uRep{U_{s=\exp \left(2 \pi i \frac{5}{14}\right)}(\SL{4})}^{modularize}$ of the unimodal representation theory of $\SL{4}$, at a particular choice of $s$. These identifications are shown as arrows. Note that $P$ is sent to $V_{(100)}$, the standard representation of $\SL{4}$. In particular, the knot invariant coming from $P$ matches up with a specialization of the HOMFLYPT polynomial.
}
\label{fig:SO6}
\end{figure}

\begin{proof}[Proof of Theorem \ref{thm:D10-coincidence}]
We want to show that $\frac{1}{2} \mathcal{D}_{10}$ has an order $3$ automorphism: $$\baselinetrick{\xymatrix@C-8mm@R-3mm{P \ar@{|->}@/^/[rr] & & Q \ar@{|->}@/^/[dl] \\ & f^{(2)} \ar@{|->}@/^/[ul]&}}.$$

\newcommand{\isoto}{\xrightarrow{\iso}}
Again, we first apply the $k=8$ special case of level-rank duality (Theorem \ref{thm:SO3-duality})  to see there is a functor $$\mathcal{L} :\frac{1}{2}\mathcal{D}_{10} \isoto \vRep U_{q=-\exp\left(-\frac{2\pi i}{18}\right)}(\SO{8}) // V_{(3000)}$$ with $\JW{2}$ corresponding to $V_{(1000)}$ and $P$ to $V_{(0002)}$.
In an exactly analogous manner as in Lemmas \ref{lem:KM-SO4} and \ref{lem:KM-SO6}, we can identify this two-fold quotient of the vector representations of $\SO{8}$ with a four-fold quotient of all the representations in the Weyl alcove. That is, there is a functor
\begin{multline*} \mathcal{K} : \vRep U_{q=-\exp\left(-\frac{2\pi i}{18}\right)}(\SO{8}) // V_{(3000)} \isoto \\
    \uRep U_{q=-\exp\left(-\frac{2\pi i}{18}\right)}(\SO{8}) // (V_{(3000)}, V_{(0030)}, V_{(0003)}).
\end{multline*}

The triality automorphism of the Dynkin diagram $D_4$ gives an automorphism $T$ of this category.  A direct computation shows that $T$ induces a cyclic permutation of  $P$, $Q$, and $\JW{2}$ in $\frac{1}{2} \mathcal{D}_{10}$.
For example,
\begin{align*}
\mathcal{L}^{-1}(\mathcal{K}^{-1}(T(\mathcal{K}(\mathcal{L}(\JW{2}))))) & = \mathcal{L}^{-1}(\mathcal{K}^{-1}(T(V_{(1000)}))) \\
	        & = \mathcal{L}^{-1}(\mathcal{K}^{-1}(V_{(0001)})) \\
	        & = \mathcal{L}^{-1}(\mathcal{K}^{-1}(V_{(0001)} \tensor V_{(0003)})) \\
	        & = \mathcal{L}^{-1}(\mathcal{K}^{-1}(V_{(0002)})) \\
	        & = \mathcal{L}^{-1}(V_{(0002)}) \\
	        & = P
\end{align*}

See Figures \ref{fig:SO8} and \ref{fig:SO8-4fold} for more details. It may be that this automorphism of $\frac{1}{2}\mathcal{D}_{10}$ is related to the exceptional modular invariant associated to $\SL{2}$ at level $16$ described in \cite{MR1105431}.
\end{proof}

\begin{figure}[!pht]
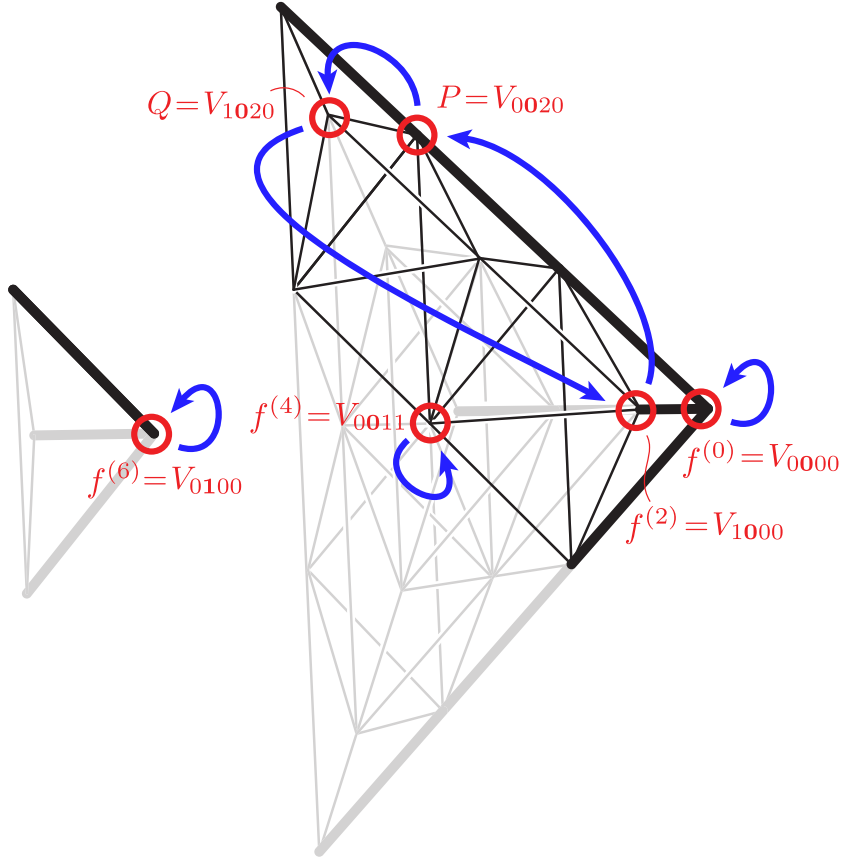

\begin{equation*}
\mathfig{0.8}{coincidences/SO8-chamber}
\end{equation*}
\caption{ We can realise $\frac{1}{2} \mathcal{D}_{10}$ as the vector representations in the $2$-fold quotient $\vRep U_{q=-\exp \left(-\frac{2 \pi i}{18} \right)}(\SO{8}) // (V_{(3000)})$, via level-rank duality.
The Weyl alcove for $\SO{8}$ at $q=-\exp\left(-\frac{2\pi i}{18}\right)$ consists of those $V_{(abcd)}$ such that $a+2b+c+d \leq 3$.  In particular, $b=0$ or $b=1$.  So we draw this alcove as two tetrahedra, the $V_{\star 0 \star \star}$ tetrahedron, and the $V_{\star1 \star \star}$ tetrahedron.  The vector representations are those $V_{(abcd)}$ with $c+d$ even.
We show a fundamental domain for the modularization involution $\tensor V_{(3000)}$, which acts on the $V_{\star0 \star \star}$ tetrahedron by $\pi$ rotation about the line joining $\frac{3}{2}000$ and $00\frac{3}{2}\frac{3}{2}$ and on the $V_{\star1 \star \star}$ tetrahedron by $\pi$ rotation about the line joining $\frac{1}{2}100$ and $01\frac{1}{2}\frac{1}{2}$.
The tensor category of $\frac{1}{2}\mathcal{D}_{10}$ is equivalent to the tensor subcategory of this modularization consisting of images of vector representations, with the equivalence sending $\JW{0} \mapsto V_{(0000)}$, $\JW{2} \mapsto V_{(1000)}$, $\JW{4} \mapsto V_{(0011)}$, $\JW{6} \mapsto V_{(0100)}$,  $P \mapsto V_{(0020)}$ and $Q \mapsto V_{(1020)}$.
The blue arrows shows the action of the triality automorphism $V_{(abcd)} \mapsto V_{(cbda)}$ for $\SO{8}$ on the image of $\frac{1}{2}\mathcal{D}_{10}$. This action is computed via the equivalence with the $4$-fold quotient of all representations, shown in Figure \ref{fig:SO8-4fold}. Notice that under this automorphism $P$ is sent to the standard representation.  In particular, the knot invariant coming from $P$ matches up with a specialization of the Kauffman polynomial.
} \label{fig:SO8}
\end{figure}

\begin{figure}[!htb]
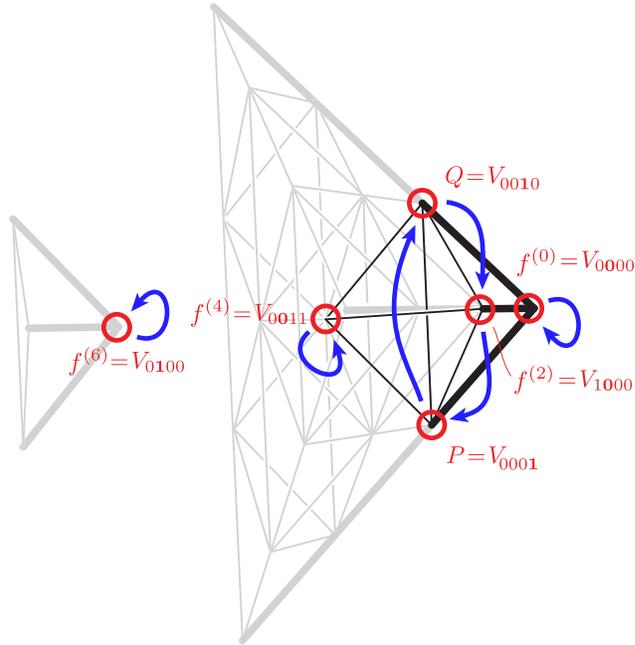

\begin{equation*}
\mathfig{0.6}{coincidences/SO8-4fold-quotient}
\end{equation*}
\caption{The action of the $D_4$ triality automorphism on the four-fold quotient
$\Rep U_{q=-\exp \left(-\frac{2 \pi i}{18} \right)}(\SO{8}) // (V_{(3000)}, V_{(0030)}, V_{(0003)}).$}
\label{fig:SO8-4fold}
\end{figure}




\newcommand{\urlprefix}{}
\forGTART{\bibliographystyle{gtart}}
\forQT{\bibliographystyle{qt-journal}}
\bibliography{bibliography/bibliography}

This paper is available online at 
\href{http://arxiv.org/abs/1003.0022}{\tt arXiv:\nolinkurl{1003.0022}},
and at
\url{http://tqft.net/identities}.

\end{document}